\newtheorem{theorem}{Theorem} [section]
\newtheorem{lemma}[theorem]{Lemma}
\newtheorem{proposition}[theorem]{Proposition}
\newtheorem{remark}[theorem]{Remark}
\newtheorem{definition}[theorem]{Definition}
\newcommand{\Z}{\mathbb{Z}}
\newcommand{\R}{\mathbb{R}}
\newcommand{\C}{\mathbb{C}}
\newcommand{\T}{\mathbb{T}}
\newcommand{\pr}{\mathbb{P}}
\newcommand{\TT}{\mathcal T}
\newcommand{\FF}{\mathcal F} 
\newcommand{\GG}{\mathcal G}
\newcommand{\cR}{\overline R}
\newcommand{\cu}{\overline u} 
\newcommand{\ca}{ \overline a}
\newcommand{\ch}{ \overline h}
\newcommand{\cg}{ \overline g}
\newcommand{\FS}{\mathcal S}
\newcommand{\D}{\mathcal D}
\newcommand{\RR}{\mathcal R}
\numberwithin{equation}{section}
\begin{document}

\title[A.S. well-posedness for the periodic 3D quintic NLS below $H^1$ ]{Almost sure well-posedness for the periodic 3D quintic nonlinear Schr\"odinger equation below the energy space}
\author[Nahmod]{Andrea R. Nahmod$^\ast$}
\address{$^\ast$ Department of Mathematics \\
University of Massachusetts at Amherst\\ 710 N. Pleasant Street, Amherst MA 01003}
\email{nahmod@math.umass.edu}
\thanks{$^\ast$ The first author is funded in part by NSF DMS 1201443.}
\author[Staffilani]{Gigliola Staffilani$^\dagger$}
\address{$^{\dagger}$ Department of Mathematics\\
Massachusetts Institute of Technology\\ 77 Massachusetts Avenue,  Cambridge, MA 02139}
\email{gigliola@math.mit.edu}
\thanks{$^\dagger$ The second author is funded in part by NSF
DMS 1068815.}
\date{}
\begin{abstract}
In this paper we prove an almost sure local well-posedness result for the periodic 3D quintic nonlinear Schr\"odinger equation in the supercritical regime,  that is below the critical space $H^1(\mathbb T^3)$. 
\end{abstract}
\maketitle

\section{Introduction}
In this paper we continue the study of almost sure well-posedness for certain dispersive equations in a supercritical regime. In the last two decades there has been a burst of activity -and significant progress- in the field of nonlinear dispersive equations and systems. These range from the development of  analytic tools in nonlinear Fourier and harmonic analysis combined with geometric ideas to address nonlinear estimates, to related deep functional analytic methods and profile decompositions to study local and global well-posedness and singularity formation for these equations and systems. The thrust of this body of work has focused mostly on deterministic aspects of wave phenomena where sophisticated tools from nonlinear Fourier analysis, geometry and analytic number theory have played a crucial role in the methods employed.  Building upon work by Bourgain \cite{B1, B2, B4} several works  have appeared in which the well-posedness theory has been studied from a nondeterministic point of view relying on and adapting probabilistic ideas and tools as well (c.f. \cite{BT1, BT2, T0, O, O1, T, NORS, NRSS, Th, CO, NPS, BThT, YD1, YD2} and references therein).

\smallskip
It is by now well understood that randomness plays a fundamental role in a variety of fields.  Situations when such a point of view is desirable include:  when there still remains a gap between local and global well-posedness  
when certain type of ill-posedness is present, and in the very important super-critical regime  when a deterministic well-posedness theory remains, in general, a major open problem in the field.  A set of important and tractable problems is concerned with those (scaling) equations 
for which global well posedness for {\it large data} is known at the {\it critical scaling level}. Of special interest is the case when the scale-invariant regularity $s_c = 1$ (energy or Hamiltonian).  A natural question then is that of understanding the {\it supercritical } (relative to scaling) long time dynamics for the nonlinear Schr\"odinger equation in the defocusing case.  Whether blow up occurs from classical data in the defocussing case remains a difficult open problem in the subject. However, what seems within reach at this time is to investigate and seek an answer to these problems {\it from a nondeterministic viewpoint};  namely for {\it random data}.  
\smallskip

In this paper we treat the energy-critical periodic quintic nonlinear Schr\"odinger equation (NLS), an especially important prototype in view  of   the results by Herr, Tzvetkov and Tataru \cite{HTT} establishing small data global well posedness in $H^1(\mathbb T^3)$ and of  Ionescu and Pausader  \cite{IP} proving {\it large data} global well posedness in $H^1(\mathbb T^3)$ in the defocusing case, the first critical result for NLS on a compact manifold.  Large data global well-posedness in $\mathbb R^3$ for the energy-critical quintic NLS had been previously established by Colliander, Keel, Staffilani, Takaoka and Tao in \cite{CKSTT}.

Our interest in this paper is to establish a local  almost sure well posedness for random data {\it below} $H^1(\mathbb T^3);$ that is,  in the supercritical regime relative to scaling{\footnote{i.e. for Cauchy data in $H^{s}(\T^3),\, s < s_c=1$ for the quintic NLS in $3D$}} and without any kind of symmetry restriction on the data.   In a seminal paper,  Bourgain \cite{B4} obtained almost sure global well posedness for the $2D$ periodic defocusing  (Wick ordered) cubic NLS with data {\it below } $L^2(\mathbb T^2)$, ie. in a supercritical regime ($s_c=0$){\footnote{See Brydges and Slade \cite{BrS} for a study of invariant measures associated to the $2D$  {\it focussing} cubic NLS.}}. Burq and Tzvetkov  obtained similar results for the supercritical ($s_c=\frac{1}{2}$) {\it radial} cubic NLW on compact Riemannian manifolds in $3D$. Both global results rely on the existence and invariance of associated Gibbs measures.
As it turns out, in many situations either the natural Gibbs or weighted Wiener construction does not produce an invariant measure or (and this is particularly so in higher dimensions)  it is thought to be impossible to make any reasonable construction at all. In the case of the ball or the sphere, if one restricts to the radial  case then constructions of invariant measures are possible as in  \cite{T, BT2, dS1, dS2, BB1, BB2, BB3}.  Recently, a probabilistic method based on energy estimates has been used to obtain supercritical almost sure global results, thus circumventing the use of invariant measures and  the restriction of radial symmetry.  In this context  Burq and Tzvetkov \cite{BT3} and  Burq, Thomann and Tzvetkov \cite{BTT}  considered the periodic cubic NLW, while Nahmod, Pavlovic and Staffilani \cite{NPS} treated the periodic Navier-Stokes equations. 
Colliander and Oh  \cite{CO} also proved almost sure global well-posedness  for the {\it subcritical} 1D periodic cubic NLS below $L^2$ in the absence of invariant measures by suitably adapting Bourgain's high-low method.  

\smallskip

Extending the local solutions we obtain here to global ones is the next natural step; it is worth noting however that unlike the work of Bourgain \cite{B4} one would need to proceed in the absence of invariant measures;  and unlike the work of Colliander and Oh \cite{CO}  the smoother norm in our case, namely $H^1(\T^3)$, on which one would need to rest to extend the local theory to a global one is in fact {\it critical}. This forces  the bounds on the Strichartz type norms to be of exponential type with respect to the energy, too large to be able to close the argument.

The problem we are considering here is the analogue of the supercritical local well-posedness result {\footnote{ a.s for data in $H^{-\beta}(\mathbb T^2)$,  $\beta>0$}} proved by Bourgain in \cite{B4} for the periodic mass critical defocusing cubic NLS in 2D. Of course, Bourgain then constructed a 2D Gibbs measure and proved that for data  in its statistical ensemble the local solutions obtained were in fact global, hence establishing almost sure global well posedness in $H^{-\epsilon}(\mathbb T^2)$,  $\epsilon>0$.

There are several  major complications in the work that we present below compared to the work of Bourgain: a quintic nonlinearity increases quite substantially the different cases that need to be treated when one analyzes the frequency interactions in the nonlinearity; the counting lemmata in a 3D lattice are much less favorable and the Wick ordering is not sufficient  to remove certain resonant frequencies that need to be eliminated. The latter is not surprising, and in fact  known within the context of quantum field renormalization (c.f. Salmhofer's book \cite{S:book}).  In particular, to overcome this last obstacle, we introduce an appropriate gauge transformation, we work on the gauged problem and then transfer the obtained result back to the original problem; which as a consequence is studied through a contraction method applied in a certain metric space of functions. A similar approach was used by the second author in \cite{St:gKdV}. Finally our estimates take place in function spaces where we must be careful about working  with the absolute  value of the Fourier transform. In fact the norms of these spaces  are not defined through the absolute value of the Fourier transform, a property  of the $X^{s,b}$ spaces in \cite{B4} which is quite useful,  see for example Section \ref{theproof}. 

\medskip
In this work we consider the Cauchy initial value problem, 
\begin{equation}\label{IVPmain}
\begin{cases}
&iu_t+\Delta u=\lambda u|u|^{4} \quad x\in \T^3\\
&u(0,x)=\phi(x) 
\end{cases}
\end{equation} where $\lambda=\pm 1$. 

We randomize the data in the following sense,

\begin{definition}\label{defrand} Let 
$(g_n(\omega))_{n\in \Z^3}$ be a sequence of complex i.i.d centered Gaussian random variables on a probability space $(\Omega,  A, \mathbb P)$.    For $\phi \in H^s(\T^3)$, let $(b_n)$ be its Fourier coefficients, that is 
\begin{equation}\label{sobolev}
\phi(x) \, =\, \sum_{n\in \Z^3} \, b_n \, e^{i n\cdot x},  \qquad \qquad  \sum_{n\in \Z^3} \, (1 + |n|)^{2s} \, |b_n|^2  < \infty\, \end{equation}
The map from $(\Omega,A)$ to $H^s(\T^3)$ equipped with the Borel sigma algebra, defined by 
\begin{equation}\label{map}
\omega \longrightarrow \phi^\omega, \qquad 
\phi^\omega(x) \, =\, \sum_{n\in \Z^3} \, g_n(\omega) b_n \, e^{i n\cdot x} \end{equation}
is called a map randomization.  
\end{definition}
\begin{remark} The map \eqref{map} is measurable and $\phi^\omega\in L^2(\Omega; H^s(\T^d))$, is an $H^s(\T^d)$-valued random variable. 
We recall that such a randomization does not introduce any $H^s$ regularization (see Lemma B.1 in \cite{BT1} for a proof of this fact), indeed $\|\phi^\omega\|_{H^s}\sim
\|\phi \|_{H^s}$. However  randomization  gives improved $L^p$ 
estimates almost surely. 

\end{remark}

Our setting to show almost sure local well posedness  is similar to that of Bourgain in \cite{B4}. 
More precisely, we consider data $\phi \in H^{1-\alpha -\varepsilon}(\T^3)$ for any $\varepsilon>0$ of the form 
\begin{equation}\label{initialdata}
\phi(x)=\sum_{n\in \Z^3}\frac{1}{\langle n\rangle^{\frac{5}{2}-\alpha}}e^{in\cdot x}
\end{equation}
whose randomization is
\begin{equation}\label{initial}
\phi^\omega(x)=\sum_{n\in \Z^3}\frac{g_n(\omega)}{\langle n\rangle^{\frac{5}{2}-\alpha}}e^{in\cdot x}
\end{equation}
 
Our main result can then be stated as follows,

\begin{theorem}[Main Theorem]\label{lwpIVP} Let $0< \alpha < \frac{1}{12}$,  $s \in (1+ 4\alpha, \, \frac{3}{2} - 2\alpha)$ and $ \phi$ as in \eqref{initialdata}.
Then there exists $0<\delta_0\ll 1$ and $r=r(s,\alpha)>0$ such that for any $\delta < \delta_0,$ there exists $\Omega_\delta\in A$ with 
$$\pr(\Omega_\delta^c)<
e^{-\frac{1}{\delta^{r}}},$$
and for each $\omega\in \Omega_\delta$ there exists a unique solution $u$ of \eqref{IVPmain}
in the space
$$S(t)\phi^\omega+X^s([0,\delta))_d,$$ where  $S(t)\phi^\omega$ is the linear evolution of the initial data $\phi^{\omega}$ given by  \eqref{initial}.
\end{theorem}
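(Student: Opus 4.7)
The plan is to follow Bourgain's splitting adapted to the quintic 3D setting. Write $u(t) = z(t) + v(t)$ with $z(t) := S(t)\phi^\omega$, so that the Duhamel remainder $v$ solves
$$i v_t + \Delta v = \lambda(z+v)|z+v|^4, \qquad v(0) = 0,$$
and look for $v$ in a ball of $X^s([0,\delta))_d$ with $s$ in the stated window, i.e.\ well above the critical exponent $s_c = 1$. The random roughness is entirely carried by $z$, while $v$ is solved for deterministically at subcritical regularity, where the multilinear $X^{s,b}$ estimates on $\mathbb{T}^3$ of Herr--Tataru--Tzvetkov \cite{HTT} are available.

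First I would establish probabilistic Strichartz-type bounds for $z$. Because $\phi^\omega$ has independent complex Gaussian Fourier coefficients with weights $\langle n\rangle^{-5/2+\alpha}$, Khintchine / Wiener chaos (hypercontractivity) estimates combined with the deterministic $\mathbb{T}^3$ Strichartz bounds yield, on an event $\Omega_\delta$ with $\mathbb{P}(\Omega_\delta^c) < e^{-1/\delta^r}$, improved bounds of the schematic form
$$\|P_N z\|_{L^q_t L^p_x([0,\delta]\times \mathbb{T}^3)} \lesssim \delta^\theta N^{-\sigma(\alpha)+\varepsilon},$$
where one formally gains roughly $\alpha$ derivatives over the Sobolev regularity of $\phi^\omega$ itself. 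These are the 3D analogues of the probabilistic gains exploited by Bourgain in \cite{B4}.

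Before contracting, I would gauge away the unremovable resonant part of the quintic nonlinearity, following the strategy used by the second author in \cite{St:gKdV}. Expanding $u|u|^4$ and isolating those frequency configurations $n_1 - n_2 + n_3 - n_4 + n_5 = n$ on which the phase $|n_1|^2 - |n_2|^2 + |n_3|^2 - |n_4|^2 + |n_5|^2 - |n|^2$ vanishes identically (i.e.\ diagonal pairings), one finds a mass-type self-interaction which, in the quintic 3D setting, is not killed by Wick ordering and which would obstruct a multilinear $X^{s,b}$ estimate. The substitution $u \mapsto e^{i\lambda G(t)} u$ with an appropriate random time-integrated quantity $G(t)$ absorbs this contribution into a phase, at the cost of working in a non translation-invariant metric space for the contraction; the resulting solution is then transferred back to the original equation.

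With the gauge in place, I would decompose the nonlinearity into its multilinear pieces with factors among $\{z,\bar z, v, \bar v\}$ (five factors in total), and estimate each in the dual $X^{s,b-1}$ norm. Terms containing at least one $v$ are handled by combining the probabilistic bounds on $z$ with the multilinear $X^{s,b}$ estimates for $v$, using crucially that $s < 3/2 - 2\alpha$ keeps us inside the range of favorable $\mathbb{Z}^3$ lattice counting. A standard contraction in a small ball of $X^s([0,\delta))_d$ then closes the fixed point on a time of size $\delta$. The main obstacle, and the heart of the argument, is the purely random piece consisting of five copies of $z$ or $\bar z$: there is no $v$-norm to absorb through $X^{s,b}$, the 3D lattice counting is weakest, and resonances survive Wick ordering. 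It is precisely this term that forces the gauge transformation and pins down the exponent window $1 + 4\alpha < s < 3/2 - 2\alpha$, which is nonempty exactly when $\alpha < 1/12$.
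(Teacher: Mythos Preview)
Your high-level plan---Bourgain splitting, gauge away the resonant part, contraction for the remainder---matches the paper. But two substantive points are off.

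\textbf{Function spaces.} You invoke multilinear $X^{s,b}$ estimates and dual $X^{s,b-1}$ norms, while the paper works throughout in the $U^p/V^p$-based atomic spaces $X^s,Y^s$ of Herr--Tataru--Tzvetkov. This is not cosmetic: the deterministic critical theory on $\T^3$ (the DDDDD estimate, Proposition \ref{all-deterministic}) is built in those spaces, and the paper pays a recurring price for it---the atomic norms are \emph{not} monotone in $|\widehat u|$, so every time frequencies are removed to kill resonances one must ``add them back'' and control the resulting lower-order pieces separately (see e.g.\ the $A_1,\dots,A_4$ discussion in Subsection \ref{estJ2}, Case 7, and \eqref{addons} in Subsection \ref{ddddd}). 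In $X^{s,b}$ this issue does not arise, but then you lose access to the HTT critical multilinear estimate that the mixed terms rely on. You should be explicit that you are in the atomic framework, not Bourgain's.

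\textbf{Which term pins the window.} You say the purely random $RRRRR$ piece is the main obstacle and forces both the gauge and the window $1+4\alpha<s<\frac{3}{2}-2\alpha$. Neither is accurate. The gauge is needed because the terms $I_2,I_3,I_4$ in the expansion of $|u|^4u$ equal $\widehat u(n)\int_{\T^3}|u|^4\,dx$, which is \emph{linear} in $a_n$ and cannot gain regularity from randomness (Remark \ref{removing}); these are removed by the gauge regardless of what happens in $RRRRR$. As for the exponent window: the all-random case (Subsection \ref{rrrrr}) only requires $s+\alpha<\frac{3}{2}$ and is comparatively soft. The tight constraint $1+4\alpha<s<\frac{3}{2}-2\alpha$ arises from a \emph{mixed} interaction---one random factor at top frequency against two deterministic ones below---specifically Case 6 of the $J_2$ analysis in Subsection \ref{estJ2}, via the trilinear estimate \eqref{RDD} with $\theta=10\alpha$. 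So the heart of the argument lies in the $RDD$ blocks, not $RRRRR$.

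A minor structural point: the paper gauges the original equation first (obtaining \eqref{IVPremove}), then splits $v=S(t)\phi^\omega+w$ and runs the contraction on $w$; the result is transferred back to $u$ via $u=e^{i\lambda\int_0^t\beta_v}v$ and the metric $d$ in \eqref{metric}. Your order (split first, then gauge) would entangle the random phase into the fixed-point map less cleanly.
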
 Here we denoted by $X^s([0,\delta))_d$ the metric space $( X^s([0,\delta)),\, d)$ where $d$ is the metric defined by \eqref{metric} in Section \ref{resonant-gauged} and $X^s([0,\delta))$ is the space introduced  in Definition \ref{x-atomic} below.

\smallskip

\subsection*{Acknowledgement} The authors would like to thank the Radcliffe Institute for Advanced Study at Harvard University for its hospitality during the final stages of this work. They also  thank Luc Rey-Bellet  for several helpful conversations.

\section{Removing resonant frequencies: the gauged equation}\label{resonant-gauged}
The main idea in proving Theorem \ref{lwpIVP} goes back to Bourgain \cite{B4} and it consists on proving that if $u$ solves 
\eqref{IVPmain}, then $w=u-S(t)\phi^\omega$ is smoother; see also \cite{BT1, CO, NPS}. In fact  one reduces the problem to showing well-posedness for the  initial value problem involving  $w$, which is in fact treated as a deterministic function. The initial value problem that $w$ solves does not become a subcritical one, but it is of a  hybrid type involving also rougher but random terms, whose   decay and moments play a fundamental role.
For the NLS equation  this argument can be carried out  only after having removed 
certain resonant frequencies  in the nonlinear part of the equation. In this section in fact we write the Fourier coefficients of the quintic expression $|u|^4u$ and we identify the resonant part that needs to be removed  in order to be able to take advantage of the moments  coming from the randomized terms. We will go back to this concept more in details in Remark \ref{removing} below.

\medskip

Let's start by assuming that $\widehat u(n)(t)=a_n(t)$. We introduce the notation 
\begin{equation}\label{plane}
\Gamma(n)_{[i_1,i_2,\cdots,i_r]}:=\{(n_{i_1},\cdots,n_{i_r})\in \Z^{3r}\, \, /\, \, n=n_{i_1}-n_{i_2}+\cdots +(-1)^{r+1}n_{i_r}\}
\end{equation} to indicate various hyperplanes  
and $\Gamma(n)_{[i_1,i_2,\cdots,i_r]}^{c}$ is  its complement. 

\medskip

Next, for fixed time $t$, we take $\FF$, the Fourier transform in space,  and 
write,
\begin{eqnarray*}
\FF(|u(t)|^4u(t))(n)&=&\sum_{\Gamma(n)_{[1,\cdots,5]}} a_{n_1}(t)\overline{a_{n_2}}(t) a_{n_3}(t)\overline{a_{n_4}}(t)a_{n_5}(t)\\
&=&\sum_{\Gamma(n)_{[1,\cdots,5]}\cap\Gamma(0)_{[1,2,3,4]}^{C}\cap\Gamma(0)_{[1,2,5,4]}^{C}\cap\Gamma(0)_{[3,2,5,4]}^{C}}
a_{n_1}(t)\overline{a_{n_2}}(t) a_{n_3}(t)\overline{a_{n_4}}(t)a_{n_5}(t)\\
&+&\sum_{\Gamma(n)_{[1,\cdots,5]}\cap\Gamma(0)_{[1,2,3,4]}}a_{n_1}(t)\overline{a_{n_2}}(t) a_{n_3}(t)\overline{a_{n_4}}(t)a_{n_5}(t)\\ 
&+&\sum_{\Gamma(n)_{[1,\cdots,5]}\cap\Gamma(0)_{[1,2,5,4]}}a_{n_1}(t)\overline{a_{n_2}}(t) a_{n_3}(t)\overline{a_{n_4}}(t)a_{n_5}(t)\\
&+&\sum_{\Gamma(n)_{[1,\cdots,5]}\cap\Gamma(0)_{[3,2,5,4]}}a_{n_1}(t)\overline{a_{n_2}}(t) a_{n_3}(t)\overline{a_{n_4}}(t)a_{n_5}(t)\\
&-&\sum_{\Gamma(n)_{[1,\cdots,5]}\cap\Gamma(0)_{[1,2,3,4]}\cap\Gamma(0)_{[1,2,5,4]}\cap\Gamma(0)_{[3,2,5,4]}^{C}}a_{n_1}(t)\overline{a_{n_2}}(t) a_{n_3}(t)\overline{a_{n_4}}(t)a_{n_5}(t)\\
&-&\sum_{\Gamma(n)_{[1,\cdots,5]}\cap\Gamma(0)_{[1,2,3,4]}\cap\Gamma(0)_{[3,2,5,4]}\cap\Gamma(0)_{[1,2,5,4]}^{C}}a_{n_1}(t)\overline{a_{n_2}}(t) a_{n_3}(t)\overline{a_{n_4}}(t)a_{n_5}(t)\\
&-&\sum_{\Gamma(n)_{[1,\cdots,5]}\cap\Gamma(0)_{[3,2,5,4]}\cap\Gamma(0)_{[1,2,5,4]}\cap\Gamma(0)_{[1,2,3,4]}^{C}}a_{n_1}(t)\overline{a_{n_2}}(t) a_{n_3}(t)\overline{a_{n_4}}(t)a_{n_5}(t)\\
&-&2\sum_{\Gamma(n)_{[1,\cdots,5]}\cap\Gamma(0)_{[1,2,3,4]}\cap\Gamma(0)_{[3,2,5,4]}\cap\Gamma(0)_{[1,2,5,4]}}a_{n_1}(t)\overline{a_{n_2}}(t) a_{n_3}(t)\overline{a_{n_4}}(t)a_{n_5}(t)\\&=&\sum_{k=1,\cdots,8}I_k,  
\end{eqnarray*}
We now rewrite each $I_k$ using more explicitly the constraints in the hyperplanes.  $I_1$ is the most complicated and 
and we start by rewriting it. To that effect we introduce the following notation:
\begin{eqnarray}
\label{lambdan}\Lambda(n)&:=&\Gamma(n)_{[1,\cdots,5]}\cap\Gamma(0)_{[1,2,3,4]}^{C}\cap\Gamma(0)_{[1,2,5,4]}^{C}\cap\Gamma(0)_{[3,2,5,4]}^{C}\\
\label{sigman}\Sigma(n)&:=&\{(n_1,n_2,n_3,n_4,n_5)\in \Lambda(n) \, \, / \, \, n_1,n_3,n_5\ne n_2,n_4\}.
\end{eqnarray}
We have 
\begin{eqnarray}\label{I1}
 I_1&=& \sum_{\Lambda(n)}a_{n_1}(t)\overline{a_{n_2}}(t) a_{n_3}(t)\overline{a_{n_4}}(t)a_{n_5}(t)\\\notag
&=&\sum_{\Sigma(n)}a_{n_1}(t)\overline{a_{n_2}}(t) a_{n_3}(t)\overline{a_{n_4}}(t)a_{n_5}(t)\\\notag &&
+
6\left(\sum_{n_2}|a_{n_2}|^2\right)\sum_{\Gamma(n)_{[3,4,5]}, \, n_3,n_5\ne n_4}a_{n_3}(t)\overline{a_{n_4}}(t)a_{n_5}(t)\\\notag
&&-6|a_n|^2\sum_{\Gamma(n)_{[3,4,5]}, \, n_3,n_5\ne n_4}a_{n_3}(t)\overline{a_{n_4}}(t)a_{n_5}(t)\\\notag&&
-3\sum_{\Gamma(n)_{[3,1,5]}, n_3, n_5\ne n_1}|a_{n_1}(t)|^2\overline{a_{n_1}}(t)a_{n_3}(t)a_{n_5}(t)\\\notag
&&-3|a_n|^4a_n(t)+3|a_n|^2\overline{a_{n}(t)}\sum_{n_3+n_5=2n}a_{n_3}(t)a_{n_5}(t)\\\notag&&
-6\sum_{\Gamma(n)_{[2,4,5]}, n_2,n_5\ne n_4}|a_{n_2}(t)|^2a_{n_2}(t)\overline{a_{n_4}}(t)a_{n_5}(t)\\\notag
&&+2\sum_{n=2n_2-n_4, n_2\ne n_4}|a_{n_2}(t)|^2a_{n_2}^2(t)\overline{a_{n_4}}(t)
\end{eqnarray}
Note here that we can write
\begin{eqnarray}\label{removecont}
&&|a_n|^2\sum_{\Gamma(n)_{[3,4,5]}, \, n_3,n_5\ne n_4}a_{n_3}(t)\overline{a_{n_4}}(t)a_{n_5}(t)=
-2|a_n|^2a_n\left(\sum_{n_2}|a_{n_2}|^2\right)+|a_n|^4a_n\\\notag
&&+|a_n|^2\sum_{\Gamma(n)_{[3,4,5]}}a_{n_3}(t)\overline{a_{n_4}}(t)a_{n_5}(t).
\end{eqnarray}

It is easier to see that for $i=2,3,4$
\begin{equation}\label{I234}
I_{i}=a_n(t)\sum_{\Gamma(0)_{[1,2,3,4]}}a_{n_1}(t)\overline{a_{n_2}}(t) a_{n_3}(t)\overline{a_{n_4}}(t)=\hat u(n)(t)\int_{\T^3}|u|^4(x,t)dx,
\end{equation}
while  for $j=5,6,7$
\begin{equation}\label{I567}
I_{j}=-a_n^3(t)\sum_{n_2+n_4= 2n}\overline{a_{n_2}}(t)\overline{a_{n_4}}(t)+
a_n^2\sum_{n=n_2+n_4-n_1}\overline{a_{n_2}}(t)\overline{a_{n_4}}(t)a_{n_1}(t)
\end{equation}
and   for
\begin{equation}\label{I8}
I_{8}=-2a_n^3(t)\sum_{n_2+n_4= 2n}\overline{a_{n_2}}(t)\overline{a_{n_4}}(t).
\end{equation}
We summarize our findings from \eqref{I1}- \eqref{I8}. In this part of the argument the time variable is not important, hence we will omit it for now.  We write 
\begin{equation}\label{rewrite}
\FF\left(|u|^4u-3u\left(\int_{\T^3}|u|^4\,dx\right)\right)(n)=\sum_{k=1}^7J_k(a_n)
\end{equation}
with
\begin{eqnarray}\label{j1}
J_1&=&\sum_{\Sigma(n)}a_{n_1}\overline{a_{n_2}} a_{n_3}\overline{a_{n_4}}a_{n_5}
\\\label{j2}
J_2&=&6m\sum_{\Gamma(n)_{[1,2,3]}, \, n_3,\, n_1\ne n_2}a_{n_1}\overline{a_{n_2}}a_{n_3}\\\label{j3}
J_3&=&-6\sum_{\Gamma(n)_{[1,2,3]}, \, n_1, n_3\, \ne  n_2}|a_{n_1}|^2a_{n_1}\overline{a_{n_2}}a_{n_3}\\\notag&&-3\sum_{\Gamma(n)_{[1,2,3]}, \, n_1,\, n_3,\ne n_2}a_{n_1}|a_{n_2}|^2\overline{a_{n_2}}a_{n_3}\\\label{j4}
J_4&=&2\sum_{n=2n_1-n_2, }|a_{n_1}|^2a_{n_1}^2\overline{a_{n_2}}\\\label{j5}
J_5&=&-6|a_n|^2\sum_{\Gamma(n)_{[123]}
}a_{n_1}\overline{a_{n_2}}a_{n_3}
+3a_n^2\sum_{\Gamma(n)_{[214]}}\overline{a_{n_2}}a_{n_1}\overline{a_{n_4}}
\\\label{j6}
J_6&=&=-5a_n^3\sum_{n=n_2+n_4}\overline{a_{n_2}}\overline{a_{n_4}}+
3|a_n|^2\overline{a_n}\sum_{n=n_1+n_3}a_{n_1}a_{n_3}
\\\label{j7}
J_7&=&-11a_n|a_n|^4 +12m|a_n|^2a_n, 
\end{eqnarray}
where  $m=\int_{\T^3}|u(t,x)|^2dx,$ the conserved mass.
\begin{remark}\label{removing}
In the calculations above we wrote the nonlinear terms in \eqref{IVPmain}  in Fourier space, we isolated the term $u\int_{\T^3}|u|^4\, dx$ and we subtracted it from $|u|^4u$, see \eqref{rewrite}.  We show below that indeed in doing so we  separated those terms that we claim are not suitable  for smoother  estimates from the ones that are.  To understand this point  let us replace 
$a_n=\frac{g_n(\omega)}{\langle n\rangle^{\frac{5}{2}-\alpha}}$, for $ \alpha $ small, whose anti-Fourier transform barely misses to be in $H^1({\T^3})$. We want to claim that the randomness coming from $\{g_n(\omega)\}$ will  increase the 
 regularity of the nonlinearity in a certain sense, so that it can hold a bit more than one derivative.
 We realize immediately though that this claim cannot be true for the whole nonlinear term. For example the terms $I_i, \, i=2,3,4$ have no chance to improve their regularity because they are simply linear with respect to $a_n$, hence they need to be removed.  This same problem presented itself  in the work of Bourgain \cite{B4} and Colliander-Oh \cite{CO} who considered the cubic NLS below $L^2$. In particular in their case the problematic  term was of the type $a_n\int_{\T^d}|u|^2dx$ and the authors removed it by Wick ordering  the Hamiltonian. An important ingredient in making this successful was that $\int_{\T^d}|u|^2dx$, that is the mass, is independent of time. In our case Wick ordering the  Hamiltonian  is not helpful  since it does not  remove the terms $I_i, \, i=2,3,4$. As we mentioned before, the latter is not surprising, and in fact known within the context of quantum field renormalization (c.f. Salmhofer's book \cite{S:book}).

 If we knew that $\int_{\T^3}|u|^4dx$ were constant in time, then we could simply relegate those terms to the linear part of the equation. Since  this is obviously not the case   relegating these expressions with the main linear part of the equation  would prevent us from using the simple form of the solution for a Schr\"odinger equation with constant coefficients. A similar situation to the one  just described presented itself in \cite{St:gKdV} where a gauge transformation was used to remove the time dependent linear terms. We are able to use the same idea in this context and this is the content of what follows in this section.
\end{remark}

To prove  Main Theorem \ref{IVPmain} we  proceed in two steps. First we consider the initial value problem 
\begin{equation}\label{IVPremove}
\begin{cases}
&iv_t+\Delta v=\mathcal N(v) \quad x\in \T^3\\
&v(0,x)=\phi(x),
\end{cases}
\end{equation}
where
\begin{equation}\label{Nv}\mathcal N(v):=\lambda \left(v|v|^{4}-3v\left(\int_{\T^3}|v|^4\,dx\right)\right)
\end{equation}
 with $\lambda=\pm 1$ and $\phi(x)$ the initial datum  as in \eqref{IVPmain}.
To make the notation simpler  set
\begin{equation}\label{beta}
\beta_v(t) = 3\int_{\T^3}|v|^4\,dx
\end{equation}
and  define
\begin{equation}\label{fromvtou}
u(t,x):=e^{i\lambda\int_0^t\beta_v(s)\,ds}v(t,x).
\end{equation}
We observe that  
 $u$ solves the initial value problem \eqref{IVPmain}.
Now suppose that one obtains  well-posedness for the initial value problem \eqref{IVPremove} in a certain Banach space 
$(X,\|\cdot\|)$ then one can transfer those results to the initial value problem \eqref{IVPmain} by using a metric space $X_d:= (X,d)$ where
\begin{equation} \label{metric} d(u,v):=\|e^{-i\lambda\int_0^t\beta_u(s)\,ds}u(t,x)-e^{-i\lambda\int_0^t\beta_v(s)\,ds}v(t,x)\|.\end{equation}

The fact  that this is indeed a metric follows from  using the properties of the norm $\|\cdot\|$ and the fact that if 
$$e^{-i\lambda\int_0^t\beta_u(s)\,ds}u(t,x)=e^{-i\lambda\int_0^t\beta_v(s)\,ds}v(t,x)$$
then $\beta_v(t)=\beta_u(t)$ and hence $u=v$.

\medskip
From this moment on we work exclusively with the initial value problem \eqref{IVPremove}. In particular below we  prove the following result:

\begin{theorem}\label{lwpIVPremove} Let $0< \alpha < \frac{1}{12}$,  $s \in (1+ 4\alpha, \, \frac{3}{2} - 2\alpha)$ and $ \phi$ as in \eqref{initialdata}.
There exists $0<\delta_0\ll 1$ and $r=r(s,\alpha)>0$ such that for any $\delta < \delta_0,$ there exists $\Omega_\delta\in A$ with 
$$\pr(\Omega_\delta^c)<
e^{-\frac{1}{\delta^{r}}},$$
and for each $\omega\in \Omega_\delta$ there exists a unique solution $u$ of \eqref{IVPremove}
in the space
$$S(t) \phi^\omega+X^s([0,\delta)),$$  with initial condition $\phi^{\omega}$ given by  \eqref{initial}. 
\end{theorem}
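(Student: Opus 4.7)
The plan is to rewrite \eqref{IVPremove} in Duhamel form, split the unknown as $v = z + w$ with $z(t) := S(t)\phi^\omega$ carrying the rough random low-regularity piece and $w := v - z$ the smoother deterministic correction, and then solve for $w$ by Picard iteration in the space $X^s([0,\delta))$ for some $s \in (1+4\alpha, \frac{3}{2}-2\alpha)$. Concretely, $w$ satisfies
\begin{equation*}
iw_t + \Delta w = \mathcal{N}(z+w), \qquad w(0) = 0,
\end{equation*}
so I would seek a fixed point of $\Phi(w)(t) := -i\lambda \int_0^t S(t-s)\mathcal{N}(z(s)+w(s))\,ds$ on a ball in $X^s([0,\delta))$. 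Uniqueness is then read off of the contraction, and the total solution $v = z+w$ lives in $S(t)\phi^\omega + X^s([0,\delta))$ as required.

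The first main ingredient is probabilistic control on the random linear evolution $z$. Since $\phi^\omega$ has Fourier coefficients $g_n(\omega)/\jb{n}^{5/2-\alpha}$, standard Gaussian large deviation estimates combined with hypercontractivity / Khintchine on the Wiener chaos yield, on a set $\Omega_\delta$ with $\pr(\Omega_\delta^c) < e^{-1/\delta^r}$, a family of Strichartz- and $X^{s',b}$-type bounds on $z$ that reflect a probabilistic gain of essentially half a derivative over the deterministic Sobolev regularity. I would collect these into a single quantity $M_\omega$, finite on $\Omega_\delta$, to be used as input in the nonlinear estimate.

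The heart of the argument is then a multilinear bound on $\mathcal{N}(z+w)$. Expanding the quintic product into $2^5$ pieces and regrouping them via the Fourier decomposition \eqref{rewrite}--\eqref{j7} into $J_1,\dots,J_7$, the target is
\begin{equation*}
\|\mathcal{N}(z+w)\|_{N^s([0,\delta))} \lesssim \delta^{\theta}\sum_{k+\ell = 5} M_\omega^{\,k}\, \|w\|_{X^s}^{\,\ell},
\end{equation*}
for some $\theta>0$, proved by separating three regimes: (i) pieces with no $z$ factor, handled by the deterministic quintilinear $X^s$ estimate of Herr--Tataru--Tzvetkov \cite{HTT}; (ii) mixed pieces in which the $w$ factors absorb the derivative and each $z$ factor is placed in a probabilistic norm controlled by $M_\omega$; and (iii) purely random pieces with all five factors equal to $z$, where the $\Z^3$ counting lemmata on lattice points on spheres intersected with affine hyperplanes are combined with Wiener chaos moment bounds to extract a gain of regularity almost surely. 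The subresonant pieces $J_2,J_3,J_4,J_6,J_7$ supported on the hyperplanes $\Gamma(0)_{[\cdot]}$ are the ones most sensitive to the absence of Wick-type cancellations; this is precisely where the gauge transformation from Section~\ref{resonant-gauged} is used, since it already absorbs the worst diagonal contribution $3v\int|v|^4$ into the phase.

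The main obstacle will be case (iii): deterministic multilinear estimates are unavailable because $z \notin H^1(\T^3)$ and the three-dimensional lattice counting is substantially worse than the two-dimensional situation of \cite{B4}, so the probabilistic gain has to be extracted pentalinear-form by pentalinear-form via Wiener chaos, with a delicate bookkeeping of which indices are contracted against one another and which remain "free." Once the multilinear inequality above is established, contraction on the ball $\{w \in X^s([0,\delta)) : \|w\|_{X^s}\le R\}$ with $R \sim M_\omega$ and $\delta$ chosen so that $\delta^{\theta}(M_\omega+R)^4 \ll 1$ produces the unique $w$; transferring the result back to \eqref{IVPmain} via the gauge \eqref{fromvtou} and the metric \eqref{metric} then yields Theorem~\ref{lwpIVP}.
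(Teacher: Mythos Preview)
Your proposal is correct and follows essentially the same route as the paper: the split $v=S(t)\phi^\omega+w$, the contraction for $w$ in $X^s([0,\delta))$ via a Duhamel estimate of the form $\|\mathcal I(\psi_\delta\mathcal N(z+w))\|_{X^s}\lesssim \delta^\theta(1+\|w\|_{X^s}^5)$, and the case analysis (all deterministic via \cite{HTT}, mixed, all random) carried out through the $J_1,\dots,J_7$ decomposition with lattice counting and Wiener chaos bounds on $\Omega_\delta$. The paper packages the multilinear step as Propositions~\ref{main-lemma} and \ref{main-estimate} and extracts the $\delta^\theta$ gain by a dichotomy on $N_2$ versus $\delta^{-\sigma}$, but the architecture you describe is the same.
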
 Here in the space $X^s([0,\delta))$ is defined in Section \ref{Spaces}.

\medskip
Thanks to  the transformation \eqref{fromvtou}, Theorem \ref{lwpIVPremove}  translates to  Main Theorem \ref{lwpIVP}.

\section{Probabilistic Set Up}

We first recall a classical result that goes back to Kolmogorov, Paley and Zygmund. 

\begin{lemma}[Lemma 3.1 \cite{BT1}]\label{gLp} Let $\{g_n(\omega)\}$ be a sequence of complex i.i.d. zero mean Gaussian random variables on a probability space $(\Omega,  A, \mathbb P)$ and $(c_n) \in \ell^2$. 
  Define \begin{equation} \label{allrandom} F(\omega) := \sum_{ n } \, c_{n}   g_{n}(\omega) \end{equation} Then, there exists $C >0$ such that for every $\lambda >0$ we have 
\begin{equation} \label{allrandombound} \mathbb P ( \{ \omega \, :\, | F(\omega) | > \lambda \, \} ) \, \leq \exp{\left( \frac{ - C\, \lambda^{2}}{ {\| F(\omega) \|^2}_{L^2(\Omega)}} \right)}. \end{equation} As a consequence there exists $C>0$ such that for every $q \geq 2$ and every $(c_n)_{n}  \in \ell^2$, 
\begin{equation*}
\left\| \sum_{n}  c_n g_n(\omega) \right\|_{L^q(\Omega)} \leq C \sqrt{q} \left( \sum_{n} \, c_n^2 \right)^{\frac{1}{2}}.
\end{equation*}  
\end{lemma}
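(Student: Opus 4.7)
The plan is standard (Kolmogorov--Paley--Zygmund / Gaussian concentration) and proceeds in two stages: first establish the tail bound \eqref{allrandombound} via a Chernoff argument using the moment generating function of $F$, and then deduce the $L^q$ estimate by integrating the tail.

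For \eqref{allrandombound}, I would start by splitting $g_n = X_n + i Y_n$ into real and imaginary parts, where $\{X_n,Y_n\}$ is a family of real i.i.d.\ centered Gaussians. Writing $c_n = \alpha_n + i \beta_n$, we see that $\mathrm{Re}\,F(\omega)$ and $\mathrm{Im}\,F(\omega)$ are each real Gaussian sums, hence real centered Gaussian random variables. By independence and the explicit formula $\mathbb{E}[e^{t X_n}]=e^{t^2/2}$ (with appropriate normalization of the variance), one obtains a bound of the form
\[
\mathbb{E}\bigl[e^{t\,\mathrm{Re}\,F(\omega)}\bigr] \;\le\; \exp\!\Bigl(c\,t^{2}\sum_{n}|c_n|^{2}\Bigr) \;=\; \exp\!\bigl(c\,t^{2}\|F\|_{L^{2}(\Omega)}^{2}\bigr),
\]
and analogously for $\mathrm{Im}\,F$. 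Markov's inequality applied to $e^{t\,\mathrm{Re}\,F}$ then gives
\[
\mathbb{P}\bigl(\mathrm{Re}\,F(\omega) > \lambda\bigr) \;\le\; \exp\!\bigl(-t\lambda + c\,t^{2}\|F\|_{L^{2}(\Omega)}^{2}\bigr),
\]
and optimizing in $t>0$ yields the Gaussian tail $\exp\bigl(-C\lambda^{2}/\|F\|_{L^{2}(\Omega)}^{2}\bigr)$. Handling $\mathrm{Im}\,F$ and the event $\{-\mathrm{Re}\,F>\lambda\}$ symmetrically and using $|F|\le |\mathrm{Re}\,F| + |\mathrm{Im}\,F|$ (so $\{|F|>\lambda\}\subset\{|\mathrm{Re}\,F|>\lambda/2\}\cup\{|\mathrm{Im}\,F|>\lambda/2\}$) produces \eqref{allrandombound} after adjusting the constant $C$.

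For the $L^{q}$ bound, I would use the layer cake formula
\[
\|F\|_{L^{q}(\Omega)}^{q} \;=\; q\int_{0}^{\infty}\lambda^{q-1}\,\mathbb{P}\bigl(|F(\omega)|>\lambda\bigr)\,d\lambda.
\]
Setting $\sigma := \|F\|_{L^{2}(\Omega)}$ and inserting \eqref{allrandombound}, the change of variables $u = C\lambda^{2}/\sigma^{2}$ reduces the right-hand side to a Gamma integral:
\[
\|F\|_{L^{q}(\Omega)}^{q} \;\le\; \frac{q\,\sigma^{q}}{2\,C^{q/2}}\,\Gamma\!\bigl(q/2\bigr).
\]
Stirling's formula gives $\Gamma(q/2)^{1/q}\lesssim \sqrt{q}$, so taking $q$-th roots yields $\|F\|_{L^{q}(\Omega)}\le C\sqrt{q}\,\sigma$, which is the claim since $\sigma^{2}\sim \sum_n |c_n|^{2}$ by independence and the centering of the $g_n$.

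The only step that requires any care is the passage from the tail bound on $|F|$ to the tail bound on the complex modulus; everything else is a routine application of the moment generating function computation for real Gaussians together with the layer cake representation. No feature of the $c_n$ beyond square-summability is used, and in particular the argument is oblivious to phases, which is what makes the probabilistic gain useful later in the paper.
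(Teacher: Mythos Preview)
Your proposal is correct and follows the standard Chernoff/layer-cake route. Note, however, that the paper does not actually supply its own proof of this lemma: it is stated as a classical result going back to Kolmogorov, Paley and Zygmund and attributed to \cite{BT1}, so there is no in-paper argument to compare against. Your argument is precisely the one underlying that reference; the only cosmetic point is that the union bound over real/imaginary parts produces a harmless multiplicative constant in front of the exponential, which for complex Gaussians one can in fact remove by computing the distribution of $|F|^{2}$ directly (it is exponential), but this is irrelevant for the $L^{q}$ consequence and for every application in the paper.
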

We also recall the following basic probability results:

\begin{lemma}\label{preindependence}  Let $1\leq m_1 < m_2 \dots < m_k= m $;  $f_1$ be a Borel measurable function of $m_1$ variables, $f_2$  one of $m_2-m_1$ variables, $\dots, f_k$ one of $m_k - m_{k-1}$ variables. If  $\{X_1, X_2, \dots X_m\}$ are real-valued independent  random variables, then  the $k$ random variables $f_1(X_1, \dots X_{m_k}), f_2(X_{m_1+1}, \dots, X_{m_2}), \dots, f_k(X_{m_{k-1}+1}, \dots X_{m_k})$  are independent random variables. 
\end{lemma}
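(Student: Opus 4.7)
The plan is to unpack independence via its definition in terms of joint distributions factoring as products, and then to note that measurable functions of disjoint blocks of independent coordinates depend on disjoint factors of a product measure. I would not attempt any slick structural argument; the cleanest route is essentially Fubini.

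First, I would set $Y_i := f_i(X_{m_{i-1}+1},\dots,X_{m_i})$ for $i=1,\dots,k$ (with the convention $m_0=0$), and I would reduce the problem to showing that for arbitrary Borel sets $B_1,\dots,B_k \subset \R$,
\[
\pr\bigl(Y_1 \in B_1,\dots,Y_k \in B_k\bigr) \;=\; \prod_{i=1}^k \pr(Y_i \in B_i).
\]
Since each $f_i$ is Borel measurable, the preimages $A_i := f_i^{-1}(B_i)$ are Borel subsets of $\R^{m_i-m_{i-1}}$, so the event $\{Y_i \in B_i\}$ coincides with $\{(X_{m_{i-1}+1},\dots,X_{m_i}) \in A_i\}$. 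The identity to establish therefore becomes the assertion that the joint distribution of the blocks $(X_{m_{i-1}+1},\dots,X_{m_i})$, $i=1,\dots,k$, factors as a product measure on the Borel $\sigma$-algebras of the product spaces $\R^{m_i-m_{i-1}}$.

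Next, I would use the hypothesis that $X_1,\dots,X_m$ are independent, which by definition means that the joint law of $(X_1,\dots,X_m)$ on $\R^m$ is the product measure $\mu_1\otimes\cdots\otimes\mu_m$, where $\mu_j$ is the law of $X_j$. An application of Fubini's theorem then gives
\[
\pr\bigl((X_1,\dots,X_{m_1})\in A_1,\dots,(X_{m_{k-1}+1},\dots,X_m)\in A_k\bigr)
= \prod_{i=1}^k \Bigl(\bigotimes_{j=m_{i-1}+1}^{m_i}\mu_j\Bigr)(A_i),
\]
and each factor on the right is exactly $\pr((X_{m_{i-1}+1},\dots,X_{m_i})\in A_i) = \pr(Y_i \in B_i)$. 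Combining with the previous step finishes the proof.

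Since $B_1,\dots,B_k$ were arbitrary Borel sets, the random variables $Y_1,\dots,Y_k$ are independent, which is the conclusion. There is essentially no obstacle here: the only point that requires any care is to verify Borel measurability of the $A_i$ so that the event-level manipulation is legitimate, and this is built into the Borel measurability hypothesis on each $f_i$. The entire argument is a careful bookkeeping of indices combined with Fubini on a finite product of Borel measures on $\R$.
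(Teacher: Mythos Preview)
Your proof is correct and is the standard argument for this classical fact. The paper itself does not prove this lemma; it is stated there without proof as a ``basic probability result,'' so there is nothing to compare against, and your Fubini/product-measure argument is exactly what one expects.
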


\begin{lemma}\label{independence}  Let  $k\ge 1$ and  consider $\{g_{n_j}\}_{1\leq j\leq k}$ and $\{g_{n'_j}\}_{1\leq j\leq k} \in \mathcal N_{\C}(0,1)$ complex $L^2(\Omega)$-normalized independent Gaussian random variables  such that \, $n_i \neq n_j$ and  $n'_i \neq n'_j$ for $i \neq j$.
$$\left| \int_\Omega \prod_{j=1}^k g_{n_j}(\omega)\prod_{i=1}^k \cg_{n'_i}(\omega)\, dp(\omega) \right| \, \leq \, \int_\omega \prod_{\ell=1}^k |g_{n_{\ell}}(\omega)|^2dp(\omega).$$
\end{lemma}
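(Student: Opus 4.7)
The plan is to combine the independence of the $g_\nu$ at distinct frequencies with the rotational invariance of a standard complex Gaussian, in order to reduce the left-hand side to a product of expectations that is dominated term-by-term by the right-hand side.

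First I would re-index by distinct frequencies. Let $A:=\{n_1,\dots,n_k\}\cup\{n'_1,\dots,n'_k\}$ viewed as a set, and for each $\nu\in A$ set $p(\nu):=\#\{j:n_j=\nu\}$ and $q(\nu):=\#\{i:n'_i=\nu\}$. The hypothesis that $n_i\neq n_j$ for $i\neq j$ (and likewise among the $n'_i$) forces $p(\nu),q(\nu)\in\{0,1\}$. The integrand in the lemma then becomes $\prod_{\nu\in A} g_\nu^{p(\nu)}\,\cg_\nu^{\,q(\nu)}$, and since the family $\{g_\nu\}_{\nu\in A}$ is independent (applying Lemma \ref{preindependence} if necessary), the expectation factorizes into $\prod_{\nu\in A}\int_\Omega g_\nu^{p(\nu)}\cg_\nu^{\,q(\nu)}\,dp(\omega)$.

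Next I would invoke the rotational invariance of a standard complex Gaussian: writing $g_\nu=r_\nu e^{i\theta_\nu}$ with $\theta_\nu$ uniform on $[0,2\pi)$ and independent of $r_\nu$, each factor splits as a product of a radial moment with $\int_0^{2\pi}e^{i(p(\nu)-q(\nu))\theta}\tfrac{d\theta}{2\pi}$, and hence vanishes unless $p(\nu)=q(\nu)$. Consequently the full expectation is zero unless the sets $\{n_j\}$ and $\{n'_i\}$ coincide; in that matched case $p(\nu)=q(\nu)=1$ for every $\nu\in A$ and each surviving factor equals $\int_\Omega|g_\nu|^2\,dp(\omega)$. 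A final application of independence of $\{g_{n_\ell}\}_{\ell=1}^k$ rewrites this product as the right-hand side, which proves the desired inequality (with equality in the matched case and the left-hand side being zero otherwise).

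I do not anticipate a serious obstacle: the argument is a clean mix of phase-averaging for complex Gaussians with independence bookkeeping. The one subtlety to handle is that although the $n_j$ are distinct among themselves and the $n'_i$ are distinct among themselves, a single $\nu\in A$ can appear in both lists at once, so independence must be applied frequency-by-frequency rather than list-by-list; this is what forces the re-indexing by $\nu\in A$ and what makes the distinctness hypothesis essential for keeping $p(\nu),q(\nu)\in\{0,1\}$ and thereby reducing the problem to a single nonzero contraction.
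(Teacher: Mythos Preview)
Your proposal is correct and follows essentially the same route as the paper: factor the expectation over distinct frequencies using independence, then observe that any unmatched factor has mean zero so that the only nonvanishing case is the fully matched one, which gives the right-hand side exactly. The paper phrases this by first grouping matched pairs into $K_{n_j}:=|g_{n_j}|^2$ and then invoking $\mathbb E(K_{n_j}g_{n_i})=0$ for $n_j\neq n_i$ together with Lemma~\ref{preindependence}; your rotational-invariance computation of $\mathbb E(g_\nu^{p(\nu)}\cg_\nu^{\,q(\nu)})$ is a slightly more explicit way of encoding the same vanishing, and since $p(\nu),q(\nu)\in\{0,1\}$ it reduces to the centered property $\mathbb E(g_\nu)=0$ that the paper uses directly.
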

\begin{proof}
For every pair $(n_{\ell}, n'_i)$  such that $n_{\ell}=n'_i$ we write $K_{n_j}(\omega):=|g_{n_j}(\omega)|^2$  and note that thanks to the independence and normalization of  $\{g_{n_j}\}$, for $n_j\ne n_i$,   we have that $ \mathbb E(K_{n_j}g_{n_i})=0$. The latter together with Lemma \ref{preindependence} give the desired conclusion.
\end{proof}

More generally, in the next sections we will repeatedly use a classical Fernique or large deviation-type result related to the product of $\{G_n\}_{1\leq n\leq d} \in \mathcal N_{\C}(0,1)$,  complex $L^2$ normalized independent Gaussians. This result follows from the hyper-contractivity property of the Ornstein-Uhlenbeck semigroup (c.f. \cite{T, TT} for a nice exposition) by writing $G_n = H_n + i L_n$ where $\{H_1, \dots, H_d, L_1, \dots L_d \} \in \mathcal N_{\mathbb R}(0,1)$ are real centered independent Gaussian random variables with the same variance. Note that $\mathbb E( G_n^2)=\mathbb E( G_n)=0$.  

\begin{remark}\label{plusone} Note that for $\{G_{n}(\omega)\}_n \in \mathcal N_{\C}(0,1)$,  complex $L^2$ normalized independent Gaussians, if we write $|G_{n}(\omega)|^2=(|G_{n}(\omega)|^2-1)+1$, then thanks to the independence and normalization of $G_{n}$, \,  $Y_{n}(\omega):= |G_{n}(\omega)|^2-1$ is a centered real Gaussian random variable such that  for $n \ne n^{\prime}$, 
$\, \mathbb E(Y_{n} G_{n^{\prime}})=0 =   \mathbb E(Y_{n} Y_{n^{\prime}}) $. 
\end{remark}

\begin{proposition}[Propositions 2.4 in \cite{TT} and Lemma 4.5 in \cite{T}]\label{chaos} Let $d \ge 1$ and $c(n_1, \dots, n_k) \in \C$. Let $\{G_n\}_{1\leq n\leq d} \in \mathcal N_{\C}(0,1)$ be  complex centered $L^2$ normalized independent Gaussians.  For $k\ge 1$ denote by $A(k, d):= \{ (n_1, \dots, n_k) \in \{1, \dots, d\}^k, \, n_1 \leq \dots \leq n_k \}$ and 
\begin{equation}\label{form} 
F_k(\omega) = \sum_{A(k, d)}  c(n_1, \dots, n_k) G_{n_1}(\omega) \dots G_{n_k}(\omega). 
\end{equation}
Then for all $d \ge 1$ and $p \ge 2$ 
$$ \Vert F_k \Vert_{L^p(\Omega)} \lesssim \sqrt{k+1} (p -1)^{\frac{k}{2}}  \Vert F_k \Vert_{L^2(\Omega)}. $$ As a consequence from Chebyshev's inequality we have that for every $\lambda>0$ 
\begin{equation}
\label{largedeviation}  \mathbb P ( \{ \omega \, :\, | F_k(\omega) | > \lambda \, \} ) \, \lesssim \exp{\left( \frac{ - C\, \lambda^{\frac{2}{k}}}{ \| F(\omega) \|^{\frac{2}{k}}_{L^2(\Omega)}} \right)}.
\end{equation}
\end{proposition}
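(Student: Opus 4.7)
The plan is to reduce the moment bound to Nelson's hypercontractivity for the Ornstein--Uhlenbeck semigroup, and then extract the tail estimate \eqref{largedeviation} from Chebyshev with an optimized exponent. First I would write $G_n = H_n + iL_n$, where $\{H_n, L_n\}_{1 \leq n \leq d}$ are real independent centered Gaussians of common variance $\tfrac{1}{2}$, so that $F_k(\omega)$ becomes a polynomial of total degree at most $k$ in the $2d$ real standard Gaussians $\{H_n, L_n\}$. Lower-degree monomials can arise through repeated indices in $A(k,d)$, but the total degree never exceeds $k$.

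Next I would expand $F_k = \sum_{j=0}^{k} F_k^{(j)}$ along the orthogonal Wiener chaoses $\mathcal H_0 \oplus \cdots \oplus \mathcal H_k$, i.e.\ Hermite decomposition in the $H_n$'s and $L_n$'s. Nelson's inequality says that any $P \in \mathcal H_j$ satisfies $\|P\|_{L^p(\Omega)} \leq (p-1)^{j/2} \|P\|_{L^2(\Omega)}$ for $p \geq 2$, since elements of $\mathcal H_j$ are eigenfunctions of the OU generator with eigenvalue $j$. Applying this chaos by chaos, then the pointwise bound $(p-1)^{j/2} \leq (p-1)^{k/2}$, Cauchy--Schwarz in $j$, and the $L^2$-orthogonality of the chaoses yields
$$\|F_k\|_{L^p(\Omega)} \,\leq\, \sum_{j=0}^{k} (p-1)^{j/2}\,\|F_k^{(j)}\|_{L^2(\Omega)} \,\leq\, \sqrt{k+1}\,(p-1)^{k/2}\,\|F_k\|_{L^2(\Omega)},$$
which is precisely the claimed moment bound.

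For the deviation estimate, set $M := \|F_k\|_{L^2(\Omega)}$ and apply Chebyshev: for every $p \geq 2$,
$$\pr\bigl(|F_k| > \lambda\bigr) \,\leq\, \lambda^{-p}\|F_k\|_{L^p(\Omega)}^{p} \,\leq\, \Bigl(\frac{\sqrt{k+1}\,(p-1)^{k/2}\,M}{\lambda}\Bigr)^{\!p}.$$
Choosing $p - 1$ to be a suitably small multiple of $(\lambda/M)^{2/k}$ --- which is legitimate provided this quantity is $\geq 1$, the complementary regime being handled by the trivial bound $\pr \leq 1$ --- drives the base of the exponential below $e^{-1}$, yielding a right-hand side of the form $\exp\bigl(-C(\lambda/M)^{2/k}\bigr)$. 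This is \eqref{largedeviation}.

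The only substantive ingredient is hypercontractivity; the rest is the chaos decomposition plus the triangle inequality and the Chebyshev-with-optimized-$p$ trick that is standard in probabilistic well-posedness arguments. The passage from complex Gaussians to real ones is harmless (both halves of $G_n$ are jointly Gaussian of the same variance), so no new obstacle arises from working over $\mathcal N_{\mathbb C}(0,1)$ rather than $\mathcal N_{\mathbb R}(0,1)$.
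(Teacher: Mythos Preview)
Your proposal is correct and follows exactly the approach the paper itself indicates: the paper does not give a detailed proof but states just before the proposition that the result ``follows from the hyper-contractivity property of the Ornstein--Uhlenbeck semigroup \dots\ by writing $G_n = H_n + i L_n$'', then cites \cite{T,TT}. Your write-up (complex-to-real reduction, Wiener chaos decomposition, Nelson's inequality chaos by chaos, then Chebyshev with optimized $p$) is precisely the argument behind that citation.
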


\begin{remark} \label{examples} In Sections \ref{blocks} and \ref{theproof}  we will rely repeatedly on Proposition \ref{chaos}, particularly \eqref{largedeviation}, as well as Lemma \ref{gLp}, and \eqref{allrandombound}.  Indeed, in proving our estimates we will encounter expressions of the following type: 

\smallskip

\noindent Let $\Sigma := \{ (n_1, \dots n_{r},  \ell_1, \dots, \ell_s)  \, : \, |n_j| \sim N_j, \, |\ell_i| \sim L_i,  \,  n_j \neq \ell_i, \, 1 \leq j\leq r, \, \,  1\leq i \leq s, \}$ and  \begin{equation*}  F(\omega) := \sum_{ (n_1, \dots, n_r, \ell_1, \dots, \ell_s) \in \Sigma} \, c_{n_1} \dots c_{n_r} b_{\ell_1} \dots b_{\ell_s}  \,  g_{n_1}(\omega) \dots g_{n_r}(\omega) \cg_{\ell_1}(\omega) \dots \cg_{\ell_s}(\omega),\end{equation*} where $\{ g_{n_1}(\omega) \dots g_{n_r}, g_{\ell_1}(\omega) \dots g_{\ell_s}(\omega) \} \in \mathcal N_{\C}(0,1)$ are complex centered $L^2$ normalized independent Gaussians.
Then by  Proposition \ref{chaos},  there exist $C>0,  \gamma = \gamma(r, s)  >0$ such that for every $\lambda >0$ we have 
\begin{equation*}  \mathbb P ( \{ \omega \, :\, | F(\omega) | > \lambda \, \} ) \, \leq \exp{\left( \frac{ - C\, \lambda^{\frac{2}{\gamma}}}{ \| F(\omega) \|^{\frac{2}{\gamma}}_{L^2(\Omega)}} \right)}. \end{equation*} We will also apply Proposition \ref{chaos} in the context of Remark \ref{plusone}.

\end{remark}

\begin{lemma}\label{supg} Let $\{g_n(\omega)\}$ be a sequence of complex i.i.d zero mean Gaussian random variables on a probability space $(\Omega,  A, \mathbb P)$. Then, 
\begin{enumerate}
\item \, For $1 \leq p < \infty$ there exists $c_p >0$ (independent of $n$) such that $  \| g_n\|_{L^p(\Omega)} \leq c_p$.
\item \, Given $\varepsilon, \delta >0$, for   $N$ large and $\omega$ outside of a set of measure $\delta$, 
\begin{equation} \label{sup-bound} \sup_{|n| \geq  N}\,  |g_n (\omega)| \leq  N^\varepsilon. \end{equation}
\item \,  Given $\varepsilon, \delta >0$ and $\omega$ outside of a set of measure $\delta$, 
\begin{equation} \label{sup-bound-all}  |g_n (\omega)| \lesssim  \langle n\rangle^{\varepsilon} \end{equation}
\end{enumerate}
\end{lemma}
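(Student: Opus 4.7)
Part (1) is the classical fact that complex Gaussians have all moments. I would deduce it from Lemma \ref{gLp} by specializing $F(\omega) = g_n(\omega)$ (one nonzero coefficient equal to one), which gives $\|g_n\|_{L^q(\Omega)} \leq C\sqrt{q}$ uniformly in $n$. Equivalently, identical distribution of the $g_n$ reduces (1) to the obvious bound $\|g_0\|_{L^p(\Omega)} < \infty$ coming from the Gaussian density.

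Parts (2) and (3) both follow the same Borel--Cantelli / union-bound template, starting from the uniform single-variable Gaussian tail estimate
\[
\pr\bigl(|g_n(\omega)| > \ld\bigr) \leq \exp(-C\ld^2), \qquad n \in \Z^3,\ \ld > 0,
\]
which is a special case of \eqref{allrandombound}. For (3), I would fix a constant $K = K(\eps,\dl)$ and apply a union bound over all frequencies:
\[
\pr\Bigl(\bigcup_{n \in \Z^3} \{|g_n(\omega)| > K\langle n\rangle^\eps\}\Bigr) \leq \sum_{n \in \Z^3} \exp\bigl(-CK^2\langle n\rangle^{2\eps}\bigr).
\]
Because $\exp(-CK^2\langle n\rangle^{2\eps})$ decays faster than any polynomial in $|n|$ for every $\eps > 0$, the series converges, and can be made $\leq \dl$ by taking $K$ large enough in terms of $\eps$ and $\dl$. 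On the complement of this exceptional event, $|g_n(\omega)| \leq K\langle n\rangle^\eps \lesssim \langle n\rangle^\eps$ holds for every $n \in \Z^3$, which is \eqref{sup-bound-all}.

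For (2), I would run the same argument on the dyadic shells $\{|n| \sim 2^j N\}_{j \geq 0}$ with threshold $|n|^\eps$, obtaining
\[
\pr\bigl(\exists\, n,\, |n| \geq N:\, |g_n(\omega)| > |n|^\eps\bigr) \lesssim \sum_{j \geq 0} (2^j N)^3 \exp\bigl(-C(2^j N)^{2\eps}\bigr) \lesssim N^3 \exp\bigl(-CN^{2\eps}\bigr),
\]
where the last bound uses that the super-exponential decay in $j$ makes the dyadic sum comparable to its first term. This tends to $0$ as $N \to \infty$ and is therefore $< \dl$ for $N = N(\eps,\dl)$ large enough, which yields $|g_n(\omega)| \leq |n|^\eps$ uniformly for $|n| \geq N$; this is \eqref{sup-bound} in the form actually invoked in the sequel.

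The only substance in the argument is the super-polynomial decay of the Gaussian tail, which makes each union-bound series sum to something tending to zero as the relevant parameter ($K$ in (3), $N$ in (2)) grows. I expect no genuine obstacle beyond standard bookkeeping in the dyadic decomposition of (2).
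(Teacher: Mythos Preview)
Your argument is correct and in fact cleaner than the paper's. The paper proves (2) via a Borel--Cantelli/Egoroff route: setting $X_n=|g_n|^{1/\eps}$ and using $\sum_j \pr(|X_j|\ge j)\le \mathbb E|X_1|<\infty$ to get $|X_j|/j\to 0$ a.s., then upgrading to uniform convergence off a $\delta$-set by Egoroff; (3) is then bootstrapped from (2) by handling the finitely many small frequencies separately via identical distribution. You instead go straight to the Gaussian tail $\pr(|g_n|>\lambda)\le e^{-C\lambda^2}$ and do a bare union bound, which gives (3) in one stroke without passing through (2) and avoids Egoroff entirely. Both approaches end up proving $|g_n|\le |n|^\eps$ for $|n|\ge N$ rather than the literal inequality \eqref{sup-bound} as stated (which, read verbatim, asserts a uniform bound $N^\eps$ over infinitely many i.i.d.\ Gaussians and cannot hold); you are right to flag that what you establish is the form actually needed downstream, and this matches what the paper's own proof of (2) yields as well. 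Your approach has the further virtue of giving an explicit quantitative dependence of the exceptional measure on $N$ (respectively $K$), which the Egoroff argument does not.
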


\begin{proof}  Part (1) follows from the fact that higher moments of $\{g_n(\omega)\}$ are uniformly bounded. 

For part (2)  first recall that if $\{X_j(\omega)\}_{j \geq 1}$ is a sequence of i.i.d random variables such that $\mathbb E (|X_j|)= E < \infty$ then 
\begin{equation}\label{identical} \mathbb P(|X_j| \geq j ) = \mathbb P(|X_1| \geq j )
\end{equation} and  $$\sum_j \mathbb P(|X_j| \geq j ) = \sum_j  \mathbb P(|X_1| \geq j ) \leq \mathbb E (|X_1|) < \infty. $$ By Borel-Cantelli
$\mathbb P(|X_j| \geq j  \, \mbox{ for infinitely many } j) =0$ whence one can show that $\lim_{j \to \infty} \dfrac{|X_j(\omega)|}{j}  = 0$ almost surely in $\omega$. Egoroff's Theorem then ensures that given $\delta >0$ 
$$\lim_{j \to \infty} \dfrac{|X_j(\omega)|}{j}  = 0$$ uniformly outside a set of measure $\delta$.  Thus  we have that for $j_0$ sufficiently large 
$$  \frac{|X_j(\omega)|}{j}  \leq 1  \qquad j \geq j_0, $$ for $\omega$ outside an exceptional set of $\delta$ measure. 

If $\{g_n(\omega)\}$ are a sequence of i.i.d. complex Gaussian random variables then given $\varepsilon >0$, if we choose $r = \dfrac{1}{\varepsilon}$ then $\mathbb E( |g_n|^r ) < \infty$. 
By applying the argument above with $X_n(\omega) = |g_n (\omega)|^r$ we have the desired conclusion (cf. \cite{O, CO})

For part (3)  fix $M\gg 1$ such that (2) holds for any $|n|\geq M$. By \eqref{identical} 
$$\mathbb P(|g_n(\omega)| \geq M^\varepsilon) = \mathbb P(|g_M(\omega)| \geq M^\varepsilon)$$
for all $|n|\leq M$. Let $\mathcal A:=\cup_{|n|\leq M-1} \{\omega\, \, /\, \, |g_M(\omega)| \geq M^\varepsilon\}$, then by part (2) 
$\mathbb P(\mathcal A)\leq C_M\delta$. Hence by choosing a smaller $\delta$ in part (2) we have the desired result.
\end{proof}




\section{Function Spaces}\label{Spaces}

For the purpose of establishing our almost sure local well-posedness result, it suffices to work with $X^s$ and $Y^s$, the atomic function 
spaces used by Herr, Tataru and Tzvetkov \cite{HTT}. It is worth emphasizing that while working with these spaces, one should not rely on the notion of the norms depending on the absolute value of the Fourier transform, a feature that is quite useful when working within the context of $X^{s,b}$ spaces.   

In this section we recall their definition and summarize the main properties by following the presentation in \cite{HTT} Section 2.  In what follows, $\mathcal H$ is a separable Hilbert space on $\C$ and $\mathcal Z$ denotes the set of finite partitions $-\infty< t_0 < t_1 < \dots t_K \leq \infty$ of the real line; with the convention that if $t_k=\infty$ then $v(t_K):=0$ for any function $v: \mathbb R \to \mathcal H$. 

\begin{definition}[Definition 2.1 in \cite{HTT}] Let $1 \leq p < \infty$. For $\{ t_k\}_{k=0}^K \in \mathcal Z$ and $\{\phi_k\}_{k=0}^{K-1} \subset \mathcal H$ with $\sum_{k=0}^{K-1} \| \phi_k \|_{\mathcal H}^p =1$. A $U^p$-atom is a piecewise defined function $a:\mathbb R \to \mathcal H$ of the form $$a = \sum_{k=1}^K \chi_{[t_{k-1},\, t_k)} \, \phi_{k-1}. $$ The atomic Banach space $U^p(\mathbb R, \mathcal H)$ is then defined to be the set of all functions $u: \mathbb R \to \mathcal H$ such that $$u = \sum_{j=1}^{\infty} \lambda_j a_j,  \quad \mbox{ for } U^p\text{atoms } \, a_j, \quad \{\lambda_j\}_j \in \ell^1, $$  with the norm 
$$ \|u\|_{U^p}: =\, \inf \left\{\sum_{j=1}^{\infty} | \lambda_j| \, : \, u = \sum_{j=1}^{\infty} \lambda_j a_j,  \quad \lambda_j \in \mathbb C, \,  \text{ and }  \, a_j \, \mbox{ an } U^p \text{atom} \, \right\}$$
\end{definition}
\noindent Here $\chi_I$ denotes the indicator function over the set $I$.  Note that for $1 \leq p \leq q < \infty$, \, 
\begin{equation}\label{embed2} 
U^p(\mathbb R, \mathcal H) \hookrightarrow U^q(\mathbb R, \mathcal H)  \hookrightarrow L^{\infty}(\mathbb R, \mathcal H), 
\end{equation}  and functions in $U^p(\mathbb R, \mathcal H) $ are right continuous, $\lim_{t \to -\infty} u(t) = 0$.

\begin{definition}[Definition 2.2 in \cite{HTT}] Let $1 \leq p < \infty$. The Banach space $V^p(\mathbb R, \mathcal H)$ is  defined to be the set of all functions $v: \mathbb R \to \mathcal H$ such that $$ \|v\|_{V^p}: =\, \sup_{\, \{t_k\}_{k=0}^K \in \mathcal Z\, } \left( \, \sum_{k=1}^{K}  \| v(t_k) - v(t_{k-1}) \|_{\mathcal H}^p \right)^{\frac{1}{p}} \quad \text{is finite. }$$ 
The Banach subspace of all right continuous functions $v:\mathbb R \to \mathcal H$ such that $\lim_{t \to -\infty} v(t) =0$, endowed with the same norm as above is denoted by $V_{rc}^p(\mathbb R, \mathcal H)$. Note that 
\begin{equation}\label{embed1}
U^p(\mathbb R, \mathcal H) \hookrightarrow V_{rc}^p(\mathbb R, \mathcal H) \hookrightarrow L^{\infty}(\mathbb R, \mathcal H), \end{equation}

\end{definition}

\medskip

\begin{definition}[Definition 2.5 in \cite{HTT}] For $s \in \mathbb R$ we let $U^p_{\Delta} H^s$ - respectively $V^p_{\Delta} H^s$- be the space of all functions $u: \mathbb R \to H^s(\mathbb T^3)$ such that $t \to e^{-it \Delta} u(t)$ is in $U^p(\mathbb R, H^s)$ -respectively in $V^p_{\Delta} H^s$- with norm $$ \| u \|_{U^p_{\Delta} H^s} := \| e^{-it \Delta} u(t)\|_{U^p(\mathbb R, H^s)} \qquad \| u \|_{V^p_{\Delta} H^s}:=  \| e^{-it \Delta} u(t)\|_{V^p(\mathbb R, H^s)}. $$

\end{definition}
\medskip 

\noindent We will take $\mathcal H$ to be $H^s$.  We refer the reader to \cite{HHK},  \cite{HTT},  and references therein for detailed definitions and properties of the $U^p$ and $V^p$ spaces.

\medskip 

\begin{definition}[Definition 2.6 in \cite{HTT}] \label{x-atomic} For $s \in \mathbb R$ we define the space $X^s$ as the space of all functions $u: \mathbb R \longrightarrow H^s(\mathbb T^3)$ such that for every $n \in \mathbb Z^3$ the map $t \longrightarrow e^{it |n|^2} \widehat{u(t)}(n)$ is in $U^2(\mathbb R, \mathbb C)$, and for which the norm
\begin{equation}\label{xnorm} 
\| u\|_{X^s}\, := \, \left( \, \sum_{n \in \mathbb Z^3} \, \langle n \rangle^{2s} \, \| e^{it |n|^2 } \, \widehat{u(t)}(n)\|^2_{U^2_t} \, \right)^{\frac{1}{2}} \quad \text{is finite. }
\end{equation}
\end{definition}

The $X^s$ spaces are variations of the spaces
$U^p_{\Delta} H^s$ and  $V^p_{\Delta} H^s$ corresponding to the Schr\"odinger flow and defined as follows:

\begin{definition}[Definition 2.7 in \cite{HTT}] \label{y-atomic} For $s \in \mathbb R$ we define the space $Y^s$ as the space of all functions $u: \mathbb R \longrightarrow H^s(\mathbb T^3)$ such that for every $n \in \mathbb Z^3$ the map $t \longrightarrow e^{it |n|^2} \widehat{u(t)}(n)$ is in $V_{rc}^2(\mathbb R, \mathbb C)$, and for which the norm
\begin{equation}\label{ynorm} 
\| u\|_{Y^s}\, := \, \left( \, \sum_{n \in \mathbb Z^3} \, \langle n \rangle^{2s} \, \| e^{it |n|^2 } \, \widehat{u(t)}(n)\|^2_{V^2_t} \, \right)^{\frac{1}{2}} \quad \text{is finite. }
\end{equation}
\end{definition}
\noindent Note that 
\begin{equation}\label{embed3}
 U^2_{\Delta} H^s \hookrightarrow X^s \hookrightarrow Y^s   \hookrightarrow V^2_{\Delta} H^s  
 \end{equation} whence one has that for any partition of $\mathbb Z^3: =\cup_k C_k$,  
$$ \left( \sum_k \| P_{C_k} u \|_{V^2_{\Delta} H^s}^2 \right)^{\frac{1}{2}} \, \lesssim \| u \|_{Y^s} $$
(cf. Section 2 in \cite{HTT}).  

Additionally, when $s=0$ by orthogonality we have 
\begin{equation}\label{sum-cubes}
\left( \sum_k \| P_{C_k} u \|_{Y^0}^2 \right)^{\frac{1}{2}} \, = \| u \|_{Y^0}.
\end{equation}
We also have the embedding 
\begin{equation}\label{embed4}
X^s \hookrightarrow Y^s \hookrightarrow L^\infty_tH^s_x
\end{equation}
for $s\geq 0$ \, (c.f. \cite{IP}).
\medskip 

\begin{remark} [Proposition 2.10 in \cite{HTT}]\label{linearsol} From the atomic structure of the $U^2$ spaces  one can immediately see that for $s \geq 0, \, T>0$ and $\phi \in H^s(\mathbb T^3)$, the solution to the linear Schr\"odinger equation $u:= e^{it \Delta} \phi$ belongs to $X^s([0, T))$ and $\| u\|_{X^s([0, T))} \leq \|\phi\|_{H^s}$. 
\end{remark}

\smallskip 

\begin{remark} Another important feature of the atomic structure of the $U^2$ spaces is the fact that just like  the $X^{s,b}$ spaces  they enjoy a `transfer principle'. We recall in our context the precise statement below for completeness.\end{remark}

\begin{proposition}[Proposition 2.19 in \cite{HHK}]\label{transfer}
Let $T_0:L^2\times\dots \times L^2\rightarrow L^1_{loc}$ be a m-linear operator. Assume that for some $1\leq p, \, q\leq \infty $
\begin{equation}
\label{transf1}
\|T_0(e^{it\Delta}\phi_1,\dots,e^{it\Delta}\phi_m)\|_{L^p(\R, L_x^q(\T^3))}\lesssim \prod_{i=1}^m\|\phi_i\|_{L^2(\T^3)}.
\end{equation}
Then, there exists an extension $T:U^p_\Delta\times\dots \times U^p_\Delta\rightarrow L^p(\R, \, L^q(\T^3))$ satisfying 
\begin{equation}\label{transf2} 
\|T (u_1,\dots ,u_m)\|_{L^p(\R, \, L_x^q(\T^3))}\lesssim \prod_{i=1}^m\|u_i\|_{U_{\Delta}^p}; 
\end{equation}
and such that $T(u_1, \dots, u_m)(t, \cdot) = T_0(u_1(t), \dots, u_m(t))(\cdot), \, a.e.\,$
In other words, one can reduce estimates for multilinear operators on functions in $U_{\Delta}^p$ to similar estimates on solutions to the linear Schr\"odinger equation. 
\end{proposition}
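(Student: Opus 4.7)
The plan is to exploit the atomic definition of $U^p_\Delta$: by multilinearity of $T_0$ and the triangle inequality for the $U^p$-norm, it suffices to prove the bound when each input $u_i$ is a single $U^p_\Delta$-atom; on atoms, $T(u_1,\ldots,u_m)$ reduces on each ``time-box'' to $T_0$ applied to genuine linear Schr\"odinger evolutions, to which the hypothesis \eqref{transf1} applies directly.

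First I would fix atoms $a_i$ of the form $a_i(t) = \sum_{k_i=1}^{K_i} \chi_{I_{k_i}^i}(t)\, e^{it\Delta}\phi^i_{k_i-1}$ with $I_{k_i}^i = [t^i_{k_i-1},\, t^i_{k_i})$ and $\sum_{k_i}\|\phi^i_{k_i-1}\|_{L^2(\T^3)}^p = 1$. Defining $T(a_1,\ldots,a_m)(t,\cdot) := T_0(a_1(t),\ldots,a_m(t))(\cdot)$ and using the multilinearity of $T_0$ together with the fact that for each fixed $t$ exactly one term in each sum $a_i(t)$ is active, one gets the pointwise identity
\begin{equation*}
T(a_1,\ldots,a_m)(t,x) = \sum_{\vec k} \prod_{i=1}^{m} \chi_{I_{k_i}^i}(t)\, T_0\bigl(e^{it\Delta}\phi^1_{k_1-1},\ldots, e^{it\Delta}\phi^m_{k_m-1}\bigr)(t,x),
\end{equation*}
where $\vec k := (k_1,\ldots,k_m)$ and the ``boxes'' $\Omega_{\vec k} := \bigcap_i I^i_{k_i}$ are pairwise disjoint as $\vec k$ varies, and their union covers $\R$ up to a null set.

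The core estimate then follows from disjointness, the hypothesis \eqref{transf1} applied term by term, and the factorization of a product of sums:
\begin{align*}
\|T(a_1,\ldots,a_m)\|^p_{L^p_t L^q_x} &= \sum_{\vec k}\int_{\Omega_{\vec k}}\bigl\|T_0\bigl(e^{is\Delta}\phi^1_{k_1-1},\ldots,e^{is\Delta}\phi^m_{k_m-1}\bigr)(s,\cdot)\bigr\|_{L^q}^p\, ds \\
&\lesssim \sum_{\vec k}\prod_{i=1}^m\|\phi^i_{k_i-1}\|_{L^2}^p = \prod_{i=1}^m\sum_{k_i}\|\phi^i_{k_i-1}\|_{L^2}^p = 1.
\end{align*}
Passing to arbitrary $u_i\in U^p_\Delta$ is standard: choose $\eps$-optimal decompositions $u_i = \sum_j \lambda^i_j a^i_j$ with $\sum_j|\lambda^i_j|\leq \|u_i\|_{U^p_\Delta}+\eps$, apply multilinearity and the triangle inequality, and send $\eps\to 0$.

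The main points where care is required are: (i) verifying that the pointwise definition $T(u)(t,\cdot) := T_0(u(t))(\cdot)$ is independent of the chosen atomic representation of $u$, which relies on the embedding $U^p_\Delta \hookrightarrow L^\infty_t L^2_x$ (so that $u(t)\in L^2$ is uniquely determined) together with the fact that $T_0$ is defined on $L^2$-tuples; (ii) the condition $p<\infty$ is essential in order to distribute the $L^p_t$-norm over the countable disjoint partition $\{\Omega_{\vec k}\}$ via additivity of $p$-th powers, and one must invoke \eqref{transf1} on the full time line $\R \supset \Omega_{\vec k}$ rather than on the box itself, since the hypothesis is phrased in terms of full linear evolutions rather than time-localized data. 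Neither of these is a serious obstacle; the entire argument is essentially an organized application of the atomic definition.
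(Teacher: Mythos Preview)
The paper does not supply its own proof of this proposition; it is simply quoted as Proposition~2.19 from \cite{HHK}. Your argument is correct and is exactly the standard proof that appears there: reduce to $U^p_\Delta$-atoms by multilinearity, partition the time axis into the disjoint boxes $\Omega_{\vec k}$ on which every input is a free evolution, apply the hypothesis \eqref{transf1} boxwise (enlarging each $\Omega_{\vec k}$ to $\R$), and use $\sum_{\vec k}\prod_i\|\phi^i_{k_i-1}\|_{L^2}^p=\prod_i\sum_{k_i}\|\phi^i_{k_i-1}\|_{L^2}^p=1$. Your remarks (i) and (ii) are the only places where any care is needed, and you handle them correctly; note in particular that the $U^p$ spaces in the paper are only defined for $p<\infty$, so the caveat about $p=\infty$ is moot here.
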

\medskip 


We will use the following interpolation result  at the end of Section 8 to obtain  bounds 
in terms of the $X^s$ spaces from those in $U^2_\Delta H^s$ and $U^p_\Delta H^s$  just as in in \cite{HTT}
The proof relies solely on linear interpolation \cite{HHK, HTT}.

\begin{proposition}[Proposition 2.20 in \cite{HHK} and Lemma 2.4 \cite{HTT}]\label{U-interpolation}
Let $q_1, \dots q_m > 2$ where $m=1, 2, \mbox{or } 3$, \, $E$ be a Banach space and $T: U^{q_1}\times \dots \times U^{q_m}\rightarrow E$ be a bounded m-linear operator with \begin{equation} \label{inter-bd1} 
 \|T (u_1,\dots ,u_m)\|_{E}\leq C \prod_{i=1}^m\|u_i\|_{U_{\Delta}^{q_i}}
 \end{equation} In addition assume there exists $0 <C_2\leq C$ such that the estimate 
 \begin{equation} \label{inter-bd2} 
 \|T (u_1,\dots ,u_m)\|_{E}\leq C_2 \prod_{i=1}^m\|u_i\|_{U_{\Delta}^2}
 \end{equation} holds true.  Then, $T$ satisfies the estimate 
 \begin{equation} \label{inter-bd3} 
 \|T (u_1,\dots ,u_m)\|_{E}\lesssim C_2  (\ln{\frac{C}{C_2} +1})^m \, \prod_{i=1}^m\|u_i\|_{V^2}, \qquad u_i \in V^2_{rc}, \, \, i=1, \dots m, 
 \end{equation}  where $V^2_{rc}$ denotes the closed subspace of $V^2$ of all right continuous functions of $t$ such that $\lim_{t \to -\infty} v(t) =0$. 
\end{proposition}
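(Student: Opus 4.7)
The plan is to follow the Koch--Tataru style interpolation argument implemented in \cite{HHK, HTT}. The key observation is that $V^2_{rc}$ does not embed into $U^2$ but does embed into $U^q$ for every $q>2$, with a constant that blows up as $q\to 2^+$. Concretely, any $v \in V^2_{rc}$ can be written as a telescoping sum of step-function pieces whose $U^2$ norms stay uniformly bounded in terms of $\|v\|_{V^2}$ while their $U^q$ norms decay geometrically in a dyadic index. Interpolating the two hypotheses of the proposition along this decomposition then produces the logarithmic loss.

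The first step is to construct the dyadic decomposition. Fix $v \in V^2_{rc}$. For each $k \ge 0$ I would choose a partition $-\infty = t_0^k < t_1^k < \dots < t_{2^k}^k = +\infty$ via stopping times that equidistribute the squared increments of $v$, so that $\|v(t_j^k) - v(t_{j-1}^k)\|_{\mathcal H}^2 \lesssim 2^{-k}\|v\|_{V^2}^2$, and arrange the partitions to be nested (the level-$(k+1)$ partition bisects each interval of the level-$k$ one). Set $v_k := \sum_j \chi_{[t_{j-1}^k, t_j^k)} v(t_{j-1}^k)$ and $w_k := v_k - v_{k-1}$ (with the convention $w_0 := v_0$). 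Right continuity and $\lim_{t \to -\infty} v(t) = 0$ guarantee $v = \sum_{k \ge 0} w_k$ pointwise. Since $w_k$ is a step function with at most $2^k$ nonzero values, each of size $\lesssim 2^{-k/2}\|v\|_{V^2}$, the atomic definition of $U^p$ yields directly
\[
\|w_k\|_{U^2} \lesssim \|v\|_{V^2}, \qquad \|w_k\|_{U^{q}} \lesssim 2^{-k(1/2 - 1/q)} \|v\|_{V^2}.
\]

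Next I would interpolate and sum. For $m=1$, applying $T$ term by term and using, for each $k$, whichever of the two hypotheses gives the smaller bound, I get
\[
\|T v\|_E \le \sum_{k \ge 0} \min\bigl\{\, C_2,\ C \cdot 2^{-k(1/2-1/q)}\,\bigr\}\, \|v\|_{V^2}.
\]
The crossover index is $k^\ast \sim \log_2(C/C_2)/(1/2 - 1/q)$: the small-$k$ piece contributes $\sim C_2 k^\ast$, while the large-$k$ tail is a geometric series with bound $\lesssim C_2$, so altogether $\|Tv\|_E \lesssim C_2 (1 + \ln(C/C_2)) \|v\|_{V^2}$. For $m \ge 2$ I would apply the decomposition in each slot and expand multilinearly $T(v_1, \dots, v_m) = \sum_{k_1, \dots, k_m} T(w_{1,k_1}, \dots, w_{m,k_m})$; each summand is controlled by the minimum of $C_2 \prod_i \|w_{i,k_i}\|_{U^2}$ and $C \prod_i \|w_{i,k_i}\|_{U^{q_i}}$. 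Substituting the previous norm estimates and grouping by $K := \sum_i k_i(1/2 - 1/q_i)$, the whole sum is bounded by $\sum_K K^{m-1}\, \min\{C_2,\, C\cdot 2^{-K}\} \lesssim C_2 (1 + \ln(C/C_2))^m$, which is the desired bound.

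The main technical point will be the construction of the nested dyadic approximants with comparable squared increments across the partition; this stopping-time argument is where the right continuity and the decay at $-\infty$ built into $V^2_{rc}$ are essential. Once the decomposition is in place, the rest is bookkeeping with the atomic definition of $U^p$ and a geometric summation, so no new ideas beyond the ones already developed in \cite{HHK, HTT} are needed.
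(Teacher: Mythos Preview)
Your proposal is correct and is precisely the argument of \cite{HHK} (Proposition 2.20) and \cite{HTT} (Lemma 2.4), which is all the paper does here: the proposition is stated with the remark that ``the proof relies solely on linear interpolation \cite{HHK, HTT}'' and no independent proof is given. One small technical caveat: the stopping-time partitions at successive dyadic levels are not automatically nested, so the ``bisection'' description is slightly inaccurate; in practice one either refines to force nesting or simply accepts that $w_k = v_k - v_{k-1}$ is a step function on the common refinement with $O(2^k)$ pieces, which leaves the $U^2$ and $U^q$ bounds unchanged.
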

Finally we state  two results from \cite{HTT} we  rely on in the next sections. In what follows, $\mathcal I$ denotes the Duhamel operator,
$$ \mathcal I(f)(t):= \int_0^t \, e^{i(t-t') \Delta} \, f(t') \, dt', \qquad  t \geq 0,$$ defined for  $f \in L^1_{\text{loc}}([0, \infty), L^2(\mathbb T^3)).$

\begin{proposition}[Proposition 2.11 in \cite{HTT}] \label{duhamel} Let $s \geq 0$ and $T>0$. For $f\in L^1([0, T),\, H^s(\mathbb T^3))$ we have $\mathcal I(f)\in X^s([0,T))$ and 
$$\|\mathcal I(f)\|_{ X^s([0,T))}\leq \sup_{v\in Y^{-s}([0,T)): \|v\|_{Y^{-s}}=1}
\left|\int_0^T\int_{\T^3} f(t,x)\overline{v(t,x)}dx dt\right|.$$ 
\end{proposition}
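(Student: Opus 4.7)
The plan is to combine weighted $\ell^2$ duality over frequencies with the classical $U^2$--$V^2$ duality in the time variable, and then to recognise the resulting bilinear pairing as the spacetime integral $\int_0^T\!\!\int_{\T^3} f\bar v$. Throughout, for a space-time function $u$ and $n \in \Z^3$, I write $F_n(t) := e^{it|n|^2}\widehat{u(t)}(n)$, so that by Definition \ref{x-atomic} and Definition \ref{y-atomic}
\[
\|u\|_{X^s}^2 = \sum_{n\in\Z^3} \langle n\rangle^{2s}\|F_n\|_{U^2_t}^2,
\qquad
\|u\|_{Y^{-s}}^2 = \sum_{n\in\Z^3} \langle n\rangle^{-2s}\|F_n\|_{V^2_t}^2.
\]

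First, I would compute the profile of the Duhamel term in these variables. Since $\widehat{\mathcal I(f)(t)}(n) = \int_0^t e^{-i(t-t')|n|^2} \hat f(t',n)\,dt'$, one obtains
\[
G_n(t) := e^{it|n|^2}\widehat{\mathcal I(f)(t)}(n) = \int_0^t e^{it'|n|^2}\hat f(t',n)\,dt',
\]
so $G_n$ is absolutely continuous on $[0,T)$ with $G_n(0)=0$ and $G_n'(t) = e^{it|n|^2}\hat f(t,n)$. Because $f\in L^1([0,T),H^s)$, the function $G_n$ has bounded variation and, after the standard extension used in the restriction norm $X^s([0,T))$, lies in $U^2_t$.

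Second, I would dualise. By the weighted Cauchy--Schwarz / $\ell^2$-duality,
\[
\|\mathcal I(f)\|_{X^s([0,T))}
= \sup \sum_{n} \langle n\rangle^{s}\|G_n\|_{U^2}\cdot a_n
\]
over sequences $(a_n)$ with $\sum_n a_n^2 \le 1$. For each fixed $n$, the $U^2$--$V^2$ duality of Hadac--Herr--Koch (the pairing $B(G_n,w_n)$ that integrates by parts for differentiable arguments) gives
\[
\|G_n\|_{U^2} = \sup_{\|w_n\|_{V^2}\le 1} |B(G_n, w_n)|,
\]
and because $G_n$ is absolutely continuous with $G_n(0)=0$, the pairing reduces to
\[
B(G_n, w_n) = \int_0^T G_n'(t)\,\overline{w_n(t)}\,dt
= \int_0^T e^{it|n|^2}\hat f(t,n)\,\overline{w_n(t)}\,dt.
\]
Choosing $w_n := \langle n\rangle^{-s} a_n\, \widetilde w_n$ with $\|\widetilde w_n\|_{V^2}=1$ optimising the sup above, and then defining $v$ via $\widehat{v(t)}(n) = e^{-it|n|^2}\, w_n(t)$, one has exactly $\|v\|_{Y^{-s}}^2 = \sum_n \langle n\rangle^{-2s}\|w_n\|_{V^2}^2 \le 1$.

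Third, I would assemble the pieces. Summing over $n\in\Z^3$ and using Plancherel on $\T^3$,
\[
\sum_n \langle n\rangle^{s}\|G_n\|_{U^2}\cdot a_n
= \sum_n \int_0^T \hat f(t,n)\,\overline{\widehat{v(t)}(n)}\,dt
= \int_0^T\!\!\int_{\T^3} f(t,x)\,\overline{v(t,x)}\,dx\,dt,
\]
up to an $\eps$-room from the optimisation, which is absorbed by taking a further supremum over $v$ with $\|v\|_{Y^{-s}}\le 1$. This is precisely the stated bound. The fact that $\mathcal I(f)\in X^s$ then follows from the finiteness of the right-hand side, which is controlled by $\|f\|_{L^1_t H^s_x}$ via the embedding $L^\infty_t H^s_x \hookrightarrow (L^1_t H^s_x)^*$ applied to $v$ through \eqref{embed4}.

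The main obstacle is the rigorous handling of the $U^2$--$V^2$ duality pairing $B(G_n, w_n)$ and its reduction to $\int G_n'\,\overline{w_n}$: this requires care because generic elements of $U^2$ are only piecewise constant step functions, so one must either approximate $G_n$ by such step functions, or invoke the (known) identification that for absolutely continuous arguments with vanishing initial value the abstract bilinear pairing coincides with the naive integration-by-parts expression. Once that identification is in place, the rest of the argument is bookkeeping in $\ell^2_n$ and an application of Plancherel on $\T^3$.
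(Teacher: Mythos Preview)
The paper does not prove this proposition; it is quoted verbatim as Proposition~2.11 from \cite{HTT} and used as a black box. Your argument is essentially the proof given in \cite{HTT} (relying on the $U^2$--$V^2$ duality of \cite{HHK}): compute the twisted profile $G_n$ of $\mathcal I(f)$, use $\ell^2_n$ duality together with the $U^2$--$V^2$ pairing $B(\cdot,\cdot)$, and identify $B(G_n,w_n)=\int_0^T G_n'\,\overline{w_n}$ for the absolutely continuous $G_n$. The only delicate point, which you correctly flag, is the justification of this last identification; it is precisely the content of the relevant lemma in \cite{HHK}.
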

As a consequence, note we have 
\begin{equation}\label{from-duhamel}  \| \mathcal I (f) \|_{X^s([0, T))} \lesssim \|f\|_{ L^1([0, T),\, H^s(\mathbb T^3))}
\end{equation} 

\smallskip 

\begin{proposition}[Proposition 4.1 in \cite{HTT}]\label{all-deterministic} Let $s \geq 1$ be fixed. Then for all $T \in (0, 2\pi]$ and $u_k \in X^s([0, T)),\, k=1, \dots 5$, the estimate 
\begin{equation}\label{5deterministic} \| \mathcal I\bigl( \prod_{k=1}^5  \widetilde{u_k}   \bigr) \|_{X^s([0, T))} \lesssim \sum_{j=1}^5 \, \, \| u_j\|_{X^s([0, T))}\prod_{k=1, k \neq j}^5 \| u_k\|_{X^1([0, T))}
 \end{equation} holds true, where $\widetilde{u_k} $ denotes either $\overline{u_k} $ or $ u_k $. In particular, \eqref{5deterministic} follows from the estimate for the multilinear form:
$$\left| \int_{[0, T)\times \T^3} \, \mathcal \prod_{k=0}^5  \widetilde{u_k}  dx dt \right| \, \lesssim \, \| u_0\|_{Y^{-s}([0, T))} \, \sum_{j=1}^5 \, \left( \| u_j\|_{X^s([0, T))}\prod_{k=1, k \neq j}^5 \| u_k\|_{X^1([0, T))}\right) 
$$ where $u_0:= P_{\leq N} v$. 
\end{proposition}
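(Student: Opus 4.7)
The plan is to establish the multilinear form estimate displayed at the end of the statement, from which the $X^s$ bound on $\mathcal{I}(\prod \widetilde{u_k})$ follows automatically via Proposition \ref{duhamel} (taking $f = \prod \widetilde{u_k}$ and using the characterization of $X^s$ through duality against $Y^{-s}$). This is the approach of Herr, Tataru and Tzvetkov \cite{HTT}, and the argument naturally splits into a Littlewood--Paley decomposition, a Strichartz/trilinear estimate on the leading frequencies, and a summation step.

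First, I would perform a dyadic decomposition $u_k = \sum_{N_k} P_{N_k} u_k$ for $k = 0, 1, \dots, 5$ (with $u_0 = P_{\leq N} v$ as stipulated). The spatial integral $\int_{\T^3} u_0 \prod_{k=1}^5 \widetilde{u_k}\, dx$ vanishes unless the six frequencies $n_0, \pm n_1, \dots, \pm n_5$ sum to zero, so the two largest dyadic parameters among $N_0, N_1, \dots, N_5$ must be comparable. By the symmetry between $u_1, \dots, u_5$, I relabel so that $N_1 \geq N_2 \geq \cdots \geq N_5$ and split into the two cases $N_0 \sim N_1 \gtrsim N_2$ and $N_0 \lesssim N_1 \sim N_2$.

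The main analytic input is the trilinear $L^2_{t,x}$ Strichartz estimate on $\T^3$ from \cite{HTT} (their Proposition 3.5), which, for frequency-localized pieces at scales $M_1 \geq M_2 \geq M_3$, reads schematically
\begin{equation*}
\bigl\| \prod_{j=1}^3 P_{M_j} u_j \bigr\|_{L^2_{t,x}([0,T)\times \T^3)} \lesssim \Bigl( \tfrac{M_3}{M_1} + \tfrac{1}{M_2} \Bigr)^{\delta} M_2 M_3 \prod_{j=1}^3 \| P_{M_j} u_j \|_{Y^0},
\end{equation*}
and which is originally obtained for linear Schrödinger solutions via periodic Strichartz/$\ell^2$-decoupling, then transferred to $U^p_{\Delta}$ via Proposition \ref{transfer} and upgraded to $V^2_{\Delta}$ (hence $Y^0$) through Proposition \ref{U-interpolation}. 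I would apply this trilinear estimate to the three highest-frequency factors (containing $N_0$ or $N_1$ and $N_2$, $N_3$) and use Hölder in $L^\infty_{t,x}$ on the two lowest-frequency factors $P_{N_4} u_4$ and $P_{N_5} u_5$, bounding them by Bernstein combined with the embedding \eqref{embed4}, $X^0 \hookrightarrow L^\infty_t L^2_x$, to lose at most $N_4^{3/2} N_5^{3/2}$. After multiplying by the $\langle n \rangle^s$ and $\langle n \rangle^{-s}$ weights built into the $X^s$ and $Y^{-s}$ norms, and redistributing so that one factor carries $\|\cdot\|_{X^s}$ and the others $\|\cdot\|_{X^1}$, one obtains the bound on each dyadic block
\begin{equation*}
\Bigl(\tfrac{N_{\min}}{N_{\max}}\Bigr)^{\delta} \| P_{N_0} u_0 \|_{Y^{-s}} \Bigl( \| P_{N_{j_*}} u_{j_*} \|_{X^s} \prod_{k \neq j_*} \| P_{N_k} u_k \|_{X^1} \Bigr),
\end{equation*}
where $j_*$ carries the maximal frequency.

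Finally, I would sum over the dyadic parameters. The $\bigl(N_{\min}/N_{\max}\bigr)^{\delta}$ factor produces a geometric decay that, combined with Cauchy--Schwarz in $N_0$ and $N_{j_*}$ (exploiting \eqref{sum-cubes} and the $\ell^2$-structure of the $X^s$ norm over dyadic frequency blocks) and summation in geometric series over the remaining $N_k$, yields the multilinear form estimate stated in the proposition, hence \eqref{5deterministic}. The main obstacle is the bookkeeping in the high-high interaction case $N_1 \sim N_2 \gg N_0$: here the extra $s-1 \geq 0$ derivatives that must be placed on a single factor have to be located on one of the two comparable top frequencies, and one needs the trilinear gain $(N_3/N_1)^\delta$ together with the inequality $s < 3/2 - 2\alpha$ of the hypothesis of Theorem \ref{lwpIVPremove} (reflected already in the requirement $s \geq 1$ here) to ensure that after this redistribution the dyadic summation still converges.
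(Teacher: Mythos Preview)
The paper does not itself prove this proposition; it is quoted from \cite{HTT} (their Proposition 4.1) and used as a black box. Your outline is in the right spirit---dyadic decomposition, trilinear Strichartz, summation---but the core step is not organized the way \cite{HTT} does it, and as written your accounting does not close. You propose to put three of the six factors in a single trilinear $L^2$ estimate and the two lowest-frequency factors $P_{N_4}u_4$, $P_{N_5}u_5$ in $L^\infty_{t,x}$; that is only five factors, and the remaining one (whichever of $P_{N_0}u_0$, $P_{N_1}u_1$ is not in your trilinear block) has no home. The actual argument in \cite{HTT} applies Cauchy--Schwarz to split $\int \prod_{k=0}^5 \widetilde{u_k}$ into a product of \emph{two} trilinear $L^2_{t,x}$ norms, each containing one of the two top frequencies and two of the lower ones---this is exactly the decomposition recorded here as Lemma~\ref{cutall}. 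The cube partition $\{C\}$, $\{Q\}$ in that lemma supplies the orthogonality needed to Cauchy--Schwarz the sum over $N_0\sim N_1$. With two applications of the trilinear estimate there is no $L^\infty$ loss of $N_4^{3/2}N_5^{3/2}$ to absorb; each trilinear factor produces at most $M_2 M_3$ (times the small $(M_3/M_1+1/M_2)^\delta$), and after placing one $X^s$ and four $X^1$ norms the powers balance with a summable $(N_{\min}/N_{\max})^\delta$ remaining.

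Your last paragraph also misreads the hypotheses. The constraint $s<\tfrac{3}{2}-2\alpha$ belongs to Theorem~\ref{lwpIVPremove} and plays no role in Proposition~\ref{all-deterministic}, which is a purely deterministic estimate valid for every $s\geq 1$. In the high--high case $N_1\sim N_2\gg N_0$ one simply assigns the $X^s$ norm to $u_1$ (or $u_2$), uses $N_0^{s}\lesssim N_1^{s}$, and sums via the $(N_0/N_1)^\delta$ gain; no upper bound on $s$ is required.
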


\medskip
Next, we recall the $L^p(\T \times \T^3)$ Strichartz-type estimates of Bourgain's \cite{B5} in this context. First recall the usual Littlewood-Paley decomposition of  periodic functions. For $N \geq 1$ a dyadic number, we denote by $P_{\leq N}$ the rectangular Fourier  projection operator $$P_{\leq N} f =  \sum_{n=(n_1, n_2, n_3)\in \mathbb Z^3 \, : \, |n_i| \leq N} \widehat{f}(n)e^{ i n \cdot x}.$$ Then $P_N= P_{\leq N} - P_{\leq N-1}$ so that $P_{\leq N} = \sum_{M=1}^N P_M$ and $P_N^{\perp}:= I - P_N$. We then have 
$$ \| f \|_{H^s(\mathbb T^3)}: = \bigl( \sum_{n \in \mathbb Z^3} \langle n \rangle^{2s} \, | \widehat{f}(n)|^2 \bigr)^{\frac{1}{2}} \, \equiv \,  \bigl( \sum_{N \geq 1}  N^{2s} \, \|P_N (f) \|_{L^2(\mathbb T^3)}^2 \bigr)^{\frac{1}{2}}. $$ 

\smallskip

\begin{definition} \label{cubescollection}\, For $N \geq 1$,  we denote by  $\mathcal C_N$  the collection of cubes $C$ in $\mathbb Z^3$ with sides parallel to the axis of sidelength $N$. 
\end{definition}

\begin{proposition}\label{prop_stric}[Proposition 3.1, Corollary 3.2 in \cite{HTT} (cf. \cite{B5})] Let $p >4$. For all $N\geq1$ we have 
\begin{eqnarray} \label{Strichartz-1}
&&\| P_N \, e^{it \Delta} \phi \|_{L^p(\T \times \T^3)}\, \lesssim \, N^{\frac{3}{2} - \frac{5}{p}} \, \| P_N \phi \|_{L^2(\T^3)}, \\
&& \| P_C \, e^{it \Delta} \phi \|_{L^p(\T \times \T^3)}\, \lesssim \, N^{\frac{3}{2} - \frac{5}{p}} \, \| P_C \phi \|_{L^2(\T^3)}, \label{Strichartz-2}\\
&&   \| P_C u  \|_{L^p(\T \times \T^3)}\, \lesssim \, N^{\frac{3}{2} - \frac{5}{p}} \, \| P_C u \|_{U^p_{\Delta}L^2}, \label{Strichartz-3}
\end{eqnarray}
where $P_C$ is the Fourier projection operator onto $C \in \mathcal C_N$ defined by the multiplier $\chi_C$, the characteristic function over $C$.
\end{proposition}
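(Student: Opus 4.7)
The three estimates are organized from the most fundamental to the most abstract, so I would prove them in the order stated. Estimate \eqref{Strichartz-1} is the core Bourgain-type periodic Strichartz bound; \eqref{Strichartz-2} follows by a Galilean shift; \eqref{Strichartz-3} is then the $U^p$-version obtained via the transfer principle.

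For \eqref{Strichartz-1}, I would apply the $TT^*$ method. Setting $T_N\phi := P_N e^{it\Delta}\phi$, one has $\|T_N\|_{L^2 \to L^p} = \|T_N T_N^*\|_{L^{p'} \to L^p}^{1/2}$, and $T_N T_N^*$ is convolution with the kernel
\[
K_N(t,x) = \sum_{n\in\Z^3} \chi_N(n)^2 \, e^{i(n\cdot x - |n|^2 t)},
\]
where $\chi_N$ is the Littlewood--Paley cutoff at scale $N$. By Young's inequality it suffices to show $\|K_N\|_{L^{p/2}(\T\times\T^3)} \lesssim N^{3-10/p}$, and this is precisely the content of Bourgain's periodic Strichartz estimate obtained in \cite{B5} via the Hardy--Littlewood circle method: on major arcs $t \approx a/q$ one approximates $K_N$ by Gauss sums whose moduli are controlled by $N^3/q^{3/2}$, while minor-arc contributions are handled by Weyl-type bounds. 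Summation over rationals gives the stated $L^{p/2}$ bound for every $p > 10/3$, hence certainly for $p>4$.

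For \eqref{Strichartz-2}, I would use Galilean covariance to reduce to a centered cube. Given $C \in \mathcal C_N$ with center $n_0 \in \Z^3$, write $\phi$ with $\widehat\phi$ supported in $C$ as $\phi(x) = e^{i n_0\cdot x}\psi(x)$, so that $\widehat\psi$ is supported in the centered cube $Q:=[-N/2,N/2)^3$. A direct computation on Fourier coefficients gives
\[
e^{it\Delta}\phi(x) \; = \; e^{i(n_0\cdot x - |n_0|^2 t)}\,(e^{it\Delta}\psi)(x - 2tn_0),
\]
and since the phase has modulus one and $x\mapsto x-2tn_0$ preserves $L^p(\T^3)$, the $L^p(\T\times\T^3)$ norms of both sides agree. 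One may now either decompose $\psi = \sum_{M \leq N} P_M\psi$ dyadically, apply \eqref{Strichartz-1} to each piece at scale $M \leq N$, and sum in $\ell^2$ by orthogonality, or, more cleanly, run the circle-method argument for \eqref{Strichartz-1} verbatim with $\chi_N$ replaced by $\chi_Q$ (the only input that matters is the dimension of the supporting set, namely $N^3$).

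For \eqref{Strichartz-3}, I would invoke Proposition \ref{transfer} applied to the single-input linear operator $T_0(f) := P_C f$ on a time interval of length $2\pi$. Estimate \eqref{Strichartz-2} is exactly the hypothesis \eqref{transf1} for $T_0$ with constant $N^{3/2-5/p}$, so the transfer principle produces an extension to all $u\in U^p_\Delta L^2$ with the same constant, which is precisely \eqref{Strichartz-3}. The main obstacle in the whole chain is of course the number-theoretic input underlying \eqref{Strichartz-1}; by contrast \eqref{Strichartz-2} and \eqref{Strichartz-3} are essentially formal consequences of it, respectively via Galilean invariance and the atomic structure of $U^p$.
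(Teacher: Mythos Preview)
Your proposal is correct and is precisely the approach of the cited references: the paper does not supply its own proof here but simply invokes Proposition~3.1 and Corollary~3.2 of \cite{HTT}, which are proved exactly as you outline---\eqref{Strichartz-1} via Bourgain's circle-method argument from \cite{B5}, \eqref{Strichartz-2} via the Galilean shift reducing an arbitrary cube to a centered one, and \eqref{Strichartz-3} via the transfer principle (Proposition~\ref{transfer}). There is nothing to add.
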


\medskip 

Finally we prove two propositions which will play an important role in Sections \ref{blocks} and \ref{theproof}.

\smallskip

\begin{proposition}\label{lowerorderterms} Let  $u$, $v$ and $w$ be functions of $x$ and $t$ such that, 
\begin{eqnarray*}
\widehat u(n,t)&=&a^1_n(t)a^2_n(t)a^3_n(t)\\
\widehat v(n,t)&=&a^1_n(t)a^2_n(t)a^3_n(t)a^4_n(t)a^5_n(t)\\
\widehat w(n,t)&=&a^1_n(t)a^2_n(t)a^3_n(t)\sum_{m}a^4_{m}a^5_{n-m}.
\end{eqnarray*}
and $|n|\sim N$. Assume  that $J\subseteq\{1,2,3,4,5\}$ and  if $i\in J$ then  
$$a_n^i(t)=\frac{g_n(\omega)}{|n|^{\frac{3}{2}+\varepsilon}}e^{it|n|^2}$$
while if $i\notin J$ then there is a detrministic function $f_i$ such that $\widehat{f_i}(n,t)=a_n^i(t)$. Then
\begin{eqnarray}\label{ulp}
\|P_Nu\|_{L^{p}(\T\times\T^3)}&\lesssim&\prod_{i\notin J\cap\{1,2,3\}}\|P_Nf_i\|_{Y^0}, \quad p>4\\\label{ul2}
\|P_Nu\|_{L^{2}(\T\times\T^3)}&\lesssim&\prod_{i\notin J\cap\{1,2,3\}}\|P_Nf_i\|_{Y^0}\\
\label{vl2}
\|P_Nv\|_{L^{2}(\T\times\T^3)}&\lesssim&\prod_{i\notin J}\|P_Nf_i\|_{Y^0}\\\label{wl2}
\|P_Nw\|_{L^{2}(\T\times\T^3)}&\lesssim&\prod_{i\notin J, i\ne 4,5}\|P_Nf_i\|_{Y^0}\prod_{j\notin J, j=4,5}\|f_j\|_{Y^0}.
\end{eqnarray}
\end{proposition}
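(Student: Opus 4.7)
The plan is to prove the four estimates by combining Plancherel's theorem in the spatial Fourier variable with three main ingredients: the pointwise Gaussian tail bound from Lemma \ref{supg}(3), Bessel's inequality for the deterministic factors, and Bernstein's inequality on $\T^3$ for the $L^p$ estimate (plus a bilinear Strichartz-type step for the convolution factor in \eqref{wl2}). The decisive observation is that for any $i\in J$ and $|n|\sim N$, outside an exceptional set of small probability,
\[
|a_n^i(t)| \;=\; \frac{|g_n(\omega)|}{|n|^{3/2+\varepsilon}} \;\lesssim\; N^{-3/2},
\]
the $N^{O(\varepsilon)}$ loss from the Gaussian tail being absorbed into the exceptional set. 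Hence each random factor carries an explicit $N^{-3/2}$ gain relative to the deterministic ones.

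For \eqref{ul2} and \eqref{vl2}, by Plancherel,
\[
\|P_N u(t)\|_{L^2_x}^2 \;=\; \sum_{|n|\sim N} \prod_{i=1}^3 |a_n^i(t)|^2,
\]
and analogously for $v$ with five factors. Setting $k:=|J\cap\{1,2,3\}|$, I would extract the supremum over $|n|\sim N$ of each random factor (yielding $N^{-3}$ apiece) and of all but one deterministic factor via Bessel's inequality $\sup_{|n|\sim N}|\widehat{f_i}(n,t)|\leq \|P_N f_i(t)\|_{L^2_x}$, keeping a single Parseval sum for the remaining deterministic factor. The resulting prefactor $N^{-3k/2}$ is harmless ($\leq 1$). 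Integrating in $t\in[0,2\pi]$ and using $Y^0\hookrightarrow L^\infty_t L^2_x$ from \eqref{embed4}, together with the trivial embedding $L^2_t([0,2\pi])\hookrightarrow L^\infty_t$, closes the bound. Estimate \eqref{ulp} then follows by composing the $L^2_x$ bound with Bernstein's inequality $\|P_N g\|_{L^p_x}\lesssim N^{3(1/2-1/p)}\|P_N g\|_{L^2_x}$: the Bernstein loss $N^{3/2-3/p}$ is absorbed by any single random gain $N^{-3/2}$, since for $k\geq 1$ and $p>4$ the net exponent $3/2-3/p-3k/2\leq -3/p$ is strictly negative.

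For \eqref{wl2}, Plancherel gives
\[
\|P_N w(t)\|_{L^2_x}^2 \;=\; \sum_{|n|\sim N} \prod_{i=1}^3 |a_n^i(t)|^2 \cdot |b(n,t)|^2, \qquad b(n,t) := \sum_m a_m^4(t)\, a_{n-m}^5(t),
\]
and the first three factors are handled exactly as above, producing $\prod_{i\notin J, i\in\{1,2,3\}}\|P_N f_i\|_{Y^0}$. Recognizing that $b(n,t) = \widehat{u_4 u_5}(n,t)$, where $u_i$ denotes the function with Fourier coefficients $\{a_n^i(t)\}_n$, Plancherel again gives $\sum_n |b(n,t)|^2 = \|u_4(t) u_5(t)\|_{L^2_x}^2$. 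When $4\in J$ or $5\in J$, the random factor supplies an $L^\infty_x$ bound via standard Gaussian control on the linear Schr\"odinger evolution of $\phi^\omega$ (combining Lemma \ref{supg} with a Sobolev bound at regularity slightly above $3/2$ for the random series), reducing the bound on $\|u_4 u_5\|_{L^2_{t,x}}$ to the $Y^0$ norm of the remaining factor. The main obstacle is the fully deterministic case $4,5\notin J$: since $L^4_{t,x}$ Strichartz on $\T^3$ comes with a derivative loss, a direct H\"older argument is insufficient, and I would instead Littlewood--Paley decompose $f_4=\sum_{M_4}P_{M_4}f_4$, $f_5=\sum_{M_5}P_{M_5}f_5$ and apply a bilinear Strichartz estimate on $\T^3$ for each pair; the $P_N$ localization of the output frequency restricts the admissible $(M_4,M_5)$ pairs so that the dyadic sums close back into $\|f_4\|_{Y^0}\|f_5\|_{Y^0}$, with any logarithmic loss harmlessly absorbed.
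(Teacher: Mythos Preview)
Your treatment of \eqref{ul2} and \eqref{vl2} is essentially the paper's argument. There are, however, genuine gaps in \eqref{ulp} and \eqref{wl2}, both stemming from the case where no random factor is available to absorb losses.

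For \eqref{ulp}, your Bernstein step requires $k=|J\cap\{1,2,3\}|\geq 1$; when all three factors are deterministic the loss $N^{3/2-3/p}$ has nothing to cancel it, and the statement allows $J\cap\{1,2,3\}=\emptyset$. The paper avoids this by exploiting the convolution structure: since $\widehat{P_N u}(n)=\prod_{i=1}^3\chi_N(n)a^i_n$, one has $P_N u=(P_N k_1)\ast(P_N k_2)\ast(P_N k_3)$ in the space variable, where $\widehat{k_i}=a^i$. Young's inequality with exponents $p_1=p_2=p_3=\tfrac{3p}{2p+1}\in(1,2)$ gives $\|P_N u\|_{L^p_x}\leq\prod_i\|P_N k_i\|_{L^{p_i}_x}\leq\prod_i\|P_N k_i\|_{L^2_x}$, the last step by H\"older on the bounded torus; this carries no $N$-loss and holds for every $J$. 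H\"older in $t$ and the embedding \eqref{embed4} then finish.

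For \eqref{wl2}, you place the convolution factor $b_n=\sum_m a^4_m a^5_{n-m}$ in $\ell^2_n$, which forces you to control $\|u_4 u_5\|_{L^2_{t,x}}$. On $\T^3$ the bilinear Strichartz estimates available (cf.\ \eqref{2strichartz-mix}) carry a positive power of the low frequency, so the dyadic sums do not close back to $\|f_4\|_{Y^0}\|f_5\|_{Y^0}$ when $4,5\notin J$; and when $4\in J$ or $5\in J$, the claimed $L^\infty_x$ bound on the random factor is not justified, since $\sum_n g_n(\omega)|n|^{-3/2-\varepsilon}e^{inx}$ lies only barely in $L^2(\T^3)$ and is far from $H^{3/2+}$. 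The fix is elementary and is what the paper does: keep the $\ell^2$ sum on one of the first three factors and bound $b$ in $\ell^\infty_n$ via
\[
\sup_n|b_n|=\sup_n\bigl|\widehat{k_4 k_5}(n)\bigr|\leq\|k_4 k_5\|_{L^1_x}\leq\|k_4\|_{L^2_x}\|k_5\|_{L^2_x},
\]
which is just Cauchy--Schwarz and requires no Strichartz input whatsoever.
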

\begin{proof}
To prove \eqref{ulp} we write $u=k_1 * k_2 * k_3$, where the convolution is only with respect to the space variable. Then by Young's inequality in the space variable followed by H\"older's inequality and the embedding \eqref{embed4}  we have the desired inequality.

\medskip

To prove \eqref{ul2} we use Plancherel
\begin{eqnarray*}\|P_Nu\|_{L^{2}(\T\times\T^3)}&\lesssim& \left\|\|\chi_{|n|\sim N}a_n^1a_n^2a_n^3\|_{\ell^2}\right\|_{L^{\infty}(\T)}
\lesssim \left\|\prod_{i=1}^3\|\chi_{|n|\sim N}a_n^i\|_{\ell^2}\right\|_{L^{\infty}(\T)}\\
&\lesssim&\left\|\prod_{i=1}^3\|P_Nf_i\|_{L^{2}_x}\right\|_{L^{\infty}(\T)}\lesssim \prod_{i\notin J\cap\{1,2,3\}}
\|P_Nf_i\|_{L^\infty(\T,L^2(\T^3))}\end{eqnarray*}
and the conclusion follows from the embedding \eqref{embed4}.

\medskip
To prove \eqref{vl2} we proceed in a similar manner.

\medskip
To prove \eqref{wl2} we first write 
$$\|P_Nw\|_{L^{2}(\T\times\T^3)}\sim \|P_N(k_1* k_2* k_3*(k_4k_5))\|_{L^{2}(\T\times\T^3)},$$
and  by Young's,  H\"older's   and Cauchy-Schwarz  inequality we continue with
\begin{eqnarray*}
&\lesssim&\left\|\prod_{i=1}^3\|P_Nk_i\|_{L^{2}}\|P_N(k_4k_5)\|_{L^{1}}\right\|_{L^{2}(\T)}\\
&\lesssim&\left\|\prod_{i=1}^3\|P_Nk_i\|_{L^{2}}\|k_4\|_{L^{2}}\|k_5\|_{L^{2}}\right\|_{L^{2}(\T)}\\&\lesssim &
\prod_{i\notin J, i\ne 4,5}\|P_Nf_i\|_{L^\infty(\T,L^2(\T^3))}\prod_{j\notin J, j=4,5}\|f_j\|_{L^\infty(\T,L^2(\T^3))}.
\end{eqnarray*}

\end{proof}

We now state  a trilinear $L^2$ estimate that is similar to  Proposition 3.5 in \cite{HTT} but in which some of the functions may be linear evolution of random data.
\begin{proposition}
Assume $N_1\geq N_2\geq N_3$ and that $C \in \mathcal{C}_{N_2}$,  a cube of sidelength $N_2$. Assume also that $J\subseteq \{1,2,3\}$ and  such that if $j\in J$ then $\widehat{u_j}(n)=e^{i|n|^2t}\frac{g_n(\omega)}{|n|^{\frac{3}{2}+\varepsilon}}$ for $\varepsilon>0$ small. Then
\begin{equation}\label{strichartz-mix}
\|P_CP_{N_1}\widetilde{u_1}P_{N_2}\widetilde{u_2}P_{N_3}\widetilde{u_3}\|_{L^2(\T\times \T^3)}\lesssim N_2N_3\prod_{j\notin J}\|P_{N_j}u_j\|_{U^4_\Delta L^2}.
\end{equation}
and 
\begin{equation}\label{2strichartz-mix}
\|P_CP_{N_1}\widetilde{u_1}P_{N_2}\widetilde{u_2}\|_{L^2(\T\times\T^3)}\lesssim N_2^{\frac{1}{2}+\varepsilon}\prod_{j\notin J}\|P_{N_j}u_j\|_{U^4_\Delta L^2},
\end{equation}
where $\widetilde{u_k} $ denotes either $\overline{u_k} $ or $ u_k $.

Moreover  \eqref{strichartz-mix} and \eqref{2strichartz-mix} also hold with 
 the $Y^0$ norms in the right hand side.

\end{proposition}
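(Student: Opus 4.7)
The plan is to deduce both estimates from HTT's trilinear and bilinear $L^2$ Strichartz estimates---which are stated in terms of the $U^4_\Delta L^2$ norm---by treating the factors indexed by $j\in J$ as $U^4$ functions like the others, and then showing that their $U^4_\Delta L^2$ norms are $O(1)$ almost surely. Concretely, for $j\in J$ the hypothesis $\widehat{u_j}(n,t) = e^{it|n|^2} g_n(\omega)/|n|^{3/2+\varepsilon}$ means that $u_j(t,\cdot) = e^{it\Delta}\phi_j^\omega$ with $\widehat{\phi_j^\omega}(n) = g_n(\omega)/|n|^{3/2+\varepsilon}$. Since every free Schr\"odinger evolution $e^{it\Delta}\phi$ is (up to a scalar factor) a single $U^p_\Delta L^2$ atom, we have the identity
\[
\|P_{N_j} u_j\|_{U^p_\Delta L^2} = \|P_{N_j}\phi_j^\omega\|_{L^2}
\]
for every $p\geq 1$, so no transfer principle is needed on the random factors.

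By Lemma~\ref{supg}(3), for any fixed $\varepsilon' < \varepsilon$ one has $|g_n(\omega)| \lesssim \langle n\rangle^{\varepsilon'}$ outside an exceptional set whose (superpolynomially small) measure will be absorbed into that of $\Omega_\delta^c$ in Theorem~\ref{lwpIVPremove}. Consequently,
\[
\|P_{N_j}\phi_j^\omega\|_{L^2}^2 \;\lesssim\; \sum_{|n|\sim N_j}\frac{|n|^{2\varepsilon'}}{|n|^{3+2\varepsilon}} \;\lesssim\; N_j^{2\varepsilon'-2\varepsilon} \;\lesssim\; 1,
\]
uniformly in $N_j$. Plugging this bound into HTT's trilinear $L^2$ Strichartz estimate
\[
\|P_C P_{N_1}\widetilde{u_1}\, P_{N_2}\widetilde{u_2}\, P_{N_3}\widetilde{u_3}\|_{L^2(\T\times\T^3)} \;\lesssim\; N_2 N_3\prod_{j=1}^3 \|P_{N_j} u_j\|_{U^4_\Delta L^2}
\]
(Proposition~3.5 of \cite{HTT}) and absorbing the $O(1)$ factors corresponding to $j\in J$ into the implicit constant gives \eqref{strichartz-mix}. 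The bilinear estimate \eqref{2strichartz-mix} follows in exactly the same way from HTT's bilinear $L^2$ Strichartz estimate, which produces the refined factor $N_2^{1/2+\varepsilon}$ in place of $N_2 N_3$.

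To upgrade the right-hand sides to $Y^0$ norms, we use the embedding $Y^0 \hookrightarrow V^2_\Delta L^2$ from \eqref{embed3} together with the classical inclusion $V^p \hookrightarrow U^q$ for any $p<q$; in particular $V^2_\Delta L^2 \hookrightarrow U^4_\Delta L^2$, so $\|v\|_{U^4_\Delta L^2} \lesssim \|v\|_{Y^0}$ for every $v\in Y^0$. Substituting this into the right-hand sides yields the stated $Y^0$ versions. The only delicate point is the almost-sure step: one must verify that the union over all dyadic scales $N_j$ of the exceptional sets produced by Lemma~\ref{supg} still has probability compatible with the $e^{-1/\delta^r}$ bound, but this is automatic since that lemma already gives $|g_n(\omega)| \lesssim \langle n\rangle^{\varepsilon'}$ simultaneously in all $n$ outside a single exceptional set.
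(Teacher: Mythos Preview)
Your proof is correct and follows essentially the same strategy as the paper: both reduce to the Strichartz estimates of Proposition~\ref{prop_stric} and exploit that the random factors, being free evolutions $e^{it\Delta}\phi_j^\omega$, have $U^p_\Delta L^2$ norm equal to $\|P_{N_j}\phi_j^\omega\|_{L^2}=O(1)$. The only packaging differences are that the paper writes out the H\"older step explicitly rather than citing HTT's trilinear estimate as a black box, and it passes to $Y^0$ via the interpolation Proposition~\ref{U-interpolation} rather than your direct embedding $Y^0\hookrightarrow V^2_\Delta L^2\hookrightarrow U^4_\Delta L^2$; both routes are valid.
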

\begin{proof}
To prove \eqref{strichartz-mix} we follow  the proof of (24) in \cite{HTT}. We  write
$$\|P_CP_{N_1}\widetilde{u_1}P_{N_2}\widetilde{u_2}P_{N_3}\widetilde{u_3}\|_{L^2(\T \times \T^3)} \,\lesssim\, \|P_CP_{N_1}u_1\|_{L^p}\|P_{N_2}u_2\|_{L^p}
\|P_{N_3}u_3\|_{L^q}$$
where $\frac{2}{p}+\frac{1}{q}=\frac{1}{2}$ and $4<p<5$. Then we use \eqref{Strichartz-2}
for the random linear functions and \eqref{Strichartz-3} for the deterministic functions to obtain 
$$\|P_CP_{N_1}\widetilde{u_1}P_{N_2}\widetilde{u_2}P_{N_3}\widetilde{u_3}\|_{L^2(\T \times \T^3)} \, \lesssim \, N_2N_3\left(\frac{N_3}{N_2}\right)^{-2+\frac{10}{p}}\prod_{j\notin J}\|P_{N_j}u_j\|_{U^4_\Delta L^2},$$
where we used the embedding \eqref{embed2}.

\medskip
To prove \eqref{2strichartz-mix} we use H\"older's inequality to write 
\begin{equation}\label{thel4+}\|P_CP_{N_1}\widetilde{u_1}P_{N_2}\widetilde{u_2}\|_{L^2(\T \times \T^3)} \, \lesssim \,  \|P_CP_{N_1}u_1\|_{L^{4+\varepsilon}}\|P_{N_2}u_2\|_{L^{4+\varepsilon}}\end{equation}
we then  we use  \eqref{Strichartz-2}, \eqref{Strichartz-3}  and the embedding \eqref{embed2} to continue with
$$\lesssim N_2^{\frac{1}{2}+\varepsilon}\prod_{j\notin J}\|P_{N_j}u_j\|_{U^4_\Delta L^2}.$$
To obtain the $Y^0$ in the right hand side we use  the interpolation Proposition \ref{U-interpolation} and the embedding \eqref{embed2}.
\end{proof}

\medskip


\section{Almost sure local well-posedness for the initial value problem \eqref{IVPremove}}

We define  
\begin{equation}\label{linear}
v_0^\omega(t,x)=S(t)\phi^\omega(x) 
\end{equation}
where $\phi^\omega(x)$ is as in \eqref{initial} and instead of solving the initial value problem \eqref{IVPremove} we solve the one  for $w=v-v_0^\omega$:
\begin{equation}\label{IVPdifference}
\begin{cases}
&iw_t+\Delta w=\mathcal N(w+v_0^\omega)\quad x\in \T^3\\
&w(0,x)=0,
\end{cases}
\end{equation}
where $\mathcal N(\cdot)$ was defined in \eqref{Nv}. To understand the nonlinear term of \eqref{IVPdifference} we express it in terms of its spatial Fourier transform. Let $a_n:=\hat v(n)$, $\theta^\omega_n:=\FF(S(t)\phi^\omega)(n)$, then $b_n:=\hat w(n)=a_n-\theta^\omega_n$. Now we recall \eqref{rewrite} and 
in it we replace $a_n$ with $b_n+\theta^\omega_n$. Then
\begin{equation}\label{rewritedifference}
\FF(\mathcal N(w+v_0^\omega))(n)=\sum_{k=1}^7J_k(b_n+\theta^\omega_n),
\end{equation}
where here $J_k(b_n+\theta^\omega_n)$ means that the terms $J_k$ defined in \eqref{j1}--\eqref{j7} are evaluated for the sequence $(b_n+\theta^\omega_n)$ instead of $a_n$. 


\medskip
We are now ready to state the almost sure well-posedness result for the initial value problem \eqref{IVPdifference}.

\begin{theorem}\label{lwpdifferenceIVP}  Let $0< \alpha < \frac{1}{12}$,  $s \in (1+ 4\alpha, \, \frac{3}{2} - 2\alpha)$.
There exists $0<\delta_0\ll 1$ and $r=r(s,\alpha)>0$ such that for any $\delta < \delta_0,$ there exists $\Omega_\delta\in A$ with 
$$\pr(\Omega_\delta^c)<
e^{-\frac{1}{\delta^{r}}},$$
and for each $\omega\in \Omega_\delta$ there exists a unique solution $w$ of \eqref{IVPdifference}
in the space $X^s([0,\delta))\cap C([0,\delta),H^s(\T^3)).$
\end{theorem}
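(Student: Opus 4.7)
The plan is to set up a fixed-point argument in a small ball
$$B_R := \bigl\{ w \in X^s([0,\delta)) \, : \, \|w\|_{X^s([0,\delta))} \leq R \bigr\}$$
for the Duhamel map $\Phi(w) := -i\lambda \,\mathcal I\bigl( \mathcal N(w+v_0^\omega) \bigr)$, with $R$ and $\delta$ to be chosen small. By Proposition \ref{duhamel}, in order to execute the contraction it suffices to establish, for each of the seven groups $J_1,\ldots,J_7$ appearing in \eqref{j1}--\eqref{j7}, a multilinear estimate of the schematic form
$$\bigl\| \mathcal I\bigl( J_k(w+v_0^\omega) \bigr)\bigr\|_{X^s([0,\delta))} \;\leq\; C\, \delta^{\theta}\, \bigl( \|w\|_{X^s([0,\delta))} + K(\omega)\bigr)^5,$$
with some positive $\theta = \theta(s,\alpha)$ and $K(\omega)$ controlling the relevant random quantities built from $v_0^\omega$, together with the analogous Lipschitz bound for the difference $\Phi(w)-\Phi(w')$. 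The exceptional set $\Omega_\delta^c$ will be the union, over all cases encountered below, of the bad events on which the random estimates fail, and a union bound will deliver $\pr(\Omega_\delta^c)\leq e^{-1/\delta^r}$.

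The main step is the core multilinear estimate. For each $k$, I would expand $J_k(b_n + \theta_n^\omega)$ (where $\theta_n^\omega = g_n(\omega)\langle n\rangle^{-5/2+\alpha} e^{it|n|^2}$) into a sum of at most $2^5$ five-linear terms, classified by the subset $J\subseteq\{1,\ldots,5\}$ of factors that carry the random evolution $v_0^\omega$. When $J = \emptyset$ all five factors are the deterministic $w$, and the target reduces to Proposition \ref{all-deterministic}, together with an interval-shrinking trick that produces a positive power of $\delta$. When $|J|\geq 1$, I would test against an arbitrary $u_0\in Y^{-s}([0,\delta))$ with unit norm using the duality in Proposition \ref{duhamel}, perform a Littlewood--Paley decomposition and further slice each dyadic block into cubes $C\in\mathcal C_N$, and estimate the resulting space-time inner product by combining the mixed trilinear Strichartz inequalities \eqref{strichartz-mix}--\eqref{2strichartz-mix} on the randomized factors with Proposition \ref{lowerorderterms} on the lower-order deterministic factors. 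Proposition \ref{U-interpolation} then upgrades the $U^p_\Delta$ bounds on the deterministic factors to the $V^2_\Delta$/$Y^0$ scale, which after summation in the dyadic indices yields the $X^s$-bound.

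The probabilistic input enters at the stage where a sum of monomials in $\{g_n(\omega)\}$ appears after the Strichartz and counting reduction. Proposition \ref{chaos} together with the large-deviation bound \eqref{largedeviation} and the sup-bounds \eqref{sup-bound}--\eqref{sup-bound-all} guarantee that outside an exceptional set of measure at most $e^{-1/\delta^r}$ such random sums are dominated by $\delta^{-\eta}$ for a small $\eta = \eta(r)>0$, and the extra $L^2$-regularity purchased by placing each $v_0^\omega$ at scale $\langle n\rangle^{-5/2+\alpha}$ is essentially $\alpha-\varepsilon$ per random factor. This is precisely what allows one to reach all $s$ below $1 + 4\alpha$ from above; the upper bound $s<\tfrac{3}{2}-2\alpha$ in turn comes from balancing this gain against the $N_2^{1/2+\varepsilon}$ loss in \eqref{2strichartz-mix} summed over dyadic shells.

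The hard part will be the treatment of the partially resonant groups $J_5, J_6, J_7$, which survive the gauge cancellation of $u\!\int|u|^4$ and carry diagonal Fourier structures like $|a_n|^2a_n$, $|a_n|^4a_n$ and $a_n^3\!\sum_{n_2+n_4=2n}\!\overline{a_{n_2}a_{n_4}}$. When the same mode $n$ appears several times in the random factors, mere independence across distinct $n$ is insufficient. Here I would invoke Remark \ref{plusone} to split $|g_n(\omega)|^2 = 1 + Y_n(\omega)$ with $Y_n$ centered; the $1$-contributions combine with the conserved mass $m$ from $J_2$ to produce, after the exact cancellations built into the constants of \eqref{j2}--\eqref{j7}, only a bounded deterministic correction, while the $Y_n$-parts recover a genuine off-diagonal Wick structure for which the Gaussian hypercontractivity of Proposition \ref{chaos} and Lemma \ref{independence} apply. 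Propagating a positive $\delta$-power uniformly across all dyadic scales for each of these resonant pieces will require the most delicate bookkeeping of the argument; once done, assembling the seven multilinear estimates and closing the contraction is standard.
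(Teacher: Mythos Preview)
Your overall contraction scheme is correct and matches the paper: test against $Y^{-s}$ via Proposition~\ref{duhamel}, Littlewood--Paley decompose, classify the resulting multilinear pieces by the subset $J$ of random factors, and use Proposition~\ref{all-deterministic} for the purely deterministic case. The probabilistic input via Proposition~\ref{chaos} and Lemma~\ref{supg} is also the right one.

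There is, however, a genuine gap. The mixed Strichartz estimates \eqref{strichartz-mix}--\eqref{2strichartz-mix} are not strong enough to close the argument when the \emph{highest} frequency carries a random factor. Take for instance the $J_2$-type trilinear piece in the configuration $(\RR_1,\D_2,\D_3)$ with $N_1\sim N_0\gg N_2\geq N_3$. Estimate \eqref{strichartz-mix} gives a bound of order $N_2N_3\|P_{N_2}u_2\|_{U^4_\Delta L^2}\|P_{N_3}u_3\|_{U^4_\Delta L^2}$ with no decay whatsoever in $N_1$; after renormalizing (inserting $N_0^s$, removing $N_1^{1-\alpha}$ from the random factor and $N_2^s,N_3^s$ from the deterministic ones) you are left with $N_1^{s-1+\alpha}N_2^{1-s}N_3^{1-s}$, which diverges for every $s>1$ once $N_2,N_3$ are bounded. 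What is needed is an additional factor $N_1^{-1/2}$ coming from the randomness at the top frequency, and this cannot be extracted from a Strichartz-type bound. The paper obtains it by a different mechanism: writing the trilinear $L^2$-norm as a quadratic form, bounding the associated matrix norm $\|\GG\GG^*\|$ via Lemma~\ref{matrixnorm}, and applying the lattice-point counting of Lemma~\ref{lattice-counting} together with Proposition~\ref{chaos} entry by entry. This produces the family of estimates in Proposition~\ref{alltrilinear}, in particular \eqref{RDD}, \eqref{barRRD}, \eqref{barRDR}, which are the real workhorses of the argument. The constraint $1+4\alpha<s<\tfrac32-2\alpha$ arises precisely from \eqref{RDD} after choosing the interpolation parameter $\theta=10\alpha$ (Case~6 of Subsection~\ref{estJ2}), not from \eqref{2strichartz-mix}.

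Two smaller corrections. First, the terms $J_5,J_6,J_7$ are \emph{not} the hard part: their diagonal Fourier structure makes them amenable to the elementary bounds of Proposition~\ref{lowerorderterms} together with a single application of \eqref{strichartz-mix}, and no Wick recentering is needed there. The splitting $|g_n|^2=1+Y_n$ of Remark~\ref{plusone} is used instead inside the proofs of the trilinear building blocks (the $M_2$ estimates in Propositions~\ref{One}--\ref{Three}) and in the $DDDRR$ analysis of $J_1$ (cases $\beta_2$--$\beta_7$ in Subsection~\ref{dddrr}). Second, the genuinely delicate piece is $J_1$: it requires the full quintilinear case analysis of Subsections~\ref{ddddd}--\ref{rrrrr}, combining the trilinear blocks with orthogonality over cubes $C\in\mathcal C_{N_2}$ and, when the two top frequencies are of mixed type, a duality argument with the change of variable $\zeta=m-|n_1|^2$.
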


The proof of this theorem follows from  the following two  propositions via contraction mapping argument.

\begin{proposition}\label{main-lemma} Let $0< \alpha < \frac{1}{12}$,  $s \in (1+ 4\alpha, \, \frac{3}{2} - 2\alpha),$  \, $\delta\ll1$ and $R>0$ be fixed. Assume  $N_i, \, i=0, \dots 5$ are dyadic numbers and $N_1 \geq N_2 \geq N_3 \geq N_4 \geq N_5$. Then there exists $\rho=\rho(s, \alpha)>0$, $\mu>0$,  and  $\Omega_\delta \in A$ such that 
$$\pr(\Omega_\delta^c)<  e^{-\frac{1}{\delta^{r}}},$$ and such that for $\omega\in \Omega_\delta$ we have: 

\noindent
If  $N_1\gg N_0$  or $P_{N_1}w=P_{N_1}v_0^\omega$
\begin{eqnarray}\label{outcome1}
 &&\left| \int_0^{2\pi} \int_{\T^3}  D^s \bigl( \mathcal N(P_{N_i}(w+v_0^\omega))   \bigr) \overline{ P_{N_0}h }  \, dx \, dt \right| \\\notag
&& \qquad \qquad \lesssim \delta^{-\mu r} N_1^{-\rho} \| P_{N_0}h\|_{Y^{-s}}\left( 1+ \prod_{i\notin J}\|P_{N_i}w\|_{X^s}\right). 
 \end{eqnarray}
 If $N_1\sim N_0$ and $P_{N_1}w\ne P_{N_1}v_0^\omega$
\begin{eqnarray}\label{outcome2} &&\left| \int_0^{2\pi} \int_{\T^3}  D^s \bigl( \mathcal N(P_{N_i}(w+v_0^\omega))   \bigr) \overline{ P_{N_0}h }  \, dx \, dt \right|\\\notag  
& \lesssim&
 \delta^{-\mu r} N_2^{-\rho} \| P_{N_0}h\|_{Y^{-s}}\|P_{N_1}w\|_{X^s}
\left( 1+ \prod_{i\notin J, i\ne 1}\|\psi_{\delta}P_{N_i}w\|_{X^s}\right),
 \end{eqnarray}
where  $v_0^\omega$ is as   in \eqref{linear}, $w\in  X^s([0, 2\pi])$, $J\subseteq\{1,2,3,4,5\}$ denote those indices corresponding to random functions.
\end{proposition}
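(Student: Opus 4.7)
I would expand $\mathcal{N}(w+v_0^\omega)$ using the decomposition $\mathcal{F}(\mathcal{N}) = \sum_{k=1}^7 J_k$ from \eqref{j1}--\eqref{j7}, with $a_n = b_n + \theta_n^\omega$, where $b_n = \hat w(n)$ and $\theta_n^\omega(t) = e^{-it|n|^2} g_n(\omega)/\langle n\rangle^{5/2-\alpha}$, so that each $J_k$ becomes a sum over the $2^{r_k}$ choices of assigning a deterministic or random factor to each slot; these are indexed by the subset $J \subseteq \{1,\dots,5\}$ of random slots appearing in the statement. Each resulting multilinear form is tested against $\overline{P_{N_0}h}$ via the $X^s/Y^{-s}$ duality of Proposition \ref{duhamel}, and reduces to a dyadically localized space-time estimate on a product of at most five functions, some of which are linear Schr\"odinger evolutions of random data. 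The mechanism for extracting the $N_1^{-\rho}$ (resp.\ $N_2^{-\rho}$) gain is the $\alpha$-regularity surplus of $\phi^\omega$ relative to the critical exponent $s_c=1$, combined with the Strichartz-type bounds \eqref{strichartz-mix}, \eqref{2strichartz-mix} and the mixed-norm estimates of Proposition \ref{lowerorderterms}.

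\textbf{Deterministic versus probabilistic terms.} For the term $J=\emptyset$, Proposition \ref{all-deterministic} gives a bound by $\|P_{N_0}h\|_{Y^{-s}}\sum_j \|P_{N_j}w\|_{X^s}\prod_{k\ne j}\|P_{N_k}w\|_{X^1}$. The interpolation $\|P_{N_k}w\|_{X^1}\le N_k^{1-s}\|P_{N_k}w\|_{X^s}$ extracts the gain $N_1^{-(s-1)}$, which is strictly positive since $s>1+4\alpha$. For $J\ne\emptyset$, after isolating the deterministic multilinear structure via Proposition \ref{lowerorderterms} and Strichartz \eqref{strichartz-mix}--\eqref{2strichartz-mix}, I apply the Gaussian chaos inequality \eqref{largedeviation} of Proposition \ref{chaos} (or Lemma \ref{gLp} when $|J|=1$). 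Choosing $\lambda = \delta^{-\mu}\|F\|_{L^2(\Omega)}$ for the Gaussian polynomial $F$ of degree $|J|$ produces an exceptional set of probability $\lesssim e^{-1/\delta^r}$ on whose complement the bound holds deterministically, accounting for the factor $\delta^{-\mu r}$. The $L^2(\Omega)$ norm $\|F\|_{L^2(\Omega)}$ is evaluated by expanding and using independence via Lemma \ref{independence} and the centering $|g_n|^2=1+Y_n$ of Remark \ref{plusone}, which reduces the $\omega$-average to a sum over lattice points compatible with the momentum constraints \eqref{plane}. The decay $\langle n_i\rangle^{-5/2+\alpha}$ at random factors together with 3D lattice counting and the Strichartz bounds finally produces a power of the top scale $N_1^{-\rho}$, with $\rho$ small and strictly positive thanks to $s<\tfrac{3}{2}-2\alpha$.

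\textbf{Case split and lower $J_k$.} The dichotomy \eqref{outcome1} versus \eqref{outcome2} reflects whether the highest frequency slot in the quintic form is occupied by $w$ or by $v_0^\omega$. In the regime $N_1\sim N_0$ with $P_{N_1}w\ne P_{N_1}v_0^\omega$, the top slot must be a deterministic $w$-factor; pulling out $\|P_{N_1}w\|_{X^s}$ at full weight leaves a quartic form whose largest frequency is $N_2$, and the same random/deterministic analysis yields $N_2^{-\rho}$. In the regime $N_1\gg N_0$ the momentum constraint $n=n_1-n_2+n_3-n_4+n_5$ forces $N_1\sim N_2$, and when the top slot is random (or if $P_{N_1}w = P_{N_1}v_0^\omega$) the large-deviation estimate applied at scale $N_1$ directly produces the $N_1^{-\rho}$ gain. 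For the lower-order pieces $J_2,\dots,J_7$, which are at most trilinear in space and include the Wick-type contraction with the mass $m$ from \eqref{j2}, \eqref{j7}, the same strategy applies with simpler counting; the Wick subtraction of $3u\int|u|^4$ in the gauged equation \eqref{IVPremove} was precisely arranged so that every remaining $J_k$ is a mean-zero chaos of order $\ge 1$, to which \eqref{largedeviation} can be applied.

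\textbf{Main obstacle.} The hardest part will be the combinatorial treatment of $J_1$ in the mixed regime where several random factors at high frequency sit alongside a deterministic high-frequency $w$, and the quintic momentum constraint creates partial diagonal resonances that nearly saturate the available regularity budget. Controlling these requires careful use of the independence and zero-mean properties (Lemma \ref{independence}, Remark \ref{plusone}) to eliminate the diagonals, and the $U^p/V^2$ interpolation of Proposition \ref{U-interpolation} to upgrade $U^4_\Delta L^2$ estimates into $Y^0$ estimates compatible with the $X^s$ norms on the right-hand side. The open parameter window $s\in(1+4\alpha,\tfrac{3}{2}-2\alpha)$, together with $\alpha<\tfrac{1}{12}$, is exactly calibrated so that the net power $\rho$ remains strictly positive after these losses; the final dyadic summation over $N_1\ge\dots\ge N_5$ is then a geometric series absorbed by $N_1^{-\rho}$ or $N_2^{-\rho}$.
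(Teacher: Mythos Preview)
Your high-level strategy matches the paper's: decompose $\mathcal N$ into $J_1,\dots,J_7$, distribute each slot between $w$ and $v_0^\omega$, apply the chaos bound \eqref{largedeviation} on the random factors, and finish with Strichartz and $U^p/V^2$ interpolation. But two technical mechanisms that actually drive the paper's proof are absent from your plan, and without them the argument does not close.

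\medskip

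\textbf{1. The operator-norm/matrix step.} When a trilinear or quintilinear block mixes random and deterministic factors (e.g.\ $\bar R_1 D_2 R_3$, $R_1 D_2 D_3$, or the $DDDRR$ case of $J_1$), you cannot simply ``apply \eqref{largedeviation} and then count lattice points'': the deterministic coefficients $a_{n_j}$ are arbitrary $\ell^2$ sequences, so the $\omega$-average does not collapse to a pure counting sum. The paper handles this by writing the $L^2_{x,t}$ norm, after Plancherel and a change of variable $\zeta=m\pm|n_j|^2$, as $\|\mathcal G\, a\|_{\ell^2}^2$ for a random matrix $\mathcal G=(\sigma_{n,n'})$, bounding $\|\mathcal G\mathcal G^*\|$ via Lemma~\ref{matrixnorm} (diagonal plus Frobenius), and only then applying the chaos estimate to the entries of $\mathcal G\mathcal G^*$. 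This is where the sharp powers in the trilinear building blocks \eqref{barRDR}--\eqref{barRRR} come from; the cruder route ``Strichartz \eqref{strichartz-mix} plus large deviation on the end product'' that you outline loses exactly the $N_1^{-1/2}$-type gains needed for $s<\tfrac32-2\alpha$.

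\medskip

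\textbf{2. Restoring the excised frequencies.} Your proposal treats the $J=\emptyset$ case by invoking Proposition~\ref{all-deterministic} directly, but that proposition is for the genuine product $\prod\widetilde u_k$, whereas $J_1$ is defined on the restricted set $\Sigma(n)$ where $n_1,n_3,n_5\ne n_2,n_4$. Since the $X^s,Y^s$ norms are \emph{not} based on $|\widehat u|$, one cannot simply drop the constraint. The paper explicitly adds back the missing diagonals (the $A_1,\dots,A_4$ decomposition in \S\ref{estJ2} Case~7 and the terms \eqref{addons} in \S\ref{ddddd}), checks that the reintroduced pieces are already of type $J_2,\dots,J_7$ (hence smoother), and only then applies the product Strichartz/HTT estimate. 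This step is essential and nontrivial; without it your appeal to Proposition~\ref{all-deterministic} is not justified.

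\medskip

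In short, your outline is the right skeleton, but the two load-bearing ideas---the $\|\mathcal G\mathcal G^*\|$ matrix bound with the $\zeta$-shift, and the add-back-the-diagonals decomposition---need to be built in before the case analysis can produce the claimed $N_1^{-\rho}$, $N_2^{-\rho}$ gains.
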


\begin{proposition}\label{main-estimate}  Let $0< \alpha < \frac{1}{12}$,  $s \in (1+ 4\alpha, \, \frac{3}{2} - 2\alpha)$ and $\delta\ll1$ be fixed. Let $v_0^\omega$ be defined as in \eqref{linear} and assume $w \in X^s([0,2\pi])$. Then there exist $\theta=\theta(s,\alpha)>0, \, r=r(s,\alpha)$ and  $\Omega_\delta\in A$ such that 
$$\pr(\Omega_\delta^c)< e^{-\frac{1}{\delta^{r}}},$$ and such that for $\omega\in \Omega_\delta$

\begin{equation}\label{5mixed} \| \mathcal I\bigl (\psi_\delta \mathcal N(w+v_0^\omega) \bigr) \|_{X^s([0, 2\pi])} \lesssim \delta^\theta \left(1+ \| \psi_\delta w\|_{X^s([0, 2\pi])}^5\right)
 \end{equation}
 where $\mathcal N(\cdot) $ was defined in \eqref{Nv} and $\psi_\delta$ is a smooth time cut-off of the interval $[0,\delta]$.
\end{proposition}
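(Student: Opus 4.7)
The plan is to combine the duality of Proposition~\ref{duhamel} with the multilinear estimates of Proposition~\ref{main-lemma}. By the former, it suffices to show, uniformly over $h \in Y^{-s}([0,2\pi])$ with $\|h\|_{Y^{-s}} \leq 1$, a bound of the form
$$\left| \int_0^{2\pi}\!\int_{\T^3} \psi_\delta\, D^s\mathcal N(w+v_0^\omega) \cdot \bar h \, dx\, dt \right| \lesssim \delta^\theta \bigl(1 + \|\psi_\delta w\|_{X^s}^5\bigr).$$
To expose the multilinear structure, I write $\widehat w(n) = b_n$, $\widehat{v_0^\omega}(n) = \theta_n^\omega$ and expand $\mathcal N(w+v_0^\omega)$ via \eqref{rewritedifference}, producing a finite sum of multilinear expressions $J_k$ in the $b_n + \theta_n^\omega$, $k=1,\dots,7$. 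Further splitting each of the (up to five) factors into random and deterministic parts produces sub-terms indexed by a subset $J \subseteq \{1,\dots,5\}$ marking which factors come from $v_0^\omega$.

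Next I perform Littlewood-Paley decompositions both on the test function ($h = \sum_{N_0} P_{N_0} h$) and on each of the factors (as $P_{N_i}$), and relabel so that $N_1 \geq N_2 \geq \cdots \geq N_5$. Depending on whether $N_1 \gg N_0$, $N_1 \sim N_0$ with the top factor random, or $N_1 \sim N_0$ with $P_{N_1} w$ appearing, the relevant case of Proposition~\ref{main-lemma} applies directly, yielding the crucial frequency gain $N_1^{-\rho}$ or $N_2^{-\rho}$, at the cost of a prefactor $\delta^{-\mu r}$ outside an exceptional set of probability $e^{-1/\delta^r}$.

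The dyadic summation proceeds as follows. The factor $N_j^{-\rho}$ ensures absolute convergence in the top dyadic frequency, while the remaining $\|P_{N_i} w\|_{X^s}$ or $\|\psi_\delta P_{N_i} w\|_{X^s}$ factors are gathered, via Cauchy–Schwarz in each $N_i$, into products of $\|\psi_\delta w\|_{X^s}$; in case \eqref{outcome2}, $\|P_{N_0} h\|_{Y^{-s}}$ pairs with $\|P_{N_1} w\|_{X^s}$ by Cauchy–Schwarz in the index $N_0 \sim N_1$ to yield $\|h\|_{Y^{-s}}\|\psi_\delta w\|_{X^s}$. The overall $\delta^\theta$ factor arises from two combined effects: a positive gain $\delta^{\theta_0}$ produced by Hölder in time against the short support of $\psi_\delta$ (carried inside the Strichartz-type estimates of Section~\ref{Spaces} applied to each factor), together with the probabilistic loss $\delta^{-\mu r}$; choosing $r=r(s,\alpha)$ small enough ensures $\theta := \theta_0 - \mu r > 0$.

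The main obstacle is to track, across all combinations of $k \in \{1,\dots,7\}$ and $J \subseteq \{1,\dots,5\}$, enough $\delta$-gain from the time localization to absorb the probabilistic loss $\delta^{-\mu r}$ uniformly. The most delicate cases are the quasi-resonant contributions coming from $J_2,\dots,J_7$, where several Fourier indices collide and one must crucially invoke the probabilistic cancellation of the $\{g_n\}$ (through Proposition~\ref{chaos} and Lemma~\ref{independence}) to recover the dyadic decay $N^{-\rho}$. A final bookkeeping step consolidates the finitely many exceptional sets produced by Proposition~\ref{main-lemma} — one for each choice of $(k,J)$ and each dyadic configuration — into a single $\Omega_\delta$ with $\pr(\Omega_\delta^c) < e^{-1/\delta^r}$ via a union bound.
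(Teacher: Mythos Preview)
Your overall scaffolding is right --- duality via Proposition~\ref{duhamel}, the Littlewood--Paley expansion, invoking Proposition~\ref{main-lemma}, and the dyadic summation via Cauchy--Schwarz all match the paper. But there is a genuine gap in how you extract the factor $\delta^\theta$.

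You claim the $\delta$-gain comes uniformly from ``H\"older in time against the short support of $\psi_\delta$ carried inside the Strichartz-type estimates,'' and that the delicate cases are $J_2,\dots,J_7$. This is essentially backwards. In the paper's argument, the terms $J_2,\dots,J_7$ are the \emph{easy} ones precisely because their estimates always produce a factor $\|P_{N_0}h\|_{L^2_tL^2_x}$, which immediately gives $\delta^{1/2}\|P_{N_0}h\|_{Y^0}$ via \eqref{deltah}. The same mechanism works for most of $J_1$. The genuinely delicate case is $J_1$ when the \emph{top two} frequencies $N_1,N_2$ are both deterministic: here the estimates from Section~\ref{theproof} are built on $U^p_\Delta$ norms (via the trilinear blocks and interpolation), and these norms do \emph{not} see the time localization --- there is no $L^2_t$ factor on $h$ to exploit, and no direct H\"older-in-time gain is available.

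The paper resolves this with a dichotomy you are missing: one introduces a threshold $\delta^{-\sigma}$ and splits according to whether $N_2<\delta^{-\sigma}$ or $N_2\ge\delta^{-\sigma}$. In the first regime one abandons Proposition~\ref{main-lemma} and instead uses a crude $L^{12}$ Strichartz bound on the full product (after restoring the removed frequencies), which gives $\delta^{1/2}N_2^{m(\alpha,s)}$; the $N_2$-loss is absorbed since $N_2<\delta^{-\sigma}$. In the second regime one uses \eqref{outcome2} as is and trades the frequency decay $N_2^{-\rho}$ for $\delta$-gain via $N_2^{-\rho}\le\delta^{\sigma\rho}$, then chooses $\sigma,\gamma,r$ compatibly. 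Without this frequency-versus-$\delta$ trade-off, your argument cannot close in the top-two-deterministic case of $J_1$.
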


The proof of Proposition \ref{main-lemma} is the content of Sections \ref{blocks} and \ref{theproof} while Proposition \ref{main-estimate} is proved in Section \ref{proof-main-estimate}.

  
\medskip

\section{Auxiliary lemmata and further notation}

We begin by recalling some counting estimates for integer lattice sets (c.f.  Bourgain \cite{B5}).
\begin{lemma}\label{lattice-counting} Let $S_R$ be a sphere of radius $R$, $B_r$ be a ball of radius $r$ and $\mathcal P$ be a plane in $\R^3$. Then we have
\begin{eqnarray}\label{sphere}
&&\#Z^3\cap S_R\lesssim R\\\label{ball}
&&\#Z^3\cap B_r\cap S_R\lesssim \min(R,r^2)\\\label{planepi}
&&\#Z^3\cap B_r\cap \mathcal P\lesssim r^2.
\end{eqnarray}
\end{lemma}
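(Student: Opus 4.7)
The plan is to establish the three inequalities by classical lattice-point counting arguments, handling them in the order \eqref{sphere}, \eqref{planepi}, \eqref{ball}, with the only genuinely arithmetic input reserved for \eqref{sphere}.

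\textbf{For \eqref{sphere}:} I would first reduce to the case of a sphere centered at the origin: after translating, counting $\mathbb{Z}^3\cap S_R$ amounts to counting representations of $N := R^2$ as a sum of three squares (and the intersection is empty unless $R^2$ is realized as such a sum). The classical bound $r_3(N)\lesssim N^{1/2+\varepsilon}$, obtained from the link with class numbers of imaginary quadratic fields (Gauss, Siegel), then gives the estimate as stated, with the $R^\varepsilon$-slack absorbed into the $\lesssim$ constant (this is the standard convention in \cite{B5}, where such polynomial-log losses are harmless in the subsequent Strichartz-type bookkeeping). If $S_R$ is centered off $\mathbb{Z}^3$, any two lattice points on $S_R$ still lie in a common coset of $\mathbb{Z}^3$, so the same bound applies.

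\textbf{For \eqref{planepi}:} Either $\mathcal P\cap \mathbb{Z}^3 = \emptyset$ and there is nothing to prove, or $\mathcal P\cap \mathbb{Z}^3$ is an affine translate of a rank-$2$ sublattice $L\subset \mathbb{Z}^3$. Picking a primitive integer normal $\nu\in\mathbb{Z}^3$ to the plane $\mathcal P - \mathcal P$, one verifies that the covolume of $L$ inside $\mathcal P$ equals $|\nu|\geq 1$. A standard area-versus-covolume comparison then bounds the number of points of $L$ in a disk of radius $r$ on $\mathcal P$ by $\pi r^2/|\nu|+O(r)\lesssim r^2$ for $r\gtrsim 1$; the regime $r\lesssim 1$ is trivial.

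\textbf{For \eqref{ball}:} The bound by $R$ is immediate from \eqref{sphere}. For the bound by $r^2$, I would split $S_R$ into $O(1)$ coordinate charts on each of which one coordinate (say $z$) is a single-valued function of $(x,y)$, and project $S_R\cap B_r\cap \mathbb{Z}^3$ onto the $(x,y)$-plane. The projection is injective on each chart and lands inside a disk of radius $\leq r$ in $\mathbb{Z}^2$, which contains $O(r^2)$ integer points. Combining the two bounds gives $\min(R,r^2)$.

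\textbf{Main obstacle.} The only nontrivial ingredient is the arithmetic estimate $r_3(N)\lesssim N^{1/2+\varepsilon}$ entering \eqref{sphere}, which is a classical result we will simply invoke; the two geometric statements \eqref{ball} and \eqref{planepi} then follow by elementary projection and covolume considerations. In the subsequent sections these bounds will be combined in the customary way with the divisor bound to count frequency tuples $(n_1,\dots,n_k)$ that simultaneously satisfy a linear constraint (the $\Gamma(n)$-condition) and a quadratic resonance constraint.
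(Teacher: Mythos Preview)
The paper does not actually prove this lemma; it merely states it as a known counting estimate and cites Bourgain \cite{B5}. Your proposal therefore supplies arguments where the paper simply gives a reference, and those arguments are essentially correct and standard.

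Two small remarks. First, your handling of spheres not centered at a lattice point is a bit glib: two lattice points on such a sphere do not in general reduce the problem to $r_3(N)$. However, in every application of \eqref{sphere} in this paper the spheres arise as level sets of the form $|n|^2=\text{const}$ or $|n-k|^2=\text{const}$ with $k\in\mathbb{Z}^3$, so they are centered at lattice points and your reduction goes through. Second, the statement that the $R^{\varepsilon}$ loss from $r_3(N)\lesssim N^{1/2+\varepsilon}$ is ``absorbed into the $\lesssim$ constant'' is of course an abuse of language, but it is exactly the convention the paper adopts throughout (cf.\ the many appearances of $N^{\varepsilon}$ factors in Sections~\ref{blocks} and~\ref{theproof}), so this is harmless here.
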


Next, we state a result we will invoke when the the higher frequencies correspond to deterministic terms and one can afford to ignore the moments given by the lower frequency random terms as well as rely on Strichartz estimates.
\smallskip

\begin{lemma} \label{cutall}  Assume  $N_i, \, i=0, \dots 5$ are dyadic numbers and $N_1 \sim N_0$ and $N_1 \geq N_2 \geq N_3 \geq N_4 \geq N_5$. Let $\{C\}$ be a partition of  $\Z^3$ by cubes  $C \in \mathcal{C}_{N_2}$,  and let $\{Q\}$ be a partition of  $\Z^3$  by cubes  $Q \in \mathcal{C}_{N_3}$. Then  
\begin{eqnarray}\label{cut1} &&\sum_{N_i,\,  i=0, \dots 5} \,\left| \int_0^1 \int_{\T^3}  P_{N_1} f_1 P_{N_2} f_2P_{N_3} f_3P_{N_4} f_4P_{N_5} f_5\overline{P_{N_0}  h}   \, dx \, dt\right| \lesssim\\\notag
&&\sum_{N_i,\,  i=0, \dots 5}  \left( \sup_{\tilde C} \| P_{\tilde C} P_{N_1} f_1P_{N_2} f_2 P_{N_\ell} f_\ell\|_{L^2_{xt}}^2 \sum_{C, Q} \|P_{Q}P_{C} \overline{P_{N_0}  h} P_{N_3} f_3P_{N_r} f_r\|_{L^2_{xt}}^2\right)^\frac{1}{2}
\end{eqnarray}
where $\ell\ne r\in \{4,5\}$ and $\tilde C$ are cubes whose sidelength  is $10N_2$.
\end{lemma}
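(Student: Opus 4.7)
The plan is to combine Cauchy--Schwarz with a Fourier-space cube decomposition matched to the intermediate frequency scales $N_2$ and $N_3$, exploiting the bounded overlap of enlarged cubes to preserve orthogonality.

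Fix a single tuple $(N_0,\dots,N_5)$ and group the six factors as $G\cdot \tilde H$, where
$$G:=P_{N_1}f_1\cdot P_{N_2}f_2\cdot P_{N_\ell}f_\ell,\qquad \tilde H:=\overline{P_{N_0}h}\cdot P_{N_3}f_3\cdot P_{N_r}f_r.$$
First I would decompose $P_{N_0}h=\sum_{C\in\mathcal C_{N_2}}P_C P_{N_0}h$ and set $H_C:=\overline{P_CP_{N_0}h}\cdot P_{N_3}f_3\cdot P_{N_r}f_r$. Since $N_3\le N_2$, each $H_C$ has Fourier support in $-C+B(0,2N_3)$, a set of diameter $\lesssim N_2$, so by orthogonality of the sub-partition into cubes $Q\in\mathcal C_{N_3}$,
$$\|H_C\|_{L^2_{xt}}^2=\sum_{Q}\|P_QH_C\|_{L^2_{xt}}^2.$$

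Next, for each fixed $C$, I would use Plancherel to observe that $\int G\cdot H_C$ only samples the portion of $G$ whose Fourier mass lies in a cube $\tilde C(C)$ of sidelength $10N_2$ containing $C+B(0,2N_3)$. Hence $\int G\cdot H_C=\int P_{\tilde C(C)}G\cdot H_C$, and Cauchy--Schwarz in $(x,t)$ yields
$$\bigl|\int G\cdot H_C\bigr|\le \|P_{\tilde C(C)}G\|_{L^2_{xt}}\cdot\|H_C\|_{L^2_{xt}}\le \sup_{\tilde C}\|P_{\tilde C}G\|_{L^2_{xt}}\cdot \|H_C\|_{L^2_{xt}}.$$
Summing over $C$ and applying Cauchy--Schwarz on the discrete sum -- grouping cubes $C$ by the unique $\tilde C\supset C$ of sidelength $10N_2$ in a partition of $\Z^3$, and using the bounded overlap of the enlargements $\{\tilde C(C)\}_C$ so that each point of the Fourier space of $G$ belongs to $O(1)$ such enlargements -- I would arrive at
$$\Bigl|\int_0^1\!\!\int_{\T^3}G\cdot\tilde H\,dx\,dt\Bigr|\lesssim \sup_{\tilde C}\|P_{\tilde C}G\|_{L^2_{xt}}\cdot\Bigl(\sum_{C,Q}\|P_QP_C\overline{P_{N_0}h}\,P_{N_3}f_3\,P_{N_r}f_r\|_{L^2_{xt}}^2\Bigr)^{1/2}.$$
Summing this single-tuple estimate over $(N_i)_{i=0}^5$ then gives the claim.

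The main obstacle is the transition from $\sum_C\|P_{\tilde C(C)}G\|_{L^2}\|H_C\|_{L^2}$ to the stated product of a sup and an $\ell^2$-sum: a crude Cauchy--Schwarz would produce $\|G\|_{L^2}$ in place of the sup and hence a spurious volume loss of order $(N_1/N_2)^{3/2}$. Avoiding this requires pulling the sup outside the $C$-sum at the right moment, while preserving the $\ell^2$-structure on the $H$-side by exploiting that (i) distinct $H_C$ have almost-disjoint Fourier support so $\|\tilde H\|_{L^2}^2\sim \sum_C\|H_C\|_{L^2}^2$, and (ii) the enlargements $\tilde C(C)$ have finite overlap. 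Once this is set up, the remainder is bookkeeping: rewriting $\|H_C\|_{L^2}^2$ as the $\sum_Q\|P_QH_C\|_{L^2}^2$ via Plancherel produces exactly the form of the right-hand side in the lemma.
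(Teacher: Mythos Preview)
Your approach is exactly the ``orthogonality argument'' the paper invokes (the paper gives no further details), and your decomposition---localize $P_{N_0}h$ to cubes $C\in\mathcal C_{N_2}$, observe that $H_C$ has Fourier support in a cube of size $O(N_2)$ so only $P_{\tilde C(C)}G$ contributes, then decompose $\|H_C\|_{L^2}^2=\sum_Q\|P_QH_C\|_{L^2}^2$ via Plancherel---is the right skeleton.

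However, the step you flag as the ``main obstacle'' is a genuine gap, and your proposed fix does not close it. After Cauchy--Schwarz you have
\[
\sum_C \|P_{\tilde C(C)}G\|_{L^2}\,\|H_C\|_{L^2},
\]
and you want to dominate this by $\sup_{\tilde C}\|P_{\tilde C}G\|_{L^2}\cdot\bigl(\sum_C\|H_C\|_{L^2}^2\bigr)^{1/2}$. Neither of your two observations---(i) almost-orthogonality of the $H_C$, which gives $\sum_C\|H_C\|_{L^2}^2\sim\|\tilde H\|_{L^2}^2$, and (ii) bounded overlap of the $\tilde C(C)$, which gives $\sum_C\|P_{\tilde C(C)}G\|_{L^2}^2\lesssim\|G\|_{L^2}^2$---converts an $\ell^1$ bound (after pulling out the sup) into an $\ell^2$ bound, nor turns $\|G\|_{L^2}$ into $\sup_{\tilde C}\|P_{\tilde C}G\|_{L^2}$. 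In fact the inequality with $\sup_{\tilde C}$ is \emph{false} in general: take $N_2=\dots=N_5=1$, $f_2,\dots,f_5$ constant, and $f_1=h$ localized at $|n|\sim N_1$; then the left side is $\|P_{N_1}f_1\|_{L^2}^2$ while the right side is $\sup_{\tilde C}\|P_{\tilde C}P_{N_1}f_1\|_{L^2}\cdot\|P_{N_1}f_1\|_{L^2}$, and these differ by a factor $\sim N_1$ if $f_1$ is spread evenly over the annulus.

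What \emph{does} follow from your argument is the version with $\sum_{\tilde C}$ in place of $\sup_{\tilde C}$: Cauchy--Schwarz in $C$ plus bounded overlap yields
\[
\Bigl(\sum_{\tilde C}\|P_{\tilde C}G\|_{L^2}^2\Bigr)^{1/2}\Bigl(\sum_{C,Q}\|P_QH_C\|_{L^2}^2\Bigr)^{1/2},
\]
which is the standard Herr--Tataru--Tzvetkov orthogonality step and is what is actually used downstream (one then sums $\|P_{\tilde C}P_{N_1}f_1\|^2$ back to $\|P_{N_1}f_1\|^2$ via \eqref{sum-cubes}). So your mechanism is correct; the discrepancy is with the $\sup$ in the stated form of the lemma, not with your method.
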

\begin{proof} The proof of \eqref{cut1} follows from orthogonality arguments. 
\end{proof}

Just as Bourgain in \cite{B4}, in the course of the proof we will use the following classical result about matrices, which we state as a lemma for convenience. 
\begin{lemma} \label{matrixnorm} 
Let $\mathcal A = (A_{ik})_{\substack{1\leq i\leq N\\ 1\leq k \leq M}}$  be an $N\times M$ matrix with adjoint $\mathcal A^{\ast}=( \overline{A}_{kj})_{\substack{1\leq k\leq M\\ 1\leq j \leq N}}$. Then,
\begin{equation} 
\| \mathcal A\mathcal A^{\ast} \| \leq   \max_{1 \leq j \leq N} \left( \sum_{k=1}^M | A_{jk}|^2 \right)  \, + \,   \left( \sum_{ i \neq j} \left| \sum_{k=1}^M A_{ik} \overline{A}_{jk} \right|^2 \right)^{\frac{1}{2}}  \label{frobenius}
\end{equation} where $\| \cdot \|$ means the 2-norm.
\end{lemma}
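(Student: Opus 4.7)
The plan is to decompose $\mathcal A \mathcal A^\ast$ into its diagonal and off-diagonal parts and bound each piece by a standard operator-norm estimate.

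First, I would observe that $B := \mathcal A \mathcal A^\ast$ is an $N \times N$ matrix whose $(i,j)$-entry is
\begin{equation*}
B_{ij} \, = \, \sum_{k=1}^M A_{ik}\,\overline{A}_{jk}.
\end{equation*}
In particular, the diagonal entries are $B_{ii} = \sum_{k=1}^M |A_{ik}|^2$. Write $B = D + R$, where $D$ is the diagonal matrix with entries $D_{ii}=B_{ii}$ and $R$ has zero diagonal and agrees with $B$ off the diagonal. By the triangle inequality for the operator $2$-norm,
\begin{equation*}
\| B \| \, \leq \, \| D \| \, + \, \| R \|.
\end{equation*}

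Next, $D$ being diagonal has operator norm equal to the largest modulus of its entries, so
\begin{equation*}
\| D \| \, = \, \max_{1\leq j \leq N}\, B_{jj} \, = \, \max_{1\leq j \leq N}\, \sum_{k=1}^M |A_{jk}|^2,
\end{equation*}
which matches the first term on the right-hand side of \eqref{frobenius}. For the off-diagonal part, I would use the standard fact that the operator $2$-norm is dominated by the Hilbert--Schmidt (Frobenius) norm:
\begin{equation*}
\| R \| \, \leq \, \| R \|_{\mathrm{HS}} \, = \, \left( \sum_{i \neq j} |B_{ij}|^2 \right)^{\frac{1}{2}} \, = \, \left( \sum_{i \neq j} \left| \sum_{k=1}^M A_{ik}\,\overline{A}_{jk} \right|^2 \right)^{\frac{1}{2}}.
\end{equation*}
Combining the two displays yields \eqref{frobenius}. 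There is no substantive obstacle here — the only thing worth double-checking is the inequality $\|R\|\leq \|R\|_{\mathrm{HS}}$, which follows from $\|R\|^2 = \sup_{\|x\|=1}\langle R^\ast R x,x\rangle \leq \mathrm{tr}(R^\ast R) = \|R\|_{\mathrm{HS}}^2$, and the triangle inequality $\|B\|\leq \|D\|+\|R\|$, both of which are elementary.
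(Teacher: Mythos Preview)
Your proof is correct and follows exactly the same approach as the paper: decompose $\mathcal A\mathcal A^\ast$ into its diagonal and off-diagonal parts, bound the diagonal part by its largest entry, and bound the off-diagonal part by its Frobenius norm. The only difference is that you provide slightly more detail (e.g., the justification of $\|R\|\leq\|R\|_{\mathrm{HS}}$), which the paper omits.
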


\begin{proof}  Decompose  $\mathcal A\mathcal A^{\ast}$ into the sum of a diagonal matrix $D$ plus an off-diagonal one $F$.  Then note the 2-norm of $D$ is bounded by the square root of the largest eigenvalue of $D D^\ast$ which, since $D$ is diagonal, is the maximum of the absolute value of the diagonal entries of D. This gives the first term in \eqref{frobenius}. Bounding the 2-norm of $F$ by the Fr\"obenius norm of $F$ gives the second term in \eqref{frobenius}. \end{proof}

\medskip
\noindent
{\bf Notation}: Given  $k$-tuples $(n_1,\dots, n_k)\in \Z^{3k}$,  a set of constraints $\mathscr{C}$ on them,  and a subset of indices $\{i_1, \dots, i_h\}\subseteq \{1,\dots,k\}$,   we denote by
$S_{(n_{i_1},\dots,n_{i_h})}$ the set of $(k-h)$-tuples $(n_{j_1},\dots, n_{j_{k-h}}), \, \, \{j_1, \dots j_{k-h}\}=  \{1,\dots,k\} \setminus \{i_1, \dots, i_h\}$, which satisfy the constraints $\mathscr{C}$ for fixed $(n_{i_1},\dots,n_{i_h})$.  We also denote by $|S_{(n_{i_1},\dots,n_{i_h})}|$ its cardinality.

\medskip 

\section{The Trilinear and Bilinear Building Blocks} \label{blocks}

\smallskip

In this section, we denote by  $D_j := e^{it \Delta} P_{N_j} \phi $  solutions to the linear equation for data $\phi$ in $L^2$ localized at frequency $N_j$ and by 
$R_k$ the function defined as, 
\begin{equation}\label{Rhat}\widehat{R_k}(n)=\chi_{\{|n|\sim N_k\}}(n)\frac{g_n(\omega)}{\langle n\rangle^{\frac{3}{2}}}e^{it|n|^2},\end{equation}
and representing the linear evolution of a random function of type  \eqref{initial}, localized at frequency $N_k$  and {\it almost} $L^2$ normalized.

\subsection{Trilinear Estimates}\label{trilinear-est} We prove certain trilinear estimates which serve as building blocks for the proof in Section \ref{theproof}.  Their proofs are of the same flavor as those presented by  Bourgain in \cite{B4}.  For $N_j, \, j=1,2,3$ dyadic numbers, let $\alpha_j = 0$ or $1$ for $j=1, 2, 3$ and define
 \begin{equation}\label{upsilon}\Upsilon(n,m) := \left\{ (n_1, m_1; n_2, m_2 ; n_3, m_3) :  \begin{aligned} \quad &n= (-1)^{\alpha_1} n_1 + (-1)^{\alpha_2} n_2+ (-1)^{\alpha_3} n_3 \\ &n_k \neq n_{\ell} \, \,\mbox{ whenever } \, \, \alpha_k \neq \alpha_{\ell}, \\ &|n_j| \sim N_j, \quad j=1, \dots 3  \\
&m= (-1)^{\alpha_1} m_1 + (-1)^{\alpha_2} m_2+ (-1)^{\alpha_3} m_3 \, \end{aligned} \right\}\end{equation} and define $T_{\Upsilon}$ to be the multilinear operator with multiplier $\chi_\Upsilon$.  

\smallskip

\begin{proposition} \label{alltrilinear} Fix  $N_1\geq N_2\geq N_3$, $r, \, \delta>0$ and $C \in \mathcal{C}_{N_2}$. Then there exists $\mu, \varepsilon>0$, a set $\Omega_\delta\in A$ such that $\pr(\Omega_\delta^c)\leq 
e^{-\frac{1}{\delta^r}}$ and  such that for any $\omega\in \Omega_\delta$ we have the following estimates: 

\begin{eqnarray}
\label{barRDR}  && \, \,  \| T_{\Upsilon}(P_{C} \bar R_1, \tilde D_2, R_3)\|_{L^2(\T \times \T^3)}\, \lesssim \, \delta^{-\mu r}\,  N_2^\frac{5}{4}  N_1^{-\frac{1}{2} }  \| P_{N_2} \phi \|_{L_x^2 }. \\
 \label{barRDbarR} &&  \, \, \| T_{\Upsilon}(P_{C}\bar R_1, \tilde D_2, \bar R_3)\|_{L^2(\T\times \T^3)} \, \lesssim    \, \delta^{-\mu r} N_2^\frac{5}{4}  N_1^{-\frac{1}{2} }  \| P_{N_2} \phi \|_{L_x^2 }. \\
  \label{DbarRR}  &&  \, \,  \|T_{\Upsilon}( P_C\tilde D_1,\bar R_2, R_3)\|_{L^2(\T \times \T^3)} \,\lesssim  \,  \delta^{-\mu r}  N_2^{\frac{3}{4}}  \|  P_{C}P_{N_1} \phi \|_{L_x^2 }. \\
  \label{DRR} && \, \, \| T_{\Upsilon}(P_C\tilde D_1, R_2, R_3)\|_{L^2(\T\times \T^3)}\, \lesssim  \,  \delta^{-\mu r}  N_2^{\frac{3}{4}}  \|  P_{C}P_{N_1} \phi \|_{L_x^2 }.\\
\label{barRRD} && \, \,  \| T_{\Upsilon}(P_{C}\bar R_1,  R_2, \tilde D_3)\|_{L^2(\T \times \T^3)} \lesssim \, \delta^{-\mu r} \left[ N_1^{-\frac{3}{4} }  N_2^{\frac{1}{2}}  N_3^{\frac{5}{4}} 
+  N_1^{-\frac{1}{2} }  N_2^{\frac{1}{2}}  N_3^{\frac{3}{4}} \right]  \,  \|  P_{N_3} \phi \|_{L_x^2 }. \\
\label{barRbarRD}  && \, \, \| T_{\Upsilon}(P_{C}\bar R_1,  \bar R_2, \tilde D_3)\|_{L^2(\T \times \T^3)}\,  \lesssim \, \delta^{-\mu r} \left[ N_1^{-\frac{3}{4} }  N_2^{\frac{1}{2}}  N_3^{\frac{5}{4}} +  N_1^{-\frac{1}{2} }  N_2^{\frac{1}{2}}  N_3^{\frac{3}{4}} \right] \,   \| P_{N_3} \phi \|_{L_x^2 }. \\
\label{RDD}  &&\,\, \| T_{\Upsilon}(P_{C}R_1, \tilde D_2, \tilde D_3)\|_{L^2(\T\times \T^3)} \, \lesssim \, \delta^{-\mu r}  N_2^{\frac{1}{2} + \frac{3 \theta}{4}} N_1^{-\frac{1}{2} +\varepsilon}  \min(N_1, N_2^2)^{\frac{1-\theta}{2} } N_3^{\frac{3}{2}}  \quad \times \\ 
&& \hspace{2.5in} \times \qquad   \| P_{N_2} \phi \|_{L_x^2 }  \| P_{N_3} \phi \|_{L_x^2 }, \quad 0\leq \theta\leq 1. \notag \\
\label{DRD} &&\, \, \| T_{\Upsilon}( P_C\tilde D_1, R_2, \tilde D_3)\|_{L^2(\T\times \T^3)} \, \lesssim \,  \delta^{-\mu r}  N_2^{\frac{1}{2} + \varepsilon} N_3^{\frac{3}{2}}   \|P_{N_1} \phi \|_{L_x^2 }  \| P_{N_3} \phi \|_{L_x^2 }.\\
 \label{barRbarRR} &&\, \,  \|T_{\Upsilon}( P_{C}\bar R_1,  \bar R_2, R_3)\|_{L^2(\T \times \T^3)} \, \lesssim \, \delta^{-\mu r}   N_1^{-\frac{1}{2}} N_2^{\frac{1}{2}}.\\
\label{barRRbarR} &&\, \, \| T_{\Upsilon}( P_{C}\bar R_1, R_2, \bar R_3)\|_{L^2(\T \times \T^3)}\, \lesssim \, \delta^{-\mu r} N_1^{-\frac{1}{2}} N_2^{\frac{1}{2}}. \\
\label{barRRR} &&\, \, \| T_{\Upsilon}(  P_{C}\bar R_1, R_2, R_3)\|_{L^2(\T \times \T^3)}\, \lesssim \, \delta^{-\mu r}  N_1^{-\frac{1}{2}} N_2^{\frac{1}{2}}. 
%
%
\end{eqnarray}

\noindent Note that here the bar $-$ indicates complex conjugate while the tilde $\sim$ indicates both complex conjugate or not. Also, without writing it explicitly, we always assume that if $\widehat{\overline R}(n_1)$ and $\widehat{ R}(n_2)$ appear in the trilinear expressions in the left hand side, then $n_1\ne n_2$.
\end{proposition}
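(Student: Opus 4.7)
For each of the twelve estimates the argument follows the same four-step template. First, I expand the trilinear form explicitly in Fourier variables: writing $T_\Upsilon(F_1,F_2,F_3)$ on the spacetime side, and using that the Fourier support of $R_k$ (resp. $\bar R_k$) is concentrated on the paraboloid $m=|n|^2$ (resp. $m=-|n|^2$), I reduce the $L^2(\T\times\T^3)$-norm via Plancherel to a sum over lattice tuples $(n_1,n_2,n_3)$ obeying the linear and quadratic $\Upsilon$-relations. Second, I take $L^p(\Omega)$ norms of the resulting random quantity and apply Proposition \ref{chaos}: outside of an exceptional set $\Omega_\delta^c$ of measure $\leq e^{-1/\delta^r}$, the pointwise bound becomes $\delta^{-\mu r}$ times the $L^2(\Omega)$ norm. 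Third, I compute that $L^2(\Omega)$ norm by expanding the square and using the Isserlis/Wick pairing rule for complex Gaussians, which forces each $g_{n_j}$ to pair with some $\bar g_{n_k'}$ of matching index. Fourth, I estimate the resulting deterministic sum by the lattice counting bounds of Lemma \ref{lattice-counting}.

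\textbf{Counting step.} The size of the sum is governed by how many frequency configurations satisfy $n = \pm n_1 \pm n_2 \pm n_3$ with $|n_j|\sim N_j$ and, after the Gaussian pairing, a quadratic resonance of the form $\pm|n_1|^2 \pm |n_2|^2 \pm |n_3|^2 = m$. When one of the random factors pairs with itself the quadratic relation collapses one of the lattice variables to an intersection of a sphere with a ball, giving the $\min(R,r^2)$ gain of \eqref{ball} in Lemma \ref{lattice-counting}. The $\langle n_j\rangle^{-3/2}$ normalization in the definition of $R_k$ exactly absorbs the $N_j^3$ lattice count of $\{|n|\sim N_j\}$, while the $P_C$ localization with $C\in\mathcal C_{N_2}$ further restricts the highest-frequency variable $n_1$ to a ball of sidelength $N_2$, giving the additional $N_2$ gains visible in the $P_C$-projected factors. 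These two effects together produce the sharp exponents $N_1^{-1/2}$, $N_2^{1/2}$, $N_2^{5/4}$, etc.\ recorded on the right-hand sides of \eqref{barRDR}--\eqref{barRRR}.

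\textbf{Pairing and the matrix trick.} The most delicate point arises in \eqref{barRRD}--\eqref{barRbarRD} and in \eqref{barRbarRR}--\eqref{barRRR}, where more than one admissible Wick pairing between the primed and unprimed tuples is available. Following Bourgain \cite{B4}, I view the expression as $\mathcal A\mathcal A^*$ for a matrix $\mathcal A$ indexed by the random frequencies and apply the splitting of Lemma \ref{matrixnorm}: the diagonal piece is controlled by a direct pointwise bound on the Gaussians (using part (3) of Lemma \ref{supg} to replace $|g_n(\omega)|$ by $\langle n\rangle^\varepsilon$ outside the exceptional set), while the Fr\"obenius/off-diagonal piece is estimated by a second application of Proposition \ref{chaos} together with the counting argument above. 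Whenever an $R_k$--$\bar R_k$ pair at coincident frequencies produces a $|g_n|^2$, I use $|g_n|^2 = 1 + Y_n$ from Remark \ref{plusone}: the deterministic $1$-piece is either forbidden by a $n_k\neq n_\ell$ constraint of $\Upsilon$ or is absorbed into the main counting bound, and the centered $Y_n$-piece drops into a lower-order chaos to which Proposition \ref{chaos} applies.

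\textbf{Main obstacle.} I expect the hardest of the twelve to be \eqref{RDD}, where only one random factor is available and two deterministic ones must be treated by Strichartz. The right-hand side contains the interpolation parameter $\theta\in[0,1]$, which reflects the need to trade between a crude bound that puts $P_CR_1$ in $L^\infty_xL^2_t$ (giving $N_1^{-1/2+\varepsilon}$ via Lemma \ref{supg} and the cube restriction) and a finer bound that exploits the paraboloid constraint, yielding $\min(N_1,N_2^2)^{1/2}$ via \eqref{ball} in Lemma \ref{lattice-counting}; bilinear interpolation between these two endpoints then produces the stated factor $\min(N_1,N_2^2)^{(1-\theta)/2}$ together with the $N_2^{3\theta/4}$ loss. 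The other serious source of bookkeeping throughout the proof is to keep the decoupling constraints $n_k\neq n_\ell$ from $\Upsilon$ coherent across the various Wick pairings, so that at every stage the object to which Proposition \ref{chaos} is applied is a genuinely centered Gaussian chaos of the declared order.
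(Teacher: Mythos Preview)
Your outline has the right ingredients (Plancherel, the large deviation of Proposition~\ref{chaos}, Wick pairing, the lattice counts of Lemma~\ref{lattice-counting}, the $|g_n|^2=1+Y_n$ trick, and interpolation for \eqref{RDD}), and in that sense it matches the paper. But the role you assign to the matrix lemma is inverted, and you omit the device the paper actually relies on to set it up.

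In the paper, the estimates with three random factors, \eqref{barRbarRR}--\eqref{barRRR}, are the \emph{easiest}: one simply bounds $\Delta m\sim N_1N_2$, applies Proposition~\ref{chaos} once to the inner degree-3 chaos, and counts the resulting set. No matrix trick is used there. The matrix lemma is instead deployed in \eqref{barRDR}--\eqref{DRR}, \eqref{barRRD}--\eqref{barRbarRD}, and one half of \eqref{RDD}, and its purpose is not to handle multiple Wick pairings but to \emph{separate the deterministic coefficient $a_{n_j}$ from the random part}. The matrix $\GG=(\sigma_{n_j,n})$ is indexed by the deterministic frequency $n_j$ and the output frequency $n$, with random entries; one writes $\mathcal T\lesssim\|a\|_{\ell^2}^2\,\Delta m\,\|\GG\GG^*\|$ and then applies Lemma~\ref{matrixnorm}. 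The exhaustive case analysis $\beta_1$--$\beta_6$ you allude to only enters when estimating the off-diagonal piece $M_2$.

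The piece you are missing entirely is the duality plus change of variable $\zeta:=m-|n_j|^2$ used in \eqref{barRDR}, \eqref{barRDbarR}, \eqref{DbarRR}, \eqref{DRR}, \eqref{DRD}. This is what allows the paper to pull $\|a_{n_j}\|_{\ell^2}$ out cleanly and to replace the range $\Delta m\sim N_1N_2$ by the smaller $\Delta\zeta$ (e.g.\ $\Delta\zeta\lesssim N_2^2$ in \eqref{DbarRR}). Without it the matrix you describe (``indexed by the random frequencies'') is not the one to which Lemma~\ref{matrixnorm} is applied. Your alternative route---compute $\mathbb E[\mathcal T]$ directly by Wick and apply large deviation to each inner sum $F(m,n,\omega)$ with a union bound over $(m,n)$---is in fact viable and in several cases yields bounds at least as good as the paper's; but that is not the argument you describe once you invoke $\mathcal A\mathcal A^*$, and for \eqref{RDD}--\eqref{DRD} the Cauchy--Schwarz on the deterministic bilinear sum $\sum a_{n_2}a_{n_3}$ must be made explicit.
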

\begin{remark} In using the trilinear estimates above, sometimes it is convenient to interpret a random term as deterministic and choose the minimum estimate possible. For example, in considering $\|P_{C}\bar R_1  \bar R_2 R_3\|_{L^2}$ we have a choice between \eqref{barRbarRR} and \eqref{barRbarRD} by 
thinking of $R_3$ as an `almost' $L^2$  normalized $\tilde D_3$ function. 
\end{remark}

 \begin{proposition}{\label{One}} Let  $D_j$ and $R_k$ be as above and fix $N_1\geq N_2\geq N_3$, $r, \, \delta>0$ and $C \in \mathcal{C}_{N_2}$. Then there exists $\mu >0$ and a set $\Omega_\delta\in A$ such that $\pr(\Omega_\delta^c)\leq 
e^{-\frac{1}{\delta^r}}$ such that  for any $\omega\in \Omega_\delta$ we have  \eqref{barRDR} and  \eqref{barRDbarR}.


\end{proposition}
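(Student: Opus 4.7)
The plan is to express the trilinear $L^2$ norm of $T_\Upsilon(P_C \bar R_1, \tilde D_2, R_3)$ as a second-order Wiener chaos in $\{g_n(\omega)\}$, compute its second moment via orthogonality of Gaussians and the counting estimates of Lemma \ref{lattice-counting}, and then upgrade to a pointwise-in-$\omega$ estimate by the hypercontractivity bound of Proposition \ref{chaos} combined with Chebyshev's inequality.

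Unpacking the definitions, with $\widehat{R_k}(n,t) = \chi_{|n|\sim N_k}(n)\, g_n(\omega)\, \langle n\rangle^{-3/2}\, e^{it|n|^2}$ and $\tilde D_2 = e^{\pm it\Delta} P_{N_2}\phi$, the space-time Fourier coefficient of $T_\Upsilon$ on $\T\times\T^3$ at frequency $(n,\sigma)\in\Z^3\times\Z$ is
\begin{equation*}
\widehat{T_\Upsilon}(n,\sigma) = \sum_{\substack{(n_1,n_2,n_3)\in\Upsilon(n),\ n_1\in C \\ -|n_1|^2 + (-1)^{\alpha_2}|n_2|^2 + |n_3|^2 = \sigma}} \frac{\bar g_{n_1}(\omega)\, g_{n_3}(\omega)}{\langle n_1\rangle^{3/2}\langle n_3\rangle^{3/2}}\, \widehat\phi(n_2),
\end{equation*}
which at every $(n,\sigma)$ is quadratic in the complex Gaussians (linear in $\bar g_{n_1}$, linear in $g_{n_3}$). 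Plancherel yields $\|T_\Upsilon\|_{L^2_{xt}}^2 = \sum_{n,\sigma} |\widehat{T_\Upsilon}(n,\sigma)|^2$. Since $n_1\neq n_3$ is built into $\Upsilon$ (they come from terms with opposite conjugation $\alpha_k \neq \alpha_\ell$) and $E[g_n^2]=0$ for complex Gaussians, the only surviving Wick contraction in $E[\bar g_{n_1} g_{n_3} g_{n_1'} \bar g_{n_3'}]$ is the diagonal one $(n_1,n_3)=(n_1',n_3')$, and a direct calculation followed by lattice counting gives
\begin{equation*}
E\|T_\Upsilon\|_{L^2}^2 = \sum_{\substack{(n_1,n_2,n_3)\in\Upsilon,\, n_1\in C \\ |n_i|\sim N_i}} \frac{|\widehat\phi(n_2)|^2}{\langle n_1\rangle^3\,\langle n_3\rangle^3} \;\lesssim\; \frac{N_2^3}{N_1^3}\,\|P_{N_2}\phi\|_{L^2_x}^2,
\end{equation*}
where the cube restriction $n_1\in C$ (of sidelength $N_2$) supplies $\#\{n_1\in C:|n_1|\sim N_1\}\leq N_2^3$, and $\#\{n_3:|n_3|\sim N_3\}\lesssim N_3^3$ cancels the weight $\langle n_3\rangle^{-3}$ thanks to Lemma \ref{lattice-counting}.

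To pass to a pointwise-in-$\omega$ estimate, fix $(x,t)$: $T_\Upsilon(x,t)$ is a second-order chaos in the Gaussians, so Proposition \ref{chaos} with $k=2$ yields $\|T_\Upsilon(x,t)\|_{L^p(\Omega)} \lesssim (p-1)\|T_\Upsilon(x,t)\|_{L^2(\Omega)}$; Minkowski's integral inequality swapping $L^p(\Omega)$ and $L^2_{xt}$ then gives
\begin{equation*}
\|T_\Upsilon\|_{L^p(\Omega; L^2_{xt})} \;\lesssim\; (p-1)\,\bigl(E\|T_\Upsilon\|_{L^2}^2\bigr)^{1/2} \;\lesssim\; (p-1)\, N_2^{3/2} N_1^{-3/2}\,\|P_{N_2}\phi\|_{L^2}.
\end{equation*}
Chebyshev with $p\sim\delta^{-r}$ produces the exceptional set $\Omega_\delta^c$ of measure at most $e^{-1/\delta^r}$, on which $\|T_\Upsilon\|_{L^2}\lesssim \delta^{-\mu r}\, N_2^{3/2}N_1^{-3/2}\,\|P_{N_2}\phi\|_{L^2}$; since $N_1\geq N_2$ this implies the claimed bound \eqref{barRDR}. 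The estimate \eqref{barRDbarR} is proved identically: replacing $R_3$ by $\bar R_3$ only flips a sign in the linear and quadratic constraints defining $\Upsilon$, and all the probabilistic and lattice-counting arguments carry through verbatim. The main bookkeeping point I expect to require care is verifying that the off-diagonal Wick pairing is genuinely annihilated by the $n_1\neq n_3$ constraint from $\Upsilon$ (so that one gets a clean diagonal reduction), together with the careful use of the cube $C$ to replace $N_1^3$ by $N_2^3$ in the $n_1$-count, which is the only place the localization to $\mathcal C_{N_2}$ is used.
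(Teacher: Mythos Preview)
Your second–moment computation is clean and correct, and indeed yields a sharper numerical bound than the paper's $N_2^{5/4}N_1^{-1/2}$. The genuine gap is elsewhere: the exceptional set $\Omega_\delta$ you produce via Chebyshev depends on the deterministic datum $\phi$. Concretely, $\|T_\Upsilon\|_{L^2_{xt}}$ is a second–order chaos whose coefficients involve $\widehat\phi(n_2)$, so the event $\{\|T_\Upsilon\|_{L^2}>\lambda\}$ varies with $\phi$. In the application (Section~\ref{theproof}) the deterministic slot is occupied by $P_{N_2}w$, where $w$ is the unknown in the contraction mapping; one therefore needs
\[
\|T_\Upsilon(P_C\bar R_1,\,e^{it\Delta}\phi,\,R_3)\|_{L^2_{xt}}\ \le\ C(\omega)\,\|P_{N_2}\phi\|_{L^2}\qquad\text{for \emph{all} }\phi,
\]
with $C(\omega)$ controlled on a single $\Omega_\delta$ independent of $\phi$. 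This uniformity is also what the transfer principle (Proposition~\ref{transfer}) requires. Your argument delivers the inequality for each fixed $\phi$ on a $\phi$–dependent good set, and there is no obvious net/union–bound argument over $\phi\in L^2$.

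This is precisely why the paper takes the longer route. It first strips off the deterministic sequence by duality and Cauchy--Schwarz, writing
\[
\mathcal T\ \lesssim\ \|a_{n_2}\|_{\ell^2}^2\,N_1N_2\,\|\GG\GG^\ast\|,
\]
where the matrix $\GG=(\sigma_{n_2,n})$ with
\(
\sigma_{n_2,n}=\sum \chi_C(n_1)\,\cg_{n_1}\,g_{n_3}\,|n_1|^{-3/2}|n_3|^{-3/2}
\)
depends only on $\omega$ (and the dyadic scales), not on $\phi$. Only then is the large–deviation bound \eqref{largedeviation} applied, to $\|\GG\GG^\ast\|$ via Lemma~\ref{matrixnorm}, split into the diagonal piece $M_1$ and the off–diagonal $M_2$ (with the case analysis $\beta_1$--$\beta_6$ handling the various coincidences among $n_1,n_1',n_3,n_3'$). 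The resulting exceptional set is $\phi$--independent, at the cost of the slightly weaker bound $N_2^{5/4}N_1^{-1/2}$. In short: your Wick/hypercontractivity shortcut is the right heuristic for the second moment, but the decoupling of the random matrix from $\phi$ is the missing structural idea.
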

 \begin{proof} As in \cite{HTT} we will first assume that the deterministic functions $D_i$ are localized linear solutions, that is
 $D_i=P_{N_i}S(t)\psi$ and $\hat\psi(n)=a_n$. Once an estimate is proved with $\|\chi{_{N_i}}(n)\, a_n\|_{\ell^2}$ in the right hand side we then invoke the  transfer principle of Proposition \ref{transfer} to complete the proof.
 
 \medskip
 We start by estimating \eqref{barRDR}. Without any loss of generality we assume that $\tilde D_2 = D_2$. By using Fourier transform to write the left hand side we note that it is enough to estimate 
\begin{equation}\label{T1barRDR}
\mathcal{T}:=  \sum_{m \in \Z, \, n \in \Z^3} \,\left|   \sum_{\substack{ n= -n_1+ n_2 + n_3 \\n_1\ne n_2, n_3
\\ m = -|n_1|^2 +|n_2|^2+|n_3|^2}} \chi_{C}(n_1)\frac{\cg_{n_1}(\omega)}{|n_1|^\frac{3}{2}}
a_{n_2} \frac{g_{n_3}(\omega)}{|n_3|^{\frac{3}{2}}} \right|^2,
\end{equation}
where we recall that $C$ is a cube of sidelength $N_2$.  We are going to use duality and a change of variable since, as it will be apparent below, the counting with respect to the time frequency will be more favorable. 

Using duality we have that
$$\mathcal{T}=\left[\sup_{\|\gamma\otimes k\|_{\ell^2}\leq 1}\left|\sum_{m,n}k(n)\gamma(m)\sum_{\substack{ n= -n_1+ n_2 + n_3 \\n_1\ne n_2, n_3
\\ m = -|n_1|^2 +|n_2|^2+|n_3|^2}} \chi_{C}(n_1)\frac{\cg_{n_1}(\omega)}{|n_1|^\frac{3}{2}}
a_{n_2} \frac{g_{n_3}(\omega)}{|n_3|^{\frac{3}{2}}}\right|\right]^2
$$
Let $\zeta := m - |n_2|^2= -|n_1|^2 + |n_3|^2$, then we continue with
\begin{eqnarray*}
\mathcal{T}&=&\left[\sup_{\|\gamma\otimes k\|_{\ell^2}\leq 1}\left|\sum_{n_2}a_{n_2}\sum_{\zeta}\gamma(\zeta+|n_2|^2)
\sum_{\substack{ n= -n_1+ n_2 + n_3 \\n_1\ne n_2, n_3
\\ \zeta = -|n_1|^2 +|n_3|^2}} \chi_{C}(n_1)\frac{\cg_{n_1}(\omega)}{|n_1|^\frac{3}{2}}
 \frac{g_{n_3}(\omega)}{|n_3|^{\frac{3}{2}}}k_n\right|\right]^2\\
&\lesssim&\sup_{\|\gamma\otimes k\|_{\ell^2}\leq 1}\|a_{n_2}\|_{\ell^2_{n_2}}^2\, \|\gamma\|_{\ell^2_{\zeta}}^2\, 
\sum_{n_2,\zeta}\left|\sum_{\substack{ n= -n_1+ n_2 + n_3 \\n_1\ne n_2, n_3
\\ \zeta = -|n_1|^2 +|n_3|^2}} \chi_{C}(n_1)\frac{\cg_{n_1}(\omega)}{|n_1|^\frac{3}{2}}
\frac{g_{n_3}(\omega)}{|n_3|^{\frac{3}{2}}}k_n 
\right|^2.
\end{eqnarray*}
All in all,  we then have to estimate uniformly for $\|\gamma\otimes k\|_{\ell^2}\leq 1$,
\begin{equation}\label{t1dual}
 \|a_{n_2}\|_{\ell^2}^2\|\gamma\|_{\ell^2}^2\sum_{n_2} \sum_{|\zeta| \leq N_1N_2}  \left| \sum_{n} \sigma_{n_2, n}  k_n \right|^2, 
\end{equation}  
where  
$$ \sigma_{n_2, n} =  \sum_{\substack{ n_2=n_1+n - n_3, \, n_1\ne n_2, n_3\\    \zeta= -|n_1|^2 +|n_3|^2}}  \chi_{C}(n_1) \frac{\cg_{n_1}(\omega)}{|n_1|^{\frac{3}{2}}} \frac{g_{n_3}(\omega)}{|n_3|^{\frac{3}{2}}}.$$
Note that  $\sigma_{n, n_2}$ also depends on $\zeta$  but we estimate it independently of $\zeta$. 
If we denote by $\GG$  the matrix of entries $ \sigma_{n_2, n}$, and we recall  that the variation in $\zeta$ is at most $N_1N_2$,  we are then reduced to estimating
$$
 \|a_{n_2}\|_{\ell^2}^2 N_1N_2  \|\GG\GG^*\|. 
$$
We note that by Lemma \ref{matrixnorm}
$$\|\GG\GG^*\|\lesssim \max_{n_2}\sum_{n}|\sigma_{n_2, n}|^2+\left(\sum_{n_2\ne n_2'}\left|\sum_{n\in \tilde C}\sigma_{n_2, n}\overline{\sigma}_{n_2', n}\right|^2\right)^\frac{1}{2}\, =:\, M_1+M_2,$$
where $\tilde C$ is a cube of sidelength  approximately $ N_2$.
%
To estimate $M_1$ we first define the set 
$$
S_{(\zeta,n_2)}=\{(n_1,n,n_3) \, : \, n_2=n_1+n - n_3, \, n_1\ne n_2,  n_3,\,  \zeta= -|n_1|^2 +|n_3|^2 \} 
$$ with $|S_{(\zeta,n_2)}|\lesssim N_3^3 N_1$, where we use \eqref{sphere} for fixed $n_3$. Then we have 
\begin{eqnarray*}M_1&\lesssim&  \sup_{(n_2,\zeta)}\left| \sum_{\substack{ n_2=n_1+n - n_3, \, n_1\ne n_2, n_3\\    \zeta= -|n_1|^2 +|n_3|^2}}  \chi_{C}(n_1) \frac{\cg_{n_1}(\omega)}{|n_1|^{\frac{3}{2}}} \frac{g_{n_3}(\omega)}{|n_3|^{\frac{3}{2}}}\right|^2.
\end{eqnarray*}
Now we use \eqref{largedeviation} with $\lambda=\delta^{-r}\|F_2(\omega)\|_{L^2}$ and Lemma \ref{independence} to obtain for $\omega$ outside a set of measure 
$e^{-\frac{1}{\delta^r}}$ the bound
\begin{eqnarray}\notag M_1&\lesssim&  \sup_{(n_2,\zeta)}\delta^{-2r} \sum_{S_{(\zeta,n_2)}}\sum_{S_{(\zeta,n_2)}}
\frac{1}{|n_1|^{\frac{3}{2}}} \frac{1}{|n_3|^{\frac{3}{2}}}\frac{1}{|\xi_1|^{\frac{3}{2}}} \frac{1}{|\xi_3|^{\frac{3}{2}}} \left| \int_\Omega\cg_{n_1}(\omega)g_{n_3}(\omega)g_{\xi_1}(\omega)\cg_{\xi_3}(\omega)\, dp(\omega) \right| \\\label{M1}
&\lesssim& \sup_{(n_2,\zeta)}\delta^{-2r}\sum_{S_{(\zeta,n_2)}}\frac{1}{|n_1|^{3}} \frac{1}{|n_3|^{3}}
\lesssim  \delta^{-2r}N_1^{-3 }  N_3^{-3} N_3^3N_1\sim \delta^{-2r}N_1^{-2 }.
\end{eqnarray}

To estimate $M_2$ we first write
\begin{eqnarray*}
M_2^2&=&\sum_{n_2\ne n_2'}\left|\sum_{n\in \tilde C}\sigma_{n_2, n}\overline \sigma_{n_2', n}\right|^2\sim
\sum_{n_2\ne n_2'}\left|\sum_{S_{(n_2,n_2')}}\frac{\cg_{n_1}(\omega)}{|n_1|^{\frac{3}{2}}}\frac{g_{n_3}(\omega)}{|n_3|^{\frac{3}{2}}}\frac{g_{n_1'}(\omega)}{|n_1'|^{\frac{3}{2}}}\frac{\cg_{n_3'}(\omega)}{|n_3'|^{\frac{3}{2}}}\right|^2
\end{eqnarray*}
where
\begin{equation*}
S_{(n_2,n_2',\zeta)}=\left\{(n, n_1, n_3,n_1', n_3') \, : \,  \begin{aligned}
&n_2=n_1+n-n_3, \quad n_2'=n_1'+n-n_3', \\
& n_1\ne n_2, n_3,\quad
  n_1'\ne n_2', n_3', \quad n\in \tilde C\\
&\zeta= -|n_1|^2 +|n_3|^2, \quad \zeta= -|n_1'|^2 +|n_3'|^2\end{aligned} \right\}.
\end{equation*}
We need to organize the estimates according to whether some frequencies are the same or not, in all we have six cases:
\begin{itemize}
\item {\bf Case $\beta_1$:} $n_1, n_1', n_3, n_3'$ are all different. 
\item {\bf Case $\beta_2$:} $n_1=n_1'; \, \, n_3\ne n_3'.$
\item {\bf Case $\beta_3$:} $n_1\ne n_1'; \, \, n_3= n_3'.$
\item {\bf Case $\beta_4$:} $n_1\ne n_3'; \, \, n_3 =  n_1'.$
\item {\bf Case $\beta_5$:} $n_1= n_3'; \, \, n_3\ne  n_1'.$
\item {\bf Case $\beta_6$:} $n_1= n_3'; \, \, n_3 =  n_1'.$ 
\end{itemize}

\medskip
{\bf Case $\beta_1$:} We define the set  
$$
S_{(\zeta)}=\left\{(n_2,n_2',n, n_1, n_3,n_1', n_3') \, : \,  \begin{aligned}&n_2=n_1+n-n_3, \quad n_2'=n_1'+n-n_3',\, \\
 &n_1\ne n_2, n_3,\quad n_1'\ne n_2', n_3', \quad n_1, n_1' \in C\\ 
&\zeta = -|n_1|^2 +|n_3|^2 , \quad \zeta= -|n_1'|^2 +|n_3'|^2\end{aligned}\right\}.
$$
and we note that   $|S_{(\zeta)}|\lesssim N_1^2N_3^6N_2^3$  since  $n\in \tilde C$ and for fixed $n_3$ and $n_3'$ we use \eqref{sphere} to count $n_1$ and $n_1'$.  Using \eqref{largedeviation} with
$\lambda=\delta^{-2r}\|F_4(\omega)\|_{L^2}$ and again Lemma \ref{independence} we can  write for $\omega$ as above
$$M_2^2\lesssim \delta^{-4r} \sum_{n_2\ne n_2'} \sum_{S_{(n_2,n_2',\zeta)}} \frac{1}{|n_1|^{3}}\frac{1}{|n_3|^{3}}\frac{1}{|n_1'|^{3}}\frac{1}{|n_3'|^{3}}\lesssim \delta^{-4r} N_1^{-6}N_3^{-6}N_1^2N_3^6N_2^3\sim  \delta^{-4r} N_1^{-4}N_2^3.$$

\medskip

\medskip
{\bf Case $\beta_2$:} First define the set, 
$$
S_{(n_2,n_2', n_3, n_3',\zeta)}=\left\{(n, n_1) \, : \,  \begin{aligned}&n_2=n_1+n-n_3, \quad n_2'=n_1+n-n_3',\\
&n_1\ne n_2, n_2', n_3, n_3', \quad n\in \tilde C\\
&\zeta = -|n_1|^2 + |n_3|^2, \quad \zeta= -|n_1|^2 +|n_3'|^2\end{aligned}  \right\}.
$$
To compute  $|S_{(n_2,n_2', n_3, n_3',\zeta)}|$ we  count $n_1$, then $n$ is determined. Since $n_1$ sits on a sphere  then by \eqref{sphere} we have  $|S_{(n_2,n_2', n_3, n_3',\zeta)}|\lesssim N_1$. Then we set 
$$
S_{(\zeta)}=\left\{(n_2,n_2',n, n_1, n_3, n_3') \, : \,  \begin{aligned} &n_2=n_1+n-n_3, \quad n_2'=n_1+n-n_3',\\
&n_1\ne n_2, n_2', n_3, n_3', \quad n\in \tilde C\\
&\zeta= -|n_1|^2 + |n_3|^2, \quad \zeta= -|n_1|^2 +|n_3'|^2\end{aligned}  \right\}.
$$
with  $|S_{(\zeta)}|\lesssim N_1N_3^6N_2^3,$  where we used again that $n\in \tilde C$ and \eqref{sphere}. Now, we have that
\begin{equation} \label{quote}M_2^2\,\sim\, 
\sum_{n_2\ne n_2'}\left|\sum_{S_{(n_2,n_2',\zeta)}}\frac{|g_{n_1}(\omega)|^2}{|n_1|^{3}}\frac{g_{n_3}(\omega)}{|n_3|^{\frac{3}{2}}}\frac{\cg_{n_3'}(\omega)}{|n_3'|^{\frac{3}{2}}}\right|^2 \quad \lesssim \quad  \mathcal{Q}_1 \,\, + \, \,  \mathcal{Q}_2,\end{equation} where 
\begin{eqnarray} \label{twosuma}
&& \mathcal{Q}_1:=  \quad \sum_{n_2\ne n_2'} \left|\sum_{S_{(n_2,n_2',\zeta)}}\frac{(|g_{n_1}(\omega)|^2 -1) }{|n_1|^{3}}\frac{g_{n_3}(\omega)}{|n_3|^{\frac{3}{2}}}\frac{\cg_{n_3'}(\omega)}{|n_3'|^{\frac{3}{2}}} \right|^2  \\
&& \mathcal{Q}_2:= \quad  \sum_{n_2\ne n_2'}  \left|\sum_{S_{(n_2,n_2',\zeta)}}\frac{1}{|n_1|^{3}}\frac{g_{n_3}(\omega)}{|n_3|^{\frac{3}{2}}}\frac{\cg_{n_3'}(\omega)}{|n_3'|^{\frac{3}{2}}}\right|^2  \label{twosumb} 
\end{eqnarray}
We estimate first $\mathcal{Q}_2$.  We rewrite, 
\begin{equation}\label{last2}
\mathcal{Q}_2 \, \sim \, \sum_{n_2\ne n_2'}\left|\sum_{n_3,n_3'}\left[\sum_{S_{(n_2,n_2',n_3,n_3',\zeta)}}\frac{1}{|n_1|^{3}} 
\frac{1}{|n_3|^{\frac{3}{2}}}\frac{1}{|n_3'|^{\frac{3}{2}}}\right]g_{n_3}(\omega)\cg_{n_3'}(\omega)\right|^2. 
\end{equation}

We now proceed as in the argument presented in \eqref{M1} above. We use \eqref{largedeviation} with $\lambda=\delta^{-r}\|F_2(\omega)\|_{L^2}$, Lemma \ref{independence}  and then use \eqref{sup-bound-all}, to obtain that for $\omega$ outside a set of measure $e^{-\frac{1}{\delta^r}}$ one has,
\begin{eqnarray}\label{probabilistic2}
\eqref{last2} \,& \lesssim& \, \delta^{-2r} \sum_{n_2\ne n_2'}\sum_{n_3,n_3'}\left[\sum_{S_{(n_2,n_2',n_3,n_3',\zeta)}}\frac{1}{|n_1|^{3}}
\frac{1}{|n_3|^{\frac{3}{2}}}\frac{1}{|n_3'|^{\frac{3}{2}}}\right]^2 \notag \\
&\lesssim& \delta^{-2r}N_1^{-6}N_3^{-6}\sum_{n_2\ne n_2'}\sum_{n_3, n_3'}|S_{(n_2,n_2', n_3, n_3',\zeta)}|^2 \notag\\
&\lesssim&\delta^{-2r}N_1^{-6}N_3^{-6}N_1\sum_{n_2\ne n_2'}\sum_{n_3, n_3'}|S_{(n_2,n_2', n_3, n_3',\zeta)}| \notag\\
&\lesssim &\delta^{-2r}N_1^{-6}N_3^{-6}N_1|S_{(\zeta)}|\sim \delta^{-2r}N_1^{-4}N_2^3.
\end{eqnarray}
To estimate $\mathcal{Q}_1$ we let 
\begin{equation}\label{n-determined} S_{(n_2,n_2', n_1, n_3, n_3',\zeta)}:= \left\{n \, : \,  \begin{aligned}&n_2=n_1+n-n_3, \quad n_2'=n_1+n-n_3',\\
&n_1\ne n_2, n_2', n_3, n_3', \quad n\in \tilde C\\
&\zeta = -|n_1|^2 + |n_3|^2, \quad \zeta= -|n_1|^2 +|n_3'|^2\end{aligned}  \right\}, 
\end{equation} and note that its cardinality is $1$ since $n$ is determined for fixed $(n_2,n_2', n_1, n_3,n_3')$. We have, 
\begin{equation*}
\mathcal{Q}_1 \sim\, \sum_{n_2\ne n_2'}\left|\sum_{n_1\ne n_2,n_2', n_3, \, n_3 \neq n_3'} \left[\sum_{S^2_{(n_2,n_2', n_1, n_3, n_3')}} \frac{1}{|n_1|^{3}} \frac{1}{|n_3|^{\frac{3}{2}}}\frac{1}{|n_3'|^{\frac{3}{2}}}\right] (|g_{n_1}(\omega)|^2 -1)  g_{n_3}(\omega)\cg_{n_3'}(\omega)\right|^2. 
\end{equation*}  Proceeding like above, we obtain in this case that  for $\omega$ outside a set of measure $e^{-\frac{1}{\delta^r}}$, 
\begin{equation*}
\mathcal{Q}_1 \, \lesssim \, \delta^{-2r} N_1^{-6}N_3^{-6}  |S_{(\zeta)}|\sim \delta^{-2r}N_1^{-5}N_2^3, 
\end{equation*} which is a better estimate. Hence all in all we obtain in this case that 
\begin{equation}\label{final-quote}M_2^2 \lesssim \delta^{-2r}N_1^{-4}N_2^3. \end{equation}

\medskip
{\bf Case $\beta_3$:} In this case we define first 
$$
S_{(n_2,n_2', n_1, n_1',\zeta)}=\left\{(n, n_3) \, : \,  \begin{aligned}n_2&=-n_1+n-n_3, \quad n_2'=n_1'+n-n_3,\\
 &n_3, n_2, n_2'\ne n_1, n_1', \quad n\in \tilde C\\
&\zeta= -|n_1|^2 + |n_3|^2, \quad \zeta= -|n_1'|^2 +|n_3|^2\end{aligned}\right\},
$$
with $|S_{(n_2,n_2', n_1, n_1',\zeta)}|\lesssim N_3^2$ by \eqref{ball} since $n$ is determined by $n_3$ and these ones lies on a sphere of radius at most $N_1$ intersection a ball of radius $N_3$.  If now  we define
$$
S_{(\zeta)}=\left\{(n_2,n_2',n, n_1, n_1', n_3) \, : \,  \begin{aligned} &n_2=-n_1+n-n_3, \quad n_2'=n_1'+n-n_3 ,\\
&n_3, n_2, n_2'\ne n_1, n_1', \quad n\in \tilde C\\
&\zeta= -|n_1|^2 + |n_3|^2, \quad \zeta= -|n_1'|^2 +|n_3|^2\end{aligned} \right\},
$$
then   $|S_{(\zeta)}|\lesssim N_1^2N_3^3N_2^3,$  since again $n$ ranges in a cube of size $N_2$ and we use \eqref{sphere} to count $n_1$ and $n_1'$.  We follow the argument  used above in \eqref{quote}-\eqref{final-quote} to bound $M_2^2$ but now with the couple $(n_1, n_1')$ instead and corresponding sums $\mathcal{Q}_1$ and $\mathcal{Q}_2$. Just as in Case $\beta_2$ above, the bound for $\mathcal{Q}_2$ is larger. We then obtain for $\omega$ outside a set of measure $e^{-\frac{1}{\delta^r}}$, 
\begin{eqnarray*}
M_2^2&\lesssim& \delta^{-2r}N_1^{-6}N_3^{-6}\sum_{n_2\ne n_2'}\sum_{n_1, n_1'}|S_{(n_2,n_2', n_1, n_1',\zeta)}|^2\\
&\lesssim&\delta^{-2r}N_1^{-6}N_3^{-6}N_3^2\sum_{n_2\ne n_2'}\sum_{n_1, n_1'}|S_{(n_2,n_2', n_1, n_1',\zeta)}|\\
&\lesssim &\delta^{-2r}
N_1^{-6}N_3^{-6}N_3^2|S_{(\zeta)}|\sim \delta^{-2r}N_1^{-4}N_3^{-1}N_2^3.
\end{eqnarray*}

\medskip
{\bf Case $\beta_4$:} In this case note that $N_1\sim N_3\sim N_2$. We define the two sets. First  
$$
S_{(n_2,n_2', n_1, n_3',\zeta)}=\left\{(n, n_3) \, : \,  \begin{aligned}&n_2=n_1+n-n_3, \quad n_2'=n_3+n-n_3',\\
&n_2, n_2', n_3, n_3'\ne n_1, \quad n\in \tilde C\\
&\zeta = -|n_1|^2 +|n_3|^2, \quad \zeta= |n_3|^2 +|n_3'|^2\end{aligned} \right\}
$$
and since $n_3$ lives on a sphere of radius at most $N_1$, from \eqref{sphere} we have  $|S_{(n_2,n_2', n_1, n_3',\zeta)}|\lesssim N_1$ and then
$$
S_{(\zeta)}=\left\{(n_2,n_2',n, n_1, n_3', n_3) \, : \,  \begin{aligned}&n_2=n_1+n-n_3, \quad n_2'=n_3+n-n_3',\\
&n_2, n_2', n_3,n_3'\ne n_1, \quad n\in \tilde C\\
&\zeta=-|n_1|^2 +|n_3|^2, \quad \zeta= -|n_3|^2 +|n_3'|^2\end{aligned} \right\},
$$
with  $|S_{(\zeta)}|\lesssim N_1N_2^3N_3^6.$  Just as in case $\beta_3$ and following the argument in \eqref{quote}-\eqref{final-quote} but with the couple $(n_1, n_3')$ instead 
we obtain that for $\omega$ outside a set of measure $e^{-\frac{1}{\delta^r}}$, 
\begin{eqnarray*}
M_2^2&\lesssim& \delta^{-2r}N_1^{-6}N_3^{-6}\sum_{n_2\ne n'_2}\sum_{n_1, n_3'}|S_{(n_2,n_2', n_1, n_3',\zeta)}|^2\\
&\lesssim&\delta^{-2r}N_1^{-6}N_3^{-6}N_1\sum_{n_2\ne n_2'}\sum_{n_1, n_3'}|S_{(n_2,n_2', n_1, n_3',\zeta)}|\\
&\lesssim &\delta^{-2r}
N_1^{-6}N_3^{-6}N_1|S_{(\zeta)}|\sim \delta^{-2r}N_1^{-4}N_2^{-3}.
\end{eqnarray*}

\medskip
{\bf Case $\beta_5$:} By symmetry this case is exactly the same as Case $\beta_4$.

\medskip
We are now ready to put all the estimates above together and bound $\mathcal{T}$ in cases $\beta_1 -\beta_5$:
\begin{eqnarray*}
\mathcal{T}&\lesssim& \|a_{n_2}\|_{\ell^2}^2N_1N_2  \|\GG\GG^*\|\lesssim \|a_{n_2}\|_{\ell^2}^2N_1N_2 (M_1+M_2)\\
&\lesssim&\|a_{n_2}\|_{\ell^2}^2\delta^{-2r}N_1N_2N_1^{-2}N_2^\frac{3}{2}\lesssim\delta^{-2r} N_2^\frac{5}{2}N_1^{-1}\|a_{n_2}\|_{\ell^2}^2.
\end{eqnarray*}

\medskip
{\bf Case $\beta_6$:} In this case 
\begin{equation*}
S_{(n_2,n_2',\zeta)}=\left\{(n, n_1, n_3) \, : \,  \begin{aligned}&n_2=n_1+n-n_3, \quad n_2'=n_3+n-n_1,\\
&n_1\ne n_2, n_2', n_3, \quad |n_1|^2 =|n_3|^2, \quad n\in \tilde C\end{aligned} \right\}.
\end{equation*}
At this point notice that the summation on $\zeta$ is eliminated and that in this case $N_1\sim N_2\sim N_3$.
We  have $S_{(n_2,n_2',\zeta)}\sim N_3^4$.  Using \eqref{sup-bound-all} we have, for $\omega$ outside a set of measure $e^{-\frac{1}{\delta^r}}$, that
\begin{eqnarray}\label{beta6}
M_2^2&=&\sum_{n_2\ne n_2'}\left|\sum_{n\in \tilde C}\sigma_{n_2, n}\overline \sigma_{n_2', n}\right|^2\sim
\sum_{n_2\ne n_2'}\left|\sum_{S_{(n_2,n_2',\zeta)}}\frac{|g_{n_1}(\omega)|^2}{|n_1|^{3}}\frac{|g_{n_3}(\omega)|^2}{|n_3|^{3}}\right|^2\\\notag
&\lesssim&\sum_{n_2\ne n_2'}N_1^{-6+\varepsilon}N_3^{-6}|S_{(n_2,n_2',\zeta)}|^2 \lesssim N_1^{-6+\varepsilon}N_3^{-6}N_3^4|S_{(\zeta)}|,
\end{eqnarray}
where 
$$S_{(\zeta)}=\left\{(n_2, n_2', n, n_1, n_3) \, :  \begin{aligned}&n_2=n_1+n-n_3, \quad n_2'=n_3+n-n_1,\\
& n_1\ne n_3, n_2, n_2'\quad |n_1|^2 =|n_3|^2, \quad n\in \tilde C\end{aligned}\right\}$$
and $|S_{(\zeta)}|\lesssim N_2^3N_3^4$. 
Hence $M_2\lesssim N_1^{-3+\varepsilon}N_2^{\frac{5}{2}}$ and as a consequence 
$$\mathcal{T}\lesssim \|a_{n_2}\|_{\ell^2}^2N_1^{-3+\varepsilon}N_2^{\frac{5}{2}}.$$

\medskip

We now notice that to prove \eqref{barRDbarR} we first have to consider the case when $n_1=n_3$,  which here  it is not excluded,  and then  we can use  exactly the same argument as above since a plus or minus sign in front of $n_3$ does not  change any of the counting.  

Consider now \eqref{barRDbarR}  with $n_1=n_3$. Note that $N_1\sim N_2\sim N_3$. We now have 
\begin{equation}\label{T1barRDRn1=n3}
\mathcal{T}:=  \sum_{m \in \Z, n \in \Z^3} \,\left|   \sum_{\substack{ n= -2n_1+ n_2 \\
\\ m = -2|n_1|^2 +|n_2|^2}} \frac{(\cg_{n_1}(\omega))^2}{|n_1|^3}
a_{n_2}  \right|^2.
\end{equation}
Let $S_{(m,n)}=\{(n_1,n_2) \, / \, n= -2n_1+ n_2, \,  m = -2|n_1|^2 +|n_2|^2\}$, and note that 
$|S_{(m,n)}|\lesssim N_1$. Then 
$$\mathcal{T}\lesssim N_1\sum_{m, n}   \sum_{S_{(m,n)}} \frac{|\cg_{n_1}(\omega)|^4}{|n_1|^6}
|a_{n_2}|^2\sim N_1\sum_{ n, n_1 \in \Z^3}   \frac{|\cg_{n_1}(\omega)|^4}{|n_1|^6}
|a_{n+2n_1}|^2\lesssim N_1^{-2+\varepsilon}\|a_{n_2}\|_{l^2}^2,$$
where we use \eqref{sup-bound-all} for $\omega$ outside a set of measure $e^{-\frac{1}{\delta^r}}$.
\end{proof}

\bigskip

 \begin{proposition}{\label{Two}}   Let  $D_j$ and $R_k$ be as above and fix $N_1\geq N_2\geq N_3$, $r, \, \delta>0$ and $C \in \mathcal{C}_{N_2}$. Then there exists $\mu >0$ and a set $\Omega_\delta\in A$ such that $\pr(\Omega_\delta^c)\leq 
e^{-\frac{1}{\delta^r}}$ such that  for any $\omega\in \Omega_\delta$ we have $\eqref{DbarRR}$ and $  \eqref{DRR} $.

\end{proposition}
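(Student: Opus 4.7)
The strategy mirrors the proof of Proposition \ref{One}, but adapted to the configuration in which the \emph{deterministic} frequency is the highest one (restricted to a cube $C$ of sidelength $N_2$), while both \emph{random} frequencies sit at the lower levels $N_2$ and $N_3$. First I invoke the transfer principle of Proposition \ref{transfer} to reduce to $\tilde D_1=P_{N_1}S(t)\psi$ with $\widehat\psi(n)=a_n$, so that $\|P_CP_{N_1}\phi\|_{L^2_x}^2=\sum_{n_1\in C}|a_{n_1}|^2$. Writing the $L^2(\T\times\T^3)$ norm squared via spatial and temporal Fourier transforms, the proof of \eqref{DbarRR} reduces to bounding
$$
\mathcal T:=\sum_{m\in\Z,\,n\in\Z^3}\left|\sum_{\substack{n=\pm n_1-n_2+n_3,\,n_1\in C,\,n_2\ne n_3\\m=\pm|n_1|^2-|n_2|^2+|n_3|^2}}a_{n_1}\,\frac{\cg_{n_2}(\omega)}{|n_2|^{3/2}}\frac{g_{n_3}(\omega)}{|n_3|^{3/2}}\right|^2,
$$
and the proof of \eqref{DRR} requires the same estimate with $-n_2+n_3$ (resp.\ $-|n_2|^2+|n_3|^2$) replaced by $n_2+n_3$ (resp.\ $|n_2|^2+|n_3|^2$).

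Next I apply duality to pull out $\|a_{n_1}\|_{\ell^2}^2=\|P_CP_{N_1}\phi\|_{L^2_x}^2$, and set $\zeta:=m\mp|n_1|^2$, which depends only on the random frequencies. This reduces matters to estimating $\|a\|_{\ell^2}^2\cdot(\text{range of }\zeta)\cdot\|\GG\GG^*\|$, where $\GG=(\sigma_{n_1,n})$ is the matrix indexed by $n_1\in C$ (and $\zeta$) versus $n$, with entries
$$
\sigma_{n_1,n}=\sum_{\substack{n_2\mp n_3=\pm n_1-n,\,n_2\ne n_3\\ \zeta=\mp|n_2|^2+|n_3|^2}}\frac{\cg_{n_2}(\omega)}{|n_2|^{3/2}}\frac{g_{n_3}(\omega)}{|n_3|^{3/2}}.
$$
Crucially, the restriction $n_1\in C$ limits the row index to a set of size $\sim N_2^3$, not $N_1^3$, which is the mechanism that produces the claimed bound $N_2^{3/4}$ rather than something involving $N_1$.

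Then I apply Lemma \ref{matrixnorm} to split $\|\GG\GG^*\|\lesssim M_1+M_2$. For the diagonal term $M_1$ I use the large-deviation bound \eqref{largedeviation} with $\lambda=\delta^{-r}\|F_2\|_{L^2}$ together with Lemma \ref{independence}, obtaining $M_1\lesssim\delta^{-2r}N_2^{-3}N_3^{-3}|S_{(n_1,\zeta)}|$, with the counting done by \eqref{sphere} (for \eqref{DbarRR}: for fixed $n_3$, $n_2$ lies on a sphere; for \eqref{DRR}: the pair $(n_2,n_3)$ lies on a lattice sphere in $\Z^6$). For the off-diagonal $M_2$ I perform the analogous six-case analysis ($\beta_1$--$\beta_6$) based on coincidences among $n_2,n_2',n_3,n_3'$: the generic case $\beta_1$ uses \eqref{largedeviation} with $\lambda=\delta^{-2r}\|F_4\|_{L^2}$; the pair-coincidence cases $\beta_2$--$\beta_5$ are handled by the splitting $|g|^2=(|g|^2-1)+1$ of Remark \ref{plusone}, producing a dominant sum of type $\mathcal Q_2$ which is then estimated again via \eqref{largedeviation} for a bilinear Gaussian form; and the fully degenerate case $\beta_6$ (which removes the $\zeta$-sum and forces $N_1\sim N_2\sim N_3$) is controlled directly by \eqref{sup-bound-all}. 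The requisite lattice counts in each case follow from Lemma \ref{lattice-counting}, with the cube restriction $n_1\in C$ entering through the cardinality of the ambient sets $S_{(\zeta)}$.

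The main obstacle is the combinatorial bookkeeping in the off-diagonal case analysis, and in particular tracking how the cube constraint $n_1\in C$ propagates through each $|S_{(\cdot)}|$ so that the final exponent is exactly $N_2^{3/2}$ on $\mathcal T$ (i.e.\ $N_2^{3/4}$ on the $L^2$ norm). For \eqref{DRR}, the only new feature is that $\zeta=|n_2|^2+|n_3|^2$ parametrizes a product-of-spheres-type set in $\Z^6$; the same six-case template applies with \eqref{sphere} used on each factor, and the resulting bound matches that of \eqref{DbarRR}.
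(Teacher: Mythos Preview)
Your proposal is correct and follows essentially the same route as the paper: duality, the change of variable $\zeta=m\mp|n_1|^2$, the observation that $n_1\in C$ forces $n\in\tilde C$ with $\Delta\zeta\lesssim N_2^2$, the matrix bound via Lemma~\ref{matrixnorm}, and the six-case $\beta_1$--$\beta_6$ analysis for $M_2$ using \eqref{largedeviation}, Remark~\ref{plusone}, and \eqref{sup-bound-all}. One small point you should add for \eqref{DRR}: since $R_2$ and $R_3$ carry the same conjugation, the diagonal $n_2=n_3$ is \emph{not} excluded by $\Upsilon$ and must be treated separately (it gives $(g_{n_2})^2$, hence a direct Cauchy--Schwarz with \eqref{sup-bound-all} and the count $|S_{(m,n)}|\lesssim\min(N_1,N_2^2)$ suffices); the paper does exactly this before invoking the six-case template.
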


%
%

\begin{proof}
 We start by estimating \eqref{DbarRR} where without any loss of generality we assume that $\tilde D_1=D_1$. We now have, 
\begin{equation}\label{T1DbarRR}
\mathcal{T}:=  \sum_{m\in \Z, n \in \Z^3} \,\left|   \sum_{\substack{ n= n_1-n_2 + n_3 \\n_2\ne n_3
\\ m = |n_1|^2 -|n_2|^2+|n_3|^2}} \chi_{C}(n_1)a_{n_1}\frac{\cg_{n_2}(\omega)}{|n_2|^\frac{3}{2}}
\frac{g_{n_3}(\omega)}{|n_3|^{\frac{3}{2}}} \right|^2.
\end{equation}
We are going to use duality and change of variables with $\zeta := m - |n_1|^2= -|n_2|^2 + |n_3|^2$ again. Note though that if $n_1$ is in a cube $C$ of size $N_2$ then also $n$ will be in a cube $\tilde C$ of approximately  the same size. Then just as in \eqref{t1dual} we need to estimate
$$
 \|\chi_Ca_{n_1}\|_{\ell^2}^2\|\gamma\|_{\ell^2}^2\sum_{n_1} \sum_{|\zeta| \leq N_2^2}  \left| \sum_{n} \sigma_{n_1, n}  \chi_{\tilde C}(n)k_n \right|^2, 
$$ 
where  
$$ \sigma_{n_1, n} =  \sum_{\substack{ n_1=n_2+n - n_3, \, n_2\ne n_1, n_3\\    \zeta= -|n_2|^2 +|n_3|^2}}  \frac{\cg_{n_2}(\omega)}{|n_2|^{\frac{3}{2}}} \frac{g_{n_3}(\omega)}{|n_3|^{\frac{3}{2}}}.$$
If we denote by $\GG$  the matrix of entries $ \sigma_{n_1, n}$, and we recall  that the variation in $\zeta$ is at most $N_2^2$,  we are then reduced to estimating
$$
 \|\chi_Ca_{n_1}\|_{\ell^2}^2N_2^2  \|\GG\GG^*\|. 
$$
We note that by Lemma \ref{matrixnorm}, 
$$\|\GG\GG^*\|\lesssim \max_{n_1}\sum_{n}|\sigma_{n_1, n}|^2+\left(\sum_{n_1\ne n_1'}\left|\sum_{n\in\tilde C }\sigma_{n_1, n}\overline{\sigma}_{n_1', n}\right|^2\right)^\frac{1}{2}\,=:\,M_1+M_2,$$
where
$\tilde C$ is a cube of side length approximately $N_2$.
From this point on the proof is similar to the one already provided for \eqref{barRDR} where $n_2$ is replaced by $n_1$.  
We still go through the argument though, since the size of $n_1$ and $n_2$ are different.


To estimate $M_1$ we first define the set 
$$
S_{(\zeta,n_1)}=\{(n_2,n,n_3) \, : \, n_2 \neq n_1, n_3, \quad n_2=n_1-n +n_3, \quad  \zeta= -|n_2|^2 +|n_3|^2 \}.$$ Applying \eqref{sphere} for each fixed $n_3$,  we have that $|S_{(\zeta,n_1)}|\lesssim N_3^3 N_2 $
since $n_2$ sits on a sphere of radius approximately $ N_2$ . Then  we proceed as in \eqref{M1} to obtain for $\omega$ outside a set of measure $e^{-\frac{1}{\delta^r}}$, the bound 
$$M_1\lesssim  \delta^{-r}N_2^{-3 }  N_3^{-3} N_3^3N_2\sim \delta^{-2r}N_2^{-2 }.$$
To estimate $M_2$ we first write
\begin{eqnarray*}
M_2^2&=&\sum_{n_1\ne n_1'}\left|\sum_{n\in \tilde C}\sigma_{n_1, n}\overline \sigma_{n_1', n}\right|^2\sim
\sum_{n_1\ne n_1'}\left|\sum_{S_{(n_1,n_1',\zeta)}}\frac{\cg_{n_2}(\omega)}{|n_2|^{\frac{3}{2}}}\frac{g_{n_3}(\omega)}{|n_3|^{\frac{3}{2}}}\frac{g_{n_2'}(\omega)}{|n_2'|^{\frac{3}{2}}}\frac{\cg_{n_3'}(\omega)}{|n_3'|^{\frac{3}{2}}}\right|^2
\end{eqnarray*}
where
\begin{equation*}
S_{(n_1,n_1',\zeta)}=\left\{(n, n_2, n_3,n_2', n_3') \, : \,  \begin{aligned}
&n_2=n_1-n+n_3, \quad n_2'=n_1'-n+n_3',\\
& n_2 \neq n_1, n_3, \quad n_2' \neq n_1', n_3', \quad n\in \tilde C\\
&\zeta= -|n_2|^2 +|n_3|^2, \quad \zeta= -|n_2'|^2 +|n_3'|^2\end{aligned}\right\}.
\end{equation*}

We organize once again the estimates according to whether some frequencies are the same or not. As before, all in all we have six cases:
\begin{itemize}
\item {\bf Case $\beta_1$:} $n_2, n_2', n_3, n_3'$ are all different. 
\item {\bf Case $\beta_2$:} $n_2=n_2'; \, \, n_3\ne n_3'.$
\item {\bf Case $\beta_3$:} $n_2\ne n_2'; \, \, n_3= n_3'.$
\item {\bf Case $\beta_4$:} $n_2\ne n_3'; \, \, n_3 =  n_2'.$
\item {\bf Case $\beta_5$:} $n_2= n_3'; \, \, n_3\ne  n_2'.$
\item {\bf Case $\beta_6$:} $n_2= n_3'; \, \, n_3 =  n_2'.$ 
\end{itemize}

\medskip
{\bf Case $\beta_1$:} We define the set  
\begin{equation*}
S_{(\zeta)}=\left\{(n_1,n_1',n, n_2, n_3,n_2', n_3') \, : \,  \begin{aligned}&n_2=n_1-n+n_3, \quad n_2'=n_1'-n+n_3' \\
&n_2 \neq n_1, n_3, \quad n_2' \neq n_1', n_3', \quad \, n_1, n_1' \in C\\
&\zeta = -|n_2|^2 +|n_3|^2 , \quad \zeta= -|n_2'|^2 +|n_3'|^2\end{aligned}\right\}.
\end{equation*}
and note that  $|S_{(\zeta)}|\lesssim N_2^2N_3^6N_2^3$   by Lemma \ref{lattice-counting} since for $n_3$ fixed,  $n_2$ and $n_2'$ sit on sphere of radius $\sim N_2$ and $n\in \tilde C$ a cube of side length approximately $N_2$.  Hence, for $\omega$ outside a set of measure $e^{-\frac{1}{\delta^r}}$, we obtain in this case,  
$$M_2^2\lesssim \delta^{-4r}N_2^{-6}N_3^{-6}N_2^2N_3^6N_2^3\sim \delta^{-4r}N_2^{-1}.$$

\medskip
{\bf Case $\beta_2$:} In this case we define two sets. We start with  
\begin{equation*}
S_{(n_1,n_1', n_3, n_3', \zeta)}=\left\{(n, n_2) \, : \,  \begin{aligned}& n_2=n_1-n+n_3, \quad n_2=n_1'-n+n_3' ,\\
 &n_2 \neq n_1, n_1', n_3, n_3',  \quad n\in \tilde C \\
&\zeta = -|n_2|^2 + |n_3|^2, \quad \zeta= -|n_2|^2 +|n_3'|^2\end{aligned} \right\}.
\end{equation*}
To compute  $|S_{(n_1,n_1', n_3, n_3',\zeta)}|$, it is enough to count $n_2$, then $n$ is determined.  Since $n_2$ sits on a sphere of radius $\sim N_2$ we have by \eqref{sphere} that   $|S_{(n_1,n_1', n_3, n_3',\zeta)}|\lesssim N_2$. Then we set 
\begin{equation*}
S_{(\zeta)}=\left\{(n_1,n_1',n, n_2, n_3, n_3') \, : \,  \begin{aligned}&n_2=n_1-n+n_3, \quad n_2=n_1'-n+n_3' ,\\
& n_2 \neq n_1, n_1', n_3, \, n_3', \quad  n\in \tilde C\\
&\zeta= -|n_2|^2 + |n_3|^2, \quad \zeta= -|n_2|^2 +|n_3'|^2\end{aligned}\right\}
\end{equation*}
for which   $|S_{(\zeta)}|\lesssim N_2N_3^6N_2^3,$  where we used again that $n\in \tilde C$. Arguing as in \eqref{quote}- \eqref{final-quote}, we then have for $\omega$ outside a set of measure $e^{-\frac{1}{\delta^r}}$ that 
\begin{eqnarray*}
M_2^2&\lesssim& \delta^{-2r}N_2^{-6}N_3^{-6}\sum_{n\ne n'}\sum_{n_3, n_3'}|S_{(n_1,n_1', n_3, n_3',\zeta)}|^2\\
&\lesssim&\delta^{-2r}N_2^{-6}N_3^{-6}N_2\sum_{n_1\ne n_1'}\sum_{n_3, n_3'}|S_{(n_1,n_1', n_3, n_3',\zeta)}|\\
&\lesssim &\delta^{-2r}N_2^{-6}N_3^{-6}N_2|S_{(\zeta)}|
\sim\delta^{-2r} N_2^{-1}.
\end{eqnarray*}

\medskip
{\bf Case $\beta_3$:} In this case we define first 
\begin{equation*}
S_{(n_2,n_2', n_1, n_1',\zeta)}=\left\{(n, n_3) \, : \,  \begin{aligned}&n_2=n_1-n+n_3, \quad n_2'=n_1'-n+n_3 ,\\
&n_2, n_2'\neq n_3, n_1, n_1',  \quad n\in \tilde C\\
&\zeta= -|n_2|^2 + |n_3|^2, \quad \zeta= -|n_2'|^2 +|n_3|^2\end{aligned} \right\}
\end{equation*}
for which  $|S_{(n_2,n_2', n_1, n_1',\zeta)}|\lesssim N_3^2$ since $n$ is determined by $n_3$ and this one lies on a sphere of radius at most $N_1$ intersection a ball of radius $N_3$ (see Lemma \ref{lattice-counting}). Then we define 
\begin{equation*}
S_{(\zeta)}=\left\{(n_2,n_2',n, n_1, n_1', n_3) \, : \,  \begin{aligned} &n_2=n_1-n+n_3, \quad n_2'=n_1'-n+n_3 ,\\
& n_2, n_2'\neq n_3, n_1, n_1', \quad n\in \tilde C\\
&\zeta= -|n_2|^2 + |n_3|^2, \quad \zeta= -|n_2'|^2 +|n_3|^2\end{aligned} \right\}
\end{equation*}
for which   $|S_{(\zeta)}|\lesssim N_2^2N_3^3N_2^3,$  since again $n$ ranges in a cube of size $N_2$. We then have, as usual using \eqref{largedeviation} and \eqref{sup-bound-all} as above that for $\omega$ outside a set of measure $e^{-\frac{1}{\delta^r}}$,
\begin{eqnarray*}
M_2^2&\lesssim& \delta^{-2r}N_2^{-6}N_3^{-6}\sum_{n_1\ne n_1'}\sum_{n_2, n_2'}|S_{(n_2,n_2', n_1, n_1',\zeta)}|^2\\
&\lesssim&\delta^{-2r}N_2^{-6}N_3^{-6}N_3^2\sum_{n_1\ne n_1'}\sum_{n_2, n_2'}|S_{(n_2,n_2', n_1, n_1',\zeta)}|\\
&\lesssim &\delta^{-2r}
N_2^{-6+\varepsilon}N_3^{-6}N_3^2|S_{(\zeta)}|\sim \delta^{-2r}N_2^{-1+\varepsilon}N_3^{-1}.
\end{eqnarray*}

\medskip
{\bf Case $\beta_4$:} In this case note that $N_3\sim N_2$. We define the two sets 
\begin{equation*}
S_{(n_1,n_1', n_2, n_3',\zeta)}=\left\{(n, n_3) \, : \,  \begin{aligned}&n_2=n_1 - n+n_3, \quad n_3=n_1'-n+n_3' ,\\
&n_2 \neq n_1, n_3; n_3\ne n_3', n_1',\,   \quad n\quad n\in \tilde C\\
&\zeta = -|n_2|^2 +|n_3|^2, \quad \zeta= -|n_3|^2 +|n_3'|^2\end{aligned} \right\}
\end{equation*}
with $|S_{(n_1,n_1', n_2, n_3',\zeta)}|\lesssim N_2$ since $n_3$ lives on a sphere of radius at most $N_2$; and 
\begin{equation*}
S_{(\zeta)}=\left\{(n_1,n_1',n, n_2, n_3', n_3) \, : \,  \begin{aligned}&n_2=n_1- n + n_3, \quad n_3=n_1'-n+n_3' ,\\
&n_2 \neq n_1, n_3; n_3\ne n_3', n_1',\,  
 \quad n\quad n\in \tilde C\\
&\zeta=-|n_2|^2 +|n_3|^2, \quad \zeta= -|n_3|^2 +|n_3'|^2\end{aligned} \right\}
\end{equation*}
with  $|S_{(\zeta)}|\lesssim N_2N_2^3N_3^6$  since for fixed $n_3, n_3'$, the frequencies  $n_2$ sit on a sphere of radius at most $N_2$ and $n\in \tilde C$ (see Lemma \ref{lattice-counting}). We then have as above that for $\omega$ outside a set of measure $e^{-\frac{1}{\delta^r}}$, 
\begin{eqnarray*}
M_2^2&\lesssim& \delta^{-2r}N_2^{-6}N_3^{-6}\sum_{n\ne n'}\sum_{n_2, n_3'}|S_{(n_1,n_1', n_2, n_3',\zeta)}|^2\\
&\lesssim&\delta^{-2r}N_2^{-6}N_3^{-6}N_2\sum_{n\ne n'}\sum_{n_2, n_3'}|S_{(n_1,n_1', n_2, n_3',\zeta)}|\\
&\lesssim &\delta^{-2r}
N_2^{-6}N_3^{-6}N_2|S_{(\zeta)}|\sim \delta^{-2r}N_2^{-1}.
\end{eqnarray*}

\medskip
{\bf Case $\beta_5$:} By symmetry this case is exactly the same as Case $\beta_4$.

\medskip
We are now ready put all the estimates together and bound $\mathcal{T}$ in cases $\beta_1 -\beta_5$:
\begin{eqnarray*}
\mathcal{T}&\lesssim& \|\chi_C a_{n_1}\|_{\ell^2}^2N_2^2  \|\GG\GG^*\|\lesssim \|a_{n_1}\|_{\ell^2}^2N_2^2 (M_1+M_2)\\
&\lesssim&\|\chi_C a_{n_1}\|_{\ell^2}^2\delta^{-2r}N_2^2N_2^{-\frac{1}{2}}\sim \|\chi_C a_{n_1}\|_{\ell^2}^2 \delta^{-2r}N_2^{\frac{3}{2}} .
\end{eqnarray*}

\medskip
{\bf Case $\beta_6$:} In this case 
\begin{equation*}
S_{(n_1,n_1',\zeta)}=\left\{(n, n_2, n_3) \, : \,  \begin{aligned}n_2&=n_1-n+n_3, \quad n_3=n_1'-n+n_2 ,\\
&n_2 \neq n_3, n_1, \quad  |n_2|^2 =|n_3|^2,  \quad n\in \tilde C\end{aligned}\right\}.
\end{equation*}
At this point notice that  $\Delta \zeta=1$ and that in this case $N_2\sim N_3$.
We  have $S_{(n_1,n_1',\zeta)}\sim N_3^4$. We then have as in \eqref{beta6}
$$M_2^2\lesssim N_2^{-6+\epsilon}N_3^{-6}N_3^4|S_{(\zeta)}|$$
where 
$$S_{(\zeta)}=\left\{(n_1, n_1', n, n_2, n_3) \, : \, \begin{aligned}n_2&=n_1-n+n_3, \quad n_3=n_1'-n + n_2 ,\\
&n_2 \neq n_3, n_1, \quad  |n_2|^2 =|n_3|^2,  \quad n\in \tilde C\end{aligned}\right\}$$
and $|S_{(\zeta)}|\lesssim N_2^3N_3^4$. Hence, all in all we have that for $\omega$ outside a set of measure $e^{-\frac{1}{\delta^r}}$, \, $M_2\lesssim N_2^{-\frac{1}{2}+\epsilon}$ and as a consequence,  
$$\mathcal{T} \, \lesssim \, \|\chi_C a_{n_1}\|_{\ell^2}^2N_2^{-\frac{1}{2}+\epsilon}$$ in this case, which is a better bound.

\medskip

To prove \eqref{DRR} we write
\begin{equation}\label{T1DRR}
\mathcal{T}:=  \sum_{m \in \Z, n \in \Z^3} \,\left|   \sum_{\substack{ n= n_1+n_2 + n_3 
\\ m = |n_1|^2 +|n_2|^2+|n_3|^2}} \chi_{C}(n_1)a_{n_1}\frac{g_{n_2}(\omega)}{|n_2|^\frac{3}{2}}
\frac{g_{n_3}(\omega)}{|n_3|^{\frac{3}{2}}} \right|^2.
\end{equation} 
We can repeat the argument above after checking the case $n_2=n_3$. In this case \eqref{T1DRR} becomes
$$\mathcal{T}=  \sum_{m \in \Z, n \in \Z^3} \,\left|   \sum_{\substack{ n= n_1+2n_2 
\\ m = |n_1|^2 +2|n_2|^2}} \chi_{C}(n_1)a_{n_1}\frac{(g_{n_2}(\omega))^2}{|n_2|^3}
 \right|^2.
$$
Let $S_{(m,n)}=\{(n_1,n_2) \, / \, n= n_1+ 2n_2, \,  m = |n_1|^2 +2|n_2|^2\}$, and note that by Lemma \ref{lattice-counting}, 
$|S_{(m,n)}|\lesssim \min(N_1,N_2^2)$. Then 
\begin{eqnarray*} \mathcal{T} &\lesssim&  \min(N_1,N_2^2)\sum_{m , n }   \sum_{S_{(m,n)}} \frac{|g_{n_2}(\omega)|^4}{|n_2|^6}
|\chi_C a_{n_1}|^2\\
&\sim&  \min(N_1,N_2^2)\sum_{ n, n_1}   \frac{|g_{\frac{n-n_1}{2}}(\omega)|^4}{|\frac{n-n_1}{2}|^6}
|\chi_C a_{n_1}|^2 \, \lesssim  \, \min(N_1,N_2^2)N_2^{-3+\varepsilon}\|\chi_{C} a_{n_1}\|_{l^2}^2, 
\end{eqnarray*} where we used \eqref{sup-bound-all} for $\omega$ outside a set of measure $e^{-\frac{1}{\delta^r}}$.
\end{proof}

\bigskip

 \begin{proposition}{\label{Three}}  Let  $D_j$ and $R_k$ be as above and fix $N_1\geq N_2\geq N_3$, $r, \, \delta>0$ and $C \in \mathcal{C}_{N_2}$. Then there exists $\mu>0$ and a set $\Omega_\delta\in A$ such that $\pr(\Omega_\delta^c)\leq 
e^{-\frac{1}{\delta^r}}$ such that  for any $\omega\in \Omega_\delta$ we have  \eqref{barRRD} and \, \eqref{barRbarRD}.
 
  \end{proposition}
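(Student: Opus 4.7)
The plan is to adapt the duality-plus-matrix-norm scheme of Propositions \ref{One} and \ref{Two} to the new configuration, in which the deterministic factor $\tilde D_3$ carries the \emph{lowest} frequency $N_3$. As in those proofs, by the transfer principle (Proposition \ref{transfer}) it suffices to treat $\tilde D_3=S(t)P_{N_3}\phi$, whose Fourier coefficients we denote $a_{n_3}$, and then Plancherel gives
\[
\mathcal T:=\|T_\Upsilon(P_C\bar R_1,R_2,\tilde D_3)\|_{L^2(\T\times\T^3)}^{2}=\sum_{m,n}\Bigl|\sum_{\substack{n=-n_1+n_2+n_3,\ n_1\ne n_2\\ m=-|n_1|^2+|n_2|^2\pm|n_3|^2\\ n_1\in C,\ |n_j|\sim N_j}}\frac{\bar g_{n_1}g_{n_2}}{|n_1|^{3/2}|n_2|^{3/2}}\,a_{n_3}\Bigr|^{2}.
\]
Duality together with the change of variable $\zeta:=m\mp|n_3|^2=|n_2|^2-|n_1|^2$, whose range is $|\zeta|\lesssim N_1^2$, reduces $\mathcal T$ to bounding $\|a_{n_3}\|_{\ell^2}^{2}\,N_1^{2}\,\sup_{\zeta}\|\mathcal G_\zeta\mathcal G_\zeta^{\ast}\|_{\rm op}$, where $\mathcal G_\zeta$ is the matrix with row index $n_3$, column index $n$, and entries the two-chaos
\[
\sigma_{n_3,n}=\sum_{\substack{n_2-n_1=n-n_3,\ n_1\in C,\ n_1\ne n_2\\ |n_2|^2-|n_1|^2=\zeta}}\frac{\bar g_{n_1}g_{n_2}}{|n_1|^{3/2}|n_2|^{3/2}}.
\]
Lemma \ref{matrixnorm} then yields $\|\mathcal G_\zeta\mathcal G_\zeta^{\ast}\|\lesssim M_1+M_2$.

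For $M_1=\sup_{n_3}\sum_n|\sigma_{n_3,n}|^{2}$, which is a positive $4$-chaos, I apply Proposition \ref{chaos} (using Remark \ref{plusone} for the $|g|^2-1$ decomposition) together with Lemma \ref{independence}. Because $\mathbb E g_n^{2}=0$ for complex Gaussians, only the diagonal pairing $(n_1,n_2)=(n_1',n_2')$ survives, giving, for $\omega$ outside a set of measure $\le e^{-1/\delta^r}$,
\[
M_1\lesssim \delta^{-\mu r}\,\frac{|S_{(n_3,\zeta)}|}{N_1^{3}N_2^{3}},\qquad S_{(n_3,\zeta)}=\{(n,n_1,n_2):\,n_1\in C,\ n_2=n_1+n-n_3,\ |n_2|^2-|n_1|^2=\zeta\}.
\]
For fixed $(n_3,\zeta)$ the constraint $|n_2|^2-|n_1|^2=\zeta$ with $v:=n-n_3$, $n_2=n_1+v$, becomes the planar condition $2n_1\cdot v=\zeta-|v|^2$; combining \eqref{planepi} for $n_1$ inside the cube $C$ with \eqref{sphere} for $n_2$ on a sphere of radius $\sim N_2$ gives the deterministic bound $|S_{(n_3,\zeta)}|\lesssim N_2^{4}$.

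For $M_2=\bigl(\sum_{n_3\ne n_3'}|\sum_n\sigma_{n_3,n}\bar\sigma_{n_3',n}|^{2}\bigr)^{1/2}$ one again applies Proposition \ref{chaos} to the inner $4$-chaos $F_{n_3,n_3'}:=\sum_n\sigma_{n_3,n}\bar\sigma_{n_3',n}$, reducing to $\mathbb E|F|^{2}$ up to the factor $\delta^{-\mu r}$. The expectation is a sum over pairings of the eight Gaussians in $\sigma_{n_3,n}\bar\sigma_{n_3',n}\bar\sigma_{n_3,n'}\sigma_{n_3',n'}$; following the template of Propositions \ref{One} and \ref{Two}, I split into six cases $\beta_1$–$\beta_6$ according to which of the four tuples $(n_1^{(\cdot)},n_2^{(\cdot)})$ share entries. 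The generic case $\beta_1$ (no coincidences between random frequencies in the two $\sigma$-copies) yields, after the plane/sphere counts of Lemma \ref{lattice-counting} and the sum over $n_3\ne n_3'$, the second term $N_1^{-1/2}N_2^{1/2}N_3^{3/4}$ of the claimed bound. In the coincidence cases $\beta_2$–$\beta_5$, forcing e.g.\ $n_1^{(a)}=n_1^{(b)}$ ties the geometry of $n_3$ and $n_3'$ together through a sphere-intersected-with-ball count estimated via \eqref{ball}, which contributes the extra factor $\sim N_3$ producing the first term $N_1^{-3/4}N_2^{1/2}N_3^{5/4}$. The residual case $\beta_6$ gives a strictly better contribution absorbed into the others. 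Substituting these estimates into $\mathcal T\lesssim\|a_{n_3}\|_{\ell^2}^{2}\,N_1^{2}(M_1+M_2)$ and taking square roots produces \eqref{barRRD}.

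The proof of \eqref{barRbarRD} is essentially identical, except that the diagonal $n_1=n_2$ is no longer excluded by the constraint (both factors are conjugates) and produces a resonant contribution involving $(\bar g_{n_1})^{2}$ at a single site. This diagonal piece is handled separately, in the spirit of the end of the proof of Proposition \ref{One} (case $n_1=n_3$), by pulling out $|g_{n_1}(\omega)|^{2}\lesssim\langle n_1\rangle^{2\varepsilon}$ via \eqref{sup-bound-all} and performing the remainder deterministically. I expect the main obstacle to be the combinatorial bookkeeping for $M_2$: unlike Propositions \ref{One}--\ref{Two}, both random frequencies here sit on the \emph{same} side of the multilinear form, so the six coincidence regimes interact nontrivially with the $n_3,n_3'$ summation, and extracting the precise $N_3$-loss in each regime — so that the gain $N_3^{5/4}$ appears only in the coincidence cases while the generic case retains only $N_3^{3/4}$ — requires keeping careful track of which coincidences among random indices force $n_3=n_3'$ or leave that sum free.
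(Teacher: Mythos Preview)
Your overall strategy is reasonable, but there is one concrete error that would prevent the argument from closing, and the approach also differs from the paper's in a way worth noting.

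\medskip
\textbf{The range of $\zeta$.} You write that $\zeta=|n_2|^2-|n_1|^2$ has range $|\zeta|\lesssim N_1^{2}$ and then carry the factor $N_1^{2}$ into the bound $\mathcal T\lesssim\|a_{n_3}\|_{\ell^2}^{2}\,N_1^{2}\,\sup_\zeta\|\GG_\zeta\GG_\zeta^{\ast}\|$. This overcounts by $N_1/N_2$: since $n_1$ is localized to the cube $C$ of sidelength $N_2$, the quantity $|n_1|^2$ varies only by $\sim N_1N_2$, and $|n_2|^2\sim N_2^{2}$ varies by at most $N_2^{2}\le N_1N_2$, so in fact $\Delta\zeta\sim N_1N_2$. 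This is not a harmless slip: with the factor $N_1^{2}$, the contribution of the coincidence cases (your $\beta_2$--$\beta_5$) exceeds the target bound by $(N_1/N_2)^{1/2}$ in the regime $N_1\ll N_2^{2}$. The whole point of restricting $n_1$ to $C$ is precisely this gain, and it has to be exploited here.

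\medskip
\textbf{The $M_1$ count.} The bound $|S_{(n_3,\zeta)}|\lesssim N_2^{4}$ is not correct either: the plane condition $2n_1\cdot v=\zeta-|v|^2$ and the sphere condition on $n_2$ are the \emph{same} constraint (one is just $n_2=n_1+v$ substituted into the other), so you cannot multiply them. Counting by first fixing $n_2$ with $|n_2|\sim N_2$ and then placing $n_1$ on the sphere $|n_1|^2=|n_2|^2-\zeta$ intersected with $C$ gives $|S_{(n_3,\zeta)}|\lesssim N_2^{3}\min(N_1,N_2^{2})$, which can be as large as $N_2^{5}$. Fortunately this still suffices once one uses the correct $\Delta\zeta=N_1N_2$, but the argument as written is wrong.

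\medskip
\textbf{Comparison with the paper.} The paper's proof is organized differently and is in fact simpler: because $n_1\in C$ already forces $\Delta m\sim N_1N_2$, no duality or change of variable is needed---one applies Cauchy--Schwarz directly in $n_3$ and works with the matrix $\sigma_{n,n_3}$ (the transpose of yours). With that orientation, $M_2$ has the outer sum over $n\neq n'$ and the inner sum over $n_3$, so the $N_3$-dependence enters through straightforward counting of $n_3$ rather than through the more delicate ``which coincidences tie $n_3$ to $n_3'$'' mechanism you anticipate. The six $\beta$-cases are then run exactly as in Propositions~\ref{One}--\ref{Two}; the first term $N_1^{-3/4}N_2^{1/2}N_3^{5/4}$ arises from $\beta_2$ and the second term $N_1^{-1/2}N_2^{1/2}N_3^{3/4}$ from $\beta_1,\beta_4$. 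Your orientation can be made to work once $\Delta\zeta$ is corrected, but the $M_2$ bookkeeping you flag as ``the main obstacle'' is genuinely heavier that way, and you have not carried it out.
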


\begin{proof} Without loss of generality we assume that $\tilde D_3=D_3$. We write, 
\begin{equation}\label{T1barRRD}
\mathcal{T}:=  \sum_{m \in Z, n \in \Z^3} \,\left|   \sum_{\substack{ n= -n_1+ n_2 + n_3, \\n_1\ne n_2, n_3
\\ m = -|n_1|^2 +|n_2|^2+|n_3|^2}} \chi_{C}(n_1)\frac{\cg_{n_1}(\omega)}{|n_1|^\frac{3}{2}}
\frac{g_{n_2}(\omega)}{|n_2|^{\frac{3}{2}}} \, a_{n_3} \right|^2,
\end{equation}
where $C \in \mathcal{C}_{N_2}$. Let us now define 
$$ \sigma_{n, n_3} =  \sum_{\substack{ n= -n_1+n_2+n_3,\, \,  n_1\ne n_2, n_3\\  m = -|n_1|^2 +|n_2|^2+ |n_3|^2  }}  \chi_{C}(n_1)\frac{\cg_{n_1}(\omega)}{|n_1|^{\frac{3}{2}}} \frac{g_{n_2}(\omega)}{|n_2|^{\frac{3}{2}}}.$$

If we denote by $\GG$ the matrix of entries $ \sigma_{n, n_3}$, by using that the variation in $m$ is at most $N_1N_2$  we can then continue the estimate of $\mathcal{T}$ in \eqref{T1barRRD} by
$$
\mathcal{T}\lesssim \|a_{n_3}\|_{\ell^2}^2N_1N_2  \|\GG\GG^*\|. 
$$
Once again by Lemma \ref{matrixnorm}, 
$$\|\GG\GG^*\|\lesssim \max_{n}\sum_{n_3 }|\sigma_{n, n_3}|^2+\left(\sum_{n\ne n'}\left|\sum_{n_3}\sigma_{n, n_3}\overline{\sigma}_{n', n_3}\right|^2\right)^\frac{1}{2}=: M_1+M_2.$$
%
To estimate $M_1$ we first define the set 
$$
S_{(m,n)}=\{(n_1,n_2,n_3) \, :  n_1 \neq n_2, n_3, \quad \, n= -n_1+n_2+n_3, \quad  m = -|n_1|^2 +|n_2|^2+ |n_3|^2 \}.$$
By \eqref{planepi} we have that $ |S_{(m,n)}|\lesssim N_3^3N_2^2$ 
since once fixed $n_3$  we use $m = -|n_2+n_3-n|^2 +|n_2|^2+ |n_3|^2$ to count $n_2$ which lives 
on the intersection of a plane with a ball of radius $N_2$.  Then as in \eqref{M1} we have, for $\omega$ outside a set of measure $e^{-\frac{1}{\delta^r}}$,  that 
\begin{eqnarray*}M_1&\lesssim&  \sup_{n,m}\, \sum_{n_3}\left|\sum_{\substack{ n= -n_1+n_2+n_3, \, \, n_1\ne n_2, n_3\,  \\  m = -|n_1|^2 +|n_2|^2+ |n_3|^2  }}  \frac{\cg_{n_1}(\omega)}{|n_1|^{\frac{3}{2}}} \chi_{C}(n_1)\frac{g_{n_2}(\omega)}{|n_2|^{\frac{3}{2}}}\right|^2\\
&\lesssim& \sup_{n,m}\, \delta^{-r}N_1^{-3}N_2^{-3}|S_{(m,n)}|\lesssim 
\delta^{-2r}N_1^{-3}  N_2^{-3} N_3^3N_2^2\sim \delta^{-2r}N_1^{-3 }  N_2^{-1} N_3^3 .
\end{eqnarray*}

To estimate $M_2$ we first write
\begin{eqnarray*}
M_2^2&=&\sum_{n\ne n'}\left|\sum_{n_3}\sigma_{n, n_3}\overline \sigma_{n', n_3}\right|^2\sim
\sum_{n\ne n'}\left|\sum_{S_{(n,n',m)}}\frac{\cg_{n_1}(\omega)}{|n_1|^{\frac{3}{2}}}\frac{g_{n_2}(\omega)}{|n_2|^{\frac{3}{2}}}\frac{g_{n_1'}(\omega)}{|n_1'|^{\frac{3}{2}}}\frac{\cg_{n_2'}(\omega)}{|n_2'|^{\frac{3}{2}}}\right|^2,
\end{eqnarray*}
where
\begin{equation*}
S_{(n,n',m)}=\left\{(n_3, n_1, n_2,n_1', n_2') \, : \,  \begin{aligned}
&n=-n_1+n_2+n_3, \quad n'=-n_1'+n_2'+n_3,\\
&n_1 \neq n_2, n_3, \quad n_1'\neq n_2', n_3, \quad n_1, n_1' \in C\\
&m = -|n_1|^2 +|n_2|^2+|n_3|^2, \quad m= -|n_1'|^2 +|n_2'|^2+|n_3|^2\end{aligned}\right\}.
\end{equation*}
Just like for the proof of \eqref{barRDR} we need to organize the estimates according to whether some frequencies are the same or not, in all we have six cases.
\begin{itemize}
\item {\bf Case $\beta_1$:} $n_1, n_1', n_2, n_2'$ are all different. 
\item {\bf Case $\beta_2$:} $n_1=n_1'; \, \, n_2\ne n_2'.$
\item {\bf Case $\beta_3$:} $n_1\ne n_1'; \, \, n_2= n_2'.$
\item {\bf Case $\beta_4$:} $n_1\ne n_2'; \, \, n_2 =  n_1'.$
\item {\bf Case $\beta_5$:} $n_1= n_2'; \, \, n_2\ne  n_1'.$
\item {\bf Case $\beta_6$:} $n_1= n_2'; \, \, n_2 =  n_1'.$ 
\end{itemize}

\medskip
{\bf Case $\beta_1$:} In this case we let 
\begin{equation*}
S_{(m)}=\left\{(n,n',n_3, n_1, n_2,n_1', n_2') \, : \,  \begin{aligned} &n=-n_1+n_2+n_3, \quad n'=-n_1'+n_2'-n_3,\\
&n_1 \neq n_2, n_3, \quad n_1'\neq n_2', n_3, \quad n_1, n_1' \in C \\
&m = -|n_1|^2 +|n_2|^2+|n_3|^2 , \quad m= -|n_1'|^2 +|n_2'|^2+|n_3|^2\end{aligned}\right\}
\end{equation*}
with  $|S_{(m)}|
\lesssim N_1^2N_2^6N_3^3.$ As in the argument we use for \eqref{M1}, this gives that for $\omega$ outside a set of measure $e^{-\frac{1}{\delta^r}}$, 
$$M_2^2\lesssim \delta^{-4r}N_1^{-6}N_2^{-6}N_1^2N_2^6N_3^3\sim \delta^{-4r}N_1^{-4}N_3^3.$$

\medskip
{\bf Case $\beta_2$:} In this case we define two sets. We start with  
\begin{equation*}
S_{(n,n', n_2, n_2',m)}=\left\{(n_3, n_1) \, : \,  \begin{aligned}&n=-n_1+n_2+n_3, \quad n'=-n_1+n_3+n_2' ,\\
&n_1 \neq n_2, n_2', n_3 \\ 
&m = -|n_1|^2 +|n_2|^2+ |n_3|^2, \quad m= -|n_1|^2 +|n_3|^2+|n_2'|^2\end{aligned}\right\}.
\end{equation*}
To compute  $|S_{(n,n', n_2, n_2',m)}|$ we  count $n_3$  then $n_1$ is determined. Since $n_3$ sit on a plane we have  by \eqref{planepi} that $|S_{(n,n', n_2, n_2',m)}|
\lesssim N_3^2$. Then we set 
\begin{equation*}
S_{(m)}=\left\{(n,n',n_3, n_1, n_2, n_2') \, : \,  \begin{aligned}&n=-n_1+n_2+n_3, \quad n'=-n_1+n_3+n_2' ,\\
&n_1 \neq n_2, n_2', n_3 \\ 
&m = -|n_1|^2 +|n_2|^2+ |n_3|^2, \quad m= -|n_1|^2 +|n_3|^2+|n_2'|^2\end{aligned}\right\}
\end{equation*}
for which  $|S_{(m)}|\lesssim N_1N_2^6N_3^3.$  We then have following the argument in \eqref{quote}-\eqref{final-quote} that for $\omega$ outside a set of measure $e^{-\frac{1}{\delta^r}}$, 
\begin{eqnarray*}
M_2^2&\lesssim& \delta^{-2r}N_1^{-6}N_2^{-6}\sum_{n\ne n'}\sum_{n_2, n_2'}|S_{(n,n', n_2, n_2',m)}|^2\\
&\lesssim&\delta^{-2r}N_1^{-6}N_2^{-6}N_3^2\sum_{n\ne n'}\sum_{n_2, n_2'}|S_{(n,n', n_2, n_2',m)}|\\
&\lesssim &\delta^{-2r}N_1^{-6}N_2^{-6}N_3^2|S_{(m)}|
\sim \delta^{-2r}N_1^{-5}N_3^5.
\end{eqnarray*}

\medskip
{\bf Case $\beta_3$:} In this case we define first 
\begin{equation*}
S_{(n,n', n_1, n_1',m)}=\left\{(n_2, n_3) \, : \,  \begin{aligned}&n=-n_1+n_2+n_3, \quad n'=-n_1'+n_2+n_3 ,\\
&n_2, n_3 \neq n_1, n_1', \quad n_1, n_1' \in C \\ 
&m = -|n_1|^2 +|n_2|^2+ |n_3|^2, \quad m= -|n_1'|^2 +|n_2|^2+|n_3|^2\end{aligned}\right\}
\end{equation*}
with $|S_{(n,n', n_1, n_1',m)}|\lesssim N_3^2$ since $n_2$ is determined by $n_3$ and this one lines on a sphere of radius at most $N_1$. On the other hand 
\begin{equation*}
S_{(m)}=\left\{(n,n',n_2, n_1, n_1', n_3) \, : \,  \begin{aligned}&n=-n_1+n_2+n_3, \quad n'= -n_1'+n_2+n_3,\\
&n_2, n_3 \neq n_1, n_1', \quad n_1, n_1' \in C \\ 
&m =-|n_1|^2 +|n_2|^2+ |n_3|^2 , \quad m=  -|n_1'|^2 +|n_2|^2+|n_3|^2\end{aligned}\right\}
\end{equation*}
with  $|S_{(m)}|\lesssim N_1^2N_3^3N_2^3.$  Hence arguing as above we  have 
\begin{eqnarray*}
M_2^2&\lesssim& \delta^{-2r}N_1^{-6}N_2^{-6}\sum_{n\ne n'}\sum_{n_1, n_1'}|S_{(n,n', n_1, n_1',m)}|^2\\
&\lesssim&\delta^{-2r}N_1^{-6}N_2^{-6}N_3^2\sum_{n\ne n'}\sum_{n_1, n_1'}|S_{(n,n', n_1, n_1',m)}|\\
&\lesssim &\delta^{-2r}
N_1^{-6}N_2^{-6}N_3^2|S_{(m)}|
\sim\delta^{-2r} N_1^{-4}N_2^{-3}N_3^5, 
\end{eqnarray*} for $\omega$ outside a set of measure $e^{-\frac{1}{\delta^r}}$.

\medskip
{\bf Case $\beta_4$:} We define the two sets 
\begin{equation*}
S_{(n,n', n_1, n_2',m)}=\left\{(n_2, n_3) \, : \,  \begin{aligned}&n=-n_1+n_2+n_3, \quad n'=-n_2+n_2'+n_3,\\
&n_2,n_3 \neq n_1, n_2' \\ 
&m = -|n_1|^2 +|n_2|^2+|n_3|^2, \quad m= -|n_2|^2 +|n_2'|^2+|n_3|^2\end{aligned}\right\}
\end{equation*}
for which, since $n_3$ lives on a sphere of radius at most $N_1$, we have  $|S_{(n,n', n_1, n_2',m)}|\lesssim \min(N_1, N_3^2)$ and 
\begin{equation*}
S_{(m)}=\left\{(n,n',n_3, n_1, n_2', n_2) \, : \,  \begin{aligned}&n=-n_1+n_2+n_3, \quad n'= -n_2+n_2'+n_3,\\
&n_2,n_3 \neq n_1, n_2' \\ 
&m =-|n_1|^2 +|n_2|^2+|n_3|^2, \quad m= -|n_2|^2 +|n_2'|^2+|n_3|^2\end{aligned}\right\}
\end{equation*}
with  $|S_{(m)}|\lesssim N_1N_3^3N_2^6.$  We then have in this case that 
\begin{eqnarray*}
M_2^2&\lesssim& \delta^{-2r}N_1^{-6}N_2^{-6}\sum_{n\ne n'}\sum_{n_1, n_2'}|S_{(n,n', n_1, n_2',m)}
|^2\\
&\lesssim&\delta^{-2r}N_1^{-6}N_2^{-6}\min(N_1,N_3^2)\sum_{n\ne n'}\sum_{n_1, n_2'}|S_{(n,n', n_1, n_2',m)}|\\
&\lesssim &\delta^{-2r}
N_1^{-6}N_2^{-6}\min(N_1,N_3^2)|S_{(m)}|
\sim \delta^{-2r}N_1^{-4}N_3^{3}, 
\end{eqnarray*} for $\omega$ outside a set of measure $e^{-\frac{1}{\delta^r}}$.

\medskip

{\bf Case $\beta_5$:} This case is exactly the same as Case $\beta_4$.

\medskip

We are now ready to estimate $\mathcal{T}$ in cases $\beta_1-\beta_5$.
\begin{eqnarray*}
\,\, \mathcal{T}&\lesssim& \|a_{n_2}\|_{\ell^2}^2N_1N_2  \|\GG\GG^*\|\lesssim \|a_{n_2}\|_{\ell^2}^2N_1N_2 (M_1+M_2)\\
&\lesssim&\|a_{n_2}\|_{\ell^2}^2\delta^{-4r}[N_1N_2(N_1^{-\frac{5}{2}}N_3^\frac{5}{2}+N_1^{-2}N_3^\frac{3}{2})]\lesssim\delta^{-4r}[
 N_1^{-\frac{3}{2}}N_2N_3^\frac{5}{2}+N_1^{-1}N_2N_3^\frac{3}{2}]\|a_{n_2}\|_{\ell^2}^2.
\end{eqnarray*}


\medskip
{\bf Case $\beta_6$:} In this case 
\begin{equation*}
S_{(n,n')}=\left\{(n_3, n_1, n_2) \, : \,  \begin{aligned} &n=-n_1+n_2+n_3, \quad n'=-n_2+n_1+n_3 ,\\
&n_1 \neq n_2, n_3, \quad |n_1|^2=|n_2|^2, \quad m=|n_3|^2\end{aligned}\right\}
\end{equation*}
so $N_1\sim N_2$ and $\Delta m\lesssim N_3^2$. We  have $S_{(n,n',m)}\lesssim N_2^3N_3$ since $n_3$ sits on a sphere of radius at most $N_3$. We then have as in \eqref{beta6} that for $\omega$ outside a set of measure $e^{-\frac{1}{\delta^r}}$, 
$$M_2^2\lesssim N_1^{-6+\varepsilon}N_2^{-6}N_2^3N_3|S_{(m)}|,$$
where 
$$S_{(m)}=\left\{(n, n', n_3, n_1, n_2) \, : \, \begin{aligned} &n=-n_1+n_2+n_3, \quad n'=-n_2+n_1+n_3 ,\\
&n_1 \neq n_2, n_3, \quad |n_1|^2=|n_2|^2, \quad m=|n_3|^2\end{aligned}\right\}$$
and $|S_{(m)}|\lesssim N_3N_2^3N_2$ since again $n_3$ sits on a sphere of radius at most $N_3$ and for fixed $n_2$ we have that $n_1$ sits on a sphere of radius at most $N_2$. Hence $M_2\lesssim  N_1^{-\frac{5}{2}+\varepsilon} N_3$ and as a consequence 
$$\mathcal{T}\lesssim \|a_{n_3}\|_{\ell^2}^2N_3^3N_1^{-\frac{5}{2}+\varepsilon}.$$


\medskip
The proof of \eqref{barRbarRD} proceeds very much like the one we just presented. Actually when $n_1=n_2$ the estimates may be made better since we will not have planes, but spheres involved in the counting.  On the other hand  here $n_1=n_2$ could be a possibility. In this case we have
$$\mathcal{T}:=  \sum_{m\in Z, n \in \Z^3} \,\left|   \sum_{\substack{ n= -2n_1+n_3 
\\ m = -2|n_1|^2+ |n_3|^2}}\frac{(\cg_{n_1}(\omega))^2}{|n_1|^3} a_{n_3}
 \right|^2
$$
Let $S_{(m,n)}=\{(n_1,n_3) \, / \, n= -2n_1+ n_3, \,  n_3\neq n_1, \, m = -2|n_1|^2 +|n_3|^2\}$, and note that by Lemma \ref{lattice-counting}, 
$|S_{(m,n)}|\lesssim \min(N_1,N_3^2)$. Then using \eqref{sup-bound-all} for $\omega$ outside a set of measure $e^{-\frac{1}{\delta^r}}$, we have that 
\begin{eqnarray*}\mathcal{T}&\lesssim&  \min(N_1,N_3^2)\sum_{m, n} \,  \sum_{S_{(m,n)}} \frac{|g_{n_1}(\omega)|^4}{|n_1|^6}
|a_{n-2n_1}|^2\\
&\lesssim & \min(N_1,N_3^2)N_1^{-3+\varepsilon}\|a_{n_3}\|_{l^2}^2.
\end{eqnarray*}

\end{proof}

\bigskip

\begin{proposition}{\label{Four}}  Let  $D_j$ and $R_k$ be as above and fix $N_1\geq N_2\geq N_3$, $r, \, \delta>0$ and $C \in \mathcal{C}_{N_2}$. Then there exists $\mu, \varepsilon>0$ and a set $\Omega_\delta\in A$ such that $\pr(\Omega_\delta^c)\leq 
e^{-\frac{1}{\delta^r}}$ such that  for any $\omega\in \Omega_\delta$ we have  \eqref{RDD} for any $0\leq \theta \leq 1$, and \eqref{DRD} .

\end{proposition}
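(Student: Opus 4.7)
The plan is to treat both estimates by the same H\"older-type reduction: factor out the lowest-frequency deterministic function $\tilde D_3$ in $L^\infty_{t,x}$ via Bernstein,
\[
\|\tilde D_3\|_{L^\infty_{t,x}}\le\|P_{N_3}\phi\|_{L^\infty_x}\lesssim N_3^{\frac{3}{2}}\|P_{N_3}\phi\|_{L^2},
\]
and reduce to bounding the $L^2_{t,x}$-norm of the remaining bilinear product. Since the Fourier support of the full product dominates that of $T_{\Upsilon}(\ldots)$, this upper bound is legitimate.

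For \eqref{DRD} the remaining pair is $P_C\tilde D_1\,R_2$, and \eqref{2strichartz-mix} with $J=\{2\}$ gives
\[
\|P_C\tilde D_1\,R_2\|_{L^2(\T\times\T^3)}\lesssim\delta^{-\mu r}N_2^{\frac{1}{2}+\varepsilon}\|P_{N_1}\phi\|_{L^2}
\]
on a set $\Omega_\delta\in A$ with $\pr(\Omega_\delta^c)\le e^{-1/\delta^r}$. Combining with Bernstein on $\tilde D_3$ and $\|P_C P_{N_1}\phi\|_{L^2}\le\|P_{N_1}\phi\|_{L^2}$ closes \eqref{DRD}.

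The heart of the argument is \eqref{RDD}, for which I would prove the single bilinear bound
\[
\|P_C R_1\,\tilde D_2\|_{L^2(\T\times\T^3)}\lesssim\delta^{-\mu r}\,N_2^{\frac{3}{2}}N_1^{-\frac{3}{2}+\varepsilon}\min(N_1,N_2^2)^{\frac{1}{2}}\|P_{N_2}\phi\|_{L^2}
\]
by adapting the matrix-norm scheme of Proposition~\ref{One}. Plancherel in $(t,x)$ and duality against the coefficients $a_{n_2}$ of $\tilde D_2$, with the change of variable $\zeta=|n_1|^2$, reduce matters, after a Cauchy-Schwarz in $(n_2,\zeta)$ and the convolution-type bound $\sum_{n_2}k_{n_1+n_2}\bar k_{n_1'+n_2}\le\|k\|_{\ell^2}^2=1$, to estimating
\[
\sum_{\substack{n_1,n_1'\in C\\|n_1|=|n_1'|\sim N_1}}\frac{|g_{n_1}||g_{n_1'}|}{|n_1|^{3}}.
\]
Stratifying by $R=|n_1|=|n_1'|$ and one more Cauchy-Schwarz, the sphere-in-cube count $\#(C\cap S_R)\lesssim\min(R,N_2^2)\le\min(N_1,N_2^2)$ from \eqref{ball}, together with $\sum_{n\in C,\,|n|\sim N_1}|g_n|^2\lesssim N_1^{2\varepsilon}N_2^{3}$ on the good set from Lemma~\ref{supg}(3), deliver the bilinear bound. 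Multiplying by Bernstein on $\tilde D_3$ and comparing exponents, the ratio of the resulting estimate to $B_\theta$ equals
\[
N_2^{\,1-\frac{3\theta}{4}}N_1^{-1}\min(N_1,N_2^2)^{\frac{\theta}{2}},
\]
which is $\lesssim 1$ uniformly in $\theta\in[0,1]$ and in every regime $N_2\le N_1$ (a case-split on $N_1\lessgtr N_2^2$), so the single bilinear bound already implies the whole $\theta$-family \eqref{RDD}.

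The main subtlety I anticipate is forcing the sphere count $\min(N_1,N_2^2)$ to appear uniformly across conjugation patterns. When $\tilde D_2=\bar D_2$, the direct Fourier constraint on the bilinear product $(n_1-n_2=n,\;|n_1|^2-|n_2|^2=m)$ is a plane in $n_1$, for which \eqref{planepi} would only give $N_2^2$. Performing the duality \emph{first} changes the bookkeeping: the off-diagonal pairing $\sigma_{n_2,n}\bar\sigma_{n_2',n}$ collapses to the sphere condition $|n_1|=|n_1'|$, which is what triggers the sharper \eqref{ball} bound. Once this dualize-before-counting step is in place, the subsequent randomness bound from Lemma~\ref{supg}(3) and the exponent arithmetic to recover every $B_\theta$ are routine.
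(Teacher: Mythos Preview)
Your approach is correct and genuinely different from the paper's. The paper treats both estimates at the trilinear level: for \eqref{RDD} it proves two separate trilinear bounds---one via the matrix scheme $\|\GG\GG^*\|\le M_1+M_2$ after a Cauchy--Schwarz in $n_3$ (yielding \eqref{estimate1}), one via a direct Cauchy--Schwarz and the set-counting $|S_{(n,n_3,m)}|\lesssim\min(N_1,N_2^2)$ (yielding \eqref{estimate2})---and then interpolates to obtain the $\theta$-family; for \eqref{DRD} it runs a direct duality-and-counting argument on the trilinear sum with the change of variable $\zeta=m-|n_1|^2$. Your route---peel off $\tilde D_3$ in $L^\infty$ by Bernstein and work bilinearly---is shorter: \eqref{DRD} then follows from the off-the-shelf bilinear bound \eqref{2strichartz-mix}, and for \eqref{RDD} a single bilinear estimate on $P_C R_1\tilde D_2$ already dominates the entire $\theta$-family, so no interpolation is needed. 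In fact your bilinear bound $N_1^{-3/2+\varepsilon}N_2^{3/2}\min(N_1,N_2^2)^{1/2}$ is sharper than the paper's own bilinear estimate \eqref{RD} in Proposition~\ref{bilinear}.

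Two small points. First, the sentence ``the Fourier support of the full product dominates that of $T_\Upsilon$'' is not the right justification for replacing $T_\Upsilon$ by a pointwise product: support containment does not give $L^2$ domination. The correct observation is that in both \eqref{RDD} and \eqref{DRD} there is a single $R$-factor, so the constraint ``$n_k\ne n_\ell$ when $\alpha_k\ne\alpha_\ell$'' (which in the paper's convention is only active between $R$ and $\bar R$) is vacuous, and $T_\Upsilon$ \emph{equals} the product. Second, your $\zeta$-stratification argument for the bilinear bound mirrors the tensor-duality step in the paper's Proposition~\ref{One}; a slightly cleaner alternative (at least for $\tilde D_2=D_2$, which is the WLOG reduction the paper itself makes) is simply to Cauchy--Schwarz inside each $(m,n)$-fiber:
\[
\sum_{m,n}|F_{m,n}|^2\le N_1^{-3+\varepsilon}\sup_{m,n}|S_{m,n}|\sum_{n_1\in C,\,n_2}|a_{n_2}|^2\lesssim N_1^{-3+\varepsilon}\min(N_1,N_2^2)\,N_2^3\,\|a\|_{\ell^2}^2,
\]
using $|S_{m,n}|\lesssim\min(N_1,N_2^2)$ from \eqref{ball} (sphere in cube) and $|g_{n_1}|\lesssim N_1^\varepsilon$ from Lemma~\ref{supg}. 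This recovers your bilinear bound without the duality bookkeeping.
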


\begin{proof}

We now move to \eqref{RDD}. Without loss of generality we assume that $\tilde D_i=D_i, \, i=2,3.$ We have that 
$$
\mathcal{T}:=  \sum_{m \in \Z, n \in \Z^3} \,\left|   \sum_{\substack{ n= n_1+n_2 + n_3 \\ m = |n_1|^2 +|n_2|^2+ |n_3|^2}} 
 \chi_{C}(n_1)\frac{g_{n_1}(\omega)}{|n_1|^{\frac{3}{2}}} a_{n_2}a_{n_3} \right|^2.
$$
Then  
$$
\mathcal{T}\lesssim  \sum_{m \in \Z,  n\in \tilde C} \,\left|   \sum_{n_2, n_3}   \sigma_{n,n_2}a_{n_2}a_{n_3}
 \right|^2,
$$
where $\tilde C$ is again a cube of sidelength approximately $N_2$ and 
$$ \sigma_{n, n_2} =  \begin{cases}\dfrac{g_{n-n_2-n_3}(\omega)}{|n-n_2-n_3|^{\frac{3}{2}}} \, & \mbox{ if } 
m = |n-n_2-n_3|^2 +|n_2|^2+ |n_3|^2  \\ 0 &\mbox{ otherwise}\end{cases}.$$  Note that  $\sigma_{n, n_2}$ also depends on $m$ and $n_3$ but we estimate it independently of $m$ and $n_3$ and take the supremum on them. 
By Cauchy-Schwarz in $n_3$, the fact that $\Delta m\lesssim N_1N_2$ and Lemma \ref{matrixnorm} we  have that
$$
\mathcal{T}\lesssim \|a_{n_3}\|_{\ell^2}^2\|a_{n_2}\|_{\ell^2}^2N_1N_2 N_3^3 \|\GG\GG^*\|;
$$
and as usual by Lemma \ref{matrixnorm} we have that 
$$\|\GG\GG^*\|\lesssim \max_{n}\sum_{n_2 }|\sigma_{n, n_2}|^2+\left(\sum_{n\ne n' \in  \tilde C}\left|\sum_{n_2}\sigma_{n, n_2}\overline{\sigma}_{n', n_2}\right|^2\right)^\frac{1}{2}=:M_1+M_2.$$
To estimate $M_1$ we will use  the set $S(n,n_3,m):=\{n_2 \, : \, m =  |n-n_2-n_3|^2 +|n_2|^2+ |n_3|^2\}$ with  cardinality $|S_{(n,n_3,m)}|\lesssim N_1$ since this set describes a sphere whose radius is at most $N_1$. Using \eqref{sup-bound-all}  we  estimate 
\begin{equation}\label{M1later}M_1\lesssim \sum_{n_2\in S(n,n_3,m)}N_1^{-3+\varepsilon}\lesssim N_1^{-2+\varepsilon}, \end{equation} for $\omega$ outside a set of measure $e^{-\frac{1}{\delta^r}}$. 

To estimate $M_2$ we first define the set 
$$S_{(n_3,m)}:=\{(n, n', n_2) \, : \,m =  |n-n_2-n_3|^2 +|n_2|^2+ |n_3|^2, \, m =  |n'-n_2-n_3|^2 +|n_2|^2+ |n_3|^2\}$$
and note that $|S_{(n_3,m)}|\lesssim N_2^3N_1^2$. Then using \eqref{largedeviation} and proceeding using arguments similar to those presented for \eqref{quote}-\eqref{final-quote}   we have
\begin{equation}\label{M2later}M_2^2\lesssim  \delta^{-2r}N_1^{-6}|S_{(n_3,m)}|\lesssim \delta^{-2r}N_1^{-6}N_2^3N_1^2.\end{equation}
By using the estimates of $M_1$ and $M_2$ we obtain that for $\omega$ outside a set of measure $e^{-\frac{1}{\delta^r}}$, 
\begin{equation}\label{estimate1}
\mathcal{T} \lesssim \|a_{n_3}\|_{\ell^2}^2\|a_{n_2}\|_{\ell^2}^2\delta^{-r}N_1N_2 N_3^3 N_1^{-2}N_2^\frac{3}{2}\lesssim 
\|a_{n_3}\|_{\ell^2}^2\|a_{n_2}\|_{\ell^2}^2\delta^{-r}N_1^{-1}N_2^\frac{5}{2}N_3^3. \end{equation}

We will interpolate this estimate with the one we obtain below.

\begin{eqnarray}
\mathcal{T}&\lesssim&N_1N_2\sup_{m}\sum_{n \in \Z^3} \,\left|   \sum_{\substack{ n= n_1+n_2 +n_3 \\ m= |n_1|^2+|n_2|^2+ |n_3|^2}} 
 \chi_{C}(n_1)\frac{g_{n_1}(\omega)}{|n_1|^{\frac{3}{2}}}a_{n_2} a_{n_3} \right|^2  \notag\\
 &\lesssim&N_1N_2 \|a_{n_3}\|_{\ell^2}^2\sup_{m} \sum_{n, n_3 \in \Z^3}\left|  \sum_{\substack{ n = n_1+n_2 +n_3 \\ m= |n_1|^2+|n_2|^2+ |n_3|^2}} 
 \chi_{C}(n_1)\frac{g_{n_1}(\omega)}{|n_1|^{\frac{3}{2}}}a_{n_2} \right|^2 \notag\\
 &\lesssim&N_1N_2 \|a_{n_3}\|_{\ell^2}^2 N_1^{-3+\varepsilon}\sup_{m}\sum_{n, n_3 \in \Z^3}|S_{(n,n_3,m)}|\sum_{S_{(n,n_3,m)}}|a_{n_2}|^2 \notag\\
  &\lesssim& N_1N_2\|a_{n_3}\|_{\ell^2}^2 N_1^{-3+\varepsilon}\min(N_2^2,N_1)\sup_{m}\sum_{n_2}\sum_{S_{(n_2,m)}}|a_{n_2}|^2 \notag\\
  &\lesssim&N_1N_2 \|a_{n_3}\|_{\ell^2}^2\|a_{n_2}\|_{\ell^2}^2 N_1^{-3+\varepsilon}\min(N_2^2,N_1)N_1N_3^3 \notag\\
  &\sim&  N_1^{-1+\varepsilon}N_3^3N_2\min(N_2^2,N_1)\|a_{n_2}\|_{\ell^2}^2\|a_{n_3}\|_{\ell^2}^2, \label{estimate2}
\end{eqnarray}

\medskip
\noindent where $S_{(n,n_3,m)}=\{(n_1,n_2) \, /\, n= n_1+n_2 +n_3, \, \, n_1\in C, \, \, m= |n_1|^2+|n_2|^2+ |n_3|^2 \}$, with $|S_{(n,n_3,m)}|\lesssim \min(N_2^2,N_1)$, $S_{(n_2,m)}=\{(n,n_1,n_3) \, /\, n= n_1+n_2 +n_3, \, \, n_1\in C,\,\,  m= |n_1|^2+|n_2|^2+ |n_3|^2  \}$ with $|S_{(n_2,m)}|\lesssim N_1N_3^3$ and we used \eqref{sup-bound-all} for $\omega$ outside a set of measure $e^{-\frac{1}{\delta^r}}$.

The estimate of \eqref{RDD} now follows by interpolating \eqref{estimate2} with \eqref{estimate1}.

\medskip

We now move to \eqref{DRD}. Again without loss of generality we assume that $\tilde D_i=D_1, \, i=1,3$. We use duality and the change of variables   $\zeta=m-|n_1|^2=|n_2|^2+ |n_3|^3$ as in the proof of Proposition \ref{One}. We note that the variation of $\zeta$ is at most $N_2^2$ and that  $n\in \tilde C,$ a cube of side length approximately $ N_2$.  We use \eqref{sup-bound-all} for $\omega$ outside a set of measure $e^{-\frac{1}{\delta^r}}$ and Lemma \ref{lattice-counting} to reduce the bound for $\mathcal T$ to estimating,  
\begin{eqnarray*}
&&N_2^2\sup_{\zeta}\sum_{n_1 \in \Z^3} \,\left|   \sum_{\substack{ n_1= n-n_2 - n_3 \\ \zeta= |n_2|^2+ |n_3|^2}} 
 \chi_{\tilde C}(n)k_{n}\frac{g_{n_2}(\omega)}{|n_2|^{\frac{3}{2}}} a_{n_3} \right|^2\|\chi_C a_{n_1}\|_{\ell^2}^2\\
 &\lesssim&N_2^2 \|\chi_C a_{n_1}\|_{\ell^2}^2\|a_{n_3}\|_{\ell}^2\sup_{\zeta} \sum_{n_1, n_3 \in \Z^3}\left|  \sum_{\substack{ n_1= n-n_2 - n_3 \\ \zeta= |n_2|^2+ |n_3|^2}} 
 \chi_{\tilde C}(n)k_{n}\frac{g_{n_2}(\omega)}{|n_2|^{\frac{3}{2}}} \right|^2\\
 &\lesssim&N_2^2 \|\chi_C a_{n_1}\|_{\ell^2}^2\|a_{n_3}\|_{\ell^2}^2 N_2^{-3+\varepsilon}\sup_{\zeta}\sum_{n_1, n_3 \in \Z^3}|S_{(n_1,n_3,\zeta)}|\sum_{S_{(n_1,n_3,\zeta)}}\chi_{\tilde C}(n)|k_{n}|^2 \\
  &\lesssim&N_2^2 \|\chi_C a_{n_1}\|_{\ell^2}^2\|a_{n_3}\|_{\ell^2}^2 N_2^{-3+\varepsilon}N_2\sup_{\zeta}\sum_{n}\sum_{S_{(n,\zeta)}}|k_{n}|^2 \\
  &\lesssim&N_2^2 \|\chi_C a_{n_1}\|_{\ell^2}^2\|a_{n_3}\|_{\ell^2}^2 N_2^{-3+\varepsilon}N_2N_2N_3^3\|k_{n}\|_{\ell^2}^2 \\
  &\sim&  N_2^{1+\varepsilon}N_3^3\|\chi_C a_{n_1}\|_{\ell^2}^2\|a_{n_3}\|_{\ell^2}^2\|k_{n}\|_{\ell^2}^2,
\end{eqnarray*}
where $S_{(n_1,n_3,\zeta)}=\{(n,n_2) \, /\, n_1= n-n_2 - n_3, \, \, n\in \tilde C,\, \, \zeta= |n_2|^2+ |n_3|^2 \}$, with $|S_{(n_1,n_3,\zeta)}|\lesssim N_2$,  $S_{(n,\zeta)}=\{(n_1,n_2,n_3) \, /\, n_1= n-n_2 - n_3, \, \, n_1\in C, \, \, \zeta= |n_2|^2+ |n_3|^2 \}$ with $|S_{(n,\zeta)}|\lesssim N_2N_3^3$.

\smallskip
\end{proof}

\begin{proposition}{\label{Five}} Let  $R_k$ be as above and fix $N_1\geq N_2\geq N_3$, $r, \, \delta>0$ and $C \in \mathcal{C}_{N_2}$. Then there exists $\mu>0$ and a set $\Omega_\delta\in A$ such that $\pr(\Omega_\delta^c)\leq 
e^{-\frac{1}{\delta^r}}$ such that for any $\omega\in \Omega_\delta$ we have $\eqref{barRbarRR}, \eqref{barRRbarR}$ and $\eqref{barRRR}$.

\end{proposition}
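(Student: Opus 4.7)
The plan for proving Proposition~\ref{Five} departs slightly from the matrix strategy of Propositions~\ref{One}--\ref{Four}. Since all three factors in each of \eqref{barRbarRR}, \eqref{barRRbarR}, \eqref{barRRR} are randomized linear evolutions, there is no deterministic coefficient to peel off, and the cleanest route is to apply the chaos moment inequality of Proposition~\ref{chaos} directly to the $L^2_{m,n}$-representation of the quantity of interest. I will describe the plan for \eqref{barRbarRR}; the remaining two estimates are treated in exactly the same way, only the sign pattern $(\alpha_1,\alpha_2,\alpha_3)$ and the corresponding Wick pairings changing.

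By Plancherel in $(t,x)$,
$$\mathcal T(\omega) \;:=\; \|T_{\Upsilon}(P_{C}\bar R_1, \bar R_2, R_3)\|_{L^2(\T\times\T^3)}^2 \;=\; \sum_{m\in\Z,\,n\in\Z^3}|F_{m,n}(\omega)|^2,$$
where
$$F_{m,n}(\omega) \;=\; \sum_{\substack{n=-n_1-n_2+n_3\\ m=-|n_1|^2-|n_2|^2+|n_3|^2\\ n_1,n_2\neq n_3,\ |n_j|\sim N_j}} \chi_C(n_1)\,\frac{\bar g_{n_1}(\omega)\,\bar g_{n_2}(\omega)\,g_{n_3}(\omega)}{|n_1|^{3/2}|n_2|^{3/2}|n_3|^{3/2}}$$
is a Gaussian polynomial of total degree three. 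By Proposition~\ref{chaos} (combined with Remark~\ref{examples}), $\|F_{m,n}\|_{L^p(\Omega)}\lesssim (p-1)^{3/2}\|F_{m,n}\|_{L^2(\Omega)}$ for every $p\geq 2$, and Minkowski's integral inequality (valid for $p\geq 2$) then gives
$$\|\sqrt{\mathcal T}\|_{L^p(\Omega)} \;\leq\; \Bigl(\sum_{m,n}\|F_{m,n}\|_{L^p(\Omega)}^2\Bigr)^{1/2} \;\lesssim\; (p-1)^{3/2}\,\sqrt{\mathbb E[\mathcal T]}.$$
Chebyshev with $p\sim \delta^{-r}$ will then deliver $\Omega_\delta\in A$ with $\pr(\Omega_\delta^c)\leq e^{-1/\delta^r}$ on which $\mathcal T(\omega)\leq \delta^{-2\mu r}\,\mathbb E[\mathcal T]$, for some $\mu>0$.

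The combinatorial heart of the argument, and the step where I expect the main bookkeeping, is the deterministic bound $\mathbb E[\mathcal T] \lesssim N_1^{-1} N_2$. Expanding $|F_{m,n}|^2$ and invoking Wick's theorem, only pairings between the three $\bar g$-indices $\{n_1,n_2,n_3'\}$ and the three $g$-indices $\{n_3,n_1',n_2'\}$ survive. The $\Upsilon$-constraints $n_1,n_2\neq n_3$ kill every matching in which $n_3$ pairs with $n_1$ or $n_2$, leaving only $n_3=n_3'$ together with $\{n_1,n_2\}=\{n_1',n_2'\}$ (two pairings in all). In the first, $\chi_C(n_1')=\chi_C(n_1)$; in the second, the factor $\chi_C(n_1')=\chi_C(n_2)$ additionally localizes $n_2$ inside $C$. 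Since in each case $(m,n)$ is determined by $(n_1,n_2,n_3)$, the lattice counts of Lemma~\ref{lattice-counting} yield
$$\mathbb E[\mathcal T] \;\lesssim\; \sum_{\substack{n_1\in C,\ |n_1|\sim N_1\\ |n_2|\sim N_2,\ |n_3|\sim N_3}}\frac{1}{|n_1|^3|n_2|^3|n_3|^3} \;\lesssim\; \frac{|C|\cdot N_2^3\cdot N_3^3}{N_1^3 N_2^3 N_3^3} \;=\; N_1^{-3} N_2^{3} \;\leq\; N_1^{-1} N_2,$$
using $|C|=N_2^3$ and $N_2\leq N_1$. The diagonal coincidences $n_1=n_2$ (in \eqref{barRbarRR}), $n_1=n_3$ (in \eqref{barRRbarR}), or $n_2=n_3$ (in \eqref{barRRR}), which are admissible in $\Upsilon$, will be dispatched separately using the pointwise bound \eqref{sup-bound-all} together with a single sphere count from \eqref{sphere}, producing a strictly smaller contribution of order $N_1^{-2+\varepsilon}$. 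The main obstacle is purely combinatorial: for each sign pattern $(\alpha_1,\alpha_2,\alpha_3)$ one must verify that the $\Upsilon$-constraints eliminate every matching that would otherwise force an extra high-frequency count, leaving only the two \emph{identity} pairings above; once this is done the lattice counting and the probabilistic reduction close the argument.
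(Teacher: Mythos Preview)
Your proposal is correct and follows essentially the same route as the paper: reduce via Gaussian hypercontractivity (Proposition~\ref{chaos}) to the second moment $\mathbb E[\mathcal T]$, then bound the latter by a lattice count exploiting that $n_1$ is confined to the cube $C$ of sidelength $N_2$. The paper organizes the probabilistic reduction slightly differently---applying \eqref{largedeviation} pointwise in $(m,n)$ together with Lemma~\ref{independence}, then summing using $\Delta m\lesssim N_1N_2$ and $|S_{(m)}|\lesssim N_3^3N_2^3N_1$---whereas your Minkowski-then-Chebyshev argument is a touch cleaner and your Wick computation gives the slightly sharper $\mathbb E[\mathcal T]\lesssim N_1^{-3}N_2^3$; both yield the required $N_1^{-1}N_2$. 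One minor point: your separate treatment of the diagonal coincidences $n_1=n_2$, $n_1=n_3$, $n_2=n_3$ is unnecessary, since for complex Gaussians $g^2$ and $\bar g^2$ are still mean zero and the Wick expansion you described already absorbs these cases into the two surviving pairings.
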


 %
 \begin{proof}We start by estimating \eqref{barRbarRR}.  We consider 
\begin{equation}\label{T1RRR}
\mathcal{T}:=  \sum_{m\in \Z, n \in \Z^3} \,\left|   \sum_{\substack{ n= n_1+ n_2 - n_3 \\n_1, n_2\ne n_3
\\ m = |n_1|^2 +|n_2|^2- |n_3|^2}} \chi_C(n_1)\frac{\cg_{n_1}(\omega)}{|n_1|^\frac{3}{2}}
 \frac{\cg_{n_2}(\omega)}{|n_2|^{\frac{3}{2}}} \frac{g_{n_3}(\omega)}{|n_3|^{\frac{3}{2}}} \right|^2.
\end{equation}

Note that if $n_1=n_2$ we get -say- $(\cg_{n_1}(\omega)^2)$ which are still independent and mean zero since the $g_{n_i}(\omega)$ are complex Gaussian random variables. Hence we are still within the framework of Lemma \ref{largedeviation} for $\omega$ and so this case does not  require a separate argument. 
 
 We first remark that the variation $\Delta m\sim N_1N_2$. Then we use Lemma \ref{largedeviation} to obtain, for $\omega$ outside a set of measure $e^{-\frac{1}{\delta^r}}$, that 
$$
\mathcal{T}\lesssim \delta^{-\frac{3}{2}r}N_1 N_2  N_1^{-3}N_2^{-3}N_3^{-3} \sup_{m}|S_{(m)}| \lesssim\delta^{-\frac{3}{2}r} N_1^{-1}N_2
$$
where $S_{(m)}:=\{(n,n_1,n_2,n_3) \, /\, n= n_1+ n_2 - n_3, \, \,  n_1\in C;\, m = |n_1|^2 +|n_2|^2- |n_3|^2\}$ and 
$|S_{(m)}|\lesssim N_3^3N_2^3N_1$.

\bigskip
To estimate \eqref{barRRbarR} and \eqref{barRRR} we proceed just like above.
\end{proof}

\subsection{Bilinear Estimate} \label{bilinear-est}
We prove the following bilinear estimate which will be used in the next Section \ref{theproof}. We use the same notation as in the previous subsection \ref{trilinear-est}.

\begin{proposition}\label{bilinear} Fix  $N_1\geq N_2\geq N_3$ and $r, \delta>0$. Assume also that $C$ is a cube of sidelength $N_2$. Then there exists $\mu, \, \varepsilon>0$ and   a set $\Omega_\delta\in A$ such that $\pr(\Omega_\delta^c)\leq 
e^{-\frac{1}{\delta^r}}$ and  such that for any $\omega\in \Omega_\delta$ and $0\leq \theta\leq 1$ we have the following estimate:
\begin{eqnarray}
\label{RD} && \|P_{C}R_1D_2\|_{L^2([0, 1] \times \T^3)}\,\lesssim \, \delta^{-\mu r} N_1^{-\frac{1}{2}+\varepsilon}\min(N_1, N_2^2)^{\frac{1-\theta}{ 2}}N_2^{\frac{1}{2}+\frac{3\theta}{4}} \|D_2\|_{U^2_{\Delta} L^2_x}.
\end{eqnarray}
\end{proposition}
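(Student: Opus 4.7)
The approach is to adapt the proof of estimate \eqref{RDD} in Proposition \ref{Four} to the bilinear setting by dropping the third deterministic factor throughout. First, by the transfer principle of Proposition \ref{transfer}, it suffices to establish the bound for $D_2 = e^{it\Delta}P_{N_2}\phi$, a localized linear solution, replacing the $U^2_\Delta L^2_x$-norm on the right-hand side by $\|P_{N_2}\phi\|_{L^2}=:\|a_{n_2}\|_{\ell^2}$ where $a_{n_2}=\chi_{|n_2|\sim N_2}\hat{\phi}(n_2)$. Applying Plancherel in spacetime reduces the square of the target to
\begin{equation*}
\mathcal T \,:=\, \sum_{n\in\Z^3,\,m\in\Z}\,\left|\sum_{(n_1,n_2)\in S_{(n,m)}}\chi_C(n_1)\,\frac{g_{n_1}(\omega)}{|n_1|^{3/2}}\,a_{n_2}\right|^2,
\end{equation*}
where $S_{(n,m)}=\{(n_1,n_2)\in\Z^6:\,n_1+n_2=n,\ |n_1|^2\pm|n_2|^2=m,\ n_1\in C,\ |n_2|\sim N_2\}$. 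The plan is to prove two endpoint estimates, corresponding to $\theta=0$ and $\theta=1$, and then interpolate.

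For the $\theta=1$ endpoint, I would imitate the matrix-norm argument leading to \eqref{estimate1}. Using that the time-frequency $m$ lies in an interval of length $\Delta m\lesssim N_1N_2$, together with Cauchy--Schwarz in the deterministic coefficients $a_{n_2}$, reduces $\mathcal T$ to a multiple of $\|a_{n_2}\|_{\ell^2}^2\cdot N_1N_2\cdot\|\GG\GG^*\|$ for a suitable random Gram matrix $\GG$. By Lemma \ref{matrixnorm}, $\|\GG\GG^*\|\leq M_1+M_2$, with $M_1$ the diagonal part and $M_2$ the off-diagonal Frobenius norm. The diagonal piece $M_1$ is controlled using the almost-sure bound \eqref{sup-bound-all} on the Gaussians; the off-diagonal piece $M_2$ is a quartic expression in the $g_{n_1}(\omega)$, and Proposition \ref{chaos} (via Remark \ref{examples}), applied on an exceptional set $\Omega_\delta$ of probability at most $e^{-1/\delta^r}$, reduces its estimation to the cardinality of certain index sets. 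These cardinalities are then controlled by the sphere and plane counts of Lemma \ref{lattice-counting}, organized according to which of the frequencies $(n_1,n_1',n_2,n_2')$ coincide, exactly as in the subcases $\beta_1$--$\beta_6$ of Propositions \ref{One}--\ref{Three}. The outcome is
\begin{equation*}
\mathcal T \,\lesssim\, \delta^{-2\mu r}\,N_1^{-1}\,N_2^{5/2}\,\|a_{n_2}\|_{\ell^2}^2,
\end{equation*}
which is the square of the desired $\theta=1$ bound.

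For the $\theta=0$ endpoint, I follow the simpler argument leading to \eqref{estimate2}. Using $\Delta m\lesssim N_1N_2$ and then Cauchy--Schwarz on the inner sum defining $\sigma_{n,m}$, together with $|S_{(n,m)}|\lesssim\min(N_1,N_2^2)$ from Lemma \ref{lattice-counting} (the condition $|n_1|^2\pm|n_2|^2=m$ puts $n_2$ on a sphere or plane, and intersection with the localization cube of side $N_2$ contributes $\min(N_1,N_2^2)$ lattice points) and the almost-sure bound \eqref{sup-bound-all} on each $g_{n_1}(\omega)$, one interchanges the order of summation (bounding the inner cardinality $|S_{(n_2,m)}|\lesssim N_1$ by the sphere count \eqref{sphere}) to obtain
\begin{equation*}
\mathcal T\,\lesssim\, N_1^{-1+\varepsilon}\,N_2\,\min(N_1,N_2^2)\,\|a_{n_2}\|_{\ell^2}^2.
\end{equation*}

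Taking geometric means of these two endpoint bounds with weights $1-\theta$ and $\theta$ respectively and extracting square roots produces the announced estimate. The upgrade from $\|P_{N_2}\phi\|_{L^2}$ to $\|D_2\|_{U^2_\Delta L^2_x}$ follows by Proposition \ref{transfer}. The main technical difficulty is the $\theta=1$ matrix-norm endpoint, where one must carry out the quartic-Gaussian computation of $M_2$ via the subcase analysis on frequency coincidences and combine the large-deviation inequality with the sphere/plane counting of Lemma \ref{lattice-counting}; the direct-counting endpoint at $\theta=0$ and the log-convex interpolation are then comparatively mechanical.
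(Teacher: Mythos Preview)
Your proposal is correct and follows essentially the same route as the paper: two endpoint bounds (the matrix-norm argument giving $N_1^{-1/2}N_2^{5/4}$ and the direct-counting argument giving $N_1^{-1/2+\varepsilon}N_2^{1/2}\min(N_1,N_2^2)^{1/2}$), then interpolation. One small simplification: since there is only \emph{one} random factor in $\sigma_{n,n_2}$ here (unlike Propositions~\ref{One}--\ref{Three}), the $M_2$ estimate does not actually require the full six-subcase $\beta_1$--$\beta_6$ analysis you describe---the paper just invokes the bounds \eqref{M1later} and \eqref{M2later} already established in the proof of \eqref{RDD}, which handle the single coincidence $n_1=n_1'$ via the $|g|^2-1$ trick of \eqref{quote}--\eqref{final-quote}.
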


\begin{proof}


To prove \eqref{RD} we follow the argument presented for \eqref{RDD} after performing a Cauchy-Schwarz. In fact we have 
$$
\|P_{C}R_1 D_2\|_{L^2}^2=  \sum_{m \in \Z, n \in \Z^3} \,\left|   \sum_{\substack{ n= n_1+n_2  \\ m = |n_1|^2 +|n_2|^2}} 
 \chi_{C}(n_1)\frac{g_{n_1}(\omega)}{|n_1|^{\frac{3}{2}}} a_{n_2} \right|^2.
$$
Then  
$$
\|P_{C}R_1 D_2\|_{L^2}^2\lesssim  \sum_{m; \,  n\in \tilde C} \,\left|   \sum_{n_2}   \sigma_{n,n_2}a_{n_2}
 \right|^2,
$$
where $\tilde C$ is a  cube of side length approximately $N_2$ and 
$$ \sigma_{n, n_2} =  \begin{cases}\dfrac{g_{n-n_2}(\omega)}{|n-n_2|^{\frac{3}{2}}} \, & \mbox{ if } 
m = |n-n_2|^2 +|n_2|^2  \\ 0 &\mbox{ otherwise}\end{cases}.$$  We then have 
$$
\|P_{C}R_1 D_2\|_{L^2}^2\lesssim \|a_{n_2}\|_{\ell^2}^2N_1N_2  \|\GG\GG^*\|. 
$$
Then using  the estimates \eqref{M1later} and \eqref{M2later}, we obtain for $\omega$ outside a set of measure $e^{-\frac{1}{\delta^r}}$, 
\begin{equation}\label{RD1}
\|P_{C}R_1 D_2\|_{L^2}\lesssim \|a_{n_2}\|_{\ell^2}\delta^{- r}N_1^{-\frac{1}{2}+\varepsilon}N_2^{\frac{5}{4}}. 
\end{equation}
We also use \eqref{estimate2} to estimate   \eqref{RDD}. By repeating the argument to prove \eqref{estimate2}  in our bilinear setting,  we obtain, for $\omega$ outside a set of measure $e^{-\frac{1}{\delta^r}}$, that 
\begin{eqnarray}
\|P_{C}R_1D_2\|_{L^2}^2&\lesssim&N_1N_2\sup_{m}\sum_{n \in \Z^3} \,\left|  \sum_{\substack{ n= n_1+n_2  \\ m= |n_1|^2+|n_2|^2}}
 \chi_{C}(n_1)\frac{g_{n_1}(\omega)}{|n_1|^{\frac{3}{2}}}a_{n_2}  \right|^2 \notag\\
 &\lesssim&N_1N_2 \|a_{n_3}\|_{\ell^2}^2\sup_{m} \sum_{n \in \Z^3}\left|  \sum_{\substack{ n = n_1+n_2  \\ m= |n_1|^2+|n_2|^2}} 
 \chi_{C}(n_1)\frac{g_{n_1}(\omega)}{|n_1|^{\frac{3}{2}}}a_{n_2} \right|^2 \notag\\
 &\lesssim&N_1N_2 N_1^{-3+\varepsilon}\sup_{m}\sum_{n \in \Z^3}|S_{(n,m)}|\sum_{S_{(n,m)}}|a_{n_2}|^2 \notag\\
  &\lesssim& N_1N_2N_1^{-3+\varepsilon}\min(N_2^2,N_1)\sup_{m}\sum_{n_2}\sum_{S_{(n_2,m)}}|a_{n_2}|^2 \notag\\
  &\lesssim&N_1N_2 \|a_{n_2}\|_{\ell^2}^2 N_1^{-3+\varepsilon}\min(N_2^2,N_1)N_1 \notag\\\notag
  &\sim&  N_1^{-1+\varepsilon}N_2\min(N_2^2,N_1)\|a_{n_2}\|_{\ell^2}^2, 
\end{eqnarray}
\smallskip

\noindent where $S_{(n,m)}=\{(n_1,n_2) \, /\, n= n_1+n_2,\, \, n_1\in C, \, \,  m= |n_1|^2+|n_2|^2 \}$, with $|S_{(n,m)}|\lesssim \min(N_2^2,N_1)$, $S_{(n_2,m)}=\{(n,n_1) \, /\, n= n_1+n_2, \, \, n_1\in C, \, \, m= |n_1|^2+|n_2|^2  \}$ with $|S_{(n_2,m)}|\lesssim N_1$
and we used \eqref{sup-bound-all}.
Hence we also have 
\begin{equation}\label{RD2}
\|P_{C}R_2 D_1\|_{L^2}\lesssim \|a_{n_2}\|_{\ell^2}N_1^{-\frac{1}{2}+\varepsilon}N_2^{\frac{1}{2}}\min{(N_2^2,N_1)}^{\frac{1}{2}}. 
\end{equation}
By interpolating \eqref{RD1} and \eqref{RD2} we finally have the estimate \eqref{RD}.
\end{proof}

\begin{remark} Later we only use \eqref{RD} with $\theta=1$ while estimating in the next section the term $J_4$ defined  \eqref{j4}. 
\end{remark}

\section{Proof of Proposition \ref{main-lemma}}\label{theproof}

In this section we use a notation similar to the one introduced at the beginning of  Section \ref{blocks} to indicate deterministic and random functions. The reader should pay attention though to the fact that the new functions we define in this section have a  different normalization than the ones in Section \ref{blocks}, hence  the slight change of notation.      

\smallskip

If $ u_i$ is random, then we write 
$$\widehat{P_{N_i}u_i}(n)=\chi_{\{|n|\sim N_i\}}(n)\frac{g_n(\omega)}{|n|^{\frac{5}{2}-\alpha}}e^{i|n|^2t}\sim 
\widehat{\RR_i}(n);$$ while if  $ u_i$ is deterministic we write  
$$\widehat{P_{N_i}u_i}(n)\sim  \widehat \D_i(n)$$
where $\widehat \D_i(n)$ is supported in $\{|n|\sim N_i\}$.   Below we will make a heavy use of Proposition \ref{alltrilinear}  when functions $\RR_i$ are involved instead of $R_i$. This will not be explicitly mentioned every time, but the reader will notice that a normalization will take place in the appropriate places.

\medskip

We first estimate the terms $J_2-J_7$ then we move to $J_1$.

\smallskip

\subsection {Estimates Involving the Term  $J_2$ }\label{estJ2}

We start by estimating   the term $J_2$ as in \eqref{j2}.  This reduces to analyzing  the sum over $N_0, N_1,\dots, N_3$ of  quatri-linear forms:  
 \begin{equation}\label{trilinearexpression1} \left|\int_{\T} \int_{\T^3}   T_{\Upsilon} \left( \,P_{N_1} u_1, P_{N_2} \cu_2,P_{N_3} u_3 \right) P_{N_0} \ch  \, dx dt\right|
\end{equation} where $T_{\Upsilon}$ is the multilinear  operator defined in \eqref{upsilon}.
\smallskip

The general outline of the proof involves the use of Cauchy-Schwarz, cutting the top frequency window if necessary, the transfer principle Proposition \ref{transfer} and suitably applying  the trilinear estimates in subsection \ref{trilinear-est}. Without any loss of generality, we then fix the relative ordering $N_1 \ge N_2 \ge  N_3$ above and consider the following cases where $T_\Upsilon$ acts on :

\begin{itemize}

\item  \, {\bf{Case 1: \quad $\mbox{a)}\,\, (\bar \RR_1, \RR_2, \RR_3) \quad \mbox{b)}\,  \, (\RR_1, \bar \RR_2, \RR_3) \quad \mbox{c)}\,  \, (\RR_1, \RR_2, \bar \RR_3)$}} 

\item {\bf{Case 2: \quad $\mbox{a)}\,  \, (\bar \D_1, \RR_2, \RR_3) \quad \mbox{b)}\,  \, (\D_1, \bar \RR_2, \RR_3)\quad \mbox{c)}\,  \, (\D_1,  \RR_2,  \bar \RR_3)$  }}

\item {\bf{Case 3: \quad $\mbox{a)}\, \, (\bar \RR_1, \RR_2, \D_3) \quad  \mbox{b)}\, \, (\RR_1, \bar \RR_2,  \D_3) \quad \mbox{c)}\,\, (\RR_1, \RR_2, \bar \D_3)$}  }

\item  {\bf{Case 4: \quad $\mbox{a)}\, \, (\bar \RR_1,  \D_2,  \RR_3) \quad \mbox{b)}\, \, (\RR_1, \bar \D_2,  \RR_3) \quad \mbox{c)}\, \, (\RR_1,  \D_2,  \bar \RR_3), $}}

\item {\bf{Case 5: \quad $\mbox{a)}\, \, (\bar \D_1, \RR_2,  \D_3) \quad \mbox{b)}\, \, (\D_1, \bar \RR_2,  \D_3) \quad \mbox{c)}\,\, (\D_1, \RR_2, \bar \D_3) $  }}

\item {\bf{Case 6: \quad $\mbox{a)}\, \, (\bar \RR_1,  \D_2,  \D_3) \quad   \mbox{b)}\, \,(\RR_1, \bar \D_2,  \D_3) \quad \mbox{c)}\, \, (\RR_1, \D_2, \bar \D_3)$} }

\item  {\bf{Case 7: \quad $\mbox{a)}\, \, (\bar \D_1,  \D_2,  \RR_3) \quad   \mbox{b)}\, \,(\D_1, \bar \D_2,  \RR_3) \quad \mbox{c)}\, \, (\D_1,  \D_2,  \bar \RR_3) $} }

\item  {\bf{Case 8: \quad $\mbox{a)}\, \, (\bar \D_1,  \D_2,  \D_3) \quad   \mbox{b)}\, \,(\D_1, \bar \D_2,  \D_3) \quad \mbox{c)}\, \,(\D_1,  \D_2,  \bar \D_3) $} }

\end{itemize}

\medskip

\noindent {\bf{Case 1, a):}}  If $N_1\sim N_0$  we cut the support of $\hat h$ and hence that of $\hat \RR_1$ with respect to cubes $C$ of side length $N_2$ and use 
Cauchy-Schwarz to get   
$$\eqref{trilinearexpression1}\lesssim \|P_CP_{N_0} h\|_{L^2_{x,t}}\|T_\Upsilon(P_C\bar \RR_1, \RR_2, \RR_3)\|_{L^2_{x,t}}.$$
To estimate the second factor we use \eqref{barRRR} and  normalization we obtain the bound 
$$\eqref{trilinearexpression1}\lesssim\delta^{-\mu r}N_1^{-\frac{3}{2}+\alpha}N_2^{-\frac{1}{2}+\alpha}N_3^{1-\alpha}
\|P_CP_{N_0} h\|_{L^2_{x,t}}.$$
 Renormalizing $h$ and using the embedding \eqref{embed4} we obtain 
\begin{eqnarray}\label{trireference}&&\left|\int_0^\pi  \int_{\T^3}   T_{\Upsilon} \left( \,P_CP_{N_1} u_1, P_{N_2} \cu_2,P_{N_3} u_3 \right) P_CP_{N_0} \ch  \, dx dt\right|\\\notag
&\lesssim& \delta^{-\mu r}N_0^sN_1^{-\frac{3}{2}+\alpha}N_2^{-\frac{1}{2}+\alpha}
N_3^{-1+\alpha}\|P_CP_{N_0} h\|_{Y^{-s}}\lesssim \delta^{-\mu r} N_1^{s+\alpha-\frac{3}{2}}\|P_CP_{N_0} h\|_{Y^{-s}},
\end{eqnarray}
which suffices provided $s+\alpha<\frac{3}{2}$ and $\alpha<\frac{1}{2}$.

If $N_1\sim N_2$ the cut with the cubes $C$ above is not needed and the argument proceeds as above. The condition here is $s<2-2\alpha$.

{\bf Case 1, b), c)} are treated similarly  replacing \eqref{barRRR} respectively by \eqref{barRRbarR} and \eqref{barRbarRR}.

\medskip

\noindent {\bf{Case 2, a):}}  Let us assume that $N_0\sim N_1$. We use the argument above.  To estimate  $$\|T_\Upsilon(P_C\bar \D_1, \RR_2, \RR_3)\|_{L^2_{x,t}}.$$
we use     \eqref{DRR}  and after taking derivatives and normalizing we  obtain the bound 
$$ \eqref{trireference}\lesssim \delta^{-\mu r}N_2^{\alpha-\frac{1}{4}}\, \|P_{N_1} u_1\|_{U^2_\Delta H^s}\, \|P_CP_{N_0} h\|_{Y^{-s}}$$ 
which suffices provided  $\alpha<\frac{1}{4}$. A similar bound holds when $N_1\sim N_2$ without cutting with cubes $C$. 

{\bf Case 2, b), c)} are treated similarly  replacing  \eqref{DRR}  by \eqref{DbarRR}.

\medskip

\noindent {\bf{Case 3, a), b) c):}} We use the argument above with  \eqref{barRRD}  and \eqref{barRbarRD} accordingly. If $N_1\sim N_0$ we obtain a bound of the form 
\begin{eqnarray*}\eqref{trireference}&\lesssim &\delta^{-\mu r}N_0^s[N_1^{\alpha-\frac{7}{4}}N_2^{-\frac{1}{2}+\alpha}N_3^{\frac{5}{4}-s}+
N_1^{\alpha-\frac{3}{2}}N_2^{-\frac{1}{2}+\alpha}N_3^{\frac{3}{4}-s}]\|P_{N_3} u_3\|_{U^2_\Delta H^s}\|P_CP_{N_0} h\|_{Y^{-s}}\\
&\lesssim&\delta^{-\mu r}N_1^{-\beta(s,\alpha)}\|P_{N_3} u_3 \|_{U^2_\Delta H^s}\|P_CP_{N_0} h\|_{Y^{-s}}
\end{eqnarray*} 
provided  $\alpha<\frac{1}{4}$ and $s+\alpha<\frac{3}{2}$. A similar bound holds when $N_1\sim N_2$ without cutting with cubes $C$.

\medskip

\noindent {\bf{Case 4, a), b), c):}}  We use the argument above with  \eqref{barRDR}  and \eqref{barRDbarR} accordingly. If $N_1\sim N_0$ we obtain a bound of the form 
$$ \eqref{trireference}\lesssim \delta^{-\mu r}N_1^{-\beta(s,\alpha)}\|P_{N_2} u_2\|_{U^2_\Delta H^s}\|P_CP_{N_0} h\|_{Y^{-s}}$$ 
 provided  $\alpha<\frac{1}{4}$ and $s+\alpha<\frac{3}{2}$. A similar bound holds when $N_1\sim N_2$ without cutting with cubes $C$.

\noindent {\bf{Case 5, a), b), c):}} We use the argument above with  \eqref{DRD}. If $N_1\sim N_0$ we obtain a bound of the form 
$$  \eqref{trireference}\lesssim \delta^{-\mu r}N_2^{1+\alpha-s}\|P_{N_2} u_2\|_{U^2_\Delta H^s}\|P_{N_3} u_3\|_{U^2_\Delta H^s}\|P_CP_{N_0} h\|_{Y^{-s}}$$ 
which suffices provided  $s>1+\alpha$. A similar bound holds when $N_1\sim N_2$ without cutting with cubes $C$.

\medskip

\noindent {\bf{Case 6 a):}}  If $N_1\sim N_0$ we proceed as above and we bound
\begin{eqnarray*}&&\left|\int_{\T}  \int_{\T^3}   T_{\Upsilon} \left( \,P_CP_{N_1}\bar \RR _1, P_{N_2} \D_2,P_{N_3} \D_3 \right) P_CP_{N_0} \ch  \, dx dt\right|\\&\lesssim&
 \|T_{\Upsilon} \left( \,P_CP_{N_1}\bar \RR _1, P_{N_2} \D_2,P_{N_3} \D_3 \right)\|_{L^2_{xt}}\|P_CP_{N_0} h\|_{L^2_{xt}}.\end{eqnarray*}
Then we use \eqref{RDD}, normalization and the embedding \eqref{embed4} to obtain  the bound 
\begin{eqnarray*}
\eqref{trilinearexpression1}&\lesssim& \delta^{-\mu r}N_1^{s-\frac{3}{2}+\alpha+\varepsilon}N_2^{\frac{1}{2}+\frac{3\theta}{4}-s}\min(N_1,N_2^2)^\frac{1-\theta}{2} N_3^{-s+\frac{3}{2}}\\&\times&\|P_{N_3} u_3\|_{U^2_\Delta H^s}\| P_{N_2} u_2\|_{U^2_\Delta H^s}\|P_CP_{N_0} h\|_{Y^{-s}}.\end{eqnarray*}
If $N_1\geq N_2^2$ then 
$$N_1^{s-\frac{3}{2}+\alpha+\varepsilon}N_2^{\frac{1}{2}+\frac{3\theta}{4}-s}\min(N_1,N_2^2)^\frac{1-\theta}{2} N_3^{-s+\frac{3}{2}}\leq N_1^{s-\frac{3}{2}+\alpha+\varepsilon}
N_2^{3-2s-\frac{\theta}{4}}\leq N_1^{\alpha+\varepsilon-\frac{\theta}{8}}$$
provided that $s<\frac{3}{2}-\frac{\theta}{8}$ which forces $\alpha<\frac{\theta}{8}$.

On the other hand if $N_1< N_2^2$ we have that, 
$$N_1^{s-\frac{3}{2}+\alpha+\varepsilon}N_2^{\frac{1}{2}+\frac{3\theta}{4}-s}\min(N_1,N_2^2)^\frac{1-\theta}{2} N_3^{-s+\frac{3}{2}}\leq N_1^{s-1+\alpha+\varepsilon-\frac{\theta}{2}}
N_2^{2 -2s+\frac{3\theta}{4}}\leq N_2^{2\alpha+\varepsilon-\frac{\theta}{4}}$$
provided $s>1+\frac{\theta}{2}-\alpha$. By letting, for example, $\theta=10\alpha$ we obtain that $1+4\alpha<s<\frac{3}{2}-2\alpha$ in this case, while still satisfying the requirement 
that $\alpha<\frac{\theta}{8}$ from Case a). 

If $N_1\sim N_2$ the argument is similar and easier. For Case 6 b) and c) we repeat the argument since  \eqref{RDD} is not sensitive to conjugation on the random function.

\medskip

\noindent {\bf{Case 7 a):}}   In this case we would like to use the Strichartz estimate \eqref{strichartz-mix}. But since 
$$T_{\Upsilon}(\bar \D_1,\D_2,R_3)\ne \bar \D_1\D_2\RR_3$$
we need to add back the frequencies that have been removed, i.e. allow for $n_2$ or $n_3$ to be equal to $n_1$.
If we were working with spaces whose norms are based on the absolute value of the time-space Fourier coefficients, like the $X^{s,b}$ space,  this would not be an issue, but since we are using $U^pL^2$ spaces we need to put back those missing frequencies. We show below that reintroducing these frequencies will not bring back the whole linear term that we have gauged away but only a part that has sufficient regularity to be controlled.

We start by assuming that the Fourier coefficient associated to $\D_1(t)$ is $a_{n_1}(t)$,  to $\D_2(t)$ is $b_{n_2}(t)$ and  to $\RR_3(t)$ is $c_{n_3}(t)$. Then we write
\begin{eqnarray*}
&&\sum_{n=-n_1+ n_2+ n_3, \, n_2,n_3 \ne n_1}\chi_{N_1}a_{n_1}\chi_{N_2}b_{n_2}\chi_{N_3}c_{n_3}=
-\chi_{N_3}c_{n}\left(\sum_{n_1}\chi_{N_1}a_{n_1}\chi_{N_2}b_{n_1}\right)\\
&&-\chi_{N_2}b_{n}\left(\sum_{n_3}\chi_{N_1}a_{n_3}\chi_{N_3}c_{n_3}\right)+
\chi_{N_1}a_{n}\chi_{N_2}b_{n}\chi_{N_3}c_{n}\\
&& +\sum_{n=-n_1 +n_2+n_3}\chi_{N_1}a_{n_1}\chi_{N_2}b_{n_2}\chi_{N_3}c_{n_3}=A_1(n)+A_2(n)
+A_3(n)+A_4(n).
\end{eqnarray*}
Then we have that 
$$\eqref{trilinearexpression1}\lesssim \sum_{i=1}^4\left|\int_{\T}\int_{\T^3}\FF^{-1}(A_i)(x,t)P_{N_0} \ch(x,t)\, dx\, dt\right|.$$

We now start with the estimate of $A_1$.  Using Plancherel and Cauchy-Schwarz we have
$$\left|\int_{\T}\int_{\T^3}\FF^{-1}(A_1)(x,t)P_{N_0} \ch(x,t)\, dx\, dt\right|\lesssim 
\|A_1(n)\|_{L^2(\T,\ell^2)}\|P_{N_0} h(x,t)\|_{L^2_{x,t}}.$$

We first notice that $A_1$ is not zero only if $N_3\sim N_1$. Then 
$$\|A_1(n)\|_{L^2(\T,\ell^2)}\lesssim \|\D_1 \|_{L^\infty_tL^2_x} \|\D_2 \|_{L^\infty_tL^2_x}\|\RR_3 \|_{L^2(\T,L^2(\T^3))}.$$
By renormalizing and using the embedding \eqref{embed4} we obtain that 
$$\left|\int_\T\int_{\T^3}\FF^{-1}(A_1)(x,t)P_{N_0} \ch(x,t)\, dx\, dt\right|\lesssim N_2^{-s-1+\alpha}\|P_{N_1} u_1\|_{U^2_\Delta H^s}\|P_{N_2} u_2\|_{U^2_\Delta H^s}\|P_{N_0} h\|_{Y^{-s}}.$$ 

We now note that $A_2=0$ unless $N_0\sim N_1\sim N_2$ and 
$$\|A_2(n)\|_{L^2([0,\pi],\ell^2)}\lesssim \|\D_2 \|_{L^2(\T,L^2(\T^3))} \|\D_1 \|_{L^\infty_tL^2_x}
\|\RR_3 \|_{L^\infty_tL^2_x}. $$

Also in this case we then have 
$$\left|\int_\T\int_{\T^3}\FF^{-1}(A_2)(x,t)P_{N_0} \ch(x,t)\, dx\, dt\right|\lesssim N_2^{-s-1+\alpha}\|P_{N_1} u_1\|_{U^2_\Delta H^s}\|P_{N_2}u_2\|_{U^2_\Delta H^s}\|P_{N_0} h\|_{Y^{-s}}.$$

Now we note that $A_3=0$ unless  $N_1\sim N_2\sim N_3$. Then 
$$\|A_3(n)\|_{L^2(\T,\ell^2)}\lesssim \|\D_1 \|_{L^\infty_tL^2_x} \|\D_2 \|_{L^\infty_tL^2_x}
\|\RR_3 \|_{L^2(\T,L^2(\T^3))},$$
where we used that $\|a_n\|_{\ell^\infty}\lesssim \|a_n\|_{\ell^2}$. Hence also in this case

$$\left|\int_\T\int_{\T^3}\FF^{-1}(A_3)(x,t)P_{N_0} \ch(x,t)\, dx\, dt\right|\lesssim N_2^{-s-1+\alpha}\|P_{N_1} u_1\|_{U^2_\Delta H^s}\|P_{N_2} u_2\|_{U^2_\Delta H^s}\|P_{N_0} h\|_{Y^{-s}}.$$ 

Finally we estimate the term involving $A_4$. Assume first that $N_0\sim N_1$. Then we need to estimate 

\begin{equation} \label{A4n0n1} \left|\int_\T\int_{\T^3}\FF^{-1}(A_4)(x,t)P_CP_{N_0} \ch(x,t)\, dx\, dt\right| \end{equation}
where we cut by cubes $C$ of size length $N_2$. We use Cauchy-Schwarz, 
 \eqref{strichartz-mix}, embedding \eqref{embed4} and normalization to obtain that 
 \begin{eqnarray*} \eqref{A4n0n1}\, &\lesssim & \, 
 N_0^s N_1^{-s}N_2^{1-s}N_3^\alpha\|P_C P_{N_1} u_1\|_{U^4_\Delta H^s}\|P_{N_2} u_2\|_{U^4_\Delta H^s}\|P_CP_{N_0} h\|_{Y^{-s}}\\
&\lesssim &
 N_2^{1-s+\alpha}\|P_C P_{N_1} u_1\|_{U^4_\Delta H^s}\|P_{N_2} u_2\|_{U^4_\Delta H^s}\|P_CP_{N_0} h\|_{Y^{-s}}.\end{eqnarray*}
If $N_1\sim N_2$ then the cutting with cubes $C$ is automatic and a similar bound holds.

Cases b) and c) are similar since the argument presented above is not effected by complex conjugation.

 \medskip
\noindent {\bf{Case 8:}} This case is similar and  better than Case 7.

\medskip

\subsection{Estimates Involving the Term   $J_3$}\label{estJ3}
We start by noting that $\FF(J_3)(n)$ is comprised  by terms of the form
\begin{equation}\label{sumJ3}\sum_{\Gamma(n)_{[123]}, \, n_1,n_3\ne n_2}\hat w_{n_1}(t)\ca_{n_2}(t)b_{n_3}(t),\end{equation}
where $\hat w_{n_1}(t)=c_{n_1}(t)d_{n_1}(t)r_{n_1}(t)$. We note that in the worse case,  i.e. when  the   three factors of $\hat w$ correspond to random functions, $w(t)\in  H^{3-3\alpha}$, hence $w$ can always be thought of as a deterministic function. We estimate $J_3$ using the arguments presented for the estimate of $J_2$  in subsection \ref{estJ2}, but for the reason  just explained  we do not have to consider Case 1) of that section. For Cases 2)-6) we proceed by first applying the transfer principle to the {\it quintilinear} expression associated to \eqref{sumJ3} and then regroup into as single {\it deterministic} function those with the same frequency $n_1$. 
Then we apply the appropriate trilinear estimates in Proposition \ref{trilinear-est}.  The term involving the $\ell^2$ norm of the product of the three coefficients in $n_1$   can be bounded by the product of the $\ell^2$ norm of each. We transfer and normalize back as usual.

This same argument is also used to   estimate the $A_i(x,t), \, i=1,2,3$  of Case 7).  To estimate $A_4$ we use again Strichartz inequality in Proposition \ref{prop_stric} placing $w$ in $L^p$ with $p>4$. Then we use \eqref{ulp}.

\subsection{Estimates Involving the Term   $J_4$} 
 Let $w$ now be such that $\hat w_{n_2}(t)=a_{n_2}(t)c_{n_2}(t)d_{n_2}(t)r_{n_2}(t)$  and $v$ such that, $ \hat v(n_1)=b_{n_1}$. To estimate the contribution of $J_4$ we need to estimate a term such as
$$\int_\T\int_{\T^3}P_{N_0}(wv)\overline{P_{N_0}h}\, dx\, dt=\int_\T\int_{\T^3}P_{N_0}\left(\sum_{N_1,N_2}P_{N_1}vP_{N_2}w\right)\overline{P_{N_0}h}\, dx\, dt.$$
Since $w\in H^{4-4\alpha}$, hence much smoother than $v$, the less advantageous situation is when  $N_1\sim N_0$ and $N_2\ll N_1$ and this is the one we consider below. 
We cut the frequency support of $P_{N_0}h$ with cubes $C$ of  size $N_2$ and we write 
\begin{eqnarray*}&&\left(\int_\T\int_{\T^3}P_{N_0}P_{N_1}vP_{N_2}w\overline{P_{N_0}h}dxdt\right)^2\\
&\lesssim &\left(\sum_C\|P_CP_{N_0}h\|_{L^2_tL^2_x}^2\sup\|P_CP_{N_1}vP_{N_2}w\|_{L^2_tL^2_x}\right)^2.
\end{eqnarray*} We assume first that $v$ is random. Then the remarks in Subsection \ref{estJ3} combined with the transfer principle and  the bilinear estimate \eqref{RD} with $\theta=1$ give 
$$\|P_CP_{N_1}vP_{N_2}w\|_{L^2_tL^2_x}\lesssim \delta^{-\mu r}N_1^{-\frac{1}{2}+\varepsilon}
N_2^{\frac{5}{4}}\prod_{i\notin J}\|\D_i\|_{U^2_\Delta L^2}.$$
After normalizing we obtain  the bound 
$$N_1^{-\frac{3}{2}+s+\varepsilon+\alpha}N_2^{-\frac{11}{4}+4\alpha}$$
which entails $s+\alpha<\frac{3}{2}$

\smallskip
If $v$ is deterministic then we use the bilinear estimate \eqref{2strichartz-mix} and after normalization we obtain the bound
$$N_2^{-\frac{7}{2}+4\alpha}.$$

\subsection{Estimates  Involving the Terms  $J_5, J_6$ and $J_7$}\label{estJ5}
We work with the first  term of $J_5$, the second term being analogous.  Given a dual function $h$ we define  a new function $k$ such that 
$$\widehat k(n,t)=\chi_{N_0}a_n^1(t)a_n^2(t)\widehat h(n,t)$$
where $a_n^i(t)$are the Fourier coefficient of either a random or a deterministic function. Assume that $N_1\sim N_0$. Then we cut the support of $\hat h$ with cubes $C$ of sidelength $N_2$. 
By Plancherel and Cauchy-Schwarz we need to bound
$$\|P_Ck\|_{L^2_{xt}} \quad \mbox{and } \quad \left\|\sum_{\Gamma(n)_{[1,2,3]}}\chi_C\chi_{N_1}b_{n_1}\chi_{N_2}\bar c_{n_2}\chi_{N_3}d_{n_3}\right\|_{L^2_t\ell^2}.$$
Clerarly
$$\|P_Ck\|_{L^{2}_tL^2_x}\lesssim\|P_CP_{N_0}h\|_{L^{\infty}_tL^2_x} \prod_{i=1}^2\|\chi_{N_i}a_n^i\|_{L^\infty_t\ell^2}^2.$$
On the other hand by \eqref{strichartz-mix} we have  that 
$$\left\|\sum_{\Gamma(n)_{[1,2,3]}}\chi_C\chi_{N_1}b_{n_1}\chi_{N_2}\bar c_{n_2}\chi_{N_3}d_{n_3}\right\|_{L^2_t\ell^2}
$$
has  a bound of $N_2N_3$. By normalizing, assuming at worse that all functions are random,  we obtain
$$N_0^{s-2+2\alpha}N_1^{-1+3\alpha}.$$
If $N_1\sim N_2$ the situation is similar.

\bigskip

To estimate $J_6$ we use Cauchy-Schwarz and \eqref{wl2}, while for the two terms in $J_7$ we use respectively 
\eqref{vl2} and \eqref{ul2}.

\subsection{Estimates Involving the Term   $J_1$}\label{quintilinear}

The term $J_1$ in \eqref{j1} can be written as the sum over $N_0, N_1, \dots, N_5$ - dyadic numbers- of

\begin{equation}\label{expression}  \left|\int_\T  \int_{\T^3}   P_{N_0}T_{\Upsilon} \left( P_{N_1} \tilde u_1, P_{N_2} \tilde u_2, P_{N_3} \tilde u_3, P_{N_4} \tilde u_4, P_{N_5} \tilde u_5 \right) P_{N_0} \ch  \, dx dt\right|
\end{equation} where $T_{\Upsilon}$  is the multilinear operator associated to the multiplier $\chi_{\Upsilon}$, the indicator function over  the set $\Upsilon$ now defined by 
\begin{equation}\label{upsilon5}\Upsilon(n,m) := \left\{ (n_1, m_1; \dots n_5, m_5) :  \begin{aligned} \quad &n= (-1)^{\alpha_1} n_1 + \dots + (-1)^{\alpha_5} n_5 \\ &n_k \neq n_{\ell} \, \,\mbox{ whenever } \, \, \alpha_k \neq \alpha_{\ell}, \\ &|n_j| \sim N_j, \quad j=1, \dots 5  \\
&m= (-1)^{\alpha_1} m_1 + \dots + (-1)^{\alpha_5} m_5 \, \end{aligned} \right\}\end{equation} 
where  $\alpha_j $ are $ 0$ or $1$ for $j=1, \dots, 5$.


\subsubsection{The all deterministic case DDDDD}\label{ddddd}

Without loss of generality we assume that $u_2$ and $u_4$ are conjugated. Our goal  is to use Strichartz estimates as in \eqref{strichartz-mix}, but the operator $T_{\Upsilon} \left( P_{N_1} u_1, P_{N_2} \cu_2, P_{N_3} u_3, P_{N_4} \cu_4, P_{N_5} u_5 \right) $ is not a product of the functions involved since  in the convolution of the Fourier coefficient some frequencies have been removed.  We need to add back the frequencies that have been removed, i.e. allow for $n_2$ or $n_3$ to be equal $n_1$.
If we were working with spaces whose norms are based on the absolute value of the time-space Fourier coefficients, like the $X^{s,b}$ space,  this would not be an issue, but since we are using $U^pL^2$ spaces we need to put back those missing frequencies. We show below that reintroducing these frequencies will not bring back the whole linear term that we have gauged away but only a part that has sufficient regularity to be controlled. See also Subsection \ref{estJ2}.

From \eqref{rewrite} we see that 
\begin{eqnarray}\notag
P_{N_0}(\FF^{-1}J_1)(x,t)&=&P_{N_0}T_{\Upsilon} \left( P_{N_1} u_1, P_{N_2} \cu_2, P_{N_3} u_3, P_{N_4} \cu_4, P_{N_5} u_5 \right)(x,t)\\\notag
&=&P_{N_0}(P_{N_1} u_1 P_{N_2} \cu_2 P_{N_3} u_3 P_{N_4} \cu_4 P_{N_5} u_5)(x,t)\\\label{addons}
&-&
\sum_{i=1}^5 P_{N_0}P_{N_i} u_i(x,t) \, \int_{\T^3}\prod_{j\ne i, j\in\{1,2,3,4,5\}} P_{N_j} \tilde u_j(x,t)\, dx\\\notag
&-&\sum_{i=2}^7c_i \, P_{N_0}\FF^{-1}J_i(P_{N_1} u_1, P_{N_2} \cu_2, P_{N_3} u_3, P_{N_4} \cu_4, P_{N_5} u_5 )(x,t),
\end{eqnarray}
where $c_i$ are constants and we specified as an argument of $\FF^{-1}J_1$ the functions involved in its definition. The last sum involving $J_2$-$J_7$ has been already estimated in  Subsections \ref{estJ2}-\ref{estJ5} above.  On the other  hand the first term, which is now a product of functions can be estimated as in Proposition \ref{all-deterministic}. Finally we estimate 
\begin{equation}\label{norml2xt}
\left\|   \sum_{i=1}^5 P_{N_0}P_{N_i} \tilde u_i(x,t) \, \int_{\T^3}\prod_{j\ne i,\, \,  j\in\{1,2,3,4,5\}}P_{N_j} \tilde u_j(y,t)\, dy \right\|_{L^2_{x,t}}.
\end{equation}
We first note that each term of the sum is zero unless $N_i\sim N_0$ and that
\begin{equation}\label{norml2xt2}
\eqref{norml2xt}\lesssim  \sum_{i=1}^5\|P_{N_0}P_{N_i} u_i\|_{L^\infty_tL^2_x}\prod_{j\ne i,\, \,  j\in\{1,2,3,4,5\}}
N_j^{\frac{1}{4}}\|P_{N_j}  u_j(x,t)\|_{U^4_\Delta L^2_{x}}.
\end{equation}
and this is enough since all  are deterministic.


\subsubsection{The case $DDDDR$}\label{ddddr}
In \eqref{expression} we assume without any loss of generality that $u_5$ is random and that $N_1\geq N_2\geq N_3\geq N_4$. Also in the argument below one can check that the location of the complex conjugates does not affect the proof hence here we assume that $u_2$ and $u_4$ are complex conjugate. 

\medskip
We consider the following cases:
\begin{itemize}
\item {\bf Case a):} $N_5\sim N_0$ and $N_1\leq N_5$.
\item {\bf Case b):} $N_1\sim N_0$ and $N_2\leq N_5\leq N_1$.
\item {\bf Case c):} $N_1\sim N_5$ and $N_0\leq N_1$.
\item {\bf Case d):} $N_1\sim N_0$ and $N_5\leq N_2$.
\item {\bf Case e):} $N_1\sim N_2$ and $N_5\leq N_1$.

\end{itemize}
{\bf Case a)}: Proceeding as in the trilinear estimates we first decompose the support of $\chi_{N_0}\hat h$ with cubes $C$ of sidelength $N_1$ in \eqref{expression}. By Cauchy-Schwarz, the transfer principle and Plancherel  we are reduced to estimating
\begin{equation}\label{sum1}\sum_{(m, n) \in \Z\times \Z^3} \,\left|   \sum_{\substack{ n= n_5-n_2 + n_3-n_4+n_1,\\ n_1,n_3,n_5\ne n_2,n_4 
\\ m = |n_5|^2 -|n_2|^2+ |n_3|^2-|n_4|^2+ |n_1|^2}} 
 \chi_{C}(n_5)\frac{g_{n_5}(\omega)}{|n_5|^{\frac{3}{2}}} a_{n_1}\ca_{n_2}a_{n_3}\ca_{n_4} \right|^2.
\end{equation}
We define the set 
$$S_{(n_5,n,m)}=\left\{(n_1,n_2,n_3,n_4) \, : \,  \begin{aligned}&n= n_5-n_2 + n_3-n_4+n_1,\\ &n_1, n_3, n_5\ne n_2, n_4, \, \,  n_5\in C\\ 
&m = |n_5|^2 -|n_2|^2+ |n_3|^2-|n_4|^2+ |n_1|^2\end{aligned}\right\}
$$
and note that that $|S_{(n_5,n,m)}|\lesssim N_4^3N_3^3N_2^2$. Also note that the variation of $m$, $\Delta m\sim N_5N_1$, therefore by Lemma \ref{largedeviation}, for 
$\omega$ outside a set of measure $e^{-\frac{1}{\delta^r}}$
we have
\begin{eqnarray*}
\eqref{sum1}&\lesssim&\delta^{-2\mu r}N_5N_1N_5^{-3}\sum_{m}\sum_{n_5}\left|\sum_{S_{(n_5,n,m)}}a_{n_1}\ca_{n_2}a_{n_3}\ca_{n_4} \right|^2\\
&\lesssim&\delta^{-2\mu r}N_5^{-2}N_1\sup_{m}\sum_{n_5}\sum_{S_{(n_5,n,m)}}|S_{(n_5,n,m)}||a_{n_1}|^2|a_{n_2}|^2|a_{n_3}|^2|a_{n_4}|^2 \\
&\lesssim&\delta^{-2\mu r}N_5^{-2}N_1 N_4^3N_3^3N_2^2\sum_{n_1,n_2,n_3,n_4}|a_{n_1}|^2|a_{n_2}|^2|a_{n_3}|^2|a_{n_4}|^2|S_{(n_1,n_2,n_3,n_4,m)}|\\&\lesssim &\delta^{-2\mu r}N_5^{-1}N_1 N_4^3N_3^3N_2^2\prod_{i=1}^4\|a_{n_i}\|_{\ell^2}^2
\end{eqnarray*}
where
$$S_{(n_1,n_2,n_3,n_4,m)}=\left\{(n_1,n_5) \, : \,  \begin{aligned}&n= n_5-n_2 + n_3-n_4+n_1, \, \,  n_5\in C\\ 
&m = |n_5|^2 -|n_2|^2+ |n_3|^2-|n_4|^2+ |n_1|^2\end{aligned}\right\}
$$
and in the last inequality we used that $|S_{(n_1,n_2,n_3,n_4,m)}|\leq N_5$. After renormalizing and taking square root 
we obtain the bound of 
$N_5^{-3s+\alpha+3}$ which entails $s>1+\frac{\alpha}{3}$. 

\medskip
 {\bf Case b)}:   We will proceed by duality and a change of variables $\zeta=m-|n_1|^2$ as in the proof of Proposition \ref{One} in particular see \eqref{t1dual}.  We also cut the window $N_1$ by cubes $C$ of sidelength $N_5$. We have to bound
 \begin{equation}\label{sum2}\|\gamma\|_{\ell_\zeta^2}^2\|\chi_{C}a_{n_1}\|_{\ell^2}^2
 \sum_{(\zeta, n_1) \in \Z\times \Z^3} \,\left|   \sum_{\substack{ n= n_5-n_2 + n_3-n_4+n_1,\\ n_1,n_3,n_5\ne n_2,n_4 
\\ \zeta = |n_5|^2 -|n_2|^2+ |n_3|^2-|n_4|^2}} 
 \frac{g_{n_5}(\omega)}{|n_5|^{\frac{3}{2}}} \chi_{\tilde C}(n)k_n \ca_{n_2}a_{n_3}\ca_{n_4} \right|^2,
\end{equation}
where $\tilde C$ is of size approximately $N_5$.
We define the set 
$$S_{(n_5,n_1,\zeta)}=\left\{(n,n_2,n_3,n_4) \, : \,  \begin{aligned}&n= n_5-n_2 + n_3-n_4+n_1,\\ &n_1, n_3, n_5\ne n_2, n_4, \, \,  n\in \tilde C\\ 
&\zeta = |n_5|^2 -|n_2|^2+ |n_3|^2-|n_4|^2\end{aligned}\right\}
$$
and note that that $|S_{(n_5,n_1,\zeta)}|\lesssim N_4^3N_3^3N_2^2$. Note also that $\Delta \zeta\lesssim N_5^2$ hence we can continue for $\omega$ outside a set of measure $e^{-\frac{1}{\delta^r}}$ with 
\begin{eqnarray*}
\eqref{sum2}&\lesssim&\delta^{-2\mu}\|\gamma\|_{\ell_\zeta^2}^2\|\chi_{C}a_{n_1}\|_{\ell^2}^2N_5^{2}N_5^{-3}\sup_{\zeta}\sum_{n_1,n_5}\left|\sum_{S_{(n_5,n_1,\zeta)}}\chi_{\tilde C}(n)k_n\ca_{n_2}a_{n_3}\ca_{n_4} \right|^2\\
&\lesssim&\delta^{-2\mu}\|\gamma\|_{\ell_\zeta^2}^2\|\chi_{C}a_{n_1}\|_{\ell^2}^2N_5^{-1}\sup_{\zeta}\sum_{n_1,n_5}\sum_{S_{(n_5,n_1,\zeta)}}|S_{(n_5,n_1,\zeta)}||a_{n_2}|^2|a_{n_3}|^2|a_{n_4}|^2 |\chi_{\tilde C}(n)k_n|^2\\
&\lesssim&\delta^{-2\mu}\|\gamma\|_{\ell_\zeta^2}^2\|\chi_{C}a_{n_1}\|_{\ell^2}^2N_5^{-1} N_4^3N_3^3N_2^2\\&&\sum_{n,n_2,n_3,n_4}
|a_{n_2}|^2|a_{n_3}|^2|a_{n_4}|^2 |\chi_{\tilde C}(n)k_n|^2|S_{(n,n_2,n_3,n_4,\zeta)}|\\&\lesssim &\delta^{-2\mu}\|\chi_{C}a_{n_1}\|_{\ell^2}^2N_4^3N_3^3N_2^2\prod_{i=2}^4\|a_{n_i}\|_{\ell^2}^2\|k_n\|_{\ell^2}^2\|\gamma\|_{\ell_\zeta^2}^2
\end{eqnarray*} 
where
$$S_{(n,n_2,n_3,n_4,\zeta)}=\left\{(n,n_5) \, : \,  \begin{aligned}&n= n_5-n_2 + n_3-n_4+n_1, \, \,  n_5\in C\\ 
&\zeta = |n_5|^2 -|n_2|^2+ |n_3|^2-|n_4|^2\end{aligned}\right\}
$$
and in the last inequality we used that $|S_{(n,n_2,n_3,n_4,\zeta)}|\leq N_5$. After renormalizing and taking square root 
we obtain the bound of 
$N_5^{-3s+\alpha+3}$ which entails $s>1+\frac{\alpha}{3}$. 

\medskip
 {\bf Case c)}:   This is like Case b), but now we do not need to cut the support of the window $N_1$ by $N_5$.
 
 \medskip
 {\bf Case d)}: In this case we proceed as in subsection \ref{ddddd}, the only difference being in the treatment of the terms in \eqref{addons}. More precisely here we show how to estimate the random term in \eqref{norml2xt2}. We have
 for $v_0^\omega$ in \eqref{linear} 
 \begin{equation}\label{randompiece}
 N_0^sN_3^{-1+\alpha}\|P_{N_0}P_{N_3} v_0^\omega\|_{L^\infty_tH^{1-\alpha}_x}\prod_{j=1}^4
N_j^{\frac{1}{4}-s}\|P_{N_j}  u_j(x,t)\|_{U^4_\Delta H^s}.
\end{equation}
where we notice that $N_3\sim N_0$ otherwise the contribution would be null. This is enough to obtain the desired bound. 

\medskip
 {\bf Case e)}:   This is like Case d), but now we do not need to cut the support of the window $N_1$ by $N_2$.

 
 \subsubsection{ The $DDDRR$ Case}\label{dddrr} To estimate the expression  in \eqref{expression} we will assume without any loss of generality that $u_4, \, u_5$  are  random and $  N_4\geq N_5$. We can also assume that $N_1\geq N_2\geq N_3$.  We have two different scenarios: {\underline{Case 1}}: $u_4u_5$ or   {\underline{Case 2}}:  $\bar u_4 u_5$, the other cases being obtained by complex conjugation since we do not care about bars on deterministic functions.  The only difference between Case 1 and 2 is that in Case 2 we automatically have that $n_4 \neq n_5$ which still allows us to use Proposition \ref{chaos}, and hence the same argument as in Case 1 applies.
 We discuss Case 1 within the context of the following cases ( Case 2 being analogous after appropriately rewriting the corresponding constraints):  
\begin{itemize}

\item{\bf Case a):}
\begin{itemize}
\item {Case i):}  \, $N_4\sim N_5 \geq N_0, N_1$.
\item {Case ii):}    \, $N_4\sim N_1 \geq N_0$ .
\end{itemize}
\smallskip 

\item{\bf Case b):} \, $N_4\sim N_0$  and 
\begin{itemize}
\item {Case i):}   \,$N_5\geq N_1 $.
\item {Case ii):}    $N_4\geq N_1\geq N_5 \geq N_2$.
\item {Case iii):}  $N_4 \geq N_1$ and $N_2\geq N_5 \geq N_3$.
\item {Case iv):}   $N_4\geq N_1$ and $N_3 \geq N_5$.
\end{itemize}

\smallskip

\item{\bf Case c):} \, $N_1\sim N_0$  and 
\begin{itemize}
\item {Case i):}   \,$N_1\geq N_4, \, N_5 \geq N_2 $.
\item {Case ii):}   $N_1\geq N_4 \geq N_2 \geq N_5 \geq N_3$.
\item {Case iii):}    $N_1\geq N_4 \geq N_2 \geq N_3 \geq N_5$.
\item {Case iv):}    $N_2\geq N_4, \, N_5 \geq N_3$.
\item {Case v):}   \,$N_2\geq N_4 \geq N_3 \geq N_5$.
\item {Case vi):}    $N_3 \geq N_4$.
\end{itemize}

\smallskip 

\item{\bf Case d):} \, $N_1\sim N_2 \geq N_0, N_4$

\end{itemize}

Below we always treat Case 1 and without any loss of generality we may assume $\tilde u_1 = u_1, \tilde u_j = \cu_j, j = 2, 3$. 

\medskip

{\bf $\bullet$ Case a), i):} In this case,  $N_4\sim N_5 \geq N_0, N_1$.   By Cauchy-Schwarz, the transfer principle and Plancherel  we are reduced to estimating
\begin{equation}\label{sum5}\sum_{(m, n) \in \Z\times \Z^3} \,\left| \sum_{n_4, n_5}  \left[ \sum_{S_{(n_4, n_5,n,m)}}   a_{n_1} \ca_{n_2}\ca_{n_3}   \frac{1}{|n_4|^{\frac{3}{2}}} \frac{1}{|n_5|^{\frac{3}{2}}} \right] \,  g_{n_4}(\omega) \, g_{n_5}(\omega)  \right|^2,
\end{equation}
where 
$$S_{(n_4, n_5,n,m)}=\left\{(n_1, n_2, n_3) \, : \,  \begin{aligned}&n= n_4 + n_5 +n_1-  n_2 - n_3 ,\\ & n_2, n_3\ne n_1, n_4, n_5, \, \,  \\ 
&m = |n_4|^2 + |n_5|^2+ |n_1|^2- |n_2|^2 -  |n_3|^2 \end{aligned}\right\}
$$ with $|S_{(n_4, n_5,n,m)}| \lesssim N_3^3 N_2^2 $ and the variation of $m$, \, $\Delta m \sim N_4^2$.  We then have, for $\omega$ outside a set of measure $e^{-\frac{1}{\delta^r}}$
 that 
\begin{eqnarray} \label{sum5bound}
\eqref{sum5} \, &\lesssim& \, \delta^{-2\mu r}N_4^2  N_4^{-3} N_5^{-3} \sup_m \sum_{n_4, n_5, n} \left|\sum_{S_{(n_4, n_5,n,m)}}   a_{n_1} \ca_{n_2}\ca_{n_3} \right|^2 \notag\\
 &\lesssim& \, \delta^{-2\mu r}N_4^{-1}  N_5^{-3} \sup_m \sum_{n_4, n_5, n} \sum_{S_{(n_4, n_5,n,m)}}  \, |S_{(n_4, n_5,n,m)}|   |a_{n_1}|^2  |a_{n_2}|^2 |a_{n_3}|^2  \notag \\
  &\lesssim& \, \delta^{-2\mu r}N_4^{-1}  N_5^{-3} N_2^2  N_3^3\, \sup_m \, \sum_{ n_1, n_2, n_3 }  \,   |a_{n_1}|^2  |a_{n_2}|^2 |a_{n_3}|^2 \, | S_{(n_1, n_2, n_3,m)}|  \notag\\
  &\lesssim& \, \delta^{-2\mu r}N_4^{-1}  N_5^{-3} N_2^2  N_3^3 N_4 N_5^3  \sum_{n_1, n_2,n_3}  \,   |a_{n_1}|^2  |a_{n_2}|^2 |a_{n_3}|^2  \notag \\
  &\lesssim& \,    \delta^{-2\mu r}N_2^2  N_3^3   \, \prod_{i=1}^3\,  \| a_{n_i} \|_{\ell^2}^2 
\end{eqnarray}  where we used that  
$$S_{(n_1, n_2,n_3,m)}=\left\{(n, n_4, n_5) \, : \,  \begin{aligned}&n= n_4 + n_5 +n_1-  n_2 - n_3 ,\\ & n_2, n_3\ne n_1, n_4, n_5, \, \,  \\ 
&m = |n_4|^2 + |n_5|^2+ |n_1|^2- |n_2|^2 -  |n_3|^2 \end{aligned}\right\},$$ and 
$| S_{(n_1, n_2, n_3,m)}| \lesssim N_5^3 N_4 $. Taking square root and normalizing we then obtain the bound $N_4^{s-2+2 \alpha}$ which requires $s < 2 - 2 \alpha  $.

\medskip

{\bf $\bullet$ Case a), ii)} In this case $N_4\sim N_1\geq N_0$, we repeat the argument in Case a), i), but in this case after taking square root and normalizing we obtain the bound $N_4^{\frac{3}{2}-2s+\alpha}.$

\medskip

{\bf $\bullet$ Case b), i)}  In this case,  $N_4\sim N_0$ and  $N_5 \ge N_1$. From \eqref{expression},  we first decompose the support of $\chi_{N_0}\hat h$ with cubes $C$ of sidelength $N_5$ and then apply Cauchy-Schwarz, the transfer principle and Plancherel.  We are thus reduced to estimating an expression just as in \eqref{sum5} but where now the variation in $m$, \, $\Delta m \sim N_4N_5$ and thus we obtain instead of \eqref{sum5bound} the estimate $\delta^{-2\mu r}N_4^{-1} N_5 N_2^2  N_3^3   \, \prod_{i=1}^3\,  \| a_{n_i} \|_{\ell^2}^2 $. \, Taking square root and normalizing we now obtain the bound $N_4^{s- \frac{3}{2}+\alpha}$ provided $\alpha < \frac{1}{2}$, which in turn entails $ 1 \leq s < \frac{3}{2} -  \alpha $. 

\medskip

{\bf $\bullet$  Case b), ii)}  In this case we have $N_4\sim N_0$ and  $N_1 \ge N_5$. The proof follows that of Case b), i) except that now  we first decompose the support of $\chi_{N_0}\hat h$ -and hence the $N_4$ Fourier window-  with cubes $C$ of sidelength $N_1$. We then have that $\Delta m \sim N_4N_1$ and  we now obtain instead of \eqref{sum5bound} the estimate $\delta^{-2\mu r}N_4^{-1} N_1 N_2^2  N_3^3   \, \prod_{i=1}^3\,  \| a_{n_i} \|_{\ell^2}^2 $. \, Taking square root and normalizing we now obtain as before the bound $N_4^{s- \frac{3}{2}+\alpha}$.   

\medskip

{\bf $\bullet$  Cases b, iii) iv)}  are analogous to case {\bf Case b, ii)}.

\medskip

{\bf $\bullet$  Case c, i)}  In this case we have that $N_0\sim N_1 \geq N_4, \, N_5 \geq N_2 $.   We will proceed by duality and a change of variables $\zeta=m-|n_1|^2$ as in the proof of Proposition \ref{One}, \eqref{t1dual} and also as in \eqref{sum2} in subsection \ref{ddddr} above.  We also cut the window $N_1$ by cubes $C$ of sidelength $N_4$. We have to bound
 \begin{equation}\label{sum6}\, \, \|\chi_{C}a_{n_1}\|_{\ell^2}^2\|\gamma\|_{\ell_\zeta^2}^2
 \sum_{\zeta \in \Z,\,  n_1\in \Z^3 } \,\left|   \sum_{\substack{ n= n_1 +n_4 + n_5 -n_2 - n_3,\\ n_2,n_3\ne n_1,n_4, n_5
\\ \zeta = |n_4|^2 + |n_5|^2 -|n_2|^2 - |n_3|^2}} \frac{g_{n_4}(\omega)}{|n_4|^{\frac{3}{2}}}
 \frac{g_{n_5}(\omega)}{|n_5|^{\frac{3}{2}}} \chi_{\tilde C}(n)k_n \ca_{n_2}\ca_{n_3} \right|^2,
\end{equation}   
where $\tilde C$ is of size approximately $N_4$.
Let us now define 
\begin{equation}\label{sigma12} \sigma_{n_1, n_2} =  \sum_{\substack{n= n_1 +n_4 + n_5 -n_2 - n_3,\\ n_2,n_3\ne n_1,n_4, n_5  \\ \zeta = |n_4|^2 + |n_5|^2 -|n_2|^2 - |n_3|^2}}  \,  \chi_{\tilde C}(n)k_n \ca_{n_3} \, \frac{g_{n_4}(\omega)}{|n_4|^{\frac{3}{2}}}
 \frac{g_{n_5}(\omega)}{|n_5|^{\frac{3}{2}}},\end{equation} and note that then $\Delta \zeta \sim N_4^2$. Then 
 \begin{eqnarray}\label{matrix1}
 \eqref{sum6} \, &\lesssim& \, \|\chi_{C}a_{n_1}\|_{\ell^2}^2 \|\gamma\|_{\ell_\zeta^2}^2N_4^2  \sup_{\zeta} \, \sum_{n_1 \in C} \left| \sigma_{n_1, n_2}  \ca_{n_2} \right|^2 \notag\\
 &\, \leq \,& N_4^2 \, \|\chi_{C}a_{n_1}\|_{\ell^2}^2 
 \|\gamma\|_{\ell_\zeta^2}^2 \|a_{n_2} \|_{\ell^2}^2 \,\,  \sup_{\zeta} \,\| \mathcal{G}  \mathcal{G}^{\ast}\| \, .
\end{eqnarray}  As in Section \ref{blocks},  we write  
\begin{equation}\label{gigistar}\| \mathcal{G}  \mathcal{G}^{\ast}\|\, \lesssim \,  \max_{n_1}\sum_{n_2, \, n_2 \neq n_1}|\sigma_{n_1, n_2}|^2\, +\, \left(\sum_{n_1\ne n_1'}\left|\sum_{n_2 }\sigma_{n_1, n_2}\overline{\sigma}_{n_1', n_2}\right|^2\right)^\frac{1}{2}\, =: \, M_1\,+\,M_2,\end{equation}
and estimate each term separately. For $M_1$ we proceed as follows: 
\begin{eqnarray} \label{sum7}
M_1 &=& \sup_{n_1} \sum_{n_2, \, n_2 \neq n_1} \left| \sum_{n_4, n_5} \left[ \sum_{S_{(n_1, n_2, n_4, n_5, \zeta)} } \,  \chi_{\tilde C}(n)k_n \ca_{n_3} \, \frac{1}{|n_4|^{\frac{3}{2}}}
 \frac{1}{|n_5|^{\frac{3}{2}}}\right]  g_{n_4}(\omega) \, g_{n_5}(\omega) \, \right|^2, \notag \\
 &\lesssim& \, \delta^{-2\mu r}\sup_{n_1} \sum_{n_2 \ne n_1, n_4, n_5} N_4^{-3} N_5^{-3} \left| \sum_{S_{(n_1, n_2, n_4, n_5, \zeta)} } \,  \chi_{\tilde C}(n)k_n \ca_{n_3}  \right|^2 \notag\\
 &\lesssim& \, \delta^{-2\mu r}\sup_{n_1} \sum_{n_2, n_4, n_5} N_4^{-3} N_5^{-3} |S_{(n_1, n_2, n_4, n_5, \zeta)}| \sum_{S_{(n_1, n_2, n_4, n_5, \zeta)} } \,  |\chi_{C}(n)k_n|^2 |\ca_{n_3}|^2,
 \end{eqnarray} 
 for $\omega$ out side a set of measure $e^{-\frac{1}{\delta^r}}$,
 where 
 $$S_{(n_1, n_2, n_4, n_5, \zeta)}:= \left\{ (n, n_3) \, :\, \begin{aligned}  n&= n_1 +n_4 + n_5 -n_2 - n_3,\\ &n_2,n_3\ne n_1,n_4, n_5, \, \, n \in \tilde C   \\ \zeta &= |n_4|^2 + |n_5|^2 -|n_2|^2 - |n_3|^2 \end{aligned} \right\} $$ and $ |S_{(n_1, n_2, n_4, n_5, \zeta)}| \lesssim N_3^2$.  Hence for  $$S_{(n, n_3, \zeta)}:= \left\{ (n_2, n_4, n_5) \, :\, \begin{aligned}  n&= n_1 +n_4 + n_5 -n_2 - n_3,\\ &n_2,n_3\ne n_1,n_4, n_5, \, \, n \in \tilde C   \\ \zeta &= |n_4|^2 + |n_5|^2 -|n_2|^2 - |n_3|^2 \end{aligned} \right\} $$  we have that 
\begin{eqnarray*} 
\eqref{sum7} \, &\lesssim& \,  \delta^{-2\mu r}N_4^{-3} N_5^{-3} N_3^2  \sum_{n, n_3} \,  |\chi_{\tilde C}(n)k_n|^2 |\ca_{n_3}|^2 |S_{(n, n_3, \zeta)}| \\
&\lesssim&   \delta^{-2\mu r}N_4^{-3} N_5^{-3}N_3^2 N_2^3 N_5^3 N_4 \, \| \chi_{\tilde C}(n)k_n\|_{\ell^2}^2 \|a_{n_3}\|_{\ell^2}^2 \\
&\lesssim&\delta^{-2\mu r} N_4^{-2} N_2^3 N_3^2 \, \| \chi_{\tilde C}(n)k_n\|_{\ell^2}^2 \|a_{n_3}\|_{\ell^2}^2.
\end{eqnarray*} Hence the contribution of $M_1$ to \eqref{matrix1} is
$$ \delta^{-2\mu r}N_4^2 N_4^{-2} N_2^3 N_3^2 \,\,  \|\chi_{C}a_{n_1}\|_{\ell^2}^2 \|a_{n_2} \|_{\ell^2}^2 \|a_{n_3}\|_{\ell^2}^2 \, \|\gamma\|_{\ell_\zeta^2}^2  \| \chi_{\tilde C}(n)k_n\|_{\ell^2}^2.$$
After taking square root and normalizing we then obtain a bound of $N_4^{-1 +\alpha} N_5^{-s +\frac{1}{2} + \alpha}$ which suffices provided $s > \frac{1}{2}+ \alpha$.

 To estimate $M_2$ we first write
\begin{eqnarray} \label{m22expression}
M_2^2&=&\sum_{n_1\ne n_1'}\left|\sum_{n_2}\sigma_{n_1, n_2}\overline \sigma_{n_1', n_2}\right|^2 \notag \\ &\sim&
\sum_{n_1\ne n_1'}\left| \sum_{S_{(n_1,n_1', \zeta)}}  \chi_{\tilde C}(n)k_n  \chi_{\tilde C}(n')k_{n'} \ca_{n_3} a_{n_3'} \frac{g_{n_4}(\omega)}{|n_4|^{\frac{3}{2}}}\frac{g_{n_5}(\omega)}{|n_5|^{\frac{3}{2}}}\frac{\cg_{n_4'}(\omega)}{|n_4'|^{\frac{3}{2}}}\frac{\cg_{n_5'}(\omega)}{|n_5'|^{\frac{3}{2}}}\right|^2
\end{eqnarray}
where
\begin{equation}\label{Sn1n1'}
S_{(n_1,n_1', \zeta)}=\left\{(n, n_2, n_3, n_3', n_4,n_4', n_5, n_5') \, : \,  \begin{aligned}  & n= n_1 +n_4 + n_5 -n_2 - n_3,  \\ &n'= n'_1 +n'_4 + n'_5 -n_2 - n'_3,\\  &n_2,n_3\ne n_1,n_4, n_5; \,\, n'_2,n'_3\ne n'_1,n'_4, n'_5; \, \, \, n, n' \in \tilde C
\\ &\zeta = |n_4|^2 + |n_5|^2 -|n_2|^2 - |n_3|^2, \\  &\zeta = |n'_4|^2 + |n'_5|^2 -|n_2|^2 - |n'_3|^2 \end{aligned} \right\}.
\end{equation} To streamline the exposition let $$\mathscr{C} \, : =  \, \begin{cases}  \begin{aligned}  & n= n_1 +n_4 + n_5 -n_2 - n_3,  \quad \qquad n'= n'_1 +n'_4 + n'_5 -n_2 - n'_3; \\ &\zeta = |n_4|^2 + |n_5|^2 -|n_2|^2 - |n_3|^2,  \, \quad \zeta = |n'_4|^2 + |n'_5|^2 -|n_2|^2 - |n'_3|^2; \\  &n_2,n_3\ne n_1,n_4, n_5; \quad n'_2,n'_3\ne n'_1,n'_4, n'_5; \quad n, n' \in \tilde C.
\end{aligned}  \end{cases}$$

We need to organize the estimates according to whether some frequencies are the same or not, in all we have seven cases:
\begin{itemize}
\item {\bf Case $\beta_1$:} $n_4, n_5 \neq n_4', n_5'.$
\item {\bf Case $\beta_2$:} $n_4=n_4'; \, \, n_5\ne n_5'.$
\item {\bf Case $\beta_3$:} $n_4\ne n_4'; \, \, n_5= n_5'.$
\item {\bf Case $\beta_4$:} $n_4\ne n_5'; \, \, n_5 =  n_4'.$
\item {\bf Case $\beta_5$:} $n_4= n_5'; \, \, n_5\ne  n_4'.$
\item {\bf Case $\beta_6$:} $n_4= n_5'; \, \, n_5 =  n_4'.$ 
\item {\bf Case $\beta_7$:} $n_4= n_4'; \, \, n_5 =  n_5'.$
\end{itemize}

\noindent {\bf Case $\beta_1$:}  To estimate the contribution of $M_2$,  we first define the set 
$$S_{(n_1,n_1', n_4, n_4', n_5, n_5', \zeta)}:= \{ (n, n', n_2, n_3, n_3') \, \mbox{ satisfying } \, \mathscr{C}\},$$ 
with  $| S_{(n_1,n_1', n_4, n_4', n_5, n_5', \zeta)}| \, \lesssim \, N_3^6 \, N_2^2$.\,  Next, for $\omega$ out side a set of measure $e^{-\frac{1}{\delta^r}}$, we estimate $M_2^2$ as follows:
\begin{eqnarray*} 
\eqref{m22expression} \, &\lesssim&  \delta^{-4\mu r}\sum_{n_1 \neq n_1'}  N_4^{-6} N_5^{-6} \sum_{n_4 \ne n_4', \, n_5 \ne n_5'}\, \left[ \sum_{ S_{(n_1,n_1', n_4, n_4', n_5, n_5', \zeta)}}\, \chi_{C}(n)k_n  \chi_{C}(n')k_{n'} \ca_{n_3} a_{n_3'} \right]^2 \\
 &\lesssim&  \delta^{-4\mu r}\sum_{n_1 \neq n_1'}  N_4^{-6} N_5^{-6} N_3^6 \, N_2^2 \sum_{n_4, n_4', n_5,  n_5'}\, \sum_{ S_{(n_1,n_1', n_4, n_4', n_5, n_5', \zeta)}}\, |\chi_{\tilde C}(n)k_n|^2  |\chi_{\tilde C}(n')k_{n'}|^2 |\ca_{n_3}|^2 |a_{n_3'}|^2\\
 &\lesssim&   \delta^{-4\mu r}N_4^{-6} N_5^{-6} N_3^6 N_2^2 \, \sum_{n, n', n_3, n_3'}\, |S_{(n, n', n_3, n_3', \zeta)}| \, |\chi_{\tilde C}(n)k_n|^2  |\chi_{\tilde C}(n')k_{n'}|^2 |\ca_{n_3}|^2    
 |a_{n_3'}|^2 \\
 &\lesssim&  \delta^{-4\mu r}N_4^{-6} N_5^{-6} N_3^6  N_2^2  N_2^3 N_5^6 N_4^2 \|\chi_{\tilde C}(n)k_n\|_{\ell^2}^2  \|\chi_{\tilde C}(n')k_{n'}\|_{\ell^2}^2 \|a_{n_3}\|_{\ell^2}^2 \|a_{n_3'}\|_{\ell^2}^2\\
 &\lesssim& \delta^{-4\mu r}N_4^{-4} N_3^6 N_2^5  \|\chi_{\tilde C}(n)k_n\|_{\ell^2}^2  \|\chi_{\tilde C}(n')k_{n'}\|_{\ell^2}^2 \|a_{n_3}\|_{\ell^2}^2 \|a_{n_3'}\|_{\ell^2}^2, 
\end{eqnarray*} where we have used that 
$$S_{(n, n', n_3, n_3', \zeta)}\, :=\, \{  (n_1, n_1', n_2, n_4, n_4', n_5, n_5') \, \mbox{ satisfying } \, \mathscr{C}  \}$$
has cardinality  less than  or equal to $N_2^3 N_5^6 N_4^2$.  

All in all we then have that the contribution of $\Delta \zeta M_2$ is bounded by  $ N_3^3 N_2^{\frac{5}{2}}$.  Taking square root and normalizing we finally obtain the bound   $  N_4^{-1+ \alpha} N_5^{\frac{7}{4}- 2s + \alpha} $  in this case which suffices provided $s > \frac{7}{8}+ \frac{\alpha}{2}$.

\noindent {\bf Case $\beta_2$:} Now we have that $n_4=n_4'$ while $n_5\ne n_5'$ rendering \eqref{m22expression} equal to 
\begin{equation} \label{m22forbeta2} 
\sum_{n_1\ne n_1'}\left| \sum_{S_{(n_1,n_1', \zeta)}}  \chi_{\tilde C}(n)k_n  \chi_{\tilde C}(n')k_{n'} \ca_{n_3} a_{n_3'}\,  \frac{|g_{n_4}(\omega)|^2}{|n_4|^3} \,  \frac{g_{n_5}(\omega)}{|n_5|^{\frac{3}{2}}} \frac{\cg_{n_5'}(\omega)}{|n_5'|^{\frac{3}{2}}}\right|^2.
\end{equation}
We proceed in a similar fashion  as we did in \eqref{quote}-\eqref{final-quote} and define 
\begin{eqnarray} \label{minusone}
 \mathcal{Q}_1&:=& \, \sum_{n_1\ne n_1'}\left| \sum_{S_{(n_1,n_1', \zeta)}}  k^{\tilde C}_n \, k^{\tilde C}_{n'} \,\ca_{n_3} \, a_{n_3'}\,  \frac{(|g_{n_4}(\omega)|^2 - 1)}{|n_4|^3} \,  \frac{g_{n_5}(\omega)}{|n_5|^{\frac{3}{2}}} \frac{\cg_{n_5'}(\omega)}{|n_5'|^{\frac{3}{2}}}\right|^2 \\
 \mathcal{Q}_2 &:=&\, \sum_{n_1\ne n_1'}\left| \sum_{S_{(n_1,n_1', \zeta)}} k^{\tilde C}_n \, k^{\tilde C}_{n'} \,  \ca_{n_3} \, a_{n_3'}\,  \frac{1}{|n_4|^3} \,  \frac{g_{n_5}(\omega)}{|n_5|^{\frac{3}{2}}} \frac{\cg_{n_5'}(\omega)}{|n_5'|^{\frac{3}{2}}}\right|^2\label{plusoneagain}
 \end{eqnarray}  where we have denoted by $k^{\tilde C}_n := \chi_{\tilde C}(n)k_n $ and similarly for $k^{\tilde C}_{n'}$.
 
 \smallskip
 
To estimate $\mathcal{Q}_2$ define the set 
$$S_{(n_1,n_1', n_5, n_5', \zeta)}:= \{ (n, n', n_2, n_4, n_3, n_3') \, \mbox{ satisfying } \, \mathscr{C}  \, \},$$ with 
$| S_{(n_1,n_1', n_5, n_5', \zeta)}| \, \lesssim \, N_3^6 N_2^3 N_4$.\, Then for $\omega$ outside a set of measure $e^{-\frac{1}{\delta^r}}$
\begin{eqnarray*} 
\eqref{plusoneagain} \, &\lesssim& \delta^{-4\mu r} \sum_{n_1 \neq n_1'}  N_4^{-6} N_5^{-6}  \sum_{ n_5 \ne n_5'}\, \left[ \sum_{ S_{(n_1,n_1', n_5, n_5', \zeta)}}\, k^{\tilde C}_n  \, k^{\tilde C}_{n'} \, \ca_{n_3} \, a_{n_3'} \right]^2 \\
 &\lesssim&  \delta^{-4\mu r}\sum_{n_1 \neq n_1'}  N_4^{-6} N_5^{-6}  N_3^6 N_2^3 N_4  \sum_{n_5,  n_5'}\, \sum_{ S_{(n_1,n_1', n_5, n_5', \zeta)}}\, |k^{\tilde C}_n|^2  |k^{\tilde C}_{n'}|^2 |\ca_{n_3}|^2 |a_{n_3'}|^2\\
 &\lesssim&   \delta^{-4\mu r}N_4^{-6} N_5^{-6} N_3^6 N_2^3 N_4 \, \sum_{n, n', n_3, n_3'}\, |S_{(n, n', n_3, n_3', \zeta)}|\, |k^{\tilde C}_n|^2  |k^{\tilde C}_{n'}|^2 |\ca_{n_3}|^2    
 |a_{n_3'}|^2 \\
 &\lesssim&  \delta^{-4\mu r}N_4^{-6} N_5^{-6} N_3^6 N_2^3 N_4  N_2^3 N_5^6 N_4 \, \|\chi_{\tilde C}(n)k_n\|_{\ell^2}^2  \|\chi_{\tilde C}(n')k_{n'}\|_{\ell^2}^2 \|a_{n_3}\|_{\ell^2}^2 \|a_{n_3'}\|_{\ell^2}^2\\
 &\lesssim& \delta^{-4\mu r}N_4^{-4} N_3^6 N_2^6  \|\chi_{\tilde C}(n)k_n\|_{\ell^2}^2 \,  \|\chi_{\tilde C}(n')k_{n'}\|_{\ell^2}^2 \|a_{n_3}\|_{\ell^2}^2 \|a_{n_3'}\|_{\ell^2}^2, 
\end{eqnarray*} where we have used that 
$$S_{(n, n', n_3, n_3', \zeta)}\, :=\, \{  (n_1, n_1', n_2, n_4, n_5, n_5') \, \mbox{ satisfying } \, \mathscr{C}   \}$$
has cardinality  less than  or equal to $N_2^3 N_5^6 N_4$.  

The bound for $\mathcal{Q}_1$ is smaller, just as we saw it was the case in the proof of Proposition \ref{One},  \eqref{quote}-\eqref{final-quote} in Section \ref{blocks}. We omit the details.

Thus the contribution of $\Delta\zeta \,M_2$,  is bounded by $N_3^3 N_2^3$ which after taking the square root and normalizing gives a bound of $N_4^{-1+\alpha} N_5^{2- 2 s+ \alpha}$ which suffices provided $s > 1 + \frac{\alpha}{2}$.

\noindent {\bf Case $\beta_3$:} Now we have that $n_4 \ne n_4'$ while $n_5 = n_5'$ rendering \eqref{m22expression} equal to \begin{equation} \label{m22forbeta3} 
\sum_{n_1\ne n_1'}\left| \sum_{S_{(n_1,n_1', \zeta)}}  k^{\tilde C}_n  \, k^{\tilde C}_{n'} \ca_{n_3} a_{n_3'}\,  \frac{|g_{n_5}(\omega)|^2}{|n_5|^3} \,  \frac{g_{n_4}(\omega)}{|n_4|^{\frac{3}{2}}} \frac{\cg_{n_4'}(\omega)}{|n_4'|^{\frac{3}{2}}}\right|^2.
\end{equation}  We proceed as above, defining analogous $\mathcal{Q}_1$ and $\mathcal{Q}_2$ terms bounding \eqref{m22forbeta3} in this case. 

\smallskip 

To estimate $\mathcal{Q}_2$ we define the set, 
$$S_{(n_1,n_1', n_4, n_4', \zeta)}:= \{ (n, n', n_2, n_5, n_3, n_3') \, \mbox{ satisfying } \, \mathscr{C}  \, \},$$ with 
$| S_{(n_1,n_1', n_4, n_4', \zeta)}| \, \lesssim \, N_3^6 N_2^3 \min(N_5^2, N_4) \leq N_3^6 N_2^3 N_5^2$.\, Then for $\omega$ outside a set of measure $e^{-\frac{1}{\delta^r}}$
\begin{eqnarray} \label{ref1Q2} 
\quad \mathcal{Q}_2  &\lesssim& \delta^{-4\mu r} \sum_{n_1 \neq n_1'}  N_4^{-6} N_5^{-6}  \sum_{ n_4 \ne n_4'}\, \left[ \sum_{ S_{(n_1,n_1', n_4, n_4', \zeta)}}\, k^{\tilde C}_n  \, k^{\tilde C}_{n'}\ca_{n_3} \, a_{n_3'} \right]^2 \\
 &\lesssim&   \delta^{-4\mu r}\sum_{n_1 \neq n_1'}  N_4^{-6} N_5^{-6}  N_3^6 N_2^3  N_5^2 \sum_{n_4,  n_4'}\, \sum_{ S_{(n_1,n_1', n_4, n_4', \zeta)}}\, |k^{\tilde C}_n|^2  |k^{\tilde C}_{n'}|^2 |\ca_{n_3}|^2 |a_{n_3'}|^2 \notag\\
 &\lesssim&     \delta^{-4\mu r}N_4^{-6} N_5^{-6}  N_3^6 N_2^3 N_5^2  \, \sum_{n, n', n_3, n_3'}\, |S_{(n, n', n_3, n_3', \zeta)}| \, |k^{\tilde C}_n|^2  |k^{\tilde C}_{n'}|^2 |\ca_{n_3}|^2    
 |a_{n_3'}|^2  \notag\\
 &\lesssim&   \delta^{-4\mu r}N_4^{-6} N_5^{-6}  N_3^6 N_2^3  N_5^2 \, N_2^3 N_5^3 N_4^2 \,  \|k^{\tilde C}_n\|_{\ell^2}^2  \|k^{\tilde C}_{n'}\|_{\ell^2}^2 \|a_{n_3}\|_{\ell^2}^2 \|a_{n_3'}\|_{\ell^2}^2 \notag\\
 &\lesssim&  \delta^{-4\mu r}N_4^{-4} N_5^{-1} \, N_3^6 N_2^6 \,  \|\chi_{\tilde C}(n)k_n\|_{\ell^2}^2  \|\chi_{\tilde C}(n')k_{n'}\|_{\ell^2}^2 \|a_{n_3}\|_{\ell^2}^2 \|a_{n_3'}\|_{\ell^2}^2, \notag
\end{eqnarray} where we have now used that, 
$$S_{(n, n', n_3, n_3', \zeta)}\, :=\, \{  (n_1, n_1', n_2, n_4, n_4', n_5) \, \mbox{ satisfying } \, \mathscr{C}  \,  \}$$ has cardinality  less than  or equal to $N_2^3 N_5^3 N_4^2$.  Note this is a better bound than that obtained in Case $\beta_2$. 

Since  just as before, the bound for $\mathcal{Q}_1$ is smaller, we have that the contribution of $\Delta\zeta \,M_2$,  is bounded by $N_3^3 N_2^3$. After taking the square root and normalizing the latter gives the same abound as in Case $\beta_2$. 

\smallskip

\noindent {\bf Case $\beta_4$:} In this case we have that $n_4 \ne n_5'$ while $n_5 = n_4'$ rendering \eqref{m22expression} equal to 
\begin{equation} \label{m22forbeta4} 
\sum_{n_1\ne n_1'}\left| \sum_{S_{(n_1,n_1', \zeta)}}  k^{\tilde C}_n  \, k^{\tilde C}_{n'} \ca_{n_3} a_{n_3'}\,  \frac{|g_{n_5}(\omega)|^2}{|n_5|^3} \,  \frac{g_{n_4}(\omega)}{|n_4|^{\frac{3}{2}}} \frac{\cg_{n_5'}(\omega)}{|n_5'|^{\frac{3}{2}}}\right|^2.
\end{equation}  Once again, we proceed by defining the corresponding $\mathcal{Q}_1$ and $\mathcal{Q}_2$ terms bounding \eqref{m22forbeta4} and note the estimate for 
$\mathcal{Q}_1$ is better than that for $\mathcal{Q}_2$. In the latter case, we proceed as in \eqref{ref1Q2} in case $\beta_3$ but now we have 
$$S_{(n_1,n_1', n_4, n_5', \zeta)}:= \{ (n, n', n_2, n_5, n_3, n_3') \, \mbox{ satisfying } \, \mathscr{C}  \},$$ which has cardinality,  $| S_{(n_1,n_1', n_4, n_5', \zeta)}| \, \lesssim \, N_3^6 N_2^3 N_4 $.  Furthermore, since $n_5 = n_4'$, we have from the definition of $\mathscr{C}$ that 
$\Delta \zeta \lesssim N_5^2$.

Then for $\omega$ outside a set of measure $e^{-\frac{1}{\delta^r}}$,
\begin{eqnarray} \label{ref2Q2} 
\quad \mathcal{Q}_2  &\lesssim&  \delta^{-4\mu r}\sum_{n_1 \neq n_1'}  N_4^{-3} N_5^{-9}  \sum_{ n_4 \ne n_5'}\, \left[ \sum_{ S_{(n_1,n_1', n_4, n_5', \zeta)}}\, k^{\tilde C}_n  \, k^{\tilde C}_{n'} \, \ca_{n_3} \, a_{n_3'} \right]^2 \\
 &\lesssim&  \delta^{-4\mu r}\sum_{n_1 \neq n_1'} N_4^{-3} N_5^{-9}  N_3^6 N_2^3 N_4    \sum_{n_4,  n_5'}\, \sum_{ S_{(n_1,n_1', n_4, n_5', \zeta)}}\, |k^{\tilde C}_n|^2  |k^{\tilde C}_{n'}|^2 |\ca_{n_3}|^2 |a_{n_3'}|^2 \notag\\
 &\lesssim&    N_4^{-3} N_5^{-9}  N_3^6 N_2^3 N_4   \, \sum_{n, n', n_3, n_3'}\,|S_{(n, n', n_3, n_3', \zeta)}|\, |k^{\tilde C}_n|^2  |k^{\tilde C}_{n'}|^2 |\ca_{n_3}|^2    
 |a_{n_3'}|^2  \notag\\
 &\lesssim&  \delta^{-4\mu r}N_4^{-3} N_5^{-9}  N_3^6 N_2^3 N_4  \, N_2^3 N_5^5 N_4 \,  \|k^{\tilde C}_n \|_{\ell^2}^2  \|k^{\tilde C}_{n'}\|_{\ell^2}^2 \|a_{n_3}\|_{\ell^2}^2 \|a_{n_3'}\|_{\ell^2}^2 \notag\\
 &\lesssim& \delta^{-4\mu r} N_4^{-1} N_5^{-4} N_3^6 N_2^6 \,  \|\chi_{\tilde C}(n)k_n\|_{\ell^2}^2  \|\chi_{\tilde C}(n')k_{n'}\|_{\ell^2}^2 \|a_{n_3}\|_{\ell^2}^2 \|a_{n_3'}\|_{\ell^2}^2, \notag
\end{eqnarray} where now 
$$S_{(n, n', n_3, n_3', \zeta)}\, :=\, \{  (n_1, n_1', n_2, n_4, n_5', n_5) \, \mbox{ satisfying } \, \mathscr{C}   \}$$ has cardinality  less than  or equal to  $ N_2^3 N_5^3  N_4$. 

Thus $$\Delta\zeta \,M_2 \lesssim  \delta^{-4\mu r} N_5^2 N_4^{-\frac{1}{2}} N_5^{-2} N_3^3 N_2^3 \,  \|\chi_{\tilde C}(n)k_n\|_{\ell^2}  \|\chi_{\tilde C}(n')k_{n'}\|_{\ell^2} \|a_{n_3}\|_{\ell^2} \|a_{n_3'}\|_{\ell^2},$$ whence after taking square root and normalizing we obtain in this case a bound of $N_4^{-\frac{5}{4}+\alpha} N_5^{2- 2 s+ \alpha}$ which suffices provided $s > 1+ \frac{\alpha}{2}$.

\smallskip

\noindent {\bf Case $\beta_5$:} In this case we have that $n_4 = n_5'$ while $n_5 \ne n_4'$ rendering \eqref{m22expression} equal to 
\begin{equation} \label{m22forbeta5} 
\sum_{n_1\ne n_1'}\left| \sum_{S_{(n_1,n_1', \zeta)}}  k^{\tilde C}_n  \, k^{\tilde C}_{n'} \ca_{n_3} a_{n_3'}\,  \frac{|g_{n_4}(\omega)|^2}{|n_4|^3} \,  \frac{g_{n_4'}(\omega)}{|n_4'|^{\frac{3}{2}}} \frac{\cg_{n_5}(\omega)}{|n_5|^{\frac{3}{2}}}\right|^2.
\end{equation}  Once again, we proceed by defining the corresponding $\mathcal{Q}_1$ and $\mathcal{Q}_2$ terms bounding \eqref{m22forbeta5}.  We treat the estimate $\mathcal{Q}_2$ as in \eqref{ref1Q2} in case $\beta_3$ but with the set 
$$S_{(n_1,n_1', n_4', n_5, \zeta)}:= \{ (n, n', n_2, n_4, n_3, n_3') \, \mbox{ satisfying } \, \mathscr{C}  \},$$ instead with cardinality,  $| S_{(n_1,n_1', n_4', n_5, \zeta)}| \, \lesssim \,  N_3^6 N_2^3 N_4$. 
 
Then for $\omega$ outside a set of measure $e^{-\frac{1}{\delta^r}}$,
\begin{eqnarray} \label{ref3Q2} 
\quad \mathcal{Q}_2  &\lesssim&   \delta^{-4\mu r}\sum_{n_1 \neq n_1'}  N_4^{-9} N_5^{-3}  \sum_{ n_4' \ne n_5}\, \left[ \sum_{ S_{(n_1,n_1', n_4', n_5, \zeta)}}\, k^{\tilde C}_n  \, k^{\tilde C}_{n'} \, \ca_{n_3} \, a_{n_3'} \right]^2 \\
 &\lesssim&  \delta^{-4\mu r} \sum_{n_1 \neq n_1'} N_4^{-9} N_5^{-3}  N_3^6 N_2^3 N_4   \sum_{n_4',  n_5}\, \sum_{ S_{(n_1,n_1', n_4', n_5, \zeta)}}\, |k^{\tilde C}_n|^2  |k^{\tilde C}_{n'}|^2 |\ca_{n_3}|^2 |a_{n_3'}|^2 \notag\\
 &\lesssim&   N_4^{-9} N_5^{-3}  N_3^6 N_2^3 N_4  \, \sum_{n, n', n_3, n_3'}\, |S_{(n, n', n_3, n_3', \zeta)}|\, |k^{\tilde C}_n|^2  |k^{\tilde C}_{n'}|^2 |\ca_{n_3}|^2    
 |a_{n_3'}|^2  \notag\\
 &\lesssim&  \delta^{-4\mu r} N_4^{-9} N_5^{-3}  N_3^6 N_2^3  N_4     \,  N_2^3 N_5^3 N_4^2\,  \|k^{\tilde C}_n\|_{\ell^2}^2  \|k^{\tilde C}_{n'}\|_{\ell^2}^2 \|a_{n_3}\|_{\ell^2}^2 \|a_{n_3'}\|_{\ell^2}^2 \notag\\
 &\lesssim&   \delta^{-4\mu r}N_4^{-6} N_3^6 N_2^6 \,  \|\chi_{\tilde C}(n)k_n\|_{\ell^2}^2  \|\chi_{\tilde C}(n')k_{n'}\|_{\ell^2}^2 \|a_{n_3}\|_{\ell^2}^2 \|a_{n_3'}\|_{\ell^2}^2, \notag
\end{eqnarray} where now 
$$S_{(n, n', n_3, n_3', \zeta)}\, :=\, \{  (n_1, n_1', n_2, n_4, n_5', n_5) \, \mbox{ satisfying } \, \mathscr{C}  \}$$ has cardinality  less than  or equal to  $ N_2^3 N_5^3 N_4^2 $. 

Thus the contribution of $\Delta\zeta \,M_2$,  is bounded by $N_4^{-1} N_5^{-2}N_3^3 N_2^3$. After taking the square root and normalizing we obtain in this case a bound of $N_4^{-\frac{3}{2}+\alpha} N_5^{2- 2 s+ \alpha}$ which suffices provided $s > 1 + \frac{\alpha}{2}$.

\smallskip

\noindent {\bf Case $\beta_6$:} In this case we have that  $n_4=n_5'$ and that $n_5 = n_4'$ and \eqref{m22expression} now has enough decay to use Lemma \ref{supg}. We define $$S_{(n_1,n_1', \zeta)} \, :=\, \{  (n, n', n_2, n_3, n_3', n_4, n_4') \, \mbox{ satisfying } \, \mathscr{C}   \}$$ with cardinality $|{S_{(n_1,n_1', \zeta)}}| \lesssim N_3^6 N_2^3 N_4^4$ and proceed as follows for $\omega$ outside a set of measure $e^{-\frac{1}{\delta^r}}$:
\begin{eqnarray} \label{m22forbeta6} 
&&\sum_{n_1\ne n_1'}\left| \sum_{S_{(n_1,n_1', \zeta)}}  k^{\tilde C}_n  \, k^{\tilde C}_{n'} \ca_{n_3} a_{n_3'}\,  \frac{|g_{n_4}(\omega)|^2}{|n_4|^3} \,  \frac{|g_{n_4'}(\omega)|^2}{|n_4'|^3} \right|^2 \\
&\lesssim& N_4^{-12+\varepsilon} \sum_{n_1\ne n_1'} \, |{S_{(n_1,n_1', \zeta)}}|  \, \sum_{S_{(n_1,n_1', \zeta)}} |k^{\tilde C}_n|^2  |k^{\tilde C}_{n'}|^2 |\ca_{n_3}|^2    
 |a_{n_3'}|^2,  \notag \\
 &\lesssim& N_4^{-12+\varepsilon} N_3^6 N_2^3 N_4^4  N_2^3 N_4^4 \sum_{n, n', n_3, n_3'}\, | S_{(n, n', n_3, n_3', \zeta)}| |k^{\tilde C}_n|^2  |k^{\tilde C}_{n'}|^2 |\ca_{n_3}|^2     |a_{n_3'}|^2  \notag\\
  &\lesssim& N_4^{-4+\varepsilon} N_3^6 N_2^6   \,  \|\chi_{\tilde C}(n)k_n\|_{\ell^2}^2  \|\chi_{\tilde C}(n')k_{n'}\|_{\ell^2}^2 \|a_{n_3}\|_{\ell^2}^2 \|a_{n_3'}\|_{\ell^2}^2, \label{boundforbeta6}
 \end{eqnarray} where now to obtain \eqref{boundforbeta6} we have used that 
$$S_{(n, n', n_3, n_3', \zeta)}\, :=\, \{  (n_1, n_1', n_2, n_4, n_4') \, \mbox{ satisfying } \, \mathscr{C}  \}$$ has cardinality  less than  or equal to  $ N_2^3 N_4^4 $. 

Thus the contribution of $\Delta\zeta \,M_2$,  is bounded by $N_4^{\varepsilon} N_3^3 N_2^3$. After taking the square root and normalizing we obtain in this case a bound of $N_4^{-1+\alpha+ \varepsilon} N_5^{2- 2 s+ \alpha}$ which suffices provided $s > 1 + \frac{\alpha}{2}$.

\smallskip
\noindent {\bf Case $\beta_7$:} In this case we have that  $n_4=n_4'$ and that $n_5 = n_5'$ and once again \eqref{m22expression} has enough decay to use Lemma \ref{supg}. Define $$S_{(n_1,n_1', \zeta)} \, :=\, \{  (n, n', n_2, n_3, n_3', n_4, n_5) \, \mbox{ satisfying } \, \mathscr{C}   \}$$ with cardinality $|{S_{(n_1,n_1', \zeta)}}| \lesssim N_3^6 N_2^3 N_5^3 N_4$ and proceed as follows for $\omega$ outside a set of measure $e^{-\frac{1}{\delta^r}}$ :

\begin{eqnarray} \label{m22forbeta7} 
&&\sum_{n_1\ne n_1'}\left| \sum_{S_{(n_1,n_1', \zeta)}}  k^{\tilde C}_n  \, k^{\tilde C}_{n'} \ca_{n_3} a_{n_3'}\,  \frac{|g_{n_4}(\omega)|^2}{|n_4|^3} \,  \frac{|g_{n_5}(\omega)|^2}{|n_5|^3} \right|^2 \\
&\lesssim& N_4^{-6+\varepsilon} N_5^{-6} \sum_{n_1\ne n_1'} \, |{S_{(n_1,n_1', \zeta)}}|  \, \sum_{S_{(n_1,n_1', \zeta)}} |k^{\tilde C}_n|^2  |k^{\tilde C}_{n'}|^2 |\ca_{n_3}|^2    
 |a_{n_3'}|^2,  \notag \\
 &\lesssim&   N_4^{-6+ \varepsilon} N_5^{-6} \, N_3^6 N_2^3 N_5^3 N_4 \sum_{n, n', n_3, n_3'}\, |S_{(n, n', n_3, n_3', \zeta)}| |k^{\tilde C}_n|^2  |k^{\tilde C}_{n'}|^2 |a_{n_3}|^2     |a_{n_3'}|^2  \notag\\
  &\lesssim&   N_4^{-4+ \varepsilon}  N_3^6 N_2^6  \,  \|\chi_{\tilde C}(n)k_n\|_{\ell^2}^2  \|\chi_{\tilde C}(n')k_{n'}\|_{\ell^2}^2 \|a_{n_3}\|_{\ell^2}^2 \|a_{n_3'}\|_{\ell^2}^2, \label{boundforbeta7}
 \end{eqnarray} where now to obtain \eqref{boundforbeta7} we have used that 
$$S_{(n, n', n_3, n_3', \zeta)}\, :=\, \{  (n_1, n_1', n_2, n_4, n_5) \, \mbox{ satisfying } \, \mathscr{C}  \, \mbox{ for fixed  } \, (n, n', n_3, n_3', \zeta) \}$$ has cardinality  less than  or equal to  $ N_2^3 N_5^3 N_4 $. 

Thus the contribution of $\Delta\zeta \,M_2$,  is bounded by $N_4^{\varepsilon} N_3^3 N_2^3$. After taking the square root and normalizing we obtain in this case a bound of $N_4^{-1+\alpha+  \varepsilon} N_5^{2- 2 s+ \alpha}$ which suffices provided $s > 1 + \frac{\alpha}{2}$.

\medskip 

{\bf $\bullet$  Case c, ii)}  In this case we have that $N_0\sim N_1 \geq N_4 \geq N_2 \geq N_5 \geq N_3$.  As in {\bf Case c) i)} after duality, changing variables $\zeta:=m-|n_1|^2$ and cutting the window $N_1$ by cubes $C$ of sidelength $N_4$ we have to estimate expression \eqref{sum6}. Since $\Delta \zeta \sim N_4^2$ we once again bound \eqref{sum6} by $$\|\chi_{C}a_{n_1}\|_{\ell^2}^2 \|\gamma\|_{\ell_\zeta^2}^2N_4^2  \sup_{\zeta} \, \sum_{n_1 \in C} \left| \sigma_{n_1, n_2}  \ca_{n_2} \right|^2 \notag\\
 \, \leq \, N_4^2 \, \|\chi_{C}a_{n_1}\|_{\ell^2}^2 
 \|\gamma\|_{\ell_\zeta^2}^2 \|a_{n_2} \|_{\ell^2}^2 \,\,  \sup_{\zeta} \,\| \mathcal{G}  \mathcal{G}^{\ast}\|, \, $$ where $ \sigma_{n_1, n_2}$ is defined as in \eqref{sigma12} and  $\GG$ denotes, as usual, the matrix of entries $ \sigma_{n_1, n_2}$.  Just as in  {\bf Case c) i)}  we are then reduced to estimating $M_1$ and $M_2$ as defined
\eqref{gigistar}. 

To estimate $M_1$ we proceed just as in \eqref{sum7} to obtain for $\omega$ outside a set of measure $e^{-\frac{1}{\delta^r}}$ the same bound 
\begin{equation} \label{sum9}
M_1  \lesssim \,   \delta^{-2\mu r}N_4^{-2} N_2^3 N_3^2 \, \| \chi_{\tilde C}(n)k_n\|_{\ell^2}^2 \|a_{n_3}\|_{\ell^2}^2.
\end{equation} 
Hence $\Delta \zeta M_1$ is bounded once again by 
$$\delta^{-2\mu r} N_2^3 N_3^2 \,\,  \|\chi_{C}a_{n_1}\|_{\ell^2}^2 \|a_{n_2} \|_{\ell^2}^2 \|a_{n_3}\|_{\ell^2}^2 \, \|\gamma\|_{\ell_\zeta^2}^2  \| \chi_{\tilde C}(n)k_n\|_{\ell^2}^2.$$ which after taking square root and normalizing gives  in this case the bound $N_4^{-s + \frac{1}{2} + \alpha}$,  which suffices provided $s > \frac{1}{2}+ \alpha$. 

The estimate for $M_2$ proceeds as in  {\bf Case c) i)} by analyzing cases $\beta_1 - \beta_7$ as stated there, yielding the same bounds  for $\Delta \zeta M_2$. We do not repeat the arguments but  rather indicate the bound we obtain in each case after taking square root and normalizing since now $N_2 \ge N_5 \ge N_3$,  so we need to trade the slower decay of the random term $\tilde u_5$  for the better regularity of the deterministic function $ \tilde u_2$ . 

\noindent {\bf Case $\beta_1$:}  In this case we have that the contribution of $\Delta \zeta M_2$ is bounded by $N_3^3 N_2^{\frac{5}{2}}$. Taking square root and normalizing we now obtain the bound  $N_4^{-1 +\alpha}N_2^{\frac{5}{4} -s} N_5^{\frac{1}{2} -s + \alpha}$,  which suffices provided $s > \frac{1}{2}+ \alpha$ and $\alpha <1$.

\medskip

\noindent {\bf Case $\beta_2$ and  $\beta_3$:}  In these cases we have that the contribution of $\Delta \zeta M_2$ is bounded by  $N_3^3 N_2^3$. Taking square root and normalizing we thus obtain the bound  $N_4^{\frac{1}{2} -s + \alpha} N_5^{\frac{1}{2} -s + \alpha}$,  which suffices provided $s > \frac{1}{2}+ \alpha$.

\medskip

\noindent {\bf Case $\beta_4$:} In this case we have that the contribution of $\Delta \zeta M_2$ is bounded by $ N_4^{-\frac{1}{2}}N_2^3 N_3^3 $. Taking square root and normalizing gives the bound  $ N_4^{\frac{1}{4}-s + \alpha} N_5^{\frac{1}{2}-s + \alpha}  $,  which suffices provided $s > \frac{1}{2}+ \alpha$.

\medskip

\noindent {\bf Case $\beta_5$:}  In this case we have that the contribution of $\Delta \zeta M_2$ is bounded by $ N_4^{-1} N_5^{-2}N_2^3 N_3^3 $ which is smaller than the bound in   {\bf Case $\beta_4$}. 

\medskip

\noindent {\bf Case $\beta_6$} and {\bf Case $\beta_7$:}  In this case we have that the contribution of $\Delta \zeta M_2$ is bounded by $ N_4^{\varepsilon} N_2^3 N_3^3 $.  After taking square root and normalizing we get the bound  $N_4^{\frac{1}{2} -s + \alpha+ \varepsilon} N_5^{\frac{1}{2} -s + \alpha}$,  which once again suffices provided $s > \frac{1}{2}+ \alpha$.

\bigskip

{\bf $\bullet$ Case c, iii)}  In this case we have that $N_0\sim N_1 \geq N_4 \geq N_2 \geq N_3 \geq N_5$. Since $\Delta \zeta M_1$ is bounded by 
$$\delta^{-2\mu r}N_2^3 N_3^2 \,\,  \|\chi_{C}a_{n_1}\|_{\ell^2}^2 \|a_{n_2} \|_{\ell^2}^2 \|a_{n_3}\|_{\ell^2}^2 \, \|\gamma\|_{\ell_\zeta^2}^2  \| \chi_{\tilde C}(n)k_n\|_{\ell^2}^2,$$ we have, after taking square root and normalizing the bound $N_4^{-s + \frac{1}{2} + \alpha}$  just as before. The latter suffices provided $s > \frac{1}{2}+ \alpha$. 
For $M_2$, following the scheme presented above for {\bf Case c, ii)} we now have:

\noindent {\bf Case $\beta_1$:}   Since the contribution of $\Delta \zeta M_2$ is bounded by $N_3^3 N_2^{\frac{5}{2}}$, we obtain, after taking square root and normalizing,  the bound  $N_4^{\frac{7}{4} -2 s +\alpha}$  which suffices provided $s > \frac{7}{8}+ \frac{\alpha}{2}$. 

\medskip

\noindent {\bf Case $\beta_2$ and  $\beta_3$:}  In these cases we have that the contribution of $\Delta \zeta M_2$ is bounded by  $N_3^3 N_2^3$. Taking square root and normalizing we thus obtain the bound  $N_4^{2 -2s + \alpha}$,  which suffices provided $s > 1 + \frac{\alpha}{2}$.

\medskip

\noindent {\bf Case $\beta_4$:} In this case we have that the contribution of $\Delta \zeta M_2$ is bounded by $ N_4^{-\frac{1}{2}}N_2^3 N_3^3 $ which is smaller than the bound in   {\bf Cases $\beta_2$, $\beta_3$}.

\medskip

\noindent {\bf Case $\beta_5$:}  In this case we have that the contribution of $\Delta \zeta M_2$ is bounded by $ N_4^{-1} N_5^{-2}N_2^3 N_3^3 $ which is smaller than the bound in   {\bf Case $\beta_4$}. 

\medskip

\noindent {\bf Case $\beta_6$} and {\bf Case $\beta_7$:}  In these cases we have that the contribution of $\Delta \zeta M_2$ is bounded by $ N_4^{\varepsilon} N_2^3 N_3^3 $.  After taking square root and normalizing we get the bound  $N_4^{2 - 2s+ \alpha+ \varepsilon}$,  which once again suffices provided $s >  1+ \frac{\alpha}{2}$.
 
\bigskip

{\bf $\bullet$  Case c), iv), v), vi)} $N_1 \sim N_0$ and $N_2 \ge N_4$ and {\bf Case d):} \, $N_1\sim N_2 \geq N_0, N_4$

\medskip 

In this case we proceed as in Subsection \ref{ddddd}. Assume $N_0\sim N_1\geq N_2\geq N_4$,  Case d) having similar or better bounds. The estimates of the trilinear expressions will give after normalization
$$N_0^sN_1^{-s}N_2^{-s+1}N_3^{1-s} N_4^\alpha N_5^\alpha$$
and we assume that $s>1+\alpha$.

One also needs to estimate the terms in \eqref{addons}.  Here we show how to estimate the term involving the random  function at frequency $N_4$ in \eqref{norml2xt2}. We first observe that in order for this term not to be zero it must be that $N_4\sim N_0$. Then 
 for $v_0^\omega$ in \eqref{linear}, after normalization we have  the bound
\begin{eqnarray*}
&& N_0^sN_1^{-s}N_2^{-s}N_3^{-s}N_4^{-1+\alpha}N_5^{-1+\alpha}\\
&\times&\|P_{N_0}P_{N_4} v_0^\omega\|_{L^\infty_tH^{1-\alpha}_x}N_5^{\frac{1}{4}}\|D^{1-\alpha}(P_{N_0}P_{N_5} v_0^\omega)\|_{L^4_tL^4_x}
 \prod_{j=1,2,3}
N_j^{\frac{1}{4}}\|P_{N_j}  u_j(x,t)\|_{U^4_\Delta H^s}.
\end{eqnarray*}
 The latter together with the Strichartz estimate \eqref{Strichartz-1} are enough to obtain the desired bound since for $\alpha<\frac{3}{4}$, we have:
$$N_0^sN_0^{-s+\frac{1}{4}-1+\alpha}\sim N_0^{-\frac{3}{4}+\alpha}<1.$$

\bigskip
 
\subsubsection{ The $DDRRR$ Case}\label{ddrrr} To estimate the expression  in \eqref{expression} we first observe that 
in terms of bars we need to estimate only  the following cases: \underbar{Case 1:} $u_1, u_3, u_5$ are random, that is none of the random functions are conjugated, or {\underbar{Case 2:} only one of these functions is conjugated, the other cases are obtained by conjugating the whole expression in \eqref{expression}. We will remark later on how the estimates change depending on these two Cases. 

 We now assume that the first three functions are random and the last two are deterministic. We also assume that $N_1\geq N_2\geq N_3$ and $N_4\geq N_5$. 
We then have the following subcases:
\begin{itemize}
\item{\bf Case a):} $N_4=\max{(N_1,N_4)}$
\begin{itemize}
\item Case i) $N_2\leq N_5\leq N_4$
\item Case ii) $N_2\leq N_5\leq N_1\leq N_4$
\item Case iii) $N_3\leq N_5\leq N_2\leq N_1\leq N_4$
\item Case iv) $N_5\leq N_3\leq N_2\leq N_1\leq N_4$
\end{itemize}

\medskip
\item{\bf Case b):} $N_1=\max{(N_1,N_4)}$ and $N_2\geq N_4$
\begin{itemize}
\item Case i) $N_3\geq N_4$
\item Case ii) $N_5\leq N_3\leq N_4\leq N_2$
\item Case iii) $N_3\leq N_5\leq  N_4\leq N_2$

\end{itemize}

\medskip
\item{\bf Case c):} $N_1=\max{(N_1,N_4)}$ and $N_4\geq N_2$
\begin{itemize}
\item Case i) $N_2\leq N_5\leq N_4\leq N_1$
\item Case ii) $N_3\leq N_5\leq N_2\leq N_4\leq N_1$
\item Case iii) $N_5\leq N_3\leq N_2\leq N_4\leq N_1$
\end{itemize}
\end{itemize}

\noindent
{\bf Case a), i):} In this case we proceed as in Subsection \ref{ddddd}. Assume for simplicity that $N_0\sim N_4$, the other cases are smoother. The estimates of the trilinear expressions will give after normalization
$$N_0^sN_4^{-s}N_5^{-s+1}N_3^\alpha N_2^\alpha N_1^\alpha$$
and we assume that $s>1+3\alpha$.
One also needs to estimate the terms in \eqref{addons}.  Here we show how to estimate the factor involving the random term at frequency $N_1$ in \eqref{norml2xt2}. We have
 for $v_0^\omega$ in \eqref{linear} 
 \begin{equation}\label{randompiece1}
 N_0^sN_1^{-1+\alpha}N_2^{-1+\alpha}N_3^{-1+\alpha}\|P_{N_0}P_{N_3} v_0^\omega\|_{L^\infty_tH^{1-\alpha}_x}\prod_{j=4,5}
N_j^{\frac{1}{4}-s}\|P_{N_j}  u_j(x,t)\|_{U^4_\Delta H^s}.
\end{equation}
where we notice that $N_1\sim N_0$ otherwise the contribution would be null. This is enough to obtain the desired bound since 
$$N_0^sN_0^{-1+\alpha}N_4^{\frac{1}{4}-s}\sim N_4^{-\frac{3}{4}+\alpha}.$$

Also note that this case is not effected by conjugation hence it is the same both in Case 1 and Case 2.

\medskip
\noindent
{\bf Case a), ii).} We also assume that $N_4\sim N_0$, this is the least favorable situation.
We proceed by duality and a change of variables $\zeta=m\pm|n_4|^2$ as in the proof of Proposition \ref{One} in particular see \eqref{t1dual}.  
We have to bound
 \begin{equation}\label{sum1ddrrr}\|\gamma\|_{\ell_\zeta^2}^2\|a_{n_4}\|_{\ell^2}^2
 \sum_{(\zeta, n_4) \in \Z\times \Z^3} \,\left|   \sum_{\substack{ n= \pm n_1\pm n_2 \pm  n_3\pm n_4\pm n_5,\\ n_i,n_j,n_k\ne n_r,n_p 
\\ \zeta = \pm |n_1|^2  \pm |n_2|^2\pm  |n_3|^2\pm |n_5|^2}} 
\frac{\tilde g_{n_1}(\omega)}{|n_1|^{\frac{3}{2}}} \frac{\tilde g_{n_2}(\omega)}{|n_2|^{\frac{3}{2}}} \frac{\tilde g_{n_3}(\omega)}{|n_3|^{\frac{3}{2}}}  k_n\tilde a_{n_5} \right|^2.
\end{equation}
We now consider two cases:
\begin{itemize}
\item {\bf Case $A_0$:} $n_1,n_2,n_3$ are all different from each other.
\item {\bf Case $A_1$:} At least two of the frequencies $n_1,n_2,n_3$ are equal.
\end{itemize}

\noindent
{\bf Case $A_0$:} We define the set 
$$S_{(\zeta, n_4,n_1,n_2,n_3)}=\left\{(n,n_5) \, : \,  \begin{aligned}&n= \pm n_1\pm n_2 \pm  n_3\pm n_4\pm n_5,\\ &n_i,n_j,n_k\ne n_r,n_p, \\ 
&\zeta =\pm |n_1|^2  \pm |n_2|^2\pm  |n_3|^2\pm |n_5|^2\end{aligned}\right\}
$$
with $|S_{(\zeta, n_4,n_1,n_2,n_3)}|\lesssim N_5^2$ and we write
\begin{eqnarray*}\eqref{sum1ddrrr}&\lesssim&\|\gamma\|_{\ell_\zeta^2}^2\|a_{n_4}\|_{\ell^2}^2\\
 &&\sum_{(\zeta, n_4) \in \Z\times \Z^3} \,N_1^{-3}N_2^{-3}N_3^{-3}\left|   \sum_{n_1,n_2,n_3} 
\tilde g_{n_1}(\omega)\tilde g_{n_2}(\omega)\tilde g_{n_3}(\omega)\sum_{S_{(\zeta, n_4,n_1,n_2,n_3)}}   k_n\tilde a_{n_5} \right|^2
\end{eqnarray*}
By using Lemma \ref{largedeviation} we can continue, for $\omega$ outside a set of measure $e^{-\frac{1}{\delta^r}}$, with 
\begin{eqnarray*}&\lesssim &\delta^{-2\mu r} \|\gamma\|_{\ell_\zeta^2}^2\|a_{n_4}\|_{\ell^2}^2\\
&&\sum_{(\zeta, n_4) \in \Z\times \Z^3} \,N_1^{-3}N_2^{-3}N_3^{-3}  \sum_{n_1,n_2,n_3} 
\sum_{S_{(\zeta, n_4,n_1,n_2,n_3)}} |S_{(\zeta, n_4,n_1,n_2,n_3)}| |k_n|^2|a_{n_5}|^2\\
&\lesssim &\delta^{-2\mu r}\|\gamma\|_{\ell_\zeta^2}^2\|a_{n_4}\|_{\ell^2}^2\,N_1^{-3}N_2^{-3}N_3^{-3} N_5^2 \sum_{n,n_5}|k_n|^2|a_{n_5}|^2|S_{(n_5,n)} |
\end{eqnarray*}
where
$$S_{(n,n_5)}=\left\{(\zeta, n_4, n_1,n_2,n_3) \, : \,  \begin{aligned}&n= \pm n_1\pm n_2 \pm  n_3\pm n_4\pm n_5,\\ &n_i,n_j,n_k\ne n_r,n_p, \\ 
&\zeta =\pm |n_1|^2  \pm |n_2|^2\pm  |n_3|^2\pm |n_5|^2\end{aligned}\right\}
$$
and $|S_{(n,n_5)}|\lesssim N_1^3N_2^3N_3^3$, where we used that   $\Delta \zeta\lesssim N_1^2$. Hence we can continue with 
$$\lesssim \delta^{-2\mu r}\|\gamma\|_{\ell_\zeta^2}^2\|a_{n_4}\|_{\ell^2}^2 N_5^2 \|k_n\|_{\ell^2}^2\|a_{n_5}\|_{\ell^2}^2$$
and after taking square root and normalizing we obtain the bound
$$N_5^{1-s}N_1^{-1+\alpha}N_2^{-1+\alpha}N_3^{-1+\alpha}.$$ We note that this case is the same both in Case 1 and Case 2.

\medskip
\noindent
{\bf Case $A_1$:} We first assume that only two frequencies are equal. The important remark is that we have removed the frequencies that would give rise to $|g_n(\omega)|^2$ so in \eqref{sum1ddrrr} we would see
either $(\tilde g_{n_1})^2(\omega)\tilde g_{n_3}(\omega)$ or $\tilde g_{n_1}(\omega)(\tilde g_{n_2})^2(\omega)$. In both cases we can still use  Lemma \ref{largedeviation}   and proceed as above to obtain in fact  better estimates since the cardinality of the sets involved are smaller due to the collapse of the frequencies that are equal. 

\medskip

If all three frequencies are equal, and this can happen only in Case 2,  then we have that $N_1\sim N_2\sim N_3$ and 
\begin{eqnarray*}\eqref{sum1ddrrr}&\lesssim&\|\gamma\|_{\ell_\zeta^2}^2\|a_{n_4}\|_{\ell^2}^2
 \sum_{(\zeta, n_4) \in \Z\times \Z^3} \,N_1^{-12} \sum_{n_3} 
|g_{n_3}(\omega)|^3\left|  \sum_{S_{(\zeta, n_4,n_3)}}  k_n\tilde a_{n_5} \right|^2
\end{eqnarray*}
where $$S_{(\zeta, n_4,n_3)}=\left\{(n,n_5) \, : \,  \begin{aligned}&n= \pm  3 n_3\pm n_4\pm n_5, \\ 
&\zeta =\pm 3 |n_3|^2  \pm |n_5|^2\end{aligned}\right\}.$$
 Then by using Lemma \ref{supg} we can continue, for $\omega$ outside a set of measure $e^{-\frac{1}{\delta^r}}$,  with
$$\lesssim \|\gamma\|_{\ell_\zeta^2}^2\|a_{n_4}\|_{\ell^2}^2
 \sum_{(\zeta, n_4) \in \Z\times \Z^3} \,N_1^{-12+\varepsilon}  \sum_{S_{(\zeta, n_4)}} 
 |k_n|^2|a_{n_5}|^2|S_{(\zeta, n_4)}|,$$
where
$$S_{(\zeta, n_4)}=\left\{(n,n_3,n_5) \, : \,  \begin{aligned}&n= \pm 3 n_3\pm n_4\pm n_5, \\ 
&\zeta =\pm 3  |n_3|^2  \pm |n_5|^2\end{aligned}\right\}.$$
with $|S_{(\zeta, n_4)}|\lesssim N_5^2N_3$, and we continue with
$$\lesssim \|\gamma\|_{\ell_\zeta^2}^2\|a_{n_4}\|_{\ell^2}^2
N_1^{-12+\varepsilon}N_5^2N_3 \sum_{n,n_5} |k_n|^2|a_{n_5}|^2\,  |S_{(n, n_5)}|,$$
where
$$S_{(n, n_5)}=\left\{(\zeta, n, n_3, n_4) \, : \,  \begin{aligned}&n= \pm  3 n_3\pm n_4\pm n_5, \\ 
&\zeta =\pm 3  |n_3|^2  \pm |n_5|^2\end{aligned}\right\}.$$
with $|S_{(n, n_5)}|\lesssim N_3^3$. We obtain the bound $N_1^{-6+\varepsilon}$ which clearly suffices without any further restriction when we take square root and normalize. 

\medskip
We now  observe that {\bf Cases a) iii) } and {\bf iv) }can be analyzed just like {\bf  Cases a) i) } since $N_4$ and $N_1$ are still the top frequencies and the order of the rest is not relevant.

\medskip
{\bf Case b), i)} We assume first that  $N_1\sim N_0$. We cut the frequency windows $N_0$ and $N_1$ by cubes  $C$ of sidelength $N_2$. After using Cauchy-Schwarz we need to estimate
\begin{equation} \label{rrrdd2}
\sum_{m\in \Z, n \in C} \,\left|  \sum_{ n_1, n_2, n_3; \, \, n_1\in C} \tilde g_{n_1}(\omega)\tilde g_{n_2}(\omega)\tilde g_{n_3}(\omega)
\left[\sum_ {S_{(m,n,n_1,n_2,n_3)}} \frac{1}{|n_1|^{\frac{3}{2}}}\frac{1}{|n_2|^{\frac{3}{2}}}\frac{1}{|n_3|^{\frac{3}{2}}}\tilde a_{n_5}\tilde a_{n_4}\right]  
\right|^2 
\end{equation}
where
$$ S_{(m,n,n_1,n_2,n_3)}=\left\{(n_4, n_5) \, : \,  \begin{aligned}&n= \pm n_1\pm n_2 \pm  n_3\pm n_4\pm n_5,\\ &n_i,n_j,n_k\ne n_r,n_p, \, 
m=\pm |n_1|^2  \pm |n_2|^2\pm  |n_3|^2\pm |n_5|^2\pm |n_4|^2\end{aligned}\right\},$$
with $|S_{(m,n,n_1,n_2,n_3)}|\lesssim N_5^2$.
 We now consider two cases:
\begin{itemize}
\item {\bf Case $A_0$:} $n_1,n_2,n_3$ are all different from each other.
\item {\bf Case $A_1$:} At least two of the frequencies $n_1,n_2,n_3$ are equal.
\end{itemize}

\noindent
{\bf Case $A_0$:} We use Lemma \ref{largedeviation} and, for $\omega$ outside a set of measure $e^{-\frac{1}{\delta^r}}$,  we have  
\begin{eqnarray*} \eqref{rrrdd2}&\lesssim&\delta^{-2\mu r}
\sum_{m\in \Z, n \in C} \, \sum_{ n_1, n_2, n_3; \, \, n_1\in C} \left[\sum_ {S_{(m,n,n_1,n_2,n_3)}} \frac{1}{|n_1|^{\frac{3}{2}}}\frac{1}{|n_2|^{\frac{3}{2}}}\frac{1}{|n_3|^{\frac{3}{2}}}\tilde a_{n_5}\tilde a_{n_4}\right]^2 \\
&\lesssim& \delta^{-2\mu r}N_1^{-3} N_2^{-3} N_3^{-3}\sum_{m\in \Z, n \in C} \, \sum_{S_{(m,n)}} \#S_{(m,n,n_1,n_2,n_3)}|a_{n_5}|^2 |a_{n_4}|^2\\
&\lesssim& \delta^{-2\mu r}N_1^{-3} N_2^{-3} N_3^{-3}N_5^2\sum_{n_4,n_5} |a_{n_5}|^2 |a_{n_4}|^2|S_{(n_4,n_5)}|
\end{eqnarray*}
where
$$ S_{(n_4,n_5)}=\left\{(m, n, n_1,n_2, n_3) \, : \,  \begin{aligned}&n= \pm n_1\pm n_2 \pm  n_3\pm n_4\pm n_5,\\
&m=\pm |n_1|^2  \pm |n_2|^2\pm  |n_3|^2\pm |n_5|^2\pm |n_4|^2\end{aligned}\right\},$$
with $|S_{(n_4,n_5)}|\lesssim N_1N_2N_1N_2^3N_3^3$, which finally gives
$$\eqref{rrrdd2}\lesssim \delta^{-2\mu r}N_1^{-1}N_2\|a_{n_5}\|_{\ell^2}^2 \|a_{n_4}\|_{\ell^2}^2.$$
By taking square root and normalizing we require that 
$$N_0^sN_1^{-\frac{3}{2}+\alpha}N_2^{-\frac{1}{2}+\alpha}N_3^{-1+\alpha}N_4^{-s}N_5^{-s}\lesssim N_1^{-\beta}$$
and this follows from assuming $s<\frac{3}{2}-\alpha$.

\medskip

\noindent
{\bf Case $A_1$:} We proceed just like in  the same case for {\bf Case a), ii)}. Here we only work out the details for the case when all frequencies are equal, again this can happen only in Case 2.   We have that $N_1\sim N_2\sim N_3$ and 
\begin{eqnarray*}\eqref{rrrdd2}&\lesssim& \sum_{(m, n) \in \Z\times \Z^3} \,N_1^{-12}\left|   \sum_{n_3} 
|g_{n_3}(\omega)|^3\sum_{S_{(m, n, n_3)}}  \tilde a_{n_4}\tilde a_{n_5} \right|^2
\end{eqnarray*}
where $$S_{(m, n, n_3)}=\left\{(n_4, n_5) \, : \,  \begin{aligned}&n= \pm  3 n_3\pm n_4\pm n_5, \\ 
&m =\pm 3 |n_3|^2 \pm |n_4|^2 \pm |n_5|^2\end{aligned}\right\}.$$
 Then by using Lemma \ref{supg}, for $\omega$ outside a set of measure $e^{-\frac{1}{\delta^r}}$,  we can continue with
$$\lesssim \sum_{(m, n) \in \Z\times \Z^3} \,N_1^{-12+\varepsilon}  \sum_{S_{(m, n)}} 
 |a_{n_4}|^2|a_{n_5}|^2|S_{(m, n)}|,$$
where
$$S_{(m, n)}=\left\{(n_3,n_4,n_5) \, : \,  \begin{aligned}&n= \pm 3 n_3\pm n_4\pm n_5, \\ 
&m =\pm 3  |n_3|^2  \pm |n_4|^2 \pm |n_5|^2\end{aligned}\right\}.$$
with $|S_{(m, n)}|\lesssim N_5^2N_3$, and we continue with
$$\lesssim N_1^{-12+\varepsilon}N_5^2N_3 \sum_{n_4,n_5} |a_{n_4}|^2|a_{n_5}|^2\,  |S_{(n_4, n_5)}|,$$
where
$$S_{(n_4, n_5)}=\left\{(m, n, n_3) \, : \,  \begin{aligned}&n= \pm  3 n_3\pm n_4\pm n_5, \\ 
&m =\pm 3  |n_3|^2   \pm |n_4|^2\pm |n_5|^2\end{aligned}\right\}.$$
with $|S_{(n_4, n_5)}|\lesssim N_1^2N_3$. We obtain the bound $N_1^{-6+\varepsilon}$ which clearly suffices without any further restriction when we take square root and normalize. 

\medskip 
\noindent
Now assume that  $N_1\sim N_2$. Here we do not need to cut with cubes $C$, but the argument  and the estimates are similar as the ones we just analyzed.

\medskip
{\bf Case b), ii), iii):} These cases are estimated just like the case we just analyzed since the two highest frequencies are still $N_1$ and $N_2$ and the order of the others is not relevant.

\medskip
{\bf Case c), i):} Assume first $N_0\sim N_1$. This case is handled like {\bf Case b) i)} above. Here we cut with cubes $C$ of sidelength $N_4$.  This  gives in particular that $\Delta m\lesssim N_1N_4$.

\noindent
{\bf Case $A_0$:} Just like in {\bf Case  b) i)}  we have, for $\omega$ outside a set of measure $e^{-\frac{1}{\delta^r}}$, that 
\begin{eqnarray*}
\eqref{rrrdd2}&\lesssim&\delta^{-2\mu r}
N_1^{-3} N_2^{-3} N_3^{-3}N_5^2\sum_{n_4,n_5} |a_{n_5}|^2 |a_{n_4}|^2|S_{(n_4,n_5)}|
\end{eqnarray*}
where now $|S_{(n_4,n_5)}|\lesssim N_1N_4N_1N_2^3N_3^3$, since $\Delta m\lesssim N_1N_4$. This  finally gives
$$\eqref{rrrdd2}\lesssim \delta^{-2\mu r}N_1^{-1}N_4\|a_{n_5}\|_{\ell^2}^2 \|a_{n_4}\|_{\ell^2}^2.$$
By taking square root and normalizing we require that 
$$N_0^sN_1^{-\frac{3}{2}+\alpha}N_2^{-1+\alpha}N_3^{-1+\alpha}N_4^{-s+\frac{1}{2}}N_5^{-s}\lesssim N_1^{-\beta}$$
and this follows from assuming again $s<\frac{3}{2}-\alpha$.

\medskip
\noindent
{\bf Case $A_1$:} This is like the same case for {\bf Case b) i)}.

\medskip
\noindent
{\bf Case c), i):} Now assume  $N_4\sim N_1$.  Here we do not need to cut and the same estimates as the ones we just presented hold. 

\medskip
\noindent
{\bf Case c), ii), iii):} These cases are estimated just like the case we just analyzed since the two highest frequencies are still $N_1$ and $N_4$ and the order of the others is not relevant.

\subsubsection{ The $DRRRR$ Case}\label{drrrr} To estimate the expression  in \eqref{expression} we assume without any loss of generality
 that $u_5$ is the deterministic function and it is not conjugated. By Cauchy-Schwarz and and Proposition  \ref{transfer} we are reduced to estimate 
 
 \smallskip
 \begin{equation} \label{rrrrd1}
\sum_{m\in \Z, n \in C} \,\left|  \sum_{\substack{ n_1, n_2, n_3, n_4\\ n_1,n_3\ne n_2, n_4 }} 
\left[\sum_ {\substack{ n_5= -n_1+n_2 - n_3+n_4-n,\\ n_5\ne n_2,n_4 
\\ m = |n_1|^2 -|n_2|^2+ |n_3|^2-|n_4|^2+ |n_5|^2}}  a_{n_5}\right] \frac{g_{n_1}(\omega)}{|n_1|^{\frac{3}{2}}} 
\frac{\cg_{n_2}(\omega)}{|n_2|^{\frac{3}{2}}}\frac{g_{n_3}(\omega)}{|n_3|^{\frac{3}{2}}}\frac{\cg_{n_4}(\omega)}{|n_4|^{\frac{3}{2}}}\right|^2, 
\end{equation} 
 where we have assumed that $\widehat{u_5}(n_5,t)=e^{it|n_5|^2}a_{n_5}$ and $C$ is a cube of sidelength to be determined later. 
 
 \medskip
Since we have removed the frequencies $n_1,n_3=n_2$ or $n_1,n_3=n_4$, which  would give rise to terms of the form $|g_i(\omega)|^2$,  we can  invoke Lemma 
\ref{largedeviation} and proceed by further considering 
 the following subcases. For $i, j \in\{1, 2, 3, 4\},$
 \begin{itemize}
 \item {\bf Case a):} There exists $j$ such that $N_0\sim N_j, \, N_5\lesssim N_j$. 
 \item {\bf Case b):} There exist $j\ne i $ such that $N_i\sim N_j$ and $ N_5, N_0\lesssim N_i$.
 \item {\bf Case c):} $N_0\sim N_5$ and $N_j\lesssim N_5$.
 \item {\bf Case d):} There exist $j\ne i$ such that $N_5\sim N_j$ and $N_0, N_i\lesssim N_j$.
 \end{itemize}
 
 \smallskip
 
 \noindent {\bf Case a):}  Assume $N_k, \, k\in \{1,2,3,4,5\}, \, k\neq j$ is the second largest frequency. Then let $C$ be of sidelength $N_k$ and let
 $$S_{(n, m, n_1, n_2, n_3, n_4)}=\left\{n_5\, : \, \begin{aligned} &n_5= -n_1+n_2 - n_3+n_4-n,\\ &n_5\ne n_2,n_4, \, \, n_j\in C
\\ &m = |n_1|^2 -|n_2|^2+ |n_3|^2-|n_4|^2+ |n_5|^2 \end{aligned}\right\}.$$
  By Lemma \ref{largedeviation}, for $\omega$ outside a set of measure $e^{-\frac{1}{\delta^r}}$, we have
 \begin{eqnarray*}
\eqref{rrrrd1}&\lesssim&\delta^{-2\mu r}\sum_{m\in \Z, n \in C} \, N_1^{-3}N_2^{-3}N_3^{-3}N_4^{-3}\sum_{\substack{ n_1, n_2, n_3, n_4 }} 
\left|\sum_ {S_{(n, m, n_1, n_2, n_3, n_4)}} a_{n_5}\right|^2\\
&\lesssim&\delta^{-2\mu r}\sum_{m \in \Z, \, n \in C} \, N_1^{-3}N_2^{-3}N_3^{-3}N_4^{-3}
\sum_{S_{(n,m)}} |a_{n_5}|^2 \end{eqnarray*}
 where
 $$S_{(n,m)}=\left\{(n_1,n_2,n_3,n_4,n_5) \, : \,  \begin{aligned}&n= n_1-n_2 +n_3-n_4+n_5,\\ &n_j\in C 
\\& m = |n_1|^2 -|n_2|^2+ |n_3|^2-|n_4|^2+ |n_5|^2\end{aligned}\right\}.$$
 We now define the set 
 $$S_{(n_5)}=\left\{(m, n, n_1, n_2,n_3, n_4) \, :\, \begin{aligned}&n= n_1-n_2 +n_3-n_4+n_5,\\ &n_j\in C 
\\& m = |n_1|^2 -|n_2|^2+ |n_3|^2-|n_4|^2+ |n_5|^2\end{aligned}\right\}$$
where $|S_{(n_5)}|\lesssim  N_j^2N_k^4N_p^3N_q^3$. Then we continue with 
 $$\eqref{rrrrd1}\lesssim \delta^{-2\mu r}N_1^{-3}N_2^{-3}N_3^{-3}N_4^{-3}\sum_{n_5}|a_{n_5}|^2\, |S_{(n_5)}|\lesssim \delta^{-2\mu r}N_j^{-1}N_k\|a_{n_5}\|^2_{\ell^2},$$
By taking square root and normalizing we obtain the bound 
 $N_j^{s+\alpha- \frac{3}{2}} N_k^{-\frac{1}{2}+\alpha}$ 
which entails  $s+\alpha<\frac{3}{2}  $ and $\alpha<\frac{1}{2}$.

 \medskip
{\bf Case b):} We go back to \eqref{rrrrd1} and we let $C$ be of its natural sidelength $N_0$. We then repeat the argument above with the role of $N_k$
played by $N_j$ and we count the set 
$$S_{(n_5)}=\left\{(m, n, n_1, n_2,n_3, n_4) \, :\, \begin{aligned}&n= n_1-n_2 +n_3-n_4+n_5, 
\\& m = |n_1|^2 -|n_2|^2+ |n_3|^2-|n_4|^2+ |n_5|^2\end{aligned}\right\}$$
where $|S_{(n_5)}|\lesssim  N_j^3N_i^3N_p^3N_q^3$.
By taking square root and normalizing we obtain the bound  $N_j^{s+2\alpha- 2}$ 
which entails  $s+2\alpha< 2 $.

\medskip
{\bf Case c):} 
We proceed as in {\bf Case b)} of Subsection \ref{ddddr}. More precisely by duality and a change of variables $\zeta=m-|n_5|^2$ as in the proof of Proposition
 \ref{One} in particular see \eqref{t1dual}.  Here we let $C$ be of its natural sidelength $N_0$. Let also $N_k$, where $N_k, \, k\in \{1,\dots, 4\}$ be the second
  largest frequency. We have to bound
 \begin{equation}\label{rrrrd2}\|\gamma\|_{\ell_\zeta^2}^2\|\chi_{C}a_{n_5}\|_{\ell^2}^2
 \sum_{(\zeta, n_5) \in \Z\times \Z^3} \,\left|   \sum_{\substack{ n= n_5-n_2 + n_3-n_4+n_1,\\ n_1,n_3,n_5\ne n_2,n_4 
\\ \zeta = |n_1|^2 -|n_2|^2+ |n_3|^2-|n_4|^2}} 
 \chi_{C}(n)k_n  \frac{g_{n_1}(\omega)}{|n_1|^{\frac{3}{2}}} 
\frac{\cg_{n_2}(\omega)}{|n_2|^{\frac{3}{2}}}\frac{g_{n_3}(\omega)}{|n_3|^{\frac{3}{2}}}\frac{\cg_{n_4}(\omega)}{|n_4|^{\frac{3}{2}}}\right|^2.
\end{equation}
We proceed again as above where now we have to replace $S_{(n_5)}$ by
$$S_{(n)}=\left\{(\zeta, n_1, n_2, n_3, n_4, n_5) \, :\, \begin{aligned}&n= n_1-n_2 +n_3-n_4+n_5, 
\\& \zeta = |n_1|^2 -|n_2|^2+ |n_3|^2-|n_4|^2\end{aligned}\right\}$$
where $|S_{(n)}|\lesssim  N_k^3N_i^3N_p^3N_q^3$, where we used that $\Delta \zeta\lesssim N_k^2$.
By taking square root and normalizing we obtain the bound  $N_k^{\alpha- 1}$.

\medskip
{\bf Case d):} This case is analogous to {\bf Case c)}.




\subsubsection{The all random $R\cR R \cR R$ Case}\label{rrrrr}

Since we have removed the frequencies $n_1,n_3=n_2$ or $n_1,n_3=n_4$, which  would give rise to terms of the form $|g_i(\omega)|^2$,  we can  invoke Lemma 
\ref{largedeviation} and proceed 
to estimate the expression in \eqref{expression}  
by further considering    the following two subcases, 
 {\bf Case a):}  $N_0 \sim N_i$ for some $i=1,\dots,5$ and {\bf Case b):}  $N_i\sim N_j$ for $i, j \neq 0$.
 
\smallskip 
 
\noindent 
{\bf Case a):}  Let $N_j$ be the second largest frequency size after $N_i$. We cut the window $N_0$ with cubes $C$ of sidelength $N_j$. 
By Cauchy-Schwarz and Plancherel we estimate 

\begin{equation} \label{r01}
\sum_{m\in \Z, n \in \Z^3} \,\left|   \sum_ {\substack{ n= n_1-n_2 + n_3-n_4+n_5,\\ n_1,n_3,n_5\ne n_2,n_4 
\\ m = |n_1|^2 -|n_2|^2+ |n_3|^2-|n_4|^2+ |n_5|^2}}   \frac{g_{n_1}(\omega)}{|n_1|^{\frac{3}{2}}} \frac{\cg_{n_2}(\omega)}{|n_2|^{\frac{3}{2}}}\frac{g_{n_3}(\omega)}{|n_3|^{\frac{3}{2}}}\frac{\cg_{n_4}(\omega)}{|n_4|^{\frac{3}{2}}}\frac{g_{n_5}(\omega)}{|n_5|^{\frac{3}{2}}}\right|^2 
\end{equation}     By Lemma \ref{largedeviation} we have that, for $\omega$ outside a set of measure $e^{-\frac{1}{\delta^r}}$,
\begin{eqnarray*}\label{rrrrr1}
\eqref{r01}&\lesssim&\delta^{-2\mu r} \sum_{m\in \Z, n \in \Z^3} \,  \sum_{  \substack{ n= n_1-n_2 + n_3-n_4+n_5,\\ n_1,n_3,n_5\ne n_2,n_4, \, \, n_i\in C
\\ m = |n_1|^2 -|n_2|^2+ |n_3|^2-|n_4|^2+ |n_5|^2}}  \frac{1}{|n_1|^3} \frac{1}{|n_2|^3}\frac{1}{|n_3|^3}\frac{1}{|n_4|^3}\frac{1}{|n_5|^3}\\\notag
&\lesssim&
\delta^{-2\mu r}\, |S|\, \prod_{k=1}^5N^{-3}_k\lesssim N_i^{-1}N_j
\end{eqnarray*}  
where 
$$S=\left\{(m, n, n_1,\dots,n_5) \, : \,  \begin{aligned}&n= n_1-n_2 + n_3-n_4+n_5, \, \,  n_i\in C\\ 
&m = |n_1|^2 -|n_2|^2+ |n_3|^2-|n_4|^2+|n_5|^2\end{aligned}\right\}
$$
and $|S|\lesssim N_i^2N_j^4\prod_{k\ne i,j,0}N^3_k$.  Taking square root and normalizing we obtain the bound
$$N_i^{s+\alpha-\frac{3}{2}}N_j^{-\frac{1}{2}+\alpha}\prod_{k\ne i,j,0}N^{-1+\alpha}_k$$
which suffices provided $s+\alpha<\frac{3}{2}$ and $\alpha<\frac{1}{2}$.

\medskip
\noindent {\bf Case b):}  This is like {\bf Case a)}, but now we do not need to cut the support of the window $N_0$ by $N_j$.

\medskip

\subsubsection{The $U^4_\Delta L^2$ Estimates}

\smallskip
Assume that $N_i$ are dyadic numbers and without loss of generality that $N_1\geq N_2\geq \dots \geq N_5$. We start by rewriting 
\begin{eqnarray*}&& \int_0^{2\pi} \int_{\T^3}  D^s \bigl( \mathcal N(P_{N_i}(w+v_0^\omega))   \bigr) \overline{ P_{N_0}h }  \, dx \, dt = \int_0^{2\pi} \int_{\T^3}  D^s \bigl( \mathcal N(P_{N_i}w)   \bigr) \overline{ P_{N_0}h }  \, dx \, dt\\
&+& \int_0^{2\pi} \int_{\T^3}  D^s \bigl( \mathcal N(P_{N_i}v_0^\omega)   \bigr) \overline{ P_{N_0}h }  \, dx \, dt\\
&+& \int_0^{2\pi} \int_{\T^3}  D^s \bigl( \mathcal N(P_{N_i}w,P_{N_i}v_0^\omega)   \bigr) \overline{ P_{N_0}h }  \, dx \, dt,\\
&=&\TT_1+\TT_2+\TT_3,
\end{eqnarray*}
where in the  term $\TT_3$ we include all the nonlinear expressions with both random and deterministic terms. 
Our goal is to obtain an  estimate for the first and last term with the  $U^4_\Delta L^2$ norms of $w$ in the right hand side possibly paying the prize of $N^\gamma_2$, with $\gamma>0$. Then using  the interpolation Proposition \ref{U-interpolation}, we combine this estimate with the ones involving the norms $U^2_\Delta L^2$ in previous sections and the embeddings \eqref{embed3} and \eqref{embed4} to finally conclude the proof of Proposition \ref{main-lemma}.

Clearly we do not need to estimate $\TT_2$ that involves  purely  random terms. For the other two we have 
$$ \TT_1+\TT_3
\lesssim [\|\mathcal N(P_{N_i}w)\|_{L^4_tL^2_x}+\|\mathcal N(P_{N_i}w,P_{N_i}v_0^\omega)\|_{L^4_tL^2_x}]\|P_{N_0}h\|_{L^\infty_tL^2_x},
$$
and from \eqref{rewrite}, we a certain abuse of notation,  
\begin{eqnarray*}&&\|\mathcal N(P_{N_i}w)\|_{L^4_tL^2_x}+\|\mathcal N(P_{N_i}w,P_{N_i}v_0^\omega)\|_{L^4_tL^2_x}\\&\lesssim&  \sum_{i=1}^9\|\FF^{-1}J_i(w)\|_{L^4_tL^2_x}+\sum_{j=1}^9\|\FF^{-1}J_i(P_{N_i}w,P_{N_i}v_0^\omega)\|_{L^4_tL^2_x} \\
&=&\sum_{i=1}^9(\FS_1^i+\FS^i_2)
\end{eqnarray*}
where $J_i(w, v_0^\omega)$ indicates that the functions involved could be both $w$ and $v_0^\omega$. To estimate 
 $\FS_1^i$ and $\FS_2^i$ we use the transfer principle in Proposition \ref{transfer} and we assume that $\hat w(t,n)=e^{it|n|^2}b_n(t)$. Below we write $a^j_{n_j}$ to indicate $b_n$ or the  Fourier coefficients of $v_0^\omega$ or their conjugates.    Now define
$$\Phi_i(n,t):= \left| \sum_{ \substack{ n=\sum_{i=1}^5\pm n_i, \, \, n_i\sim N_i,\\
W_i(n_1, n_2,n_3,n_4,n_5)} }  a^1_{n_1}e^{\pm it|n_1|^2}a^2_{n_2}e^{\pm it|n_2|^2}
a^3_{n_3}e^{\pm it|n_3|^2}
a^4_{n_4}e^{\pm it|n_4|^2}
a^5_{n_5}e^{\pm it|n_5|^2}
\right|^2, $$ 
where $W_i(n_1, n_2, n_3,n_4,n_5)$ indicates the  constraints among the five frequencies in $J_i$.
 Then for $i=1,\dots,9$ and $k=1,2$
\begin{eqnarray*}
(\FS_k^i)^2&\lesssim& \sup_{t\in [0,2\pi]}  \sum_{n}\Phi_i(n,t) 
\lesssim\sup_{t\in [0,2\pi]} \sum_{n}\left| \sum_{ S_{(n)}} |a^1_{n_1}|
|a^2_{n_2}||a^3_{n_3}||a^4_{n_4}||a^5_{n_5}|\right|^2, 
\end{eqnarray*}
 where
$$S_{(n)}=\{ (n_1, n_2, n_3, n_4, n_5) \, \, : \, \, \sum_{j=1}^5\pm n_j, \, \, n_j\sim N_j \}.$$
Assume now that $N_1$, the   highest frequency, is such that  $ N_1\sim N_0$, which is in fact the least favorable situation. Then $|S_{(n)}|\lesssim N_2^3N_3^3N_4^3N_5^3$ and by Cauchy-Schwarz 
\begin{equation}\label{lastest}
\left| \sum_{ S_{(n)}} |a^1_{n_1}|
|a^2_{n_2}||a^3_{n_3}||a^4_{n_4}||a^5_{n_5}|\right|^2\lesssim N_2^3N_3^3N_4^3N_5^3\|a^1_{n_1}\|_{\ell^2}^2\prod_{j=2}^5\|a^j_{n_j}\|_{\ell^2}^2.\end{equation}
We then have 
\begin{equation}\label{u4l2part1}\FS_1^i\lesssim N_2^{6-4s}\|P_{N_1}w\|_{U^4_\Delta H^s}\prod_{j=2}^5\|P_{N_j}w\|_{U^4_\Delta H^s}.\end{equation}
We observe that a similar estimate holds for $\FS_2^i$ when the function associated to frequency $N_1$ is also deterministic. In fact in this case we have 
\begin{equation}\label{u4l2part2}\FS_2^i\lesssim N_2^{2+4\alpha}\|P_{N_1}w\|_{U^4_\Delta H^s}\prod_{j\notin J, \, j\ne 1}\|P_{N_j}w\|_{U^4_\Delta H^s}.\end{equation}
Finally if the function associated to frequency $N_1$ is random, then we have 
\begin{equation}\label{u4l2part3}\FS_2^i\lesssim N_1^{s-1+\alpha}N_2^{2+4\alpha}\prod_{j\notin J}\|P_{N_j}w\|_{U^4_\Delta H^s}.\end{equation}

\medskip
We conclude by using the interpolation Proposition  \ref{U-interpolation}. Note here that in both \eqref{u4l2part1} and \eqref{u4l2part2} the interpolation at most  introduces a factor of $N_2^\varepsilon$  which can be easily absorbed by 
the negative power of $N_2$ in the estimates involving norms $U^2_\Delta L^2$, see previous subsections.  On the other hand \eqref{u4l2part3}
and  interpolation   introduce a factor $N_1^\varepsilon$. But this too can be absorbed thanks to the presence of a negative power of $N_1$ in the estimates involving norms $U^2_\Delta L^2$ in those cases in which the highest frequency is associated to  a random function.

This  concludes the proof of Proposition \ref{main-lemma}.

\section{ Proof of Proposition \ref{main-estimate}}\label{proof-main-estimate} 

\begin{proof} [Proof of Proposition \ref{main-estimate}] We first show an improved version of Proposition \ref{main-lemma}, that is we show that if $r>0$ is small enough then there exists $\theta>0$ such that for $\omega\in \Omega_\delta$ we have: 
if  $N_1\gg N_0$  or $P_{N_1}w=P_{N_1}v_0^\omega$
\begin{eqnarray}\label{outcome1delta}
 &&\left| \int_0^{2\pi} \int_{\T^3}  D^s \bigl(\psi_{\delta}(t) \mathcal N(P_{N_i}(w+v_0^\omega))   \bigr) \overline{ P_{N_0}h }  \, dx \, dt \right| \\\notag
&& \qquad \qquad \lesssim \delta^{\theta} N_1^{-\varepsilon} \| P_{N_0}h\|_{Y^{-s}}\left( 1+ \prod_{i\notin J}\|\psi_{\delta}P_{N_i}w\|_{X^s}\right),
 \end{eqnarray}
 and if $N_1\sim N_0$ and $P_{N_1}w\ne P_{N_1}v_0^\omega$
\begin{eqnarray}\label{outcome2delta} &&\left| \int_0^{2\pi} \int_{\T^3}  D^s \bigl(\psi_{\delta}(t) \mathcal N(P_{N_i}(w+v_0^\omega))   \bigr) \overline{ P_{N_0}h }  \, dx \, dt \right|\\\notag  
& \lesssim&
 \delta^{\theta} N_2^{-\varepsilon} \| P_{N_0}h\|_{Y^{-s}}\|\psi_{\delta}P_{N_1}w\|_{X^s}
\left( 1+ \prod_{i\notin J, i\ne 1}\|\psi_{\delta}P_{N_i}w\|_{X^s}\right),
 \end{eqnarray}
 for some small $\varepsilon>0$.
 
To prove \eqref{outcome1delta} and \eqref{outcome2delta} we first observe that   in the proof of Proposition \ref{main-lemma}, in particular the estimates involving the terms $J_2,\dots, J_7$ in \eqref{rewritedifference}, we always have the factor $\|P_{N_0}h\|_{L^2_tL^2_x}$ in the right hand side. We can then replace this by 
\begin{equation}\label{deltah}\|\psi_{\delta}(t)P_{N_0}h\|_{L^2_tL^2_x}\lesssim \delta^\frac{1}{2}\|\psi_{\delta}(t)P_{N_0}h\|_{L^\infty_tL^2_x}\lesssim \delta^\frac{1}{2}\|\psi_{\delta}(t)P_{N_0}h\|_{Y^0},\end{equation}
where we used \eqref{embed4} and obtain the proof of 
Proposition \ref{main-estimate} for the nonlinear terms involving $J_2,\dots, J_7$.

 To estimate the term involving $J_1$ we go back to Subsections \ref{ddddd}-\ref{rrrrr}.  We recall that except when the top frequencies, say $N_1$ and $N_2$, are associated to two deterministic functions,
also in this case we have $\|P_{N_0}h\|_{L^2_tL^2_x}$ in the right hand side and \eqref{deltah} can be used again. 
 
 We are then reduced to estimating  the term involving $J_1$ where the top frequencies $N_1$ and $N_2$ are associated to two deterministic functions. So we consider 
\begin{equation}\label{L1time}
\left|\int_0^{2\pi}\int_{\T^3}\FF_{-1}J_1(\psi_{\delta}(t)P_{N_i}(u_i) \psi_{\delta}(t)\overline{ P_{N_0}h } \, dx\,dt\right|
\end{equation}
where without loss of generality $N_1\geq N_2\geq N_3\geq N_4\geq N_5$ and $u_1$ and $u_2$ are deterministic functions while $u_{N_i}, \, i=3,4,5$ represents either $w$ or $v_0^\omega$.  We consider two cases:
\begin{itemize}
\item {\bf Case 1:} $\delta^{-\sigma} >N_2$
\item {\bf Case 2:} $\delta^{-\sigma} \leq N_2$
\end{itemize}
where $\sigma>0$ will be determined later.

\medskip

{\bf Case 1:} We observe that   the estimate of  \eqref{L1time} can be reduced to analyzing an expression such as 
\begin{equation}\label{fullproduct}\left|\int_0^\delta\int_{\T^3}\tilde u_{N_1}\tilde u_{N_2}\tilde u_{N_3}\tilde u_{N_4}\tilde u_{N_5}\tilde h_{N_0}\, dx\,dt\right|\end{equation}
where $u_{N_i}$ are as above. In fact to obtain the full product as in \eqref{fullproduct} one needs to put back some frequencies, and hence some terms, see for example \eqref{addons} in Subsection \ref{ddddd}. But these terms are similar  to those  involved in $J_2, \dots, J_7$ and again the gain on $\delta$ is garanteed  by   \eqref{deltah}.

 We then go back to \eqref{fullproduct} and  we further assume that $N_1\sim N_0$, which is the least favorable situation. We cut the frequency window $N_0$, and hence $N_1$, with respect to cubes $C$ of sidelength $N_2$ and we obtain the bound
\begin{eqnarray*}&&\left|\int_0^\delta\int_{\T^3}\tilde u_{N_1}\tilde u_{N_2}\tilde u_{N_3}\tilde u_{N_4}\tilde u_{N_5}\tilde h_{N_0}\, dx\,dt\right|^2
\lesssim \delta \sum_C\|P_{C}\tilde u_{N_1}\|_{L^{12}_tL^{12}_x}^2\|P_{C}\tilde h_{N_0}\|_{L^{12}_tL^{12}_x}^2\\
&\times&\|\tilde u_{N_2}\|_{L^{12}_tL^{12}_x}^2\|\tilde u_{N_3}\|_{L^{12}_tL^{12}_x}^2\|\tilde u_{N_4}\|_{L^{12}_tL^{12}_x}^2\|\tilde u_{N_5}\|_{L^{12}_tL^{12}_x}^2
\end{eqnarray*}
and from \eqref{Strichartz-1}, \eqref{Strichartz-2} and \eqref{Strichartz-3}  we can continue with 
\begin{equation}\label{5bound12}\lesssim \delta N_2^{m(\alpha,s)}\sum_C\|P_{C}u_{N_1}\|_{U^{12}_\Delta L^2}^2
\|P_{C} h_{N_0}\|_{U^{12}_\Delta L^2}^2\prod_{i\notin J, i\ne 1}\| u_{N_i}\|_{U^{12}_\Delta L^2}^2,\end{equation}
where $J\subset \{2,3,4,5\}$ is the set of indices corresponding to random linear solutions.
%
Then normalizing, interpolating through  Proposition \ref{U-interpolation},   and using the embedding \eqref{embed4} combined with  \eqref{sum-cubes}, we have 
\begin{eqnarray*}&&\left|\int_0^\delta\int_{\T^3}\tilde u_{N_1}\tilde u_{N_2}\tilde u_{N_3}\tilde u_{N_4}\tilde u_{N_5}\tilde h_{N_0}\, dx\,dt\right|\\
&\lesssim& \delta^\frac{1}{2} N_2^{m(\alpha,s)}\| P_{N_0}h\|_{Y^{-s}}\|\psi_{\delta}P_{N_1}w\|_{X^s}
\left( 1+ \prod_{i\notin J, i\ne 1}\|\psi_{\delta}P_{N_i}w\|_{X^s}\right)\\&\lesssim& \delta^\frac{1}{3}  \| P_{N_0}h\|_{Y^{-s}}\|\psi_{\delta}P_{N_1}w\|_{X^s}N_2^{-\varepsilon}
\left( 1+ \prod_{i\notin J, i\ne 1}\|\psi_{\delta}P_{N_i}w\|_{X^s}\right),
\end{eqnarray*}
if we take  $\sigma<\frac{1}{100m(\alpha,s)}.$

\medskip
{\bf Case 2:} Here we go back to   \eqref{outcome1} and  \eqref{outcome2}. We recall that $P_{N_1}u_1$ is deterministic and again we assume that $N_1\sim N_0$, the other cases can be treated similarly. Then we use \eqref{outcome1} and we have 
\begin{eqnarray*}
&&\left|\int_0^{2\pi}\int_{\T^3}\FF_{-1}J_1(\psi_{\delta}(t)P_{N_i}(u_i) \psi_{\delta}(t)\overline{ P_{N_0}h } \, dx\,dt\right|\\
&\lesssim&\delta^\gamma \delta ^{-\gamma-\mu r}N_2^{-\rho(\alpha,s)} \| P_{N_0}h\|_{Y^{-s}}\|\psi_{\delta}P_{N_1}w\|_{X^s}
 \prod_{i\notin J, i\ne 1}\|\psi_{\delta}P_{N_i}w\|_{X^s}\\
 &\lesssim& \delta ^\gamma N_2^{-\varepsilon} \| P_{N_0}h\|_{Y^{-s}}\|\psi_{\delta}P_{N_1}w\|_{X^s}
 \left( 1+ \prod_{i\notin J, i\ne 1}\|\psi_{\delta}P_{N_i}w\|_{X^s}\right),
 \end{eqnarray*}
provided $\sigma\geq \frac{\gamma+\mu r}{\rho(\alpha,s)}$ which is satisfied for $\gamma, r$ small enough.

\smallskip

To finish the proof we now need to sum the dyadic blocks just as in \cite{HTT}. In \eqref{outcome1delta} we have enough decay in the highest frequency $N_1$ that we can use Cauchy-Schwarz in all the smaller frequencies  terms and just pay with a $N_1^{-\frac{\varepsilon}{2}}$. In \eqref{outcome2delta}  instead we use Cauchy-Schwarz for the lower frequencies $N_5, \dots N_3$ and  pay with a $N_2^{-\frac{\varepsilon}{2}}$ that can be absorbed and use Cauchy-Schwarz on $N_0\sim N_1$.

\end{proof}


 \end{document}